\documentclass[12pt]{amsart}

\usepackage{amsmath,amssymb}
\usepackage{bm}
\usepackage{graphicx}
\usepackage{ascmac}
\usepackage{amsthm}
\usepackage{amsfonts}
\usepackage{mathrsfs}
\usepackage{mathtools}
\usepackage[all]{xy}
\usepackage{comment}
\usepackage{DotArrow}
%\mathtoolsset{showonlyrefs=true}

\theoremstyle{plain}
\newtheorem{thm}{Theorem}[section]
\newtheorem{cor}[thm]{Corollary}
\newtheorem{prop}[thm]{Proposition}
\newtheorem{lem}[thm]{Lemma}

\theoremstyle{definition}
\newtheorem{dfn}[thm]{Definition}

\newtheorem{rem}[thm]{Remark}

\newtheorem*{nota}{Notations}
\newtheorem*{claim}{Claim}

\newtheorem*{oftp}{Organization of this paper}
\newtheorem*{ack}{Acknowledgments}
\renewcommand{\qed}{\hfill$\blacksquare$\par}

\numberwithin{thm}{section}
\numberwithin{equation}{section}

\newcommand{\Zpn}{\mathbb{Z}_{>0}}
\newcommand{\Znn}{\mathbb{Z}_{\geq 0}}
\newcommand{\Z}{\mathbb{Z}}

\newcommand{\Q}{\mathbb{Q}}

\newcommand{\A}{\mathbb{A}}

\newcommand{\G}{\mathbb{G}}

\newcommand{\sS}{\mathscr{S}}

\renewcommand{\P}{\mathbb{P}}
\newcommand{\bK}{\mathbf{K}}

\newcommand{\bd}{\mathbf{d}}

\newcommand{\cH}{\mathcal{H}}
\newcommand{\cI}{\mathcal{I}}

\newcommand{\Hhat}{\widehat{H}}

\DeclareMathOperator{\Ker}{Ker}
\DeclareMathOperator{\Ima}{Im}
\DeclareMathOperator{\Coker}{Coker}
\DeclareMathOperator{\Hom}{Hom}

\DeclareMathOperator{\id}{id}

\DeclareMathOperator{\Gal}{Gal}

\DeclareMathOperator{\N}{N}

\DeclareMathOperator{\red}{red}

\DeclareMathOperator{\Res}{Res}
\DeclareMathOperator{\Cor}{Cor}

\DeclareMathOperator{\rk}{rank}

\DeclareMathOperator{\Pic}{Pic}

\DeclareMathOperator{\sep}{sep}

\DeclareMathOperator{\Ext}{Ext}
\DeclareMathOperator{\Ind}{Ind}
\DeclareMathOperator{\pr}{pr}

\DeclareMathOperator{\tor}{tor}

\DeclareMathOperator{\fl}{fl}

\DeclareMathOperator{\nor}{nor}
\DeclareMathOperator{\set}{set}
\DeclareMathOperator{\srd}{srd}

\usepackage[OT2,T1]{fontenc}
\DeclareSymbolFont{cyrletters}{OT2}{wncyr}{m}{n}
\DeclareMathSymbol{\Sha}{\mathalpha}{cyrletters}{"58}

\usepackage{geometry}
\geometry{left=20mm,right=20mm,top=25mm,bottom=25mm}

\usepackage{xcolor}

\title[Rationality problem]{The rationality problem for multinorm one tori}

\author[S.~Hasegawa]{Sumito Hasegawa}
\address{(Hasegawa)}
\email{sumito.hasegawa@gmail.com}

\author[K.~Kanai]{Kazuki Kanai}
\address{(Kanai) General Education Program, National Institute of Technology (KOSEN), Kure College, 2-2-11, Agaminami, Kure, Hiroshima, 737-8506, Japan}
\email{k-kanai@kure-nct.ac.jp}

\author[Y.~Oki]{Yasuhiro Oki}
\address{(Oki) Department of Mathematics, College of Science, Rikkyo University, 3-34-1, Nishi-Ikebukuro, Toshima-ku, Tokyo, 171-8501, Japan. }
\email{oki@rikkyo.ac.jp}

\subjclass[2020]{11E72, 12F20, 14E08, 20C10, 20G15}

\begin{document}

\begin{abstract}
In this paper, we study the rationality problem for multinorm one tori, a natural generalization of norm one tori. 
For multinorm one tori that split over finite Galois extensions with nilpotent Galois group, we prove that stable rationality and retract rationality are equivalent, and give a criterion for the validity of the above two conditions. 
This generalizes the result of Endo (2011) on the rationality problem for norm one tori. 
To accomplish it, 
we introduce a generalization of character groups of multinorm one tori.
Moreover, we establish systematic reduction methods originating in work of Endo (2001) for an investigation of the rationality problem for arbitrary multinorm one tori.
In addition, we provide a new example for which the multinorm principle holds.
\end{abstract}

\maketitle

\tableofcontents

\section{Introduction}

Let $k$ be a field and $k^{\sep}$ a fixed separable closure of $k$.
In algebraic geometry, a fundamental problem is to determine whether a given algebraic variety over $k$ is rational; that is birationally equivalent to a projective space over $k$. 
It is also important to determine stable rationality, retract rationality, and unirationality which are weaker notions of rationality.
These properties satisfy:
\begin{center}
rational $\Rightarrow$ stably rational $\Rightarrow$ 
retract rational $\Rightarrow$ unirational. 
\end{center}
We remark that the direction of the implication cannot be reversed.
See Section \ref{sect:rttr} for more details.

The multinorm one tori primarily studied in this paper are algebraic tori. 
We recall that an algebraic torus over $k$ is a group $k$-scheme $T$ that satisfies
$T\otimes_k k^{\sep}\cong (\G_{m,k^{\sep}})^n$ for some non-negative integer $n$.
Note that an algebraic $k$-torus $T$ is always unirational 
(see \cite[p.~40, Example 21]{Voskresenskii1998})
and Voskresenskii conjectured that stably rational tori are rational (see \cite[p.~68]{Voskresenskii1998}). 
In this paper, we focus on studies on the stable rationality and the retract rationality. 

The rationality problem is well-understood for tori of small dimensions. 
It is known by Voskresenskii \cite{Voskresenskii1967} that all tori of dimension $2$ are rational. Moreover, Kunyavskii \cite{Kunyavskii1990} solved the rationality problem for $3$-dimensional algebraic $k$-tori. 
After that, Hoshi--Yamasaki \cite{Hoshi2017} classified algebraic $k$-tori of dimensions $4$ and $5$ that are stably rational (resp.~retract rational). 

In this paper, we restrict our attention to the stable rationality and the retract rationality for \emph{multinorm one tori}. 
Let $\bK$ be a finite {\'e}tale algebra over $k$, that is, a finite product of finite separable field extensions of $k$ which are contained in $k^{\sep}$. Then, we set
\begin{equation*}
    T_{\bK/k}:=\Ker(\N_{\bK/k}\colon \Res_{\bK/k}\G_{m}\rightarrow \G_{m}),
\end{equation*}
where $\Res_{\bK/k}$ is the Weil restriction. 
We call it the multinorm one torus associated to $\bK/k$. If $\bK$ is a field, then $T_{\bK/k}$ is called the \emph{norm one torus}. Note that $T_{\bK/k}$ has dimension $\dim_{k}(\bK)-1$, and splits over the Galois closure of the composite field of all factors of $\bK$. This means that there is an isomorphism of $L$-tori $T_{\bK/k}\otimes_{k}L\cong \G_{m,L}^{\dim_{k}(\bK)-1}$, where $\bK$ is the product of finite separable field extensions $K_{1},\ldots,K_{r}$ of $k$, and $L$ is the Galois closure of the composite field of $K_{1},\ldots,K_{r}$ of $k$. 

The rationality problem for norm one tori has been extensively investigated by \cite{Endo1975}, \cite{ColliotThelene1977}, \cite{Hurlimann1984}, \cite{ColliotThelene1987}, \cite{LeBruyn95}, \cite{CortellaKunyavskii2000}, \cite{LemireLorenz2000}, \cite{Florence}, \cite{Endo2011},
\cite{Hoshi2017}, \cite{Hasegawa2020}, \cite{Hoshi2021}, 
\cite{Hoshi2024} and \cite{Hoshi}.
However, less is known about the rationality problem for multinorm one tori compared to the norm one tori.
It has been studied by \cite{Hurlimann1984}, \cite{Endo2001}. 
Moreover it has been treated in \cite{ColliotThelene1977}, \cite{CortellaKunyavskii2000}, \cite{Endo2011}.

As a motivation for studying the rationality problem for multinorm one tori, 
it is expected that this problem has applications to the rationality problem for norm one tori.
Consider the norm one torus $T_{K/k}$ associated to a finite separable field extension $K/k$. 
Let $K'/k$ be a finite separable field extension. 
By definition, there is an isomorphism of $K'$-tori
\begin{equation*}
T_{K/k}\otimes_{k}K'\cong T_{(K\otimes_{k}K')/K'}. 
\end{equation*}
Here $K\otimes_{k}K'$ may not be a finite separable field extension of $K'$, however it is a finite {\'e}tale algebra over $K'$. 
This means that multinorm one tori appear by taking base change of norm one tori. 
Moreover, if $T_{(K\otimes_{k}K')/K'}$ is \emph{not} rational (resp.~stably rational; retract rational) over $K'$, then one can prove that $T_{K/k}$ is so over $k$. In fact, this approach is used in the proof of \cite[{$\lbrack$I$\rbrack$}]{Endo2011} to obtain the non-retract rationality of some norm one tori. 

The main theorem of Endo (\cite{Endo2001}) can be stated in our notation as follows:

\begin{thm}[{\cite[Theorem 2]{Endo2001}}]\label{Endo01Theorem2}
Let $p$ be a prime, $k$ a field, $\bK=\prod_{i=1}^{r}K_{i}^{h_i}$ a finite {\'e}tale $k$-algebra with $r\geq 1$ and $h_i\geq 1$, and $L$ the Galois closure of the composite field of $K_{1},\ldots,K_{r}$ over $k$. 
Assume
\begin{itemize}
\item $\Gal(L/k)$ is an elementary $p$-abelian group;
\item $K_{i}\neq K_{j}$ for any $i,j \in \{1,\ldots,r\}$ with $i\neq j$; and
\item $[K_i:k]=p$ for any $i\in \{1,\ldots,r\}$.
\end{itemize}
Then the following hold.
\begin{itemize}
    \item[(1)] In case of $p \neq 2$, the following are equivalent:
    \begin{enumerate}
        \item $T_{\bK/k}$ is stably rational over $k$; 
        \item $T_{\bK/k}$ is retract rational over $k$;
        \item $r=1$.
    \end{enumerate}
    \item[(2)] In case of $p=2$, the following are equivalent:
     \begin{enumerate}
        \item $T_{\bK/k}$ is stably rational over $k$; 
        \item $T_{\bK/k}$ is retract rational over $k$;
        \item $r=1$ or $2$.
    \end{enumerate} 
\end{itemize}
\end{thm}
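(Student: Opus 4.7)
The plan is to translate the problem into a question about the flasque class of the character module $M := X^*(T_{\bK/k})$, viewed as a module over $G := \Gal(L/k) \cong (\Z/p\Z)^n$. By the Colliot-Th\'el\`ene--Sansuc dictionary, $T_{\bK/k}$ is retract rational (resp.\ stably rational) if and only if the flasque part $F$ in a flasque resolution $0 \to M \to P \to F \to 0$ is invertible (resp.\ stably permutation). From the definition of $T_{\bK/k}$, the module $M$ is presented by
\begin{equation*}
    0 \longrightarrow \Z \longrightarrow \bigoplus_{i=1}^{r} \Z[G/H_{i}] \longrightarrow M \longrightarrow 0,
\end{equation*}
where $H_i := \Gal(L/K_i)$ are distinct codimension-$1$ $\F_p$-subspaces of $G$ (by the distinctness of the $K_i$), and the first map sends $1$ to the vector of norm elements $(N_{G/H_i})_i$.

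For the positive direction, $r = 1$ reduces $T_{\bK/k}$ to the norm one torus of a cyclic prime-degree extension, which is rational by Voskresenskii's classical theorem. In the exceptional case $p = 2$, $r = 2$, the subgroups $H_1$ and $H_2$ span a two-dimensional subspace of $\Hom(G, \F_2)$, giving a canonical third codimension-$1$ subgroup $H_3 \supset H_1 \cap H_2$. A classical calculation exploits the linear relation among $N_{G/H_1}$, $N_{G/H_2}$ and $N_{G/H_3}$ modulo a permutation sublattice to produce an explicit stable permutation equivalence for $M$, giving stable rationality.

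For the negative direction ($p$ odd with $r \geq 2$, or $p = 2$ with $r \geq 3$), I would exhibit a subgroup $H \subset G$ with $H^1(H, F) \neq 0$. Since $H^1(H, P) = 0$ for any permutation $P$, the flasque resolution gives $H^1(H, F) \cong \ker(H^2(H, M) \to H^2(H, P))$, and via the displayed presentation, Shapiro's lemma, and the periodicity of cohomology of cyclic groups, this cohomology is governed by the combinatorics of which $H_i$ contain $H$. For $H \subset G$ cyclic of order $p$ one has $H^2(H, \Z[G/H_i]) \cong (\Z/p\Z)^p$ if $H \subset H_i$ and $0$ otherwise, while the connecting class from $H^2(H, \Z) = \Z/p\Z$ embeds diagonally into the contributing summands. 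Choosing $H$ to lie in as large an intersection of the $H_i$ as possible (or passing to a quotient of $G$ to reduce to a minimal configuration), this class survives to produce the required obstruction in each excluded case.

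The main obstacle is the combinatorial bookkeeping in the negative direction, coupled with confirming \emph{genuine} stable rationality (not merely retract rationality) in the $p = 2$, $r = 2$ case. The parity dichotomy arises because $(\Z/2\Z)^2$ has exactly three codimension-$1$ subspaces, producing a cancellation of the diagonal class that is unavailable for $p$ odd or for $r \geq 3$; making this cancellation explicit as an honest permutation-module equivalence is the technically delicate heart of the proof.
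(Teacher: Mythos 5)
The negative direction contains a fatal gap. You propose to find a cyclic subgroup $H \subset G$ of order $p$ with $H^1(H,F) \neq 0$, where $F$ is the flasque part of a flasque resolution $0 \to M \to P \to F \to 0$. But this cohomology group vanishes \emph{automatically}: for any flasque $F$ and any cyclic $H$, the $2$-periodicity of Tate cohomology of cyclic groups gives $H^1(H,F) = \widehat{H}^1(H,F) \cong \widehat{H}^{-1}(H,F) = 0$, the last equality being exactly what flasqueness asserts on all subgroups. Equivalently, Proposition \ref{prop:rrs2} identifies $H^1(H,F)$ with $\Sha^2_\omega(H,M)$, which is zero by definition once $H$ is cyclic (a generator of $H$ contributes the identity map to the defining product). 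So regardless of which cyclic $H$ you choose and how the $H_i$ are configured, the connecting class you wish to propagate dies, and this approach cannot produce the required obstruction.

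The actual mechanism behind the negative direction in Endo's proof (and behind Propositions \ref{prop:apt1} and \ref{prop:apt2} of the present paper, which follow \cite[\S\S3--4]{Endo2001} and \cite[Lemma 2.2]{Endo2011}) is a $p$-adic argument of a different character. One writes down an explicit coflasque resolution $0 \to F \to R \to I_{G/\cH} \to 0$ with $R$ permutation, reads off an upper bound on the exponent of $\widehat{H}^0(G,F)$ from the resulting long exact sequence, and then assumes for contradiction that $F$ is invertible. By the Endo--Miyata theorem, invertible $\Z_p[G]$-lattices over a $p$-group $G$ are automatically \emph{permutation} $\Z_p[G]$-lattices, so $F\otimes_\Z\Z_p$ would be a permutation $\Z_p[G]$-module; rank considerations then force a summand whose $\widehat{H}^0(G,-)$ has exponent exceeding the bound, a contradiction. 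This invertibility-forces-permutation step over $\Z_p$ has no substitute within a $\Sha^2_\omega$-style computation. Your positive direction is correct in outline --- $r=1$ reduces (after noting that $K_1/k$ is automatically Galois cyclic of degree $p$) to Proposition \ref{prop:emn1}, and the $p=2$, $r=2$ case, after quotienting by $H_1\cap H_2$, is the explicit quasi-permutation resolution of Theorem \ref{thm:dnqp} with $m=1$ --- though ``explicit stable permutation equivalence for $M$'' should read ``explicit flasque resolution of $M$ with trivial flasque class'': quasi-permutation, not stable permutation, is what the sequence establishes and what stable rationality of the torus requires.
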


Theorem \ref{Endo01Theorem2} states that the stable rationality is determined solely by the number $r$ of direct factors, which is both simple and interesting.
Moreover, it naturally raises the question of how this extends to more general cases.

\subsection{Main theorems}

Our first main theorems generalize Theorem \ref{Endo01Theorem2} to the case where $\Gal(L/k)$ is a $p$-group.

\begin{thm}\label{mth1}
Let $p$ be an odd prime number, $k$ a field, $\bK=\prod_{i=1}^{r}K_{i}$ a finite {\'e}tale $k$-algebra with $r \geq 1$, 
and $L$ the Galois closure of the composite field of $K_{1}, \ldots, K_{r}$ over $k$.
Assume
\begin{itemize}
    \item $K_{i}\not\subset K_{j}$ for any $i,j \in \{1,\ldots,r\}$ with $i\neq j$; and
    \item $[L:k]$ is a power of $p$. 
\end{itemize}
Then the following are equivalent: 
\begin{enumerate}
\item $T_{\bK/k}$ is stably rational over $k$; 
\item $T_{\bK/k}$ is retract rational over $k$; 
\item $r=1$ and $L$ is cyclic over $k$. 
\end{enumerate}
\end{thm}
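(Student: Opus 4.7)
\emph{Plan.} The implication $(1) \Rightarrow (2)$ is immediate. For $(3) \Rightarrow (1)$: since $L/k$ is cyclic, every intermediate field is Galois over $k$, so the Galois closure of $K_{1}$ equals $K_{1}$; hence $K_{1}=L$ and $T_{\bK/k}=T_{L/k}$ is the norm one torus of a cyclic extension, which is stably rational by the classical theorem of Endo--Miyata. The content of the theorem is thus $(2) \Rightarrow (3)$, which splits naturally into two cases.

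\emph{Case $r=1$.} Then $T_{\bK/k}=T_{K_{1}/k}$ is a norm one torus whose splitting group $G=\Gal(L/k)$ is a $p$-group for odd $p$. Endo's 2011 classification of retract rationality for norm one tori with nilpotent Galois closure applies; under these hypotheses retract rationality forces $G$ to be cyclic, which is exactly condition $(3)$.

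\emph{Case $r\geq 2$.} The goal is a contradiction via reduction to Theorem~\ref{Endo01Theorem2}. Let $\Phi:=\Phi(G)$ be the Frattini subgroup; $\bar G:=G/\Phi$ is elementary abelian of rank $n\geq 1$, and $\tilde L:=L^{\Phi}$ is the maximal elementary abelian $p$-subextension of $L/k$. The hypothesis $K_{i}\not\subset K_{j}$ translates under Galois correspondence to $H_{j}\not\subset H_{i}$, so the subgroups $H_{i}:=\Gal(L/K_{i})$ are pairwise incomparable. For each $i$, pick a maximal subgroup $M_{i}\leq G$ containing $H_{i}$; since every maximal subgroup of a $p$-group contains $\Phi$, the fixed field $K^{\flat}_{i}:=L^{M_{i}}\subset\tilde L$ has degree $p$ over $k$. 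The aim is to choose the $M_{i}$ so that at least two of the $K^{\flat}_{i}$'s are distinct, to form the auxiliary \'etale algebra $\bK^{\flat}$ built from these distinct degree-$p$ factors (so $r^{\flat}\geq 2$), and to apply Theorem~\ref{Endo01Theorem2} to conclude that $T_{\bK^{\flat}/k}$ is not retract rational. A contradiction then follows provided retract rationality transfers from $T_{\bK/k}$ to $T_{\bK^{\flat}/k}$.

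\emph{Main obstacles.} Two technical issues must be addressed. First, producing distinct $K^{\flat}_{i}$'s is routine when the images $H_{i}\Phi/\Phi$ in $\bar G$ are already distinct; when two of them coincide, a further reduction is needed, which I would handle by induction on $|G|$ (replacing $G$ by $G/N$ for a suitable central subgroup $N\leq\Phi$) before entering the final reduction. Second, and more substantially, the transfer of retract rationality from $T_{\bK/k}$ to $T_{\bK^{\flat}/k}$ is not automatic: retract rationality corresponds to invertibility of the character $G$-lattice and is stable under direct summands modulo permutation lattices, not under arbitrary sub- or quotient tori. The core task is to realise the character lattice of $T_{\bK^{\flat}/k}$ as such a summand of that of $T_{\bK/k}$, which would naturally be engineered via the norm/corestriction maps associated to the intermediate inclusions $K^{\flat}_{i}\subset K_{i}$ together with a careful flasque-resolution analysis. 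This is precisely the general reduction method for multinorm one tori announced in the abstract, and its correct implementation is where I expect the main difficulty to lie.
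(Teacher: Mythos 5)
The easy implications are handled correctly: $(1)\Rightarrow(2)$, $(3)\Rightarrow(1)$, and the $r=1$ case of $(2)\Rightarrow(3)$ all go through (for the last, the implicit point is that $N^{G}(H_{1})=\{1\}$ because $L$ is the Galois closure of $K_{1}$, so cyclicity of $G/N^{G}(H_{1})$ really is cyclicity of $G$). The problem is the $r\geq 2$ case, and the gap there is not only the transfer step you flag at the end --- the structural reduction you propose does not produce the configuration needed to apply Theorem~\ref{Endo01Theorem2}. Your plan only ever moves \emph{downward}: Frattini quotient, choice of maximal subgroups $M_{i}\supseteq H_{i}$, and, when the images $H_{i}\Phi/\Phi$ collide, a further central quotient. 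But both $H_{i}$ may be forced into the \emph{same} maximal subgroup with no way to separate them by a quotient. Concretely, take $G$ the Heisenberg group of order $p^{3}$, $\Phi(G)=Z(G)=\langle c\rangle$, and $\cH=\{\langle a\rangle,\langle ac\rangle\}$: these are incomparable, each lies in the unique maximal subgroup $\langle a,c\rangle$, so $M_{1}=M_{2}$ is forced and $r^{\flat}=1$. Moreover $N^{G}(\cH)=\{1\}$, so the only available quotient is by $\Phi(G)$ itself; after it the two subgroups have the \emph{same} image $\langle\bar a\rangle$, and one checks (using Proposition~\ref{prop:tkfx} and Corollary~\ref{cor:rdrd}) that the resulting $(G/\Phi)$-lattice is actually quasi-permutation --- so the quotient sees no obstruction, even though $J_{G/\cH}$ is not quasi-invertible.

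What is missing from your sketch is the \emph{sideways} move: restriction to a well-chosen maximal subgroup $P$ together with Mackey's decomposition (Proposition~\ref{prop:rtmc}), which preserves quasi-invertibility trivially (Proposition~\ref{prop:rtrd}) and therefore requires no ``transfer'' argument at all. In the Heisenberg example one takes $P=\langle a,c\rangle=N_{G}(\langle a\rangle)$; Mackey restriction brings in all $p$ conjugates of $\langle a\rangle$, so $\cH_{P}^{\red}$ has at least $p\geq 3$ pairwise-incomparable subgroups of index $p$ in the elementary abelian group $P$, and Endo's Theorem~1 (Proposition~\ref{prop:endo}) then gives non-quasi-invertibility. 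The paper's Theorem~\ref{thm:ndqp} is exactly an induction on $|G|$ that interleaves this restriction step with the central-quotient step, the latter used \emph{only} when $N^{G}(\cH)\neq\{1\}$ so that the quotient is lossless (Corollary~\ref{cor:trfx}), never when $N^{G}(\cH)=\{1\}$, which is precisely the case where your proposed Frattini quotient destroys information. And, as you yourself note, even in cases where the $K_{i}^{\flat}$ can be taken distinct, the needed relation between $J_{G/\cH}$ and $J_{G/\cM}$ (with $\cM=\{M_{i}\}$) is not a direct-summand relation modulo permutation lattices, so the contradiction would not follow without further work; the paper avoids introducing $\bK^{\flat}$ entirely.
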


We denote by $D_{n}$ the dihedral group of order $2n$, that is, 
\begin{equation*}
    D_{n}:=\langle \sigma_{n},\tau_{n}\mid \sigma_{n}^{n}=\tau_{n}^{2}=1,\tau_{n}\sigma_{n}\tau_{n}^{-1}=\sigma_{n}^{-1}\rangle. 
\end{equation*}
Note that there is an isomorphism $D_{2}\cong (C_{2})^{2}$. 

\begin{thm}\label{mth2}
Let $k$ be a field, $\bK=\prod_{i=1}^{r}K_{i}$ a finite {\'e}tale $k$-algebra with $r \geq 1$, 
and $L$ the Galois closure of the composite field of $K_{1}, \ldots, K_{r}$ over $k$.
Assume
\begin{itemize}
    \item $K_{i}\not\subset K_{j}$ for any $i,j \in \{1,\ldots,r\}$ with $i\neq j$; and 
    \item $[L:k]$ is a power of $2$.
\end{itemize}
Then the following are equivalent: 
\begin{enumerate}
\item $T_{\bK/k}$ is stably rational over $k$; 
\item $T_{\bK/k}$ is retract rational over $k$; 
\item $\bK$ satisfies the condition \emph{(iii-a)} or \emph{(iii-b)}:
\begin{enumerate}
\item[(iii-a)] $r=1$ and $L$ is cyclic over $k$; or
\item[(iii-b)] $r\geq2$, $\Gal(L/k)\cong D_{2^{\nu}}$ for some $\nu \geq 1$, there is $m_{i}\in \Z$ so that $\Gal(L/K_{i})\cong \langle \sigma_{2^{\nu}}^{m_{i}}\tau_{2^{\nu}}\rangle$ for each $i$, and $\{m_{i}\bmod 2\mid 1\leq i\leq r\}=\Z/2\Z$.
\end{enumerate}
\end{enumerate}
\end{thm}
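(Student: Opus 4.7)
The approach is the standard one in the rationality problem for tori: set $G := \Gal(L/k)$ and $H_{i} := \Gal(L/K_{i})$, and pass to the character lattice $M := X^{*}(T_{\bK/k})$, which as a $\Z[G]$-module fits into the exact sequence
\begin{equation*}
0 \to \Z \to \bigoplus_{i=1}^{r} \Z[G/H_{i}] \to M \to 0.
\end{equation*}
By Colliot-Th{\'e}l{\`e}ne--Sansuc and Saltman, $T_{\bK/k}$ is stably rational (resp.\ retract rational) over $k$ if and only if the flabby class $[M]^{\fl}$ is stably permutation (resp.\ invertible). Since $G$ is a $2$-group, Endo--Miyata ensures that these two notions coincide, so (1) $\Leftrightarrow$ (2) holds automatically, and the real task is to prove (2) $\Leftrightarrow$ (3).

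For (3) $\Rightarrow$ (1), case (a) is the classical rationality of $T_{K/k}$ for cyclic $K/k$ due to Voskresenskii. For case (b), the crucial observation is that $D_{2^{\nu}}$ has exactly two conjugacy classes of non-central involutions, distinguished by the parity of $m$ in $\sigma_{2^{\nu}}^{m}\tau_{2^{\nu}}$. Thus up to conjugation at most two distinct subgroups $H_{i}$ appear on the given list; after absorbing the redundant summands $\Z[G/H_{i}]$ (which are permutation and do not affect $[M]^{\fl}$), one reduces to the case $r = 2$ with $H_{1} = \langle \tau_{2^{\nu}} \rangle$ and $H_{2} = \langle \sigma_{2^{\nu}}\tau_{2^{\nu}} \rangle$, for which an explicit stably-permutation resolution of $M$ can be written down by hand.

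For the converse (2) $\Rightarrow$ (3), which carries the bulk of the work, I would first apply the reduction methods developed earlier in the paper to assume $r$ is minimal and that no $H_{i}$ is contained in another. When $r = 1$ we are looking at a norm one torus for a $2$-power Galois extension, and Endo's 2011 classification forces $L/k$ to be cyclic, giving case (a). When $r \geq 2$, the argument proceeds in two stages: first, constrain the structure of $G$ itself by restricting $T_{\bK/k}$ to subextensions and computing Tate cohomology $\hat{\rH}^{-1}(H, M)$ as obstructions to retract rationality, thereby showing that $G$ must be dihedral; second, assuming $G \cong D_{2^{\nu}}$, rule out configurations in which all the $m_{i}$ share a common parity by exhibiting a non-trivial flabby-class obstruction in that situation.

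The main obstacle lies in the first stage of (2) $\Rightarrow$ (3): within the class of all $2$-groups one must systematically exclude every non-dihedral candidate for $G$. Unlike the elementary abelian setting of \cite{Endo2001}, the zoo of nilpotent $2$-groups is rich, and for each non-dihedral $G$ one must exhibit either a subgroup $H \leq G$ for which the restricted flabby class fails to be invertible or an explicit cohomological witness against retract rationality. The new reduction lemmas of this paper are designed precisely to make this combinatorial case analysis tractable, and it is there that the principal technical content of the argument is expected to reside.
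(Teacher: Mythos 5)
Your outline correctly identifies the lattice-theoretic translation and the (3)\,$\Rightarrow$\,(1) direction matches the paper: one passes to a strongly reduced set, observes (as in Lemma~\ref{lem:nodt}) that the $H_i$ fall into at most two $D_{2^{\nu}}$-conjugacy classes, and the paper's Theorem~\ref{thm:dnqp} writes down exactly the explicit quasi-permutation resolution you describe. However, there are two genuine problems with the rest of the plan.

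First, the claim that ``Since $G$ is a $2$-group, Endo--Miyata ensures that these two notions coincide, so (1)\,$\Leftrightarrow$\,(2) holds automatically'' is unjustified. The Endo--Miyata/Endo results (Proposition~\ref{prop:emn1}) give the equivalence of quasi-permutation and quasi-invertible only for \emph{norm one} tori, i.e.\ a single subgroup $H$, and there is no general theorem stating that quasi-permutation and quasi-invertible coincide for arbitrary $G$-lattices (or even for arbitrary $J_{G/\cH}$) over a $2$-group. In the paper, this equivalence is part of the \emph{statement} of Theorem~\ref{thm:tnqi} and is established only as a byproduct of the full classification: one shows (3) implies quasi-permutation (via Theorem~\ref{thm:dnqp}), which trivially implies quasi-invertible, and separately shows that quasi-invertible forces (3). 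Treating (1)\,$\Leftrightarrow$\,(2) as a prior input inverts the logical order and leaves a real gap.

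Second, your sketch of (2)\,$\Rightarrow$\,(3) does not contain the mechanism that actually drives the paper's argument, and the proposed cohomological obstruction is not the one that works. The paper's Propositions~\ref{prop:apt1} and~\ref{prop:apt2} produce explicit coflabby resolutions and compute $\widehat{H}^{0}(G,F)$ of the flabby part $F$, comparing its exponent against what would be forced if $F_{2}$ were a permutation $\Z_{2}[G]$-lattice; one case (Proposition~\ref{prop:c4nr}, Case~(ii)) additionally uses $\Sha_{\omega}^{2}$ via Lee's formula. These are quite different from computing $\widehat{H}^{-1}(H,M)$ for subgroups $H$, which will frequently vanish and does not separate the dihedral case from, say, $C_{4}\times C_{2}$ or $(C_2)^3$. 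More importantly, the systematic exclusion of non-dihedral $G$ is not done by a direct cohomological invariant at all: the paper first settles order $8$ by hand, then runs a delicate induction on $|G|$ using the strongly reduced sets $\cH_{P}^{\srd}$ for maximal subgroups $P$ (Propositions~\ref{prop:rtmc}, \ref{prop:rdsr} and Lemmas~\ref{lem:mxes}--\ref{lem:qphc}), splitting into whether $\cH^{\nor}$ is empty or not (Theorems~\ref{thm:ndq2} and~\ref{thm:tnqi}), and using the structure theory of maximal-class $2$-groups (Propositions~\ref{prop:clmc}, \ref{prop:nmne}) to reduce to $D_{2^{\nu}}$. Your proposal names the correct landmark (``systematically exclude every non-dihedral candidate'') but supplies no workable device for it; the inductive restriction-to-$P$ machinery is the missing idea.
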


\begin{rem}
In the general setup we allow
$\bK=\prod_{i=1}^r K_i^{\,h_i}$ with $h_i\in\mathbb{Z}_{>0}$,
without assuming $K_i\not\subset K_j$ for each $i\neq j$.
However, by Corollary~\ref{cor:rdrd} (2) we may drop multiplicities and remove factors contained in another $K_i$.
Accordingly, we state Theorem~\ref{mth2} in this reduced form.
\end{rem}

The main theorem in the nilpotent case, which will be presented as Theorem \ref{mth3}, follows from Theorem \ref{mth1} and Theorem \ref{mth2}. As we will explain later, the proof is non-trivial.

We prepare some notions to state Theorem \ref{mth3}. For a \emph{quasi-trivial torus} (or, an \emph{induced torus}) over $k$, we mean a $k$-torus that is isomorphic to $\Res_{\bK/k}\G_{m}$ for some finite {\'e}tale algebra $\bK$ over $k$. The notion of quasi-trivial tori is introduced in \cite[\S2, p.~187]{ColliotThelene1977}. 
In addition, a \emph{subalgebra} of $\bK$ refers to a finite product $\prod_{j=1}^{s}K'_{i_{j}}$, where $1\leq i_{1}<\cdots<i_{s}\leq r$ are integers and $K'_{i_{j}}$ is an intermediate field of $K_{i_{j}}/k$ for each $j\in \{1,\ldots,s\}$.

\begin{thm}\label{mth3}
Let $k$ be a field, $\bK=\prod_{i=1}^{r}K_{i}$ a finite {\'e}tale $k$-algebra with $r\geq 1$, and $L$ the Galois closure of the composite field of $K_{1},\ldots,K_{r}$ over $k$. 
Assume
\begin{itemize}
\item $\Gal(L/k)$ is nilpotent.
\end{itemize}
Then the following are equivalent:
\begin{enumerate}
\item $T_{\bK/k}$ is stably rational over $k$; 
\item $T_{\bK/k}$ is retract rational over $k$; 
\item there exists an isomorphism of $k$-tori
\begin{equation*}
    T_{\bK/k}\times S\cong T_{\bK'/k}\times S',
\end{equation*}
where $S$ and $S'$ are quasi-trivial tori over $k$, and $\bK'=\prod_{i=1}^{r'}K'_{i}$ is a finite product of intermediate fields $K'_{i}$ of $L/k$ that satisfies the condition \emph{(iii-a)} or \emph{(iii-b)}:
\begin{enumerate}
    \item[(iii-a)] $r'=1$ and $L'/k$ is cyclic over $k$; or
    \item[(iii-b)] $r'=2$, $\Gal(L'/k)\cong C_{m}\times D_{2^{\nu}}$ for some $m\in \Zpn \setminus 2\Z$ and $\nu\in \Zpn$, and there is $m_{i}\in \Z$ so that $\Gal(L'/K'_{i})\cong \langle (1,\sigma_{2^{\nu}}^{2m_{i}+i}\tau_{2^{\nu}})\rangle$ for each $i$. 
\end{enumerate}
Here, $L'$ is the Galois closure of the composite field of $K'_{1},\ldots,K'_{r'}$ over $k$. 
\end{enumerate}
\end{thm}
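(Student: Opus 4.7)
The plan is to reduce the nilpotent case to the prime-power cases settled in Theorems \ref{mth1} and \ref{mth2} by means of a Sylow decomposition. Let $G := \Gal(L/k)$. Since $G$ is nilpotent, it factors as a direct product $G = \prod_{p} G_{p}$ of its Sylow $p$-subgroups. Set $L_{p} := L^{\prod_{q \neq p} G_{q}}$, so that $L_{p}/k$ is Galois with $\Gal(L_{p}/k) = G_{p}$. For each $i$, put $H_{i} := \Gal(L/K_{i})$ and $H_{i,p} := H_{i} \cap G_{p}$. Since every subgroup of a nilpotent group is the direct product of its Sylow subgroups, we have $H_{i} = \prod_{p} H_{i,p}$. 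Define $K_{i,p} := L_{p}^{H_{i,p}}$ and $\bK_{p} := \prod_{i=1}^{r} K_{i,p}$, viewed as a finite \'etale $k$-algebra.

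The implications $(1) \Rightarrow (2)$ and $(3) \Rightarrow (1)$ are the routine direction. The former is the standard hierarchy of rationality notions. For the latter, quasi-trivial tori are rational and stable rationality is preserved by multiplication by rational tori, so it suffices to verify that $T_{\bK'/k}$ is stably rational in each listed case. Case (a) is the norm one torus of a cyclic Galois extension, stably rational by the classical Endo--Miyata theorem. Case (b), where $\Gal(L'/k) \cong C_{m} \times D_{2^{\nu}}$, should follow by combining the Sylow reduction below with Theorem \ref{mth2}(b) applied to the $2$-Sylow component; the odd Sylow contributions collapse to stably permutation pieces (essentially $\Res_{L'_{p}/k}\G_{m} \times T_{L'_{p}/k}$ for $L'_{p}/k$ cyclic).

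The core of the argument is the following stable Sylow decomposition of $T_{\bK/k}$. The $G$-module identity $\Z[G/H_{i}] \cong \bigotimes_{p} \Z[G_{p}/H_{i,p}]$, together with a careful comparison to the quasi-trivial torus $\Res_{\bK/k} \G_{m}$, is expected to yield a stable isomorphism of $k$-tori
\begin{equation*}
T_{\bK/k} \times S_{0} \;\cong\; \Bigl(\prod_{p} T_{\bK_{p}/k}\Bigr) \times S_{0}',
\end{equation*}
with $S_{0}, S_{0}'$ quasi-trivial. Since retract rationality and stable rationality are detected modulo quasi-trivial factors, $T_{\bK/k}$ is retract rational (resp.\ stably rational) over $k$ if and only if each $T_{\bK_{p}/k}$ is. Assuming (2), each $T_{\bK_{p}/k}$ is retract rational over $k$. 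For each odd prime $p$, Theorem \ref{mth1} forces $\bK_{p}$ to consist of a single factor with $L_{p}/k$ cyclic; for $p = 2$, Theorem \ref{mth2} forces $\bK_{2}$ into case (a) or (b) of that theorem. Reassembling these local constraints by taking the compositum of the odd-cyclic parts with the $2$-Sylow part produces a finite \'etale algebra $\bK'$ of the form asserted in (3).

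The main obstacle is to rigorously establish the stable Sylow decomposition above. The subtlety is that $\hat{T}_{\bK/k}$ is the kernel of the augmentation-sum map $\bigoplus_{i} \Z[G/H_{i}] \to \Z$, and this defining relation $\sum_{i} n_{i} = 0$ does not cleanly respect the Sylow factorization of each summand. Handling this requires a careful flasque resolution argument, tracking the rank-one correction from the augmentation and comparing $T_{\bK/k}$ to $\Res_{\bK/k} \G_{m}$ in a Sylow-equivariant manner so that the coupling across different primes can be absorbed into quasi-trivial factors. Once this reduction is in place, the classification follows from Theorems \ref{mth1} and \ref{mth2} by combinatorial bookkeeping of the dihedral coset data in case (b).
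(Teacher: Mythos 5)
Your proposal rests on two claims that the paper neither establishes nor needs, and each hides the real difficulty.

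First, the ``stable Sylow decomposition'' $T_{\bK/k}\times S_{0}\cong\bigl(\prod_{p}T_{\bK_{p}/k}\bigr)\times S_{0}'$ is not proved in the paper and is not the mechanism used there. You correctly identify it as the main obstacle, but the paper circumvents it entirely rather than filling the gap. The paper's route (Theorem~\ref{thm:mnnp}) is: first use restriction (Propositions~\ref{prop:rtmc} and~\ref{prop:rtrd}) to see that quasi-invertibility of $J_{G/\cH}$ forces, for each odd prime $p$, the $G_{p}$-restriction $\cH_{G_{p}}^{\srd}$ to be a single normal subgroup $N_{p}\triangleleft G_{p}$ with $G_{p}/N_{p}$ cyclic (Theorem~\ref{thm:ndqp}); then apply Lemma~\ref{lem:psdp} --- a genuinely new reduction that modifies the \emph{multiset} $\cH$, not the torus, by adjoining auxiliary subgroups of the form $H_{0}N_{1}$ and $H_{1}\cap H_{0}N_{1}$ and invoking Proposition~\ref{prop:rdtr} / Corollary~\ref{cor:rdmn} --- to arrange $(\cH_{0})_{G_{p}}^{\set}=\cH_{G_{p}}^{\srd}$ while preserving the similarity class $[J_{G/\cH}]$; finally, pass to $G/N^{G}(\cH^{\dagger})$ to land in $G\cong C_{m}\times G_{2}$ with all $H\in\cH$ $2$-groups, and invoke Theorem~\ref{thm:tnqi}. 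Nothing in this chain requires, or produces, a stable isomorphism $T_{\bK/k}\sim\prod_{p}T_{\bK_{p}/k}$, and I see no reason such a similarity of lattices should hold in general: the augmentation relation $\sum_{i}n_{i}=0$ couples the Sylow factors nontrivially, and the cross-terms in $\Z[G/H_{i}]\cong\bigotimes_{p}\Z[G_{p}/H_{i,p}]$ do not sit inside a permutation lattice in any evident way.

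Second, and more seriously, your treatment of $(iii)\Rightarrow(i)$ for case~(b) is wrong. You write that the $C_{m}\times D_{2^{\nu}}$ case ``should follow by combining the Sylow reduction with Theorem~\ref{mth2}(b) applied to the $2$-Sylow component; the odd Sylow contributions collapse to stably permutation pieces.'' Quasi-permutation, unlike quasi-invertibility, does \emph{not} admit a local-to-global Sylow principle: knowing that $J_{G/\cH}$ restricts to a quasi-permutation $G_{2}$-lattice and to a quasi-permutation $C_{m}$-lattice does not imply it is quasi-permutation over $G=C_{m}\times D_{2^{\nu}}$. This is precisely why the paper needs Theorem~\ref{thm:nlqp}: an explicit coflabby resolution of $I_{G_{m,\nu}/\cH}$ built by a delicate induction on $\nu$ using Lemma~\ref{lem:cfad}, Lemma~\ref{lem:fgdc} and Lemma~\ref{lem:cssp}. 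That theorem is the main new technical content behind the sufficiency of condition (b), and it cannot be deduced from Theorem~\ref{mth2} by any Sylow argument. Without it, your $(iii)\Rightarrow(i)$ has no proof.

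Finally, your ``reassembly'' step in $(2)\Rightarrow(3)$ --- constructing $\bK'$ from the per-prime data and concluding $T_{\bK/k}$ is stably birational to $T_{\bK'/k}$ --- again silently uses the unproved Sylow decomposition. In the paper this is exactly where Lemma~\ref{lem:psdp} enters, and that lemma is the idea you are missing.
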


The condition (iii) implies that $T_{\bK/k}$ and $T_{\bK'/k}$ are stably birationally equivalent over $k$. 

\begin{rem}
We also give an explicit construction of $\bK'$ in Theorem \ref{mth3} (iii) using group theory. See Lemma \ref{lem:psdp}. 
\end{rem}

\begin{rem}
Theorem \ref{mth3} in the case $r=1$ is a consequence of the results of Endo--Miyata (\cite[Theorem 1.5, Theorem 2.3]{Endo1975}) and Endo (\cite[Theorem 2.1]{Endo2011}). 
For a norm one torus associated with a non-Galois extension $K/k$ whose Galois closure is nilpotent, it was always not retract rational (\cite[Theorem 2.1]{Endo2011}).
On the other hand, by extending the scope to multinorm tori, we obtain a new stably rational family as in (b) of Theorem \ref{mth3}.
\end{rem}

The following is a byproduct of our proof of Theorem \ref{mth2}; in particular, we obtain stably rational examples even when the Galois group is not nilpotent.

\begin{thm}\label{mth4}
Let $k$ be a field, and $\bK=K_{1}\times K_{2}$ a finite {\'e}tale algebra over $k$. Assume that 
\begin{itemize}
    \item $K_{1}K_{2}/k$ is Galois with Galois group $D_{2m}$ for some $m\in \Zpn$; 
    \item $[K_{1}K_{2}:K_{i}]=2$ and $K_{i}/k$ is non-Galois for each $i\in \{1,2\}$; and
    \item $K_{1}$ and $K_{2}$ are not conjugate to each other. 
\end{itemize}
Then the multinorm one torus $T_{\bK/k}$ is stably rational over $k$. 
\end{thm}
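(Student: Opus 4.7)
The character lattice $M:=\hat{T}_{\bK/k}$ sits in the $G$-equivariant short exact sequence
\begin{equation*}
0\longrightarrow \Z\longrightarrow \Z[G/H_1]\oplus \Z[G/H_2]\longrightarrow M\longrightarrow 0,\qquad 1\mapsto (N_{H_1},N_{H_2}),
\end{equation*}
where $G:=\Gal(L/k)\cong D_{2m}$, $H_i:=\Gal(L/K_i)$, and $N_{H_i}:=\sum_{gH_i}gH_i$. With the presentation $G=\langle \sigma,\tau\mid \sigma^{2m}=\tau^{2}=1,\ \tau\sigma\tau^{-1}=\sigma^{-1}\rangle$, the hypothesis $[L:K_i]=2$ forces $|H_i|=2$, and the non-Galois assumption on $K_i/k$ rules out the central involution $\langle \sigma^{m}\rangle$. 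Hence each $H_i$ is a reflection subgroup $\langle \sigma^{j_i}\tau\rangle$. The $G$-conjugacy classes of reflections are distinguished by the parity of $j_i$, so non-conjugacy of $H_1$ and $H_2$ places them in different classes; after a uniform conjugation we may assume $H_1=\langle \tau\rangle$ and $H_2=\langle \sigma\tau\rangle$.

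By Voskresenskii's criterion, it suffices to produce permutation $G$-lattices $P,Q$ with $M\oplus P\cong Q$. Two structural facts will drive the construction. First, distinct $G$-classes of reflections in $D_{2m}$ intersect trivially: $H_1\cap gH_2g^{-1}=\{1\}$ for every $g\in G$, and the double coset space $H_1\backslash G/H_2$ has exactly $m$ classes; equivalently, $K_1\otimes_{k}K_2\cong L^{\oplus m}$ as $k$-algebras. Second, letting $E:=L^{\langle \sigma\rangle}$ be the unique quadratic subextension of $L/k$, both $K_1$ and $K_2$ are linearly disjoint from $E$, so $\bK\otimes_{k}E\cong L\times L$ as $E$-algebras, and the restriction $\Res_{\langle\sigma\rangle}^{G}M$ admits the clean splitting $\Z[\langle\sigma\rangle]\oplus \hat{T}_{L/E}$ via the standard basis change $(a,b)\mapsto (a-b,b)$ on a multinorm lattice with two identical cyclic factors. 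This gives the correct template for how $M$ ought to decompose after suitable stabilization at the full group $G$.

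Following the strategy of Endo \cite{Endo2001} and the reduction framework developed in the earlier sections of this paper, I would construct an explicit $G$-equivariant resolution
\begin{equation*}
0\longrightarrow M\longrightarrow P_1\longrightarrow P_2\longrightarrow 0
\end{equation*}
with $P_1,P_2$ permutation $G$-lattices assembled from $\Z[G/H_1]$, $\Z[G/H_2]$, $\Z[G/\langle \sigma\rangle]$, and $\Z$. Splicing this with the defining short exact sequence of $M$ would yield an isomorphism $M\oplus P\cong Q$ for suitable permutation lattices $P,Q$, completing the proof. The main obstacle is to choose $P_1,P_2$ and the connecting maps correctly: the outer $\Gal(E/k)=G/\langle\sigma\rangle$-action interchanges the two factors of $\bK\otimes_{k}E\cong L\times L$, so the cocycle recording this exchange must be absorbed by the permutation summands in a way that respects both reflection subgroups simultaneously. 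Once the resolution is written down, exactness and $G$-equivariance reduce to finite matrix identities exploiting the dihedral relation $\tau\sigma^{j}\tau^{-1}=\sigma^{-j}$. Alternatively, one can verify the stably permutation property via Endo--Miyata-style Tate cohomology vanishing on all cyclic subgroups of $G$, using the trivial-intersection and double-coset data established above to handle each cyclic subgroup in turn and then invoking the nilpotent/dihedral reduction machinery of the earlier sections to propagate vanishing to the full group.
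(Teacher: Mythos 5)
Your preliminary reductions are sound and point in the right direction: normalizing $H_1=\langle\tau\rangle$, $H_2=\langle\sigma\tau\rangle$ (though a single uniform conjugation does not in general achieve this — you need to conjugate the two factors independently, which Proposition \ref{prop:conj} permits and which is harmless for the isomorphism type of $J_{G/\cH}$), the trivial-intersection and double-coset observations, and the computation $\Res^{G}_{\langle\sigma\rangle}M\cong\Z[\langle\sigma\rangle]\oplus J_{\langle\sigma\rangle/\{1\}}$ are all correct. But the proof stops exactly where the real content begins. The phrases ``I would construct an explicit $G$-equivariant resolution'' and ``once the resolution is written down'' make clear that the short exact sequence $0\to M\to P_1\to P_2\to 0$ with $P_1,P_2$ permutation is never exhibited, and exhibiting it is the whole theorem. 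The paper does this in Theorem \ref{thm:dnqp}: it writes down three explicit maps $\iota_m\colon\Z[D_{2m}/\langle\sigma_{2m}\rangle]\to\Z[D_{2m}]\oplus\Z$, $\omega_m\colon\Z[D_{2m}]\to I_{D_{2m}/\cH}$, $\psi_m\colon\Z\to I_{D_{2m}/\cH}$, and proves that $0\to\Z[D_{2m}/\langle\sigma_{2m}\rangle]\xrightarrow{\iota_m}\Z[D_{2m}]\oplus\Z\xrightarrow{(\omega_m,\psi_m)}I_{D_{2m}/\cH}\to 0$ is exact, which dualizes to the required presentation of $J_{D_{2m}/\cH}$. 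The non-routine step — showing $\Ker(\omega_m)$ is generated as an abelian group by $(1-\tau_{2m})(1+\cdots+\sigma_{2m}^{2m-1})$, so that exactness in the middle holds — is precisely the ``finite matrix identity'' you are deferring, and it cannot be waved away. This is a genuine gap.

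Two further problems. The claim that splicing $0\to\Z\to\Z[G/H_1]\oplus\Z[G/H_2]\to M\to 0$ with $0\to M\to P_1\to P_2\to 0$ ``would yield $M\oplus P\cong Q$'' does not follow: two short exact sequences meeting at $M$ give no such direct-sum isomorphism. It is also unnecessary, since the second sequence alone certifies that $M$ is quasi-permutation, which by Proposition \ref{prop:rtiv} is exactly what stable rationality requires — stably permutation is stronger than needed here. The ``alternative'' route of checking Tate cohomology vanishing on cyclic subgroups is not a valid criterion either; those implications run only one way (quasi-invertible $\Rightarrow\Sha^2_\omega=0$, etc.), and there is no converse of Endo--Miyata type available at this level of generality. (Also, $E=L^{\langle\sigma\rangle}$ is not the \emph{unique} quadratic subextension of $L/k$: $D_{2m}$ has three index-two subgroups. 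This does not affect your restriction computation, but the uniqueness claim is false.)
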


Our proofs of the main theorems are based on the study of \emph{character groups} of the corresponding tori. Let $T$ be a torus over $k$. Then the character group of $T$ is defined as follows: 
\begin{equation*}
X^{*}(T):=\Hom_{k^{\sep}\text{-groups}}(T\otimes_{k}k^{\sep},\G_{m,k^{\sep}}). 
\end{equation*}
Take a finite Galois extension $L/k$ with Galois group $G$ over which $T$ splits, which is possible in any case. Then $X^{*}(T)$ is a $G$-lattice, that is, a finitely generated free abelian group equipped with an action of $G$. On the other hand, there exist two notions for $G$-lattices: \emph{quasi-permutation} and \emph{quasi-invertible}. One can confirm the following: 
\begin{align*}
T\textrm{ is stably rational over }k
\Rightarrow T\textrm{ is retract rational over }k
\\
\Updownarrow\hspace*{150pt} \Updownarrow\hspace*{60pt}  \\
X^{*}(T)\textrm{ is quasi-permutation} 
\Rightarrow
X^{*}(T)\textrm{ is quasi-invertible. }
\end{align*}
Moreover, we can determine a $G$-lattice $M$ to be quasi-permutation or quasi-invertible by using a \emph{flabby resolution} of $M$. This efficient technique was introduced by Endo--Miyata \cite{Endo1975} and Voskresenskii \cite{Voskresenskii1969}, and further developed by Colliot-Th{\'e}l{\`e}ne--Sansuc \cite{ColliotThelene1977}. 
The details of this will be discussed in Section \ref{sect:rttr}. In particular, our theorems are reduced to the determination of a $G$-lattices $J_{G/\cH}$ to be quasi-permutation or quasi-invertible, in the case where $G$ is a finite nilpotent group. Here, $\cH$ is a multiset of subgroups of $G$, and $J_{G/\cH}$ is a $G$-lattice which fits into the exact sequence
\begin{equation*}
    0\rightarrow \Z \xrightarrow{(\varepsilon_{G/H}^{\circ})_{H\in \cH}} \bigoplus_{H\in \cH}\Z[G/H] \rightarrow J_{G/\cH} \rightarrow 0. 
\end{equation*}
See Section \ref{ssec:mnot} for details. 

We first give a sketch of our proof of Theorem \ref{mth1}, which is in the case that $G$ is a $p$-group with $p>2$. Then, one can construct an inductive argument by considering the restriction of $J_{G/\cH}$ by a certain maximal subgroup of $G$ (Proposition \ref{prop:rtmc}). On the other hand, for a proof of Theorem \ref{mth2}, which discusses the case that $G$ is a $2$-group, consideration of the above restriction argument alone is insufficient. To overcome this issue, we introduce a generalization of $J_{G/\cH}$, which is denoted by $J_{G/\cH}^{(\varphi)}$ in this paper (see Section \ref{ssec:ijdf}). We use this notion to
study some cases that $\cH$ consists of normal subgroups of indices $2$ or $4$. See Section \ref{ssec:tgs2}. This provides the first step of induction, similar to the proof of Theorem \ref{mth1}. Finally, Theorem \ref{mth3} follows from Theorems \ref{mth1}, \ref{mth2} and some reduction methods on $J_{G/\cH}^{(\varphi)}$ that will be given in Section \ref{ssec:rdsm}. Note that one of such methods, which will be presented as Proposition \ref{prop:rdtr}, is a generalization of \cite[Proposition 1.3]{Endo2011}. 

\subsection{Application to multinorm principle}

As another motivation for studying the rationality problem for multinorm one tori, we discuss their applications to the multinorm principle.

Here we assume that $k$ is a global field. For a finite {\'e}tale algebra $\bK$ over $k$, let
\begin{equation*}
    \Sha(\bK/k):=(\N_{K/k}(\A_{\bK}^\times)\cap k^\times)/\N_{\bK/k}(\bK^\times). 
\end{equation*}
Here, $\A_{\bK}^{\times}$ is the product of the id{\`e}le groups of all the factors of $\bK$.
We say that the \emph{multinorm principle holds for $K/k$} if 
\begin{equation*}
    \Sha(\bK/k)=1. 
\end{equation*}
This question has also been the subject of extensive study, for example, \cite{Hurlimann1984}, \cite{Demarche2014}, \cite{BayerFluckiger2019}, \cite{Lee2022}, \cite{Huang2024}, and \cite{Liang2024}. 

The study of $\Sha(\bK/k)$ in the case that $\bK$ is a field is one of the classic problems in algebraic number theory. For example, $\Sha(K/k)=1$ holds for any finite cyclic extension $K/k$. This result is known as Hasse's norm theorem (\cite[Satz, p.~64]{Hasse1931}).

\begin{thm}\label{mth5}
Let $k$ be a global field, and $\bK=K_{1}\times K_{2}$ a finite {\'e}tale algebra over $k$. We further assume that
\begin{itemize}
\item $K_{1}K_{2}$ is Galois over $k$; and
\item $\Gal(K_{1}K_{2}/k)\cong C_{m}\times D_{2^{\nu}}$ with $m\in \Zpn \setminus 2\Z$ and $\nu\in \Zpn$; and
\item $\Gal(K_{1}K_{2}/K_{i})\cong \{1\}\times \langle \sigma_{2^{\nu}}^{2m_{i}+i}\tau_{2^{\nu}}\rangle$ with $m_{i}\in \Z$ for each $i\in \{1,2\}$. 
\end{itemize}
Then, we have $\Sha(\bK/k)=1$. 
\end{thm}

Theorem \ref{mth5} gives a new example of the validity of the multinorm principle. It follows from Theorem \ref{mth3} together with the standard argument on the multinorm principle. Note that our proof uses an extension of Ono's theorem (\cite{Ono1963}), that is, a connection between $\Sha(\bK/k)$ and the Tate--Shafarevich group of $T_{\bK/k}$. 
See Section \ref{sect:pfmn} for more details.

\begin{oftp}
In Section \ref{sect:rttr}, we prepare some basic definitions and known results about the rationality of algebraic tori. 
In particular, we discuss the relationship between algebraic tori and $G$-lattices.
In Section \ref{sect:mult}, we introduce the concept of multinorm tori, and provide a generalization of their corresponding $G$-lattices. 
Furthermore, we develop the technique of Endo \cite{Endo2011}, and construct some reduction methods to investigate the rationality problem for arbitrary multinorm one tori.
In Section \ref{sect:pgrp}, we review some properties of $p$-groups used in this paper.
In Section \ref{sect:pfod}, we give a proof of Theorem \ref{mth1}. 
In Section \ref{sect:flrs}, 
for certain $G$-lattices, we determine whether they are stably permutation or not quasi-invertible using the theory of flabby resolutions. 
These lattices play a crucial role in Section \ref{sect:pfev} and Section \ref{sect:nilp}.
In Section \ref{sect:pfev}, we give a proof of Theorem \ref{mth2} by dividing into four steps.
In Section \ref{sect:nilp}, we give a proof of Theorem \ref{mth3}. 
That is, we give a necessary and sufficient condition for the multinorm one tori to be stably rational and retract rational in the case that split over finite Galois extensions with nilpotent Galois groups. 
Finally, Section \ref{sect:pfmn} gives a proof of Theorem \ref{mth5}, which gives an application of our study on the rationality problem for multinorm one tori. 
\end{oftp}

\begin{ack}
The authors would like to thank Seidai Yasuda for his helpful comments on this paper. The third-named author was carried out with the support of the JSPS Research Fellowship for Young Scientists and KAKENHI Grant Number JP22KJ0041.
\end{ack}

\begin{nota}
Let $G$ be a finite group. 
\begin{itemize}
\item For a subgroup $H$ of $G$, we define $N^{G}(H)$ and $N_{G}(H)$ as follows: 
\begin{equation*}
N^{G}(H):=\bigcap_{g\in G}gHg^{-1},\quad
N_{G}(H):=\{g\in G\mid gHg^{-1}=H\}. 
\end{equation*}
Note that $N^{G}(H)$ and $N_{G}(H)$ are called the \emph{normal core} of $H$ and the \emph{normalizer} of $H$ in $G$, respectively. 
\item A \emph{$G$-lattice} refers to a finitely generated free abelian group equipped with a left action of $G$. For a $G$-lattice $M$, the dual lattice of $M$ is denoted by
\begin{equation*}
    M^{\circ}:=\Hom_{\Z}(M,\Z). 
\end{equation*}
Here we define a left action of $G$ on $M^{\circ}$ as
\begin{equation*}
    G\times M^{\circ} \rightarrow M^{\circ};\,(g,f)\mapsto [x\mapsto f(g^{-1}x)]. 
\end{equation*}
%\item For a $G$-module $M$, we write $M_{\tor}$ for the torsion part of $M$. 
\item Let $M$ be a $G$-lattice. For a normal subgroup $N$ of $G$, we define $G/N$-lattices $M^{N}$ and $M^{[N]}$ as follows: 
\begin{equation*}
M^{N}:=\{x\in M\mid n(x)=x\text{ for all }n\in N\},\quad 
M^{[N]}:=\left((M^{\circ})^{N}\right)^{\circ}. 
\end{equation*}
Note that $M^{[N]}$ is isomorphic to $M_{N}/M_{N,\tor}$, where $M_{N}$ the $N$-coinvariant part of $M$, and $M_{N,\tor}$ is the torsion part of $M_{N}$. Note that $M^{N}$ and $M^{[N]}$ may not coincide in general. 
\end{itemize}
\end{nota}

\section{Basic facts on the rationality of tori}\label{sect:rttr}

Let $k$ be a field. For a non-negative integer $n$, we denote by $\P_{k}^{n}$ the projective space of dimension $n$ over $k$. Consider an algebraic variety $X$ over $k$. We say that $X$ is
\begin{itemize}
    \item \emph{rational} over $k$ if it is birationally equivalent to a projective space over $k$; 
    \item \emph{stably rational} over $k$ if $X\times_{k}\P_{k}^{m}$ is rational over $k$ for some $m\in \Znn$; 
    \item \emph{retract rational} over $k$ if there exist rational maps $f\colon \P_{k}^{n}\dashrightarrow X$ and $g\colon X\dashrightarrow \P_{k}^{n}$ with $n\in \Znn$ such that $f\circ g=\id_{X}$; 
    \item \emph{unirational} over $k$ if there is a dominant rational map from $\P_{k}^{n}$ to $X$ for some $n\in \Znn$. 
\end{itemize}

The notion of retract rationality was originally introduced by Saltman (\cite{Saltman1984}) in the case where $k$ is infinite (see also \cite{Kang2012}). It has been generalized for all varieties over arbitrary fields by Merkurjev (\cite{Merkurjev2017}). Note that one has implications
\begin{center}
rational $\Rightarrow$ stably rational $\Rightarrow$ 
retract rational $\Rightarrow$ unirational. 
\end{center}

In this paper, we concentrate on the case where $X$ is an algebraic torus over $k$. Fix a separable closure $k^{\sep}$ of $k$. Then, we can rephrase the stable rationality and the retract rationality of algebraic tori by means of $G$-lattices, where $G$ is a finite quotient of the Galois group of $k^{\sep}/k$. We follow the same terminology as the text book \cite{Lorenz2005} and the mainly referenced paper \cite{Endo2011}. 

\begin{dfn}[{\cite[\S 1]{Endo2011}}]
Let $G$ be a finite group. We say that a $G$-lattice $M$ is
\begin{enumerate}
    \item \emph{permutation} if $M$ has a $\Z$-basis permuted by $G$, that is, $M\cong \bigoplus_{i=1}^{m}\Z[G/H_i]$ for some subgroups $H_1,H_2,\dots, H_m$;
    \item \emph{quasi-permutation} if there is an exact sequence of $G$-lattices
    \begin{equation*}
        0\rightarrow M \rightarrow R \rightarrow F \rightarrow 0, 
    \end{equation*}
    where $R$ and $F$ are permutation;
    \item \emph{quasi-invertible} if it is a direct summand of a quasi-permutation $G$-lattice. 
\end{enumerate}
\end{dfn}

It is not difficult to confirm that
\begin{center}
permutation $\Rightarrow$ quasi-permutation $\Rightarrow$ 
quasi-invertible. 
\end{center}

\begin{dfn}[{\cite[\S\S 2.3--2.5]{Lorenz2005}}]
Let $G$ be a finite group. We say that a $G$-lattice $M$ is
\begin{enumerate}
\item \emph{stably permutation} if $M\oplus R\cong R'$ for some permutation $G$-lattices $R$ and $R'$;
\item \emph{invertible} (or, \emph{permutation projective}) if it is a direct summand of a permutation $G$-lattice;
\item \emph{coflabby} if $H^{1}(H,M)=0$ for any subgroup $H$ of $G$;
\item \emph{flabby} if $M^{\circ}$ is coflabby. 
\end{enumerate}
\end{dfn}

It is known that the following hold: 
\begin{center}
    permutation $\Rightarrow$ stably permutation $\Rightarrow$ invertible $\Rightarrow$ flabby and coflabby. 
\end{center}
Here the rightmost implication is a consequence of \cite[(1.1) Proposition]{Lenstra1974}. 

\vspace{6pt}
Let $G$ be a finite group. We say that $G$-lattices $M_{1}$ and $M_{2}$ are \emph{similar} if there exist permutation $G$-lattices $R_{1}$ and $R_{2}$ such that $M_{1}\oplus R_{1}\cong M_{2}\oplus R_{2}$. We denote by $\sS(G)$ the set of similarity classes of $G$-lattices. For a $G$-lattice $M$, we write for $[M]$ the similarity class containing $M$. Then $\sS(G)$ is a commutative monoid with respect to the sum
\begin{equation*}
    [M_{1}]+[M_{2}]:=[M_{1}\oplus M_{2}]. 
\end{equation*}
By definition, for a $G$-lattice $M$, we have 
\begin{itemize}
    \item $[M]=0$ in $\sS(G)$ if and only if $M$ is stably permutation; and
    \item $[M]$ is invertible in $\sS(G)$ if and only if $M$ is invertible. 
\end{itemize}

\begin{dfn}[{\cite[\S 2.6]{Lorenz2005}}]
Let $G$ be a finite group, and $M$ a $G$-lattice. 
\begin{enumerate}
\item A \emph{coflabby resolution} of $M$ is an exact sequence of $G$-lattices
\begin{equation*}
0\rightarrow U \rightarrow R \rightarrow M \rightarrow 0, 
\end{equation*}
where $R$ is permutation and $U$ is coflabby. 
\item A \emph{flabby resolution} of $M$ is an exact sequence of $G$-lattices
\begin{equation*}
0\rightarrow M \rightarrow R \rightarrow F \rightarrow 0, 
\end{equation*}
where $R$ is permutation and $F$ is flabby. 
\end{enumerate}
\end{dfn}

There is a coflabby resolution for any $G$-lattice; see \cite[Lemma 1.1]{Endo1975}. 
This implies the existence of a flabby resolution of every $G$-lattice. Moreover, if 
\begin{equation*}
0\rightarrow M \rightarrow R \rightarrow F \rightarrow 0
\end{equation*}
is a flabby resolution of $M$, then the class $[F]$ in $\sS(G)$ depends only on $M$. In the sequel, we denote $[F]$ by $[M]^{\fl}$. It is known that the map
\begin{equation*}
    \sS(G)\rightarrow \sS(G);\,[M]\mapsto [M]^{\fl}
\end{equation*}
is an endomorphism of monoids, i.e.~$[M_1\oplus M_2]^{\fl}=[M_1]^{\fl}+[M_2]^{\fl}$.
In particular, we obtain implications as follows: 
\begin{center}
    stably permutation $\Rightarrow$ quasi-permutation,\quad invertible $\Rightarrow$ quasi-invertible. 
\end{center}

\begin{lem}[{\cite[Lemma 2.7.1 (a)]{Lorenz2005}; cf.~\cite[(1.2) Proposition]{Lenstra1974}}]\label{lem:ivmo}
Let $G$ be a finite group, and $F$ an invertible $G$-lattice. Then we have
\begin{equation*}
    [F]^{\fl}=-[F]. 
\end{equation*}
\end{lem}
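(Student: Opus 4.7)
The plan is to exploit the definition of invertibility directly: since $F$ is invertible, it is a direct summand of a permutation $G$-lattice $P$, so we may write $P \cong F \oplus F'$ for some $G$-lattice $F'$. The complement $F'$ is itself a direct summand of the permutation lattice $P$, hence invertible, and as noted in the excerpt, every invertible lattice is flabby (via \cite[(1.2) Proposition]{Lenstra1974}).

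Next, I would exhibit an explicit flabby resolution of $F$. Using the split short exact sequence
\begin{equation*}
0 \rightarrow F \xrightarrow{\iota} P \xrightarrow{\pi} F' \rightarrow 0,
\end{equation*}
where $\iota$ is the inclusion of the summand and $\pi$ is the projection, we have a permutation middle term and a flabby quotient. By definition, this qualifies as a flabby resolution of $F$, and by the well-definedness of $[{-}]^{\fl}$ recalled just before the lemma, we conclude $[F]^{\fl} = [F']$.

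Finally, I would translate the isomorphism $F \oplus F' \cong P$ into the monoid $\sS(G)$: since $P$ is permutation (in particular stably permutation), its class vanishes, so $[F] + [F'] = [F \oplus F'] = [P] = 0$. Hence $[F'] = -[F]$, which combined with the previous step yields $[F]^{\fl} = -[F]$.

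I do not expect a genuine obstacle here; the whole argument is a formal manipulation once one has packaged the definitions of invertible, flabby, and the similarity monoid. The only point requiring any care is verifying that the split sequence above is legitimately a flabby resolution, which is immediate from the fact that a direct summand of a permutation lattice is invertible, hence flabby.
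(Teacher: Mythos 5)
Your proof is correct. The paper states this lemma with only a citation to Lorenz and gives no proof of its own; the argument you supply — writing $P \cong F \oplus F'$, observing that $F'$ is invertible hence flabby, reading off the split sequence as a flabby resolution, and then cancelling $[P]=0$ in $\sS(G)$ — is exactly the standard proof found in the cited source.
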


The next proposition is not difficult; for the reader's convenience, we include a proof since a precise reference appears to be lacking.

\begin{prop}\label{prop:edlr}
Let $G$ be a finite group. 
\begin{enumerate}
\item A $G$-lattice $M$ is quasi-permutation if and only if $[M]^{\fl}=0$. 
\item A $G$-lattice $M$ is quasi-invertible if and only if $[M]^{\fl}$ is invertible. 
\end{enumerate}
\end{prop}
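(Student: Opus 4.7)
The plan is to carry out everything inside the monoid $\sS(G)$, using three ingredients already available: the fact that $[-]^{\fl}\colon \sS(G)\to \sS(G)$ is an endomorphism of monoids, so $[M\oplus N]^{\fl}=[M]^{\fl}+[N]^{\fl}$; the stated equivalence ``$[N]$ is invertible in $\sS(G)$ if and only if $N$ is an invertible $G$-lattice'' (which, combined with the observation that invertibility is a similarity invariant, means every representative of an invertible class is itself an invertible lattice); and Lemma \ref{lem:ivmo}, which gives $[E]^{\fl}=-[E]$ for any invertible $E$. I also use the standard implication ``permutation $\Rightarrow$ flabby'' recorded earlier.

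For (i), in the $(\Rightarrow)$ direction, if $M$ is quasi-permutation with witnessing sequence $0\to M\to R\to U\to 0$ (both $R$ and $U$ permutation), then $U$ is in particular flabby, so this sequence is already a flabby resolution of $M$; hence $[M]^{\fl}=[U]=0$ because a permutation lattice is stably permutation. Conversely, given $[M]^{\fl}=0$, pick any flabby resolution $0\to M\to R\to F\to 0$. The hypothesis forces $F$ to be stably permutation, so $F\oplus R_{1}\cong R_{2}$ for some permutation $G$-lattices $R_{1},R_{2}$. Summing $R_{1}$ into the middle and right entries yields a short exact sequence
\begin{equation*}
0\to M\to R\oplus R_{1}\to R_{2}\to 0
\end{equation*}
whose middle and right terms are both permutation, so $M$ is quasi-permutation.

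For (ii), the $(\Rightarrow)$ direction is immediate from (i): if $M\oplus N\cong Q$ with $Q$ quasi-permutation, then $0=[Q]^{\fl}=[M]^{\fl}+[N]^{\fl}$, so $[N]^{\fl}$ is an inverse of $[M]^{\fl}$ in $\sS(G)$. For $(\Leftarrow)$, suppose $[M]^{\fl}$ is invertible in $\sS(G)$. By the stated equivalence I may pick an invertible $G$-lattice $E$ with $[E]=[M]^{\fl}$. Lemma \ref{lem:ivmo} then gives $[E]^{\fl}=-[E]=-[M]^{\fl}$, whence
\begin{equation*}
[M\oplus E]^{\fl}=[M]^{\fl}+[E]^{\fl}=0.
\end{equation*}
Applying (i), $M\oplus E$ is quasi-permutation, so $M$ is a direct summand of a quasi-permutation $G$-lattice and is therefore quasi-invertible.

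The only nontrivial step is the $(\Leftarrow)$ direction of (ii): the trick is to represent the class $[M]^{\fl}$ not by the flabby lattice $F$ coming out of a flabby resolution of $M$, but by an \emph{invertible} lattice $E$. This is legal precisely because invertibility is a similarity invariant, and it is exactly this change of representative that makes Lemma \ref{lem:ivmo} applicable and closes the calculation. Everything else is formal bookkeeping with the monoid structure on $\sS(G)$ and with part (i).
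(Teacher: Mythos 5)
Your proof is correct and follows essentially the same route as the paper: both directions of (i) are handled identically, and in (ii) your choice of an invertible representative $E$ of the class $[M]^{\fl}$ together with Lemma \ref{lem:ivmo} is exactly what the paper does (where it simply observes that the flabby cokernel $F$ from a flabby resolution is itself such a representative). The extra remark about invertibility being a similarity invariant is not really needed — it is already subsumed in the stated equivalence between a lattice being invertible and its class being invertible in $\sS(G)$ — but it does no harm.
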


\begin{proof}
(i): It is clear that $[M]^{\fl}=0$ if $M$ is quasi-permutation. For the reverse implication, assume $[M]^{\fl}=0$. Take a flabby resolution of $M$: 
\begin{equation*}
    0\rightarrow M \rightarrow R \xrightarrow{\pi} F \rightarrow 0. 
\end{equation*}
By assumption, $F$ is stably permutation. Hence, there is a permutation $G$-lattice $R'$ such that $F\oplus R'$ is permutation. Moreover, the sequence
\begin{equation*}
    0\rightarrow M \xrightarrow{x\mapsto (\iota(x),0)} R\oplus R' \xrightarrow{\pi\oplus \id_{R'}} F\oplus R' \rightarrow 0 
\end{equation*}
is exact. This implies that $M$ is quasi-permutation as desired. 

(ii): We first prove that $[M]^{\fl}$ is invertible if $M$ is quasi-invertible. 
By assumption, there is a $G$-lattice $M'$ such that $M\oplus M'$ is quasi-permutation. Combining this result with (i), we obtain 
\begin{equation*}
[M]^{\fl}+[M']^{\fl}=[M\oplus M']^{\fl}=[0]. 
\end{equation*}
Hence $[M]^{\fl}$ is invertible. 

On the other hand, assume that $[M]^{\fl}$ is invertible. Take a flabby resolution
\begin{equation*}
    0\rightarrow M \rightarrow R \rightarrow F \rightarrow 0
\end{equation*}
of $M$, where $F$ is invertible by assumption. Since $F$ is invertible, we have $[F]^{\fl}=-[F]$ according to Lemma \ref{lem:ivmo}. In particular, we obtain an equality
\begin{equation*}
    [M\oplus F]^{\fl}=[0]. 
\end{equation*}
This is equivalent to the condition that $M\oplus F$ is quasi-permutation, which follows from (i). This completes the proof. 
\end{proof}

The following is a consequence of the definition. 

\begin{prop}\label{prop:rtrd}
Let $G$ be a finite group, and $H$ its subgroup. Consider a $G$-lattice $M$. If $M$ is a quasi-permutation (resp.~quasi-invertible) $G$-module, then $M$ is so as an $H$-lattice. 
\end{prop}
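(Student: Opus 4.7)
The plan is to observe that restriction from $G$ to $H$ preserves the class of permutation lattices, and then to apply the defining exact sequences directly.

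First I would handle the permutation case. Given a permutation $G$-lattice $R=\bigoplus_{i=1}^{m}\Z[G/H_{i}]$, one decomposes each $G/H_{i}$ as an $H$-set via the double coset decomposition $G=\bigsqcup_{j}Hg_{i,j}H_{i}$, yielding an $H$-set isomorphism
\begin{equation*}
G/H_{i}\cong \bigsqcup_{j}H/(H\cap g_{i,j}H_{i}g_{i,j}^{-1}).
\end{equation*}
Taking free abelian groups, $R$ becomes a permutation $H$-lattice. Thus restriction $\Res^{G}_{H}$ sends permutation $G$-lattices to permutation $H$-lattices.

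Next, for quasi-permutation: suppose there is an exact sequence of $G$-lattices
\begin{equation*}
0\rightarrow M\rightarrow R\rightarrow U\rightarrow 0
\end{equation*}
with $R$ and $U$ permutation. Since restriction is exact and preserves the permutation property, the same sequence viewed over $H$ exhibits $M$ as a quasi-permutation $H$-lattice.

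Finally, for quasi-invertible: suppose $M$ is a direct summand of a quasi-permutation $G$-lattice $N$, so that $N\cong M\oplus M'$ as $G$-lattices. Restricting to $H$, the decomposition $N\cong M\oplus M'$ still holds as $H$-lattices, and $N$ is quasi-permutation over $H$ by the previous paragraph. Hence $M$ is quasi-invertible over $H$. There is no genuine obstacle here; the argument is essentially bookkeeping, and the only substantive input is the compatibility of permutation lattices with restriction via double cosets.
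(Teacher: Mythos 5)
Your proof is correct and complete. The paper states this proposition without proof, evidently treating it as standard; the argument you give — Mackey's double coset decomposition to show restriction preserves permutation lattices, exactness of restriction for the quasi-permutation case, and preservation of direct summands for the quasi-invertible case — is exactly the expected one (the same Mackey decomposition is invoked explicitly later in the paper in the proof of Proposition \ref{prop:rtmc}).
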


\begin{lem}[{\cite[p.~179, Lemme 2 (i), (ii), (iii)]{ColliotThelene1977}}]\label{lem:qtfx}
Let $G$ be a finite group, and $N$ its normal subgroup. Consider a $G$-lattice $M$. 
\begin{enumerate}
\item If $M$ is a permutation $G$-lattice, then $M^{N}$ is a permutation $G/N$-lattice. 
\item If $M$ is a coflabby $G$-lattice, then $M^{N}$ is a coflabby $G/N$-lattice. 
\item Let
\begin{equation*}
0\rightarrow U \rightarrow R \rightarrow M \rightarrow 0
\end{equation*}
be a coflabby resolution of $M$ in $G$-lattices. Then
\begin{equation*}
0\rightarrow U^{N} \rightarrow R^{N} \rightarrow M^{N} \rightarrow 0
\end{equation*}
is a coflabby resolution of $M^{N}$ in $G/N$-lattices. 
\end{enumerate}
\end{lem}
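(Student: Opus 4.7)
The lemma is a classical compatibility statement, so the plan is to reduce each part to a concrete computation. The main tool for (i) is the description of permutation lattices in terms of transitive $G$-sets, and the main tools for (ii) and (iii) are the left exactness of $N$-invariants together with the inflation–restriction (Hochschild--Serre) exact sequence.

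For (i), it suffices by additivity to handle $M = \Z[G/H]$ for a subgroup $H \leq G$. I would observe that an element $\sum_{gH} n_{gH}\,gH$ is $N$-fixed precisely when $n_{ngH} = n_{gH}$ for all $n \in N$, so $M^{N}$ admits the orbit sums $e_{[g]} = \sum_{x \in NgH/H} x$, one for each double coset $[g] \in N\backslash G/H$, as a $\Z$-basis. Since $N$ is normal, $g' \cdot e_{[g]} = e_{[g'g]}$, so the natural $G/N$-action realizes $M^{N}$ as the permutation module on $N\backslash G/H$. Because $N$ is normal the quotient $N\backslash G/H$ is canonically $(G/N)/(NH/N)$ as a $G/N$-set, hence $M^{N}$ is a permutation $G/N$-lattice.

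For (ii), let $\overline{H} \leq G/N$ and let $H$ be its preimage in $G$, so $N \subseteq H$ and $\overline{H} = H/N$. The low-degree exact sequence of the Hochschild--Serre spectral sequence gives
\begin{equation*}
0 \longrightarrow H^{1}(H/N, M^{N}) \longrightarrow H^{1}(H, M) \longrightarrow H^{1}(N, M)^{H/N}.
\end{equation*}
The coflabbiness of $M$ forces the middle term to vanish, whence $H^{1}(\overline{H}, M^{N}) = 0$. Since $\overline{H}$ was an arbitrary subgroup of $G/N$, $M^{N}$ is coflabby as a $G/N$-lattice.

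For (iii), applying the left-exact functor of $N$-invariants to the coflabby resolution $0 \to F \to R \to M \to 0$ produces the exact sequence
\begin{equation*}
0 \longrightarrow F^{N} \longrightarrow R^{N} \longrightarrow M^{N} \longrightarrow H^{1}(N, F),
\end{equation*}
and the rightmost group vanishes because $F$ is coflabby. By (i), $R^{N}$ is a permutation $G/N$-lattice, and by (ii), $F^{N}$ is a coflabby $G/N$-lattice, giving the desired coflabby resolution of $M^{N}$. None of the steps is really an obstacle; the only point requiring a little care is the identification of the $G/N$-set structure on $N\backslash G/H$ in (i), which relies crucially on the normality of $N$ (so that $NgH = gNH$ and the $G/N$-action is well defined).
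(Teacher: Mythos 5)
Your proof is correct and follows the standard route: the paper itself states Lemma \ref{lem:qtfx} as a citation to Colliot-Th\'el\`ene--Sansuc without reproducing the argument, and your three steps (identifying $\Z[G/H]^{N}$ with $\Z[(G/N)/(NH/N)]$ via $N$-orbit sums, using the inflation--restriction sequence for (ii), and the long exact sequence in $N$-cohomology together with $H^{1}(N,F)=0$ for (iii)) are exactly the intended ones. No gaps; the only subtle point, that normality of $N$ is what makes $N\backslash G/H$ a $G/N$-set, is correctly flagged.
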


\begin{lem}[{\cite[p.~179, Lemme 2]{ColliotThelene1977}}]\label{lem:qtac}
Let $G$ be a finite group, and $N$ its normal subgroup. Consider a $G/N$-lattice $M$. 
\begin{enumerate}
\item The $G$-lattice $M$ is permutation (resp.~stably permutation; invertible; coflabby; flabby) if and only if it is so as a $G/N$-lattice. 
\item Let
\begin{equation*}
0\rightarrow U \rightarrow R \rightarrow M \rightarrow 0
\end{equation*}
be a coflabby resolution of $M$ in $G/N$-lattices. Then it is a coflabby resolution of $M$ in $G$-lattices. 
\end{enumerate}
\end{lem}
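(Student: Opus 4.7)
The plan is to verify each of the five properties in (i) separately, and then derive (ii) as a formal consequence. Throughout, the unifying observation is that since $M$ is a $G/N$-lattice, the normal subgroup $N$ acts trivially on both $M$ and $M^{\circ}$, so $M^{N}=M$ and $(M^{\circ})^{N}=M^{\circ}$.

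For the permutation property, note that a $\Z$-basis permuted by $G$ is automatically permuted by $G/N$ when $N$ lies in the kernel of the action; concretely, an isomorphism $M\cong \bigoplus_{i}\Z[G/H_{i}]$ of $G$-lattices on which $N$ acts trivially forces $N\subset H_{i}$ for every $i$, hence $M\cong \bigoplus_{i}\Z[(G/N)/(H_{i}/N)]$ as $G/N$-lattices, and the converse is immediate. The stably permutation and invertible cases then follow by combining the permutation case with Lemma~\ref{lem:qtfx}(i): for instance, an isomorphism $M\oplus R\cong R'$ of $G$-lattices with $R,R'$ permutation yields, upon taking $N$-invariants, $M\oplus R^{N}\cong R'^{N}$ of $G/N$-lattices, and $R^{N},R'^{N}$ are permutation $G/N$-lattices by Lemma~\ref{lem:qtfx}(i).

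The essential step is the coflabby case. For any subgroup $H\leq G$, apply the inflation-restriction sequence to the normal subgroup $H\cap N$ of $H$:
\begin{equation*}
0\to H^{1}\bigl(H/(H\cap N), M\bigr)\to H^{1}(H,M)\to H^{1}(H\cap N, M)^{H/(H\cap N)}.
\end{equation*}
Since $M$ is torsion-free and $H\cap N$ is finite, $H^{1}(H\cap N, M)=\Hom(H\cap N, M)=0$, so inflation becomes an isomorphism $H^{1}(HN/N, M)\xrightarrow{\sim} H^{1}(H, M)$. As every subgroup of $G/N$ arises as some $HN/N$, this shows that $M$ is coflabby as a $G$-lattice if and only if it is coflabby as a $G/N$-lattice. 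Flabbiness then transfers by applying the same argument to $M^{\circ}$. Finally, (ii) is immediate: the exactness of $0\to F\to R\to M\to 0$ is unaffected by enlarging the acting group, and the permutation and coflabby conditions on $R$ and $F$ are preserved by (i).

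The main obstacle I anticipate is the coflabby case: the key technical move is the choice of normal subgroup in the Hochschild--Serre step, namely $H\cap N$ inside $H$ rather than $N$ inside $G$, which makes the argument uniform over all subgroups of $G$. The crucial vanishing $H^{1}(H\cap N,M)=0$ depends on $M$ being a lattice (i.e. torsion-free); once this is in hand, all remaining implications follow mechanically.
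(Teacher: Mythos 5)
Your proof is correct. The paper itself does not give a proof of this lemma but simply cites Colliot-Th\'el\`ene--Sansuc, and your argument is essentially the standard one found there: the permutation/stably-permutation/invertible cases transfer via $N$-invariants and Lemma~\ref{lem:qtfx}, while the coflabby case hinges on the inflation--restriction sequence for $H\cap N\triangleleft H$ together with the vanishing $H^{1}(H\cap N,M)=\Hom(H\cap N,M)=0$ for a finite group acting trivially on a torsion-free module.
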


\begin{cor}\label{cor:qtac}
Let $G$ be a finite group, and $N$ its normal subgroup. Consider a $G$-lattice $M$. 
\begin{enumerate}
\item If the $G$-lattice $M$ is quasi-permutation (resp.~quasi-invertible), then the $G/N$-lattice $M^{[N]}$ is so. 
\item If $N$ acts on $M$ trivially, then $M$ is quasi-permutation (resp.~quasi-invertible) $G$-lattice if and only if it is so as an $G/N$-lattice. 
\end{enumerate}
\end{cor}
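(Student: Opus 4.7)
The plan is to construct, from a flabby resolution of $M$ as a $G$-lattice, a flabby resolution of $M^{[N]}$ as a $G/N$-lattice via the three-step procedure of dualizing, taking $N$-invariants, and dualizing again, and then to translate the outcome via Proposition \ref{prop:edlr}. The definition $M^{[N]} = ((M^{\circ})^N)^{\circ}$ is tailored precisely to this procedure, and Lemma \ref{lem:qtfx} controls how permutation and coflabby lattices transform under $(-)^N$.

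For (i), I would start with a flabby resolution $0 \to M \to R \to F \to 0$ of the $G$-lattice $M$. Since $\Z[G/H]$ is self-dual under the paper's convention on $(-)^\circ$, dualizing yields a coflabby resolution $0 \to F^{\circ} \to R^{\circ} \to M^{\circ} \to 0$ of $M^{\circ}$. By Lemma \ref{lem:qtfx}(i)--(iii), taking $N$-invariants produces a coflabby resolution
\begin{equation*}
0 \to (F^{\circ})^N \to (R^{\circ})^N \to (M^{\circ})^N \to 0
\end{equation*}
of $(M^{\circ})^N$ as $G/N$-lattices, and dualizing once more gives a flabby resolution $0 \to M^{[N]} \to ((R^{\circ})^N)^{\circ} \to ((F^{\circ})^N)^{\circ} \to 0$ of $M^{[N]}$. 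Consequently $[M^{[N]}]^{\fl} = [((F^{\circ})^N)^{\circ}]$ in $\sS(G/N)$. If $M$ is quasi-permutation, then by Proposition \ref{prop:edlr}(i) the lattice $F$ is stably permutation, say $F \oplus P \cong P'$ for permutation $G$-lattices $P, P'$; since the three-step procedure is additive and carries permutation $G$-lattices to permutation $G/N$-lattices, it follows that $((F^{\circ})^N)^{\circ}$ is stably permutation, and Proposition \ref{prop:edlr}(i) again shows $M^{[N]}$ is quasi-permutation. The quasi-invertible case is identical, substituting invertible for stably permutation and invoking Proposition \ref{prop:edlr}(ii).

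For (ii), when $N$ acts trivially on $M$, it acts trivially on $M^{\circ}$ as well, so $M^{[N]} = M$, and the forward direction reduces to part (i). Conversely, any exact sequence (respectively direct-sum embedding) witnessing $M$ as quasi-permutation (respectively quasi-invertible) over $G/N$ consists of permutation $G/N$-lattices, which by Lemma \ref{lem:qtac}(i) are also permutation as $G$-lattices, exhibiting $M$ with the same property over $G$. The only subtle step in the whole argument is checking that the three-step procedure in (i) preserves exactness at the right-hand term, which holds because $F^{\circ}$ is coflabby and hence $H^1(N, F^{\circ}) = 0$; all remaining verifications are routine additivity together with the preservation results of Lemmas \ref{lem:qtfx} and \ref{lem:qtac}.
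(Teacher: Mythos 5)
Your proof is correct and follows essentially the same route as the paper's: both deduce (i) from Lemma \ref{lem:qtfx} by passing a flabby resolution through the dualize--take-$N$-invariants--dualize procedure (exactness at the right surviving because the middle dual is coflabby), and both deduce (ii) from part (i) together with Lemma \ref{lem:qtac}. The paper simply cites the lemmas without spelling out the bookkeeping; you have filled in the details faithfully.
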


\begin{proof}
(i) follows from Lemma \ref{lem:qtfx}. (ii) is a consequence of Lemma \ref{lem:qtfx} (iii) and Lemma \ref{lem:qtac}. 
\end{proof}

For a $G$-lattice $M$, we define
\begin{equation*}
\Sha_{\omega}^{2}(G,M):=\Ker\left(H^{2}(G,M)\rightarrow \bigoplus_{g\in G}H^{2}(\langle g\rangle,M)\right). 
\end{equation*}

\begin{prop}[{\cite[Proposition 2.9.2 (a)]{Lorenz2005}; cf.~\cite[Proposition 9.5 (ii)]{ColliotThelene1987}, \cite[Proposition 9.8]{Sansuc1981}}]\label{prop:rrs2}
Let $G$ be a finite group, and
\begin{equation*}
    0\rightarrow M \rightarrow R \rightarrow F \rightarrow 0
\end{equation*}
a flabby resolution of a $G$-lattice $M$. Then, there is an isomorphism
\begin{equation*}
    \Sha_{\omega}^{2}(G,M)\cong H^{1}(G,F). 
\end{equation*}
In particular, if $M$ is quasi-invertible, then we have $\Sha_{\omega}^{2}(G,M)=0$. 
\end{prop}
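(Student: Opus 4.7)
The plan is to extract the isomorphism $\Sha^2_\omega(G,M) \cong H^1(G,F)$ from the long exact cohomology sequence of the flabby resolution, using that $R$ is permutation and $F$ is flabby, and then to deduce the quasi-invertible case from Proposition \ref{prop:edlr}.

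First, I would apply $H^*(G,-)$ to $0 \to M \to R \to F \to 0$ to obtain
\begin{equation*}
H^1(G,R) \to H^1(G,F) \to H^2(G,M) \to H^2(G,R).
\end{equation*}
Writing the permutation module as $R = \bigoplus_i \Z[G/H_i]$, Shapiro's lemma yields $H^1(G,R) \cong \bigoplus_i \Hom(H_i,\Z) = 0$ since each $H_i$ is finite, so
\begin{equation*}
H^1(G,F) \cong \Ker\bigl(H^2(G,M) \to H^2(G,R)\bigr),
\end{equation*}
and the remaining task is to identify this kernel with $\Sha^2_\omega(G,M)$.

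The key intermediate step is that $H^1(\langle g\rangle,F) = 0$ for every cyclic subgroup $\langle g\rangle \leq G$. This follows from flabbiness of $F$: by definition $F^\circ$ is coflabby, so $H^1(\langle g\rangle,F^\circ)=0$, and Tate duality for finite groups acting on lattices combined with the $2$-periodicity of Tate cohomology on a cyclic group transfers this vanishing to $\hat H^1(\langle g\rangle, F) = \hat H^{-1}(\langle g\rangle, F) = 0$. Restricting the same long exact sequence to $\langle g\rangle$ then gives an injection $H^2(\langle g\rangle, M) \hookrightarrow H^2(\langle g\rangle, R)$. A diagram chase now supplies both inclusions: if $\xi \in H^1(G,F) \subseteq H^2(G,M)$, its restriction to $H^2(\langle g\rangle, M)$ factors through the vanishing $H^1(\langle g\rangle,F) = 0$, so $\xi \in \Sha^2_\omega(G,M)$; conversely, if $\xi \in \Sha^2_\omega(G,M)$, its image $\eta \in H^2(G,R) \cong \bigoplus_i H^2(H_i,\Z) \cong \bigoplus_i \Hom(H_i^{\ab}, \Q/\Z)$ must restrict to zero on every cyclic $\langle g\rangle \leq G$. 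Taking $h \in H_i$ and using the Mackey decomposition of $\Z[G/H_i]\big|_{\langle h\rangle}$, the summand associated to the trivial double coset identifies the restriction of the $i$-th component with the usual character restriction $\chi_i \mapsto \chi_i|_{\langle h\rangle}$; since a character of a finite group is detected by its values on cyclic subgroups, each $\chi_i$ vanishes, so $\eta = 0$. This proves $\Ker(H^2(G,M)\to H^2(G,R)) = \Sha^2_\omega(G,M)$.

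For the \emph{in particular} assertion, if $M$ is quasi-invertible then Proposition \ref{prop:edlr} says $[F]=[M]^{\fl}$ is invertible in $\sS(G)$, hence $F$ itself is invertible, so in particular coflabby, and $H^1(G,F)=0$. The main obstacle is the Mackey-detection step in the second inclusion above; once the injection $H^2(\langle g\rangle, M) \hookrightarrow H^2(\langle g\rangle, R)$ is established from the flabbiness of $F$, the rest reduces to the standard fact that characters of finite groups are detected on cyclic subgroups.
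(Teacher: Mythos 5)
The paper does not prove this statement itself but cites Lorenz (Proposition 2.9.2(a)); your argument is correct and is essentially the standard derivation behind that reference: the long exact sequence with $H^1(G,R)=0$ for permutation $R$, the vanishing $H^1(\langle g\rangle,F)=0$ obtained from flabbiness via duality and the $2$-periodicity of Tate cohomology on cyclic groups, and the Shapiro--Mackey detection argument showing $\Sha_\omega^2(G,R)=0$ for permutation $R$. Your treatment of the ``in particular'' clause via Proposition~\ref{prop:edlr} (so $[F]$ is invertible, hence $F$ is a direct summand of a permutation lattice and therefore coflabby) is also correct.
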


For a $G$-lattice $M$, we can summarize the properties mentioned above as follows:
\begin{align*}
\textrm{permutation}\ \ 
\Rightarrow\ \ 
&\textrm{stably permutation}\ \ 
\Rightarrow\ \ 
\hspace*{4mm}\textrm{invertible}\hspace*{5mm}\ \ 
\Rightarrow\ \ 
\textrm{flabby\ and\ coflabby}\\
&\hspace*{17mm}\Downarrow\hspace*{38mm} \Downarrow\hspace*{35mm} \\
&\textrm{quasi-permutation} 
\hspace*{4.5mm}\Rightarrow\hspace*{2mm} \textrm{quasi-invertible}
\hspace*{2mm}\Rightarrow\hspace*{2.5mm} \Sha^2_{\omega}(G,M)=0\\
&\hspace*{17mm}\Updownarrow\hspace*{38mm} \Updownarrow 
\ \ \\
&\hspace*{10mm}\textrm{$[M]^{\fl}=0$} 
\hspace*{11mm}\Rightarrow\hspace*{1mm} \textrm{$[M]^{\fl}$ is invertible}. \\
\end{align*}

For a torus $T$ over a field $k$, we define the cocharacter module $X_{*}(T)$ and the character module $X^{*}(T)$ as
\begin{equation*}
X_{*}(T):=\Hom_{k^{\sep}\text{-groups}}(\G_{m,k^{\sep}},T\otimes_{k}k^{\sep}),\quad 
X^{*}(T):=\Hom_{k^{\sep}\text{-groups}}(T\otimes_{k}k^{\sep},\G_{m,k^{\sep}}). 
\end{equation*}
These are finite free abelian groups equipped with continuous actions of $\Gal(k^{\sep}/k)$ (with respect to discrete topology). 

\begin{prop}
\label{prop:rtiv}
Let $k$ be a field, and $T$ a torus over $k$ which splits over a finite Galois extension $L$ of $k$. Define $G:=\Gal(L/k)$. %Then $T$ is stably rational (resp.~retract rational) over $k$ if and only if the $G$-lattice $X^{*}(T)$ is quasi-permutation (resp.~quasi-invertible). 
\begin{enumerate}
\item \emph{(\cite[Theorem 1.6]{Endo1973}, \cite[Proposition 9.5.3]{Lorenz2005})} The algebraic torus $T$ is stably rational over $k$ if and only if the $G$-lattice $X^{*}(T)$ is quasi-permutation. 
\item \emph{(\cite[Proposition 9.5.4]{Lorenz2005}, cf.~\cite[Theorem 3.14]{Saltman1984})} The algebraic torus $T$ is retract rational over $k$ if and only if the $G$-lattice $X^{*}(T)$ is quasi-invertible. 
\end{enumerate}
\end{prop}

Let $Y$ be an algebraic variety over $k$. A smooth compactification of $Y$ over $k$ refers to a proper smooth algebraic variety $X$ over $k$ that admits an open immersion $Y\hookrightarrow X$. Note that a smooth compactification of $Y$ over $k$ always exists if $k$ has characteristic $0$, which is a consequence of Hironaka (\cite{Hironaka1964}). Moreover, Colliot-Th{\'e}l{\`e}ne, Harari and Skorobogatov (\cite{ColliotThelene2005}) gave the existence of smooth compactifications of all tori over arbitrary fields. 

\begin{prop}[{\cite[Section 4, p.~1213]{Voskresenskii1969}}]\label{prop:flpc}
Let $k$ be a field, and $T$ an algebraic torus over $k$ which splits over a finite Galois extension $L$ over $k$. Take a smooth compactification $X$ of $T$ over $k$, and define $\overline{X}:=X\otimes_{k}k^{\sep}$. Then there is an exact sequence of $\Gal(k^{\sep}/k)$-lattices
\begin{equation*}
    0\rightarrow X^{*}(T)\rightarrow R \rightarrow \Pic(\overline{X})\rightarrow 0, 
\end{equation*}
where $R$ is permutation and $\Pic(\overline{X})$ is flabby. In particular, we have $[X^{*}(T)]^{\fl}=[\Pic(\overline{X})]$. 
\end{prop}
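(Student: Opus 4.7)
The plan is to extract the exact sequence from the standard units-and-divisors long exact sequence attached to the open immersion $\overline{T} \hookrightarrow \overline{X}$, and then to identify its outer terms with $X^{*}(T)$ and $\Pic(\overline{X})$. After enlarging $L$ inside $k^{\sep}$ (keeping it finite Galois over $k$), I would assume that every irreducible component of the boundary divisor $D := \overline{X} \setminus \overline{T}$ is defined over $L$. Writing $D = \bigcup_{i \in I} D_{i}$ for the decomposition into irreducible components and setting $R := \bigoplus_{i \in I} \Z \cdot D_{i}$, the absolute Galois group $\Gal(k^{\sep}/k)$ permutes the $D_{i}$ through the finite quotient $\Gal(L/k)$, so $R$ is a permutation $\Gal(k^{\sep}/k)$-lattice.

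Next I would invoke the divisor class long exact sequence associated to $\overline{T} \hookrightarrow \overline{X}$:
\begin{equation*}
1 \longrightarrow k^{\sep}[\overline{X}]^{\times} \longrightarrow k^{\sep}[\overline{T}]^{\times} \longrightarrow R \longrightarrow \Pic(\overline{X}) \longrightarrow \Pic(\overline{T}) \longrightarrow 0.
\end{equation*}
Since $\overline{X}$ is proper and geometrically integral, $k^{\sep}[\overline{X}]^{\times} = (k^{\sep})^{\times}$. Since $\overline{T} \cong \G_{m,k^{\sep}}^{n}$, one has $\Pic(\overline{T}) = 0$ together with a Galois-equivariant isomorphism $k^{\sep}[\overline{T}]^{\times}/(k^{\sep})^{\times} \cong X^{*}(T)$. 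Combining these yields the desired exact sequence $0 \to X^{*}(T) \to R \to \Pic(\overline{X}) \to 0$.

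Finally, I would verify that $\Pic(\overline{X})$ is flabby. The key observation is that for any subgroup $H \leq \Gal(L/k)$, the scheme $X_{L^{H}}$ is itself a smooth compactification of $T_{L^{H}}$, so the same construction produces an analogous exact sequence with a permutation $H$-lattice in the middle. Applying Tate cohomology to the sequence, the contribution of $R$ vanishes by Shapiro's lemma, and the remaining cohomological content reduces, via the Weil divisor and principal divisor sequences on $\overline{X}$, to Hilbert 90 for the function field $k^{\sep}(\overline{X})$. This gives $\widehat{H}^{-1}(H,\Pic(\overline{X}))=0$ for every $H$, which is equivalent to flabbiness. The identity $[X^{*}(T)]^{\fl}=[\Pic(\overline{X})]$ is then immediate from the construction, since the sequence above is by definition a flabby resolution of $X^{*}(T)$.

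The main obstacle is the flabbiness argument: while the existence of the exact sequence and the identification of its outer terms are formal consequences of standard facts about tori and proper geometrically integral varieties, the flabbiness essentially uses the smoothness of the compactification together with Hilbert 90, and is the substantive geometric input of the proposition.
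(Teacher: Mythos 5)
Your construction of the short exact sequence is the standard units-and-divisors argument and is correct: since $\overline{X}$ is proper and geometrically integral, $k^{\sep}[\overline{X}]^{\times}=(k^{\sep})^{\times}$; since $\overline{T}\cong\G_{m,k^{\sep}}^{n}$, one has $\Pic(\overline{T})=0$ and $k^{\sep}[\overline{T}]^{\times}/(k^{\sep})^{\times}\cong X^{*}(T)$ equivariantly; and the group $R$ of divisors supported on the boundary is a permutation $\Gal(k^{\sep}/k)$-lattice. Feeding these into the five-term sequence for $\overline{T}\hookrightarrow\overline{X}$ yields $0\to X^{*}(T)\to R\to\Pic(\overline{X})\to 0$, and the final identification of flabby classes is then formal.

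The flabbiness step, however, has a genuine gap. From the short exact sequence and $\widehat{H}^{-1}(H,R)=0$ you get
\begin{equation*}
\widehat{H}^{-1}\bigl(H,\Pic(\overline{X})\bigr)\cong \Ker\Bigl(\widehat{H}^{0}\bigl(H,X^{*}(T)\bigr)\to \widehat{H}^{0}\bigl(H,R\bigr)\Bigr),
\end{equation*}
a statement about the norm residue groups $X^{*}(T)^{H}/N_{H}X^{*}(T)$ and $R^{H}/N_{H}R$. Hilbert $90$ for $k^{\sep}(\overline{X})^{\times}$ controls $H^{1}$, not $\widehat{H}^{0}$; tracking it through the full units-and-divisors sequence only tells you that the degree-two boundary map $\widehat{H}^{-1}(H,\Pic(\overline{X}))\to H^{1}\bigl(H,(k^{\sep})^{\times}\bigr)$ has trivial target, which gives no information about the source. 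No purely cohomological d\'evissage of these sequences yields the required vanishing; one needs genuine geometric input. A clean route (Colliot-Th\'el\`ene--Sansuc, Voskresenskii) is to dualize to $0\to\Pic(\overline{X})^{\circ}\to R^{\circ}\to X_{*}(T)\to 0$, so that flabbiness becomes surjectivity of $(R^{\circ})^{H}\to X_{*}(T)^{H}$, and to prove this using completeness and smoothness of the fan of $\overline{X}$: any $H$-invariant cocharacter $\lambda$ lies, after intersecting $H$-translates of a cone containing it, in an $H$-stable smooth cone $\tau$, hence is a non-negative integral combination of the primitive ray generators of $\tau$; by $H$-invariance and linear independence the coefficients are constant on $H$-orbits, so $\lambda$ is an integral combination of $H$-orbit sums of ray generators, i.e.\ lies in the image of $(R^{\circ})^{H}$. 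In short, smoothness and properness enter not through Hilbert $90$ but through the structure of the fan (or, alternatively, through the general theorem that $\Pic(\overline{X})$ is flabby for any smooth proper geometrically rational variety, whose proof is also nontrivial geometry).
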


Combining Proposition \ref{prop:flpc} with Proposition \ref{prop:rrs2}, we obtain the following. 

\begin{cor}[{cf.~\cite[Proposition 9.5 (ii)]{ColliotThelene1987}, \cite[Proposition 9.8]{Sansuc1981}}]\label{cor:crvs}
Let $k$ be a field, and $T$ an algebraic torus over $k$ which splits over a finite Galois extension $L$ of $k$. Take a smooth compactification of $X$ over $k$, and put $\overline{X}:=X\otimes_{k}k^{\sep}$. Then there is an isomorphism
\begin{equation*}
    H^{1}(k,\Pic(\overline{X}))\cong \Sha_{\omega}^{2}(G,X^{*}(T)),
\end{equation*}
where $G:=\Gal(L/k)$. 
\end{cor}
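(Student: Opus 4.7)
The plan is to chain together Propositions \ref{prop:flpc} and \ref{prop:rrs2}, and then to transport the resulting identification from the finite quotient $G$ up to the absolute Galois group of $k$ by an inflation-restriction argument.

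First, I would invoke Proposition \ref{prop:flpc} to obtain an exact sequence
\[
0\rightarrow X^{*}(T)\rightarrow R\rightarrow \Pic(\overline{X})\rightarrow 0
\]
where $R$ is permutation and $\Pic(\overline{X})$ is flabby. Since $T$ splits over $L$, the $\Gal(k^{\sep}/k)$-action on $X^{*}(T)$ factors through $G=\Gal(L/k)$, and the permutation lattice $R$ supplied by the cited construction can be taken to have action through $G$ as well (its basis is indexed by $G$-orbits of boundary components of a smooth compactification, which are defined over $L$); consequently the quotient $\Pic(\overline{X})$ is also a $G$-lattice. Thus the above sequence is a flabby resolution of $X^{*}(T)$ in the category of $G$-lattices.

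Next, I would apply Proposition \ref{prop:rrs2} directly to this $G$-flabby resolution. This yields
\[
\Sha_{\omega}^{2}(G,X^{*}(T))\cong H^{1}(G,\Pic(\overline{X})).
\]
It therefore remains to identify $H^{1}(G,\Pic(\overline{X}))$ with $H^{1}(k,\Pic(\overline{X}))=H^{1}(\Gal(k^{\sep}/k),\Pic(\overline{X}))$. Since the absolute Galois action on $\Pic(\overline{X})$ factors through $G$, the open subgroup $\Gal(k^{\sep}/L)$ acts trivially. The inflation-restriction (Hochschild--Serre five-term) sequence gives
\[
0\rightarrow H^{1}(G,\Pic(\overline{X}))\rightarrow H^{1}(k,\Pic(\overline{X}))\rightarrow H^{1}(\Gal(k^{\sep}/L),\Pic(\overline{X}))^{G}.
\]
As $\Pic(\overline{X})$ is a lattice (in particular torsion-free) carrying the trivial $\Gal(k^{\sep}/L)$-action, the rightmost group is $\Hom_{\mathrm{cts}}(\Gal(k^{\sep}/L),\Pic(\overline{X}))$, which vanishes since any continuous homomorphism from a profinite group to a discrete torsion-free abelian group is zero. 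Hence the inflation map is an isomorphism, and combining with the preceding display we obtain
\[
H^{1}(k,\Pic(\overline{X}))\cong H^{1}(G,\Pic(\overline{X}))\cong \Sha_{\omega}^{2}(G,X^{*}(T)),
\]
as required.

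The argument is essentially a direct concatenation of the two cited propositions; the only non-trivial bookkeeping is the last step, where one must verify that the Galois cohomology computed over the absolute Galois group agrees with that computed over the finite quotient $G$. This is precisely where the torsion-freeness of $\Pic(\overline{X})$, inherited from the lattice structure produced by Proposition \ref{prop:flpc}, plays an essential role.
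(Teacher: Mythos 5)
Your argument is correct and is essentially the same as the paper's, which simply cites Propositions \ref{prop:flpc} and \ref{prop:rrs2} in succession. The only difference is that you make explicit the inflation step identifying $H^{1}(k,\Pic(\overline{X}))$ with $H^{1}(G,\Pic(\overline{X}))$ via vanishing of $\Hom_{\mathrm{cts}}(\Gal(k^{\sep}/L),\Pic(\overline{X}))$ for the torsion-free lattice $\Pic(\overline{X})$; the paper leaves this standard bookkeeping implicit.
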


\section{Multinorm one tori and their character groups}\label{sect:mult}

\subsection{Multinorm one tori}\label{ssec:mnot}

Let $k$ be a field. Consider a finite {\'e}tale algebra $\bK=\prod_{i=1}^{r}K_{i}$ over $k$, that is, a finite product of finite separable subextensions of $k^{\sep}$. The \emph{multinorm one torus} associated to $\bK/k$ is defined as
\begin{equation*}
    T_{\bK/k}=\Ker(\N_{\bK/k}\colon \Res_{\bK/k}\G_{m}\rightarrow \G_{m}). 
\end{equation*}

Let $G$ be a finite group, and $H$ a subgroup of $G$. Then one has a surjection
\begin{equation*}
    \varepsilon_{G/H}\colon \Z[G/H]\rightarrow \Z;\,\sum_{g \in G/H} a_g g\mapsto \sum_{g \in G/H} a_g, 
\end{equation*}
which is called the \emph{augmentation map}. Moreover, the dual of $\varepsilon_{G/H}^{\circ}$ coincides with the homomorphism
\begin{equation*}
    \varepsilon_{G/H}^{\circ}\colon \Z\rightarrow \Z[G/H];\,1\mapsto \sum_{g\in G/H}g. 
\end{equation*}
On the other hand, consider a finite group $G$ and subgroups $H'\subset H$ of $G$. Then the homomorphisms of $H$-lattices
\begin{equation*}
   \varepsilon_{H/H'}\colon \Z[H/H']\rightarrow \Z,\quad \varepsilon_{H/H'}^{\circ}\colon \Z \rightarrow \Z[H/H']
\end{equation*}
induce homomorphisms of $G$-lattices
\begin{equation*}
   \Ind_{H}^{G}\varepsilon_{H/H'}\colon \Z[G/H']\rightarrow \Z[G/H],\quad \Ind_{H}^{G}\varepsilon_{H/H'}^{\circ}\colon \Z[G/H] \rightarrow \Z[G/H']. 
\end{equation*}

For a multiset $\cH$ of subgroups of $G$, we define a $G$-module $I_{G/\cH}$ by an exact sequence
\begin{equation*}
    0\rightarrow I_{G/\cH}\rightarrow \bigoplus_{H\in \cH}\Z[G/H]\xrightarrow{(\varepsilon_{G/H})_{H\in \cH}} \Z \rightarrow 0. 
\end{equation*}
Furthermore, we define $J_{G/\cH}:=I_{G/\cH}^{\circ}$. Then one has an exact sequence
\begin{equation*}
    0\rightarrow \Z \xrightarrow{(\varepsilon_{G/H}^{\circ})_{H\in \cH}} \bigoplus_{H\in \cH}\Z[G/H] \rightarrow J_{G/\cH} \rightarrow 0. 
\end{equation*}

\begin{prop}\label{prop:coch}
Let $k$ be a field, $\bK=\prod_{i=1}^{r}K_{i}$ a finite {\'e}tale algebra over $k$. Take a finite Galois extension $L$ of $k$ containing $K_{1},\ldots,K_{r}$. Put $G:=\Gal(L/k)$ and $\cH:=\{\Gal(L/K_{i})\mid i\in \{1,\ldots,r\}\}$. Then there are isomorphisms of $G$-modules
\begin{equation}\label{eq:tkjg}
    X_{*}(T_{\bK/k})\cong I_{G/\cH},\quad X^{*}(T_{\bK/k})\cong J_{G/\cH}. 
\end{equation}
In particular, $T_{\bK/k}$ is stably (resp.~retract) rational over $k$ if and only if the $G$-lattice $J_{G/\cH}$ is quasi-permutation (resp.~quasi-invertible). 
\end{prop}

\begin{proof}
The isomorphisms \eqref{eq:tkjg} follow from the construction of $I_{G/\cH}$ and $J_{G/\cH}$. The equivalence between $T_{\bK/k}$ to be stably rational (resp.~retract rational) and $J_{G/\cH}$ to be quasi-permutation (resp.~quasi-invertible) is a consequence of Proposition \ref{prop:rtiv}. 
\end{proof}

In this paper, we determine whether the $G$-lattice $J_{G/\cH}$ is quasi-permutation or quasi-invertible in order to classify the stably/retract rationality of multinorm one tori. 
In what follows, we discuss
\begin{itemize}
\item how to reduce the problem to a smaller $G$-lattice (Corollary \ref{cor:rdrd}, Proposition \ref{prop:rdsr},
Proposition \ref{prop:rdtr}, Corollary \ref{cor:rdmn}); 
\item behavior of $I_{G/\cH}$ and $J_{G/\cH}$ with respect to the restriction to subgroups of $G$ (Proposition \ref{prop:rtmc}); and
\item description of $I_{G/\cH}^{N}$ and $J_{G/\cH}^{[N]}$ for a normal subgroup $N$ of $G$ (Proposition \ref{prop:tkfx}). 
\end{itemize}
Accordingly, we extend the notions of character groups and cocharacter groups of multinorm one tori, and set up a framework for dealing with them.

\subsection{$G$-lattices $I_{G/\cH}^{(\varphi)}$ and $J_{G/\cH}^{(\varphi)}$}\label{ssec:ijdf}

For a multiset $\cH$, we use the notation as follows. 
\begin{itemize}
\item Denote by $\cH^{\set}$ the underlying set of $\cH$. 
\item For $H\in \cH^{\set}$, write $m_{\cH}(H)$ for the multiplicity of $H$ in $\cH$. Moreover, we set $m_{\cH}(H):=0$ if a subgroup $H$ of $G$ does not belong to $\cH^{\set}$. 
\end{itemize}

We define $\Delta$ as follows:
\begin{equation*}
    \Delta :=\coprod_{m \in \Zpn} \Delta_{m}. 
\end{equation*}
Here, $\Delta_{m}:=\{(d_{1},\ldots,d_{m})\in (\Zpn)^{m}\mid d_{1}\leq \cdots \leq d_{m}\}$ for each $m\in \Zpn$. 
For $\bd \in \Delta_{m}$ and $i\in \{1,\ldots,m\}$, we denote by $\bd_{i}$ the $i$-th factor of $\bd$. 

\begin{dfn}
Let $\cH$ be a multiset. A \emph{weight function on $\cH$} is defined as a map
\begin{equation*}
\varphi \colon \cH^{\set}\rightarrow \Delta
\end{equation*}
such that $\varphi(H)\in \Delta_{m_{\cH}(H)}$ for any $H\in \cH^{\set}$. 
\end{dfn}

\begin{dfn}\label{dfn:dfdp}
Let $G$ be a finite group, $\cH$ a multiset of its subgroups, and $\varphi$ a weight function on $\cH$. 
\begin{enumerate}
\item We define a weight function $d_{\varphi}$ of $\cH^{\set}$ as follows: 
\begin{equation*}
d_{\varphi}\colon \cH^{\set} \rightarrow \Delta_{1};\,H \mapsto \gcd(\varphi(H)_{1},\ldots,\varphi(H)_{m_{\cH}(H)}). 
\end{equation*}
\item We say that $\varphi$ is \emph{normalized} if $\gcd(d_{\varphi}(H)\mid H\in \cH^{\set})=1$. 
\end{enumerate}
\end{dfn}

The following can be confirmed from the definition:

\begin{lem}\label{lem:pnor}
Let $G$ be a finite group, $\cH$ a multiset of its subgroups, and $\varphi$ a weight function on $\cH$. Put
\begin{equation*}
\varphi^{\nor} \colon \cH^{\set} \rightarrow \Delta;\,H \mapsto (d^{-1}\varphi(H)_{1},\ldots,d^{-1}\varphi(H)_{m_{\cH}(H)}),
\end{equation*}
where $d:=\gcd(d_{\varphi}(H)\mid H\in \cH^{\set})$. Then $\varphi^{\nor}$ is a normalized weight function on $\cH$. 
\end{lem}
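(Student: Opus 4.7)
The plan is to verify the two properties required: first, that $\varphi^{\nor}$ is a well-defined function on $\cH$ in the sense of Definition (i.e., lands in the correct $\Delta_{m_{\cH}(H)}$); second, that the resulting function is normalized.

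First I would check well-definedness. Set $d := \gcd(d_{\varphi}(H) \mid H \in \cH^{\set})$, which is a positive integer because each $d_{\varphi}(H) \in \Delta_{1} = \Zpn$. By definition $d_{\varphi}(H) = \gcd(\varphi(H)_{1},\ldots,\varphi(H)_{m_{\cH}(H)})$, so $d$ divides each $\varphi(H)_{i}$. Hence $d^{-1}\varphi(H)_{i} \in \Zpn$. The original tuple $\varphi(H) = (\varphi(H)_{1}, \ldots, \varphi(H)_{m_{\cH}(H)})$ lies in $\Delta_{m_{\cH}(H)}$, so its entries are weakly increasing; dividing by the positive integer $d$ preserves this order, so $\varphi^{\nor}(H) \in \Delta_{m_{\cH}(H)}$.

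Next I would verify the normalization condition. The key computation is
\begin{equation*}
d_{\varphi^{\nor}}(H) = \gcd(d^{-1}\varphi(H)_{1},\ldots,d^{-1}\varphi(H)_{m_{\cH}(H)}) = d^{-1}\gcd(\varphi(H)_{1},\ldots,\varphi(H)_{m_{\cH}(H)}) = d^{-1}d_{\varphi}(H),
\end{equation*}
using the elementary fact that $\gcd$ commutes with division by a common factor. Taking the gcd over $H \in \cH^{\set}$ and pulling out $d^{-1}$ again,
\begin{equation*}
\gcd(d_{\varphi^{\nor}}(H) \mid H \in \cH^{\set}) = d^{-1}\gcd(d_{\varphi}(H) \mid H \in \cH^{\set}) = d^{-1}\cdot d = 1,
\end{equation*}
which is the definition of $\varphi^{\nor}$ being normalized.

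There is no real obstacle here: the statement is essentially a restatement of the fact that any tuple of positive integers can be rescaled by its overall gcd to become a coprime tuple. The only point deserving care is the consistent use of the two-layer gcd (first within each $H$, then across $H$'s), but both layers interact well because $d$ is a common divisor of all entries $\varphi(H)_{i}$ simultaneously.
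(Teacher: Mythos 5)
Your proof is correct and fills in exactly the routine verification the paper leaves implicit (the paper states this lemma with only the remark that it ``can be confirmed from the definition''). Both the well-definedness check (that $d$ divides each $\varphi(H)_i$ and that division preserves the ordering in $\Delta_{m_{\cH}(H)}$) and the two-layer gcd computation showing $\gcd(d_{\varphi^{\nor}}(H)\mid H\in\cH^{\set})=1$ are accurate.
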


\begin{dfn}\label{dfn:gnmn}
Let $G$ be a finite group, and $\cH$ a multiset of its subgroups. Consider a weight function $\varphi$ of $\cH$. We define a $G$-lattice $I_{G/\cH}^{(\varphi)}$ by the exact sequence
\begin{equation}\label{eq:ipdf}
0\rightarrow I_{G/\cH}^{(\varphi)}\rightarrow \bigoplus_{H\in \cH^{\set}}\Z[G/H]^{\oplus m_{\cH}(H)}\xrightarrow{(\varphi(H)_{1}\cdot \varepsilon_{G/H},\,\ldots,\,\varphi(H)_{m_{\cH}(H)}\cdot \varepsilon_{G/H})_{H\in \cH^{\set}}} \Z. 
\end{equation}
Furthermore, set $J_{G/\cH}^{(\varphi)}:=(I_{G/\cH}^{(\varphi)})^{\circ}$. 
\end{dfn}

If $\varphi$ is normalized, then the rightmost homomorphism of \eqref{eq:ipdf} is surjective. Moreover, we have an exact sequence of $G$-lattices
\begin{equation*}
    0 \rightarrow \Z \xrightarrow{\left(\varphi(H)\varepsilon_{G/H}^{\circ}\right)_{H\in \cH^{\set}}}
    \bigoplus_{H\in \cH^{\set}}\Z[G/H]^{\oplus m_{\cH}(H)}\rightarrow 
    J_{G/\cH}^{(\varphi)}\rightarrow 0. 
\end{equation*}

\begin{rem}
\begin{enumerate}
\item If $\varphi(H)=\underbrace{(1,\ldots,1)}_{m_{\cH}(H)}$ for all $H\in \cH^{\set}$, then the $G$-lattices $I_{G/\cH}^{(\varphi)}$ and $J_{G/\cH}^{(\varphi)}$ coincide with $I_{G/\cH}$ and $J_{G/\cH}$ respectively. 
\item Assume that $G$ is an elementary $p$-abelian group, where $p$ is a prime number, and $\cH$ consists of subgroups of index $p$ and the whole group $G$. Let
\begin{equation*}
\varphi \colon \cH^{\set} \rightarrow \Delta;H\mapsto 
\begin{cases}
\underbrace{(1,\ldots,1)}_{m_{\cH}(H)}&\text{if }H\neq G;\\
\underbrace{(p,\ldots,p)}_{m_{\cH}(G)}&\text{if }H=G. 
\end{cases}
\end{equation*}
Then the $G$-lattice $I_{G/\cH}^{(\varphi)}$ is the same as $L=\Ker \Phi$ in \cite[\S 2, l9--14]{Endo2001}. 
\end{enumerate}
\end{rem}

\begin{lem}\label{lem:dvci}
Let $G$ be a finite group, $\cH$ a multiset of its subgroups, and $\varphi$ a weight function on $\cH$. Then one has $I_{G/\cH}^{(\varphi^{\nor})}=I_{G/\cH}^{(\varphi)}$ and $J_{G/\cH}^{(\varphi^{\nor})}=J_{G/\cH}^{(\varphi)}$. 
\end{lem}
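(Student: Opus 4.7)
The proof should be very short: both $I_{G/\cH}^{(\varphi)}$ and $I_{G/\cH}^{(\varphi^{\nor})}$ are defined as kernels of $G$-module homomorphisms out of the same ambient lattice $M := \bigoplus_{H\in \cH^{\set}}\Z[G/H]^{\oplus m_{\cH}(H)}$ to $\Z$, and the only difference between these two homomorphisms is multiplication by a single positive integer in the target.

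Concretely, set $d := \gcd(d_{\varphi}(H)\mid H\in \cH^{\set})$, so that $\varphi(H)_i = d\cdot \varphi^{\nor}(H)_i$ for every $H\in \cH^{\set}$ and every $1\le i\le m_{\cH}(H)$. Writing
\[
f_{\varphi} := (\varphi(H)_1\varepsilon_{G/H},\ldots,\varphi(H)_{m_{\cH}(H)}\varepsilon_{G/H})_{H\in \cH^{\set}}\colon M\to \Z
\]
and likewise $f_{\varphi^{\nor}}$ for $\varphi^{\nor}$, one then has the identity of $G$-module maps $f_{\varphi} = d\cdot f_{\varphi^{\nor}}$.

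Since $\Z$ is torsion-free, multiplication by the nonzero integer $d$ is injective on $\Z$, so $\Ker(f_{\varphi}) = \Ker(f_{\varphi^{\nor}})$ as $G$-submodules of $M$. By the defining exact sequence \eqref{eq:ipdf}, this is precisely the equality $I_{G/\cH}^{(\varphi)} = I_{G/\cH}^{(\varphi^{\nor})}$. Applying $\Hom_{\Z}(-,\Z)$ with its induced $G$-action yields $J_{G/\cH}^{(\varphi)} = J_{G/\cH}^{(\varphi^{\nor})}$, completing the proof.

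There is no real obstacle here; the content of the lemma is exactly the observation that the kernel of a map to $\Z$ is insensitive to scaling the map by a nonzero integer. The only thing worth being careful about is distinguishing equality as submodules (which is what we get) from mere abstract isomorphism (which would be weaker), and noting that $\varphi^{\nor}$ is well-defined and indeed takes values in $\Delta$ by Lemma \ref{lem:pnor}.
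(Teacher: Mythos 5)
Your proof is correct and is exactly the approach the paper takes: the paper reduces to the statement for $I_{G/\cH}^{(\varphi)}$ and then observes it follows from injectivity of multiplication by a nonzero integer on $\Z$, which is precisely your identity $f_{\varphi}=d\cdot f_{\varphi^{\nor}}$ together with torsion-freeness of $\Z$. Your write-up just spells out the details the paper leaves implicit.
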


\begin{proof}
It suffices to prove $I_{G/\cH}^{(\varphi)}=I_{G/\cH}^{(\varphi^{\nor})}$. This follows from the fact that the multiplication by a non-zero integer on $\Z$ is injective. 
\end{proof}

\subsection{Reduction to smaller $G$-lattices}\label{ssec:rdsm}

We give two types of reduction to smaller $G$-lattices. To accomplish it, we first prepare some lemmas.  

\begin{lem}\label{lem:spke}
Let $A$ be a (non-necessarily commutative) ring with unit. Consider a commutative diagram of left $A$-modules
\begin{equation*}
\xymatrix{
0\ar[r]& M_{0}\ar[r]\ar[d]^{h_{0}}& M_{1}\ar[r]\ar[d]^{h_{1}}& M_{2}\ar[r]&0\\
&N\ar@{=}[r]&N, &&
}
\end{equation*}
where the horizontal sequence is exact and the images of $h_{0}$ and $h_{1}$ coincide. Assume that there is a left splitting $f'\colon M_{1}\rightarrow M_{0}$ of the exact sequence satisfying $h_{0}\circ f'=h_{1}$. Then there is a split exact sequence
\begin{equation*}
0\rightarrow \Ker(h_{0})\rightarrow \Ker(h_{1})\rightarrow M_{2}\rightarrow 0. 
\end{equation*}
\end{lem}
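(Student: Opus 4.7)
The plan is to build an explicit splitting of the target sequence by exploiting the left splitting $f'$. Write $\iota \colon M_{0} \hookrightarrow M_{1}$ and $\pi \colon M_{1} \twoheadrightarrow M_{2}$ for the two maps in the top row. The restriction of $\iota$ to $\Ker(h_{0})$ lands in $\Ker(h_{1})$ by the commutativity $h_{1} \circ \iota = h_{0}$, so the left map of the claimed sequence is well-defined and injective. The plan for producing a splitting of the right map is to define $\sigma \colon M_{1} \to \Ker(h_{1})$ by $\sigma(x) := x - \iota(f'(x))$ and then descend it through $\pi$.

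The crucial check is that $\sigma(x)$ lies in $\Ker(h_{1})$, and this is exactly where the two hypotheses on $f'$ combine: by commutativity, $h_{1}(\iota(f'(x))) = h_{0}(f'(x))$, and by the compatibility $h_{0} \circ f' = h_{1}$ this equals $h_{1}(x)$, so $h_{1}(\sigma(x)) = 0$. Next, if $x = \iota(m_{0})$ then $f'(x) = m_{0}$ since $f'$ is a left splitting, so $\sigma(\iota(m_{0})) = 0$; hence $\sigma$ factors through a map $\bar{\sigma} \colon M_{2} \to \Ker(h_{1})$. Since $\pi \circ \iota = 0$, one has $\pi(\sigma(x)) = \pi(x)$ for all $x$, so $\pi \circ \bar{\sigma} = \id_{M_{2}}$, giving the desired splitting. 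Exactness in the middle amounts to $\Ker(\pi|_{\Ker(h_{1})}) = \iota(\Ker(h_{0}))$, which is routine: the inclusion $\supseteq$ is immediate, and for $\subseteq$ any element in the left-hand side is $\iota(m_{0})$ for some $m_{0}\in M_{0}$, and then $h_{0}(m_{0}) = h_{1}(\iota(m_{0})) = 0$.

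I do not anticipate any genuine obstacle: the argument is a short diagram chase. In fact the stated hypothesis $\Ima(h_{0}) = \Ima(h_{1})$ appears to be automatic from the other two assumptions (commutativity gives one inclusion, and $h_{0} \circ f' = h_{1}$ gives the reverse), so I expect not to need it explicitly. The only step that is not purely formal is showing that $\sigma(x) \in \Ker(h_{1})$, and this is precisely the point at which the compatibility $h_{0} \circ f' = h_{1}$ between the splitting and the vertical maps gets used; everything else is a direct verification.
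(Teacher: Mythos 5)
Your proof is correct and is in substance the paper's one-line appeal to the snake lemma: the exactness of $0 \to \Ker(h_0) \to \Ker(h_1) \to M_2 \to 0$ is the snake-lemma output for the two-row diagram with trivial bottom row $N \xrightarrow{=} N \to 0$ (the connecting map vanishes precisely because $\Ima(h_0)=\Ima(h_1)$), and your explicit $\bar\sigma(x) = x - \iota(f'(x))$ is exactly the section one must still produce from $f'$ to upgrade exactness to a split exact sequence. Your side observation that $\Ima(h_0)=\Ima(h_1)$ is already forced by commutativity together with $h_0\circ f'=h_1$ is also correct, so the stated hypothesis is indeed redundant.
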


\begin{proof}
    This follows from the snake lemma. 
\end{proof}

\begin{lem}\label{lem:ivfc}
Let $G$ be a finite group, and $H$ its subgroup. Consider a homomorphism of $G$-lattices
\begin{equation*}
f:=(c_{i}\varepsilon_{G/H})_{i}\colon \Z[G/H]^{\oplus m}\rightarrow \Z,
\end{equation*}
where $m\in \Zpn$ and $c_{1},\ldots,c_{m}$ are integers with the greatest common divisor $d$. Then there is an automorphism $\lambda$ of $G$-lattices $\Z[G/H]^{\oplus m}$ such that $f\circ \lambda$ coincides with the composite
\begin{equation*}
    \Z[G/H]^{\oplus m}\xrightarrow{\pr_{1}}\Z[G/H]\xrightarrow{d\varepsilon_{G/H}}\Z. 
\end{equation*}
\end{lem}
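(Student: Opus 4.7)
The plan is to reduce the statement to an elementary integer-matrix lemma. The key observation is that the $m$-fold direct sum $\Z[G/H]^{\oplus m}$ carries, in addition to the $G$-action, a commuting right action of the ring $M_{m}(\Z)$ acting on the external coordinates; in particular, any matrix $A\in \GL_{m}(\Z)$ induces an automorphism of $G$-lattices $\lambda_{A}\colon \Z[G/H]^{\oplus m}\to \Z[G/H]^{\oplus m}$. The composite $f\circ \lambda_{A}$ is again of the form $(c'_{i}\varepsilon_{G/H})_{i}$, where the row vector $(c'_{1},\ldots,c'_{m})$ is $(c_{1},\ldots,c_{m})\cdot A$. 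Thus the statement of the lemma reduces to the purely arithmetic claim that there exists $A\in \GL_{m}(\Z)$ with $(c_{1},\ldots,c_{m})\cdot A=(d,0,\ldots,0)$.

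The next step is to verify this arithmetic claim. Writing $c_{i}=db_{i}$, the integers $b_{1},\ldots,b_{m}$ have $\gcd$ equal to $1$, so the row vector $(b_{1},\ldots,b_{m})$ is unimodular. It is a classical consequence of the Smith normal form (or equivalently an iterated application of B{\'e}zout's identity) that every unimodular row vector in $\Z^{m}$ can be completed to the first row of a matrix in $\GL_{m}(\Z)$. Equivalently, one can find $A\in \GL_{m}(\Z)$ whose first column is $(b_{1},\ldots,b_{m})^{t}$; then $(c_{1},\ldots,c_{m})\cdot A=d\cdot(b_{1},\ldots,b_{m})\cdot A=(d,0,\ldots,0)$ as required.

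Putting these two steps together, the automorphism $\lambda:=\lambda_{A}$ satisfies $f\circ \lambda=(d\varepsilon_{G/H},0,\ldots,0)$, which is precisely the stated composite $d\varepsilon_{G/H}\circ \pr_{1}$. The only mildly non-formal point is the existence of $A$, which is standard; there is no real obstacle, and the lemma is essentially a translation into the language of $G$-lattices of a well-known elementary fact about $\GL_{m}(\Z)$-equivalence of integer row vectors.
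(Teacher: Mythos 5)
Your approach is in substance the same as the paper's. The paper reduces via Frobenius reciprocity to $\Hom_{\Z}(\Z^{\oplus m},\Z)$ and then invokes the theory of invariant factors; you make the same reduction explicit by observing the right $\GL_{m}(\Z)$-action on the external coordinates of $\Z[G/H]^{\oplus m}$ and appealing to the same Smith normal form / unimodular-row fact. The strategy is correct.

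However, the concluding sentence contains a slip. Taking $A$ with first column $(b_{1},\ldots,b_{m})^{t}$ does \emph{not} give $(b_{1},\ldots,b_{m})\cdot A=(1,0,\ldots,0)$: the first entry of $(b_{1},\ldots,b_{m})\cdot A$ would then be $\sum_{i}b_{i}^{2}$, and the remaining entries need not vanish. What you want is $A=B^{-1}$, where $B\in\GL_{m}(\Z)$ has first row $(b_{1},\ldots,b_{m})$ (this is exactly the completion fact you cited); then $(b_{1},\ldots,b_{m})\cdot A$ is the first row of $BB^{-1}=I$, namely $(1,0,\ldots,0)$, and hence $(c_{1},\ldots,c_{m})\cdot A=(d,0,\ldots,0)$. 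You should also fix the convention for how $\lambda_{A}$ transforms the coefficient vector: with $\lambda_{A}(x_{1},\ldots,x_{m}):=\bigl(\sum_{i}x_{i}a_{ij}\bigr)_{j}$ one computes $f\circ\lambda_{A}=(c'_{i}\varepsilon_{G/H})_{i}$ with $c'_{i}=\sum_{j}a_{ij}c_{j}$, i.e.\ $(c'_{1},\ldots,c'_{m})=(c_{1},\ldots,c_{m})A^{t}$ rather than $(c_{1},\ldots,c_{m})A$. Neither slip undermines the idea, but as written the final equality is false.
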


\begin{proof}
    By the Frobenius reciprocity, there is an isomorphism
   \begin{equation*}
       \Hom_{\Z[G]}(\Z[G/H]^{\oplus m},\Z)\cong \Hom_{\Z[H]}(\Z^{\oplus m},\Z)=\Hom_{\Z}(\Z^{\oplus m},\Z). 
   \end{equation*} 
    Hence the assertion follows from the theory of invariant factors for finitely generated abelian groups. 
\end{proof}

The following is the first type of reduction to smaller $G$-lattices. 

\begin{lem}\label{lem:rded}
Let $G$ be a finite group, $\cH$ a multiset of its subgroups, and $\varphi$ a weight function on $\cH$. Assume that there exist $H_{0},H'_{0}\in \cH$ and $i_{0},i'_{0}\in \{1,\ldots,m_{\cH}(H)\}$ such that
\begin{itemize}
\item $H_{0}\subset H'_{0}$; and
\item ${\varphi}(H_{0})_{i_{0}}\in {\varphi}(H'_{0})_{i'_{0}}\Z$. 
\end{itemize}
We denote by $\cH'$ the multiset of subgroups of $G$ that satisfies the following for every subgroup $H$ of $G$: 
\begin{equation*}
m_{\cH'}(H')=
\begin{cases}
    m_{\cH}(H')&\text{if }H'\in \cH^{\set}\setminus \{H_0\};\\
    m_{\cH}(H_{0})-1&\text{if }H'=H_{0}. 
\end{cases}
\end{equation*}
Furthermore, we define a weight function $\varphi'$ of $\cH'$ as 
\begin{equation*}
\varphi'(H')=
\begin{cases}
    \varphi(H')&\text{if }H'\in \cH^{\set}\setminus \{H_0\};\\
    (\varphi(H_{0})_{i})_{i\in \{1,\ldots,m_{\cH}(H)\}\setminus \{i_{0}\}}&\text{if }H'=H_{0}. 
\end{cases}
\end{equation*}
Then there exist isomorphisms of $G$-lattices
\begin{equation*}
I_{G/\cH}^{(\varphi)} \cong I_{G/\cH'}^{({\varphi'})} \oplus \Z[G/H_{0}],\quad
J_{G/\cH}^{(\varphi)} \cong J_{G/\cH'}^{({\varphi'})} \oplus \Z[G/H_{0}]. 
\end{equation*}
\end{lem}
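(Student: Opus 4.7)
The plan is to apply Lemma~\ref{lem:spke} to the defining sequence \eqref{eq:ipdf} of $I_{G/\cH}^{(\varphi)}$ in order to peel off the $i_{0}$-th copy of $\Z[G/H_{0}]$ as a direct summand. Concretely, put $M_{1}:=\bigoplus_{H\in \cH^{\set}}\Z[G/H]^{\oplus m_{\cH}(H)}$ and let $M_{2}$ be the distinguished $\Z[G/H_{0}]$-summand corresponding to the index $i_{0}$, so that $M_{1}=M_{0}\oplus M_{2}$ with $M_{0}$ the ambient module of the defining sequence of $I_{G/\cH'}^{(\varphi')}$. Let $h_{1}\colon M_{1}\to \Z$ be the structural map $(\varphi(H)_{j}\varepsilon_{G/H})_{H,j}$, and let $h_{0}$ be its restriction to $M_{0}$; then $\Ker h_{0}=I_{G/\cH'}^{(\varphi')}$ and $\Ker h_{1}=I_{G/\cH}^{(\varphi)}$ by construction.

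To invoke Lemma~\ref{lem:spke} one must check (a) $\Ima h_{0}=\Ima h_{1}$ and (b) the existence of a left splitting $f'\colon M_{1}\to M_{0}$ with $h_{0}\circ f'=h_{1}$. Both conditions use the hypothesis $\varphi(H_{0}')_{i_{0}'}\mid \varphi(H_{0})_{i_{0}}$ together with the canonical $G$-equivariant surjection $\pi\colon \Z[G/H_{0}]\twoheadrightarrow \Z[G/H_{0}']$ induced by $H_{0}\subset H_{0}'$, which satisfies $\varepsilon_{G/H_{0}'}\circ \pi=\varepsilon_{G/H_{0}}$. Condition (a) is immediate, because the $(H_{0}',i_{0}')$-summand already contributes $\varphi(H_{0}')_{i_{0}'}\Z$ to $\Ima h_{0}$, which contains $\varphi(H_{0})_{i_{0}}\Z$. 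For (b), set $c:=\varphi(H_{0})_{i_{0}}/\varphi(H_{0}')_{i_{0}'}\in \Z$ and define $f'$ to be the identity on $M_{0}$ and, on $M_{2}=\Z[G/H_{0}]$, the composite $c\pi$ followed by inclusion into the $(H_{0}',i_{0}')$-summand of $M_{0}$. A short computation using $\varepsilon_{G/H_{0}'}\circ \pi=\varepsilon_{G/H_{0}}$ and $c\varphi(H_{0}')_{i_{0}'}=\varphi(H_{0})_{i_{0}}$ yields $h_{0}\circ f'=h_{1}$.

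Lemma~\ref{lem:spke} then produces a split exact sequence
\begin{equation*}
0\to I_{G/\cH'}^{(\varphi')}\to I_{G/\cH}^{(\varphi)}\to \Z[G/H_{0}]\to 0,
\end{equation*}
giving the first isomorphism. The second isomorphism follows by taking $\Z$-duals, since permutation $G$-lattices are self-dual (the natural orbit basis and its dual basis are permuted by $G$ in the same way), and $(\cdot)^{\circ}$ is exact on short exact sequences of $G$-lattices that split as abelian groups. I do not expect a substantive obstacle; the only delicate point is the construction of $f'$ and verifying its compatibility with $h_{0}$ and $h_{1}$, which is precisely where the divisibility hypothesis $\varphi(H_{0}')_{i_{0}'}\mid \varphi(H_{0})_{i_{0}}$ is essential.
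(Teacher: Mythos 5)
Your proposal is correct and follows essentially the same route as the paper: both apply Lemma~\ref{lem:spke} to the split sequence $0\to M_{0}\to M_{1}\to \Z[G/H_{0}]\to 0$, with a left splitting that, on the peeled-off $\Z[G/H_{0}]$-summand, is $c\cdot\Ind_{H'_{0}}^{G}(\varepsilon_{H'_{0}/H_{0}})$ (your $c\pi$) followed by inclusion into the $(H'_{0},i'_{0})$-slot, exactly where the divisibility hypothesis is used. Your additional remark about passing to $\Z$-duals via self-duality of permutation lattices and split-exactness is a slightly more explicit version of the paper's one-line dismissal of the $J$-case.
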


\begin{rem}
If $\varphi(H)=(\underbrace{1,\ldots,1)}_{m_{\cH}(H)}$ for all $H\in \cH^{\set}$, then Lemma \ref{lem:rded} is essentially the same as \cite[Proposition 1.3]{Endo2011}. 
\end{rem}

\begin{proof}
It suffices to give an isomorphism
\begin{equation}\label{eq:mlt2}
    I_{G/\cH}^{(\varphi)}\cong 
    I_{G/\cH'}^{(\varphi')}\oplus \Z[G/H],
\end{equation}
which easily implies an isomorphism $J_{G/\cH}^{(\varphi)}\cong J_{G/\cH'}^{(\varphi')}\oplus \Z[G/H]$. Fix $H'_{0}\in \cH^{\set}$ containing $H_{0}$, $i_{0}\in \{1,\ldots,m_{\cH}(H_{0})\}$ and $i'_{0}\in \{1,\ldots,m_{\cH}(H'_{0})\}$ such that $\varphi(H_{0})_{i_{0}}\in \varphi(H'_{0})_{i'_{0}}\Z$. Then one has a commutative diagram
\begin{equation}\label{eq:gned}
\xymatrix{
    0\ar[r]& \bigoplus_{H'\in \cH'} \Z[G/H'] \ar[r] \ar[d]_{({\varphi}(H)\varepsilon_{G/H'})_{H'}}& 
    \bigoplus_{H\in \cH} \Z[G/H] \ar[r] \ar[d]^{({\varphi}(H)\varepsilon_{G/H})_{H}}& 
    \Z[G/H_{0}] \ar[r] & 0\\
    & \Z \ar@{=}[r]& \Z, &&
    }
\end{equation}
where the horizontal sequence is the canonical split exact sequence. Then the images of the vertical homomorphisms coincide. Now we define a homomorphism $\Phi$ as the direct sum of the identity maps on $\Z[G/H]^{\oplus m_{\cH}(H)}$ for all $H\in \cH^{\set}\setminus \{H_{0}\}$ and the map
\begin{equation*}
    \Z[G/H_{0}]^{\oplus m_{\cH}(H_{0})}\oplus \Z[G/H'_{0}]\rightarrow \Z[G/H_{0}]^{\oplus m_{\cH}(H_{0})-1}\oplus \Z[G/H'_{0}]
\end{equation*}
defined by
\begin{equation*}
    ((x_{i})_{i},y)\mapsto \left((x_{i})_{i\neq i_{0}},\frac{{\varphi}(H_{0})_{i_{0}}}{{\varphi}(H'_{0})_{i_{0}}}\Ind_{H'_{0}}^{G}(\varepsilon_{H'_{0}/H_{0}})(x)+y\right). 
\end{equation*}
Then the map $\Phi$ gives a left splitting of the exact sequence in \eqref{eq:gned}. Furthermore, by definition, the diagram
\begin{equation*}
    \xymatrix{
    \bigoplus_{H\in \cH} \Z[G/H] \ar[r]^{\Phi\hspace{5pt}} \ar[d]_{({\varphi}(H)\varepsilon_{G/H})_{H}}& 
    \bigoplus_{H'\in \cH'} \Z[G/H'] \ar[d]^{({\varphi'}(H')\varepsilon_{G/H'})_{H'}}\\
    \Z \ar@{=}[r]& \Z
    }
\end{equation*}
is commutative. Therefore Lemma \ref{lem:spke} implies the existence of \eqref{eq:mlt2}. This completes the proof of Lemma \ref{lem:rded}. 
\end{proof}

\begin{dfn}\label{dfn:reds}
Let $G$ be a finite group, and $\cH$ a set of its subgroups (that is, all elements in $\cH$ have multiplicity $1$). We say that $\cH$ is \emph{reduced} if $H\not\subset H'$ for any $H,H'\in \cH$ with $H\neq H'$ as subgroups of $G$. 
\end{dfn}

For a multiset $\cH$ of subgroups of a finite group $G$, we denote by $\cH^{\red}$ the subset of $\cH^{\set}$ consisting of all elements of $\cH^{\set}$ that are maximal with respect to inclusion. Note that it is reduced in the sense of Definition \ref{dfn:reds}. 

\begin{cor}\label{cor:rdrd}
Let $G$ be a finite group, and $\cH$ a multiset of its subgroups. 
\begin{enumerate}
\item Let $\varphi$ be a normalized weight function on $\cH$. Then there exist isomorphisms of $G$-lattices
\begin{align*}
I_{G/\cH}^{(\varphi)} &\cong I_{G/\cH^{\set}}^{(d_{\varphi})} \oplus \left(\bigoplus_{H\in \cH^{\set}}\Z[G/H]^{\oplus m_{\cH}(H)-1}\right);\\
J_{G/\cH}^{(\varphi)} &\cong J_{G/\cH^{\set}}^{(d_{\varphi})} \oplus \left(\bigoplus_{H\in \cH^{\set}}\Z[G/H]^{\oplus m_{\cH}(H)-1}\right). 
\end{align*}
In particular, the $G$-lattice $J_{G/\cH}^{(\varphi)}$ is quasi-permutation (resp.~quasi-invertible) if and only if $J_{G/\cH^{\set}}^{(d_{\varphi})}$ is so. 
\item There exist isomorphisms of $G$-lattices
\begin{align*}
I_{G/\cH} &\cong I_{G/\cH^{\red}} \oplus \left(\bigoplus_{H\in \cH^{\red}}\Z[G/H]^{\oplus m_{\cH}(H)-1}\right)\oplus \left(\bigoplus_{H\in \cH^{\set}\setminus \cH^{\red}}\Z[G/H]^{\oplus m_{\cH}(H)}\right);\\
J_{G/\cH} &\cong J_{G/\cH^{\red}}\oplus \left(\bigoplus_{H\in \cH^{\red}}\Z[G/H]^{\oplus m_{\cH}(H)-1}\right)\oplus \left(\bigoplus_{H\in \cH^{\set}\setminus \cH^{\red}}\Z[G/H]^{\oplus m_{\cH}(H)}\right). 
\end{align*}
In particular, the $G$-lattice $J_{G/\cH}$ is quasi-permutation (resp.~quasi-invertible) if and only if $J_{G/\cH^{\red}}$ is so. 
\end{enumerate}
\end{cor}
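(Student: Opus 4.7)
The plan is to combine Lemma \ref{lem:ivfc} (which puts each ``augmentation block'' into a canonical form) with Lemma \ref{lem:rded} (which removes one subgroup at a time). Part (i) is essentially a ``block diagonalization'' of the defining map of $I_{G/\cH}^{(\varphi)}$, and part (ii) follows from (i) by successively pruning non-maximal subgroups.

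For part (i), fix $H\in\cH^{\set}$ and consider the restriction of the defining surjection to the $H$-block
\[
f_{H}:=(\varphi(H)_{1}\varepsilon_{G/H},\ldots,\varphi(H)_{m_{\cH}(H)}\varepsilon_{G/H})\colon \Z[G/H]^{\oplus m_{\cH}(H)}\rightarrow \Z.
\]
By Lemma \ref{lem:ivfc}, there exists an automorphism $\lambda_{H}$ of the $G$-lattice $\Z[G/H]^{\oplus m_{\cH}(H)}$ such that $f_{H}\circ \lambda_{H}$ equals $d_{\varphi}(H)\varepsilon_{G/H}$ on the first coordinate and $0$ on the remaining ones. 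Taking $\lambda:=\bigoplus_{H\in \cH^{\set}}\lambda_{H}$, which is an automorphism of $\bigoplus_{H\in \cH^{\set}}\Z[G/H]^{\oplus m_{\cH}(H)}$, we identify $I_{G/\cH}^{(\varphi)}$ with the kernel of
\[
\bigoplus_{H\in \cH^{\set}}\Bigl(\Z[G/H]\oplus \Z[G/H]^{\oplus m_{\cH}(H)-1}\Bigr)\xrightarrow{(d_{\varphi}(H)\varepsilon_{G/H})_{H}\oplus 0} \Z.
\]
The normalization assumption $\gcd(d_{\varphi}(H)\mid H\in \cH^{\set})=1$ ensures that this map is surjective, so its kernel visibly splits as $I_{G/\cH^{\set}}^{(d_{\varphi})}\oplus \bigoplus_{H\in \cH^{\set}}\Z[G/H]^{\oplus m_{\cH}(H)-1}$. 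Dualizing (and using $\Z[G/H]^{\circ}\cong \Z[G/H]$) gives the analogous decomposition of $J_{G/\cH}^{(\varphi)}$. The ``in particular'' assertion then follows because permutation summands satisfy $[\cdot]^{\fl}=0$ and $[\cdot]^{\fl}$ is additive on direct sums, so Proposition \ref{prop:edlr} yields both equivalences.

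For part (ii), first apply part (i) with the normalized constant function $\varphi\equiv 1$ (for which $d_{\varphi}\equiv 1$) to obtain
\[
I_{G/\cH}\cong I_{G/\cH^{\set}}\oplus \bigoplus_{H\in \cH^{\set}}\Z[G/H]^{\oplus m_{\cH}(H)-1}.
\]
Now enumerate $\cH^{\set}\setminus \cH^{\red}=\{H_{1},\ldots,H_{s}\}$; for each $H_{j}$ there is some $H'_{j}\in \cH^{\red}$ with $H_{j}\subsetneq H'_{j}$, and the divisibility condition of Lemma \ref{lem:rded} trivially holds since $1\in 1\cdot \Z$. Applying Lemma \ref{lem:rded} successively to remove $H_{1},\ldots,H_{s}$ yields
\[
I_{G/\cH^{\set}}\cong I_{G/\cH^{\red}}\oplus \bigoplus_{H\in \cH^{\set}\setminus \cH^{\red}}\Z[G/H],
\]
and combining the two displays (together with the obvious identity $m_{\cH}(H)-1+1=m_{\cH}(H)$ for $H\in \cH^{\set}\setminus \cH^{\red}$) gives the desired formula for $I_{G/\cH}$. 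The formula for $J_{G/\cH}$ is obtained by dualizing, and the final equivalences again follow from Proposition \ref{prop:edlr} and the additivity of $[\cdot]^{\fl}$. No serious obstacle is anticipated: the only subtlety is verifying that the automorphisms produced by Lemma \ref{lem:ivfc} on different blocks can be assembled coherently, which is automatic because the defining exact sequence of $I_{G/\cH}^{(\varphi)}$ is block-diagonal in the index $H$.
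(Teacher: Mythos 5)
Your proof is correct and follows essentially the same route as the paper: Lemma \ref{lem:ivfc} to block-diagonalize the defining map, then the splitting, then Lemma \ref{lem:rded} to prune $\cH^{\set}\setminus\cH^{\red}$. The only minor difference is in (i), where you observe the kernel splitting directly after applying $\lambda$, whereas the paper reaches the same conclusion by first passing to the auxiliary function $\widetilde{d}_{\varphi}$ and then invoking Lemma \ref{lem:rded} to peel off the redundant copies; the two are equivalent.
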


\begin{proof}
(i): It suffices to construct an isomorphism on $I_{G/\cH}^{(\varphi)}$. By Lemma \ref{lem:ivfc}, there is an isomorphism
\begin{equation*}
    I_{G/\cH}^{(\varphi)} \cong I_{G/\cH}^{(\widetilde{d}_{\varphi})},
\end{equation*}
where $\widetilde{d}_{\varphi}$ is defined as
\begin{equation*}
    \widetilde{d}_{\varphi}(H):=(0,\ldots,0,d_{\varphi}(H))\in \Delta_{m_{\cH}(H)}
\end{equation*}
for every $H\in \cH^{\set}$. In this case, we have $\widetilde{d}_{\varphi}(H)_{i}\in \widetilde{d}_{\varphi}(H)_{m_{\cH}(H)}\Z$ for any $i\in \{1,\ldots,m_{\cH}(\cH)-1\}$. Then the assertion follows from Lemma \ref{lem:rded}. 

(ii): By (i), we may assume that $\cH$ is a set. Let $\varphi$ be the weight function on $\cH$ which takes the value $1$. Then, we have $I_{G/\cH}^{(\varphi)}=I_{G/\cH}$. Moreover, for every $H\in \cH^{\set}\setminus \cH^{\red}$, there is $H'\in \cH^{\red}$ such that $H\subset H'$ and $\varphi(H)\in \varphi(H')\Z$. Hence the assertion is a consequence of Lemma \ref{lem:rded}. 
\end{proof}

\begin{prop}\label{prop:conj}
Let $G$ be a finite group, and $\cH$ a multiset of its subgroups. Take $H_{0}\in \cH$ and $g\in G$. Consider a multiset $\cH'$ of subgroups of $G$ which satisfies the following for any subgroup $H$ of $G$: 
\begin{equation*}
m_{\cH'}(H)=
\begin{cases}
m_{\cH}(gH_{0}g^{-1})+1&\text{if }H=gH_{0}g^{-1}; \\
m_{\cH}(H_{0})-1&\text{if }H=H_{0};\\
m_{\cH}(H)&\text{otherwise}. 
\end{cases}
\end{equation*}
Then there exist isomorphisms of $G$-lattices
\begin{equation*}
    I_{G/\cH}\cong I_{G/\cH'},\quad J_{G/\cH}\cong J_{G/\cH'}. 
\end{equation*}
\end{prop}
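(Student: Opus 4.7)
The plan is to exhibit a canonical $G$-equivariant isomorphism between $\Z[G/H_{0}]$ and $\Z[G/gH_{0}g^{-1}]$ that is compatible with the augmentation maps, and then patch it into an isomorphism between the defining short exact sequences of $I_{G/\cH}$ and $I_{G/\cH'}$.

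First I would consider the map
\begin{equation*}
\phi\colon \Z[G/H_{0}]\rightarrow \Z[G/gH_{0}g^{-1}];\,xH_{0}\mapsto xg^{-1}\cdot gH_{0}g^{-1}.
\end{equation*}
A short check shows that $\phi$ is well-defined (if $x'^{-1}x\in H_{0}$, then $(x'g^{-1})^{-1}(xg^{-1})=g(x'^{-1}x)g^{-1}\in gH_{0}g^{-1}$), $G$-equivariant, and a bijection with inverse $y\cdot gH_{0}g^{-1}\mapsto yg\cdot H_{0}$. Crucially, $\phi$ is compatible with the augmentation maps, in the sense that $\varepsilon_{G/gH_{0}g^{-1}}\circ \phi=\varepsilon_{G/H_{0}}$, since both maps send every coset to $1$.

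Next I would fix one specific copy of the summand $\Z[G/H_{0}]$ in $\bigoplus_{H\in \cH}\Z[G/H]$ (the one being removed) together with one new copy of $\Z[G/gH_{0}g^{-1}]$ in $\bigoplus_{H\in \cH'}\Z[G/H]$ (the one being added), and define
\begin{equation*}
\Phi\colon \bigoplus_{H\in \cH}\Z[G/H]\longrightarrow \bigoplus_{H\in \cH'}\Z[G/H]
\end{equation*}
to act by $\phi$ from the distinguished $\Z[G/H_{0}]$-summand onto the distinguished new $\Z[G/gH_{0}g^{-1}]$-summand, and by the identity on every other summand. The augmentation compatibility of $\phi$ then shows that $\Phi$ intertwines the two augmentation homomorphisms
$(\varepsilon_{G/H})_{H\in \cH}\colon \bigoplus_{H\in \cH}\Z[G/H]\to \Z$ and $(\varepsilon_{G/H})_{H\in \cH'}\colon \bigoplus_{H\in \cH'}\Z[G/H]\to \Z$.

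Finally, taking kernels yields the desired isomorphism $I_{G/\cH}\cong I_{G/\cH'}$, and dualizing gives $J_{G/\cH}\cong J_{G/\cH'}$. There is no real obstacle here; the content of the statement is just the elementary observation that the $G$-module $\Z[G/H]$ depends on $H$ only up to conjugation, made canonical enough to respect the augmentation.
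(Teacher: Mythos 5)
Your proof is correct and follows essentially the same route as the paper: conjugation by $g$ gives a $G$-equivariant isomorphism $\Z[G/H_{0}]\cong \Z[G/gH_{0}g^{-1}]$ compatible with the augmentation maps, which is then extended by identities on the other summands and restricted to kernels. (The paper writes the conjugation isomorphism as ``$1\mapsto g$'' and invokes the snake lemma where you simply restrict to kernels, but this is the same argument; your explicit formula $xH_{0}\mapsto xg^{-1}\cdot gH_{0}g^{-1}$ together with the well-definedness check is in fact more careful about the direction of conjugation.)
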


\begin{proof}
It suffices to prove the left isomorphism. Consider a homomorphism of $G$-lattices
\begin{equation*}
\Z[G/H_{0}]\rightarrow \Z[G/gH_{0}g^{-1}];\,\sum_{g' \in G/H_{0}}a_{g'}g' \mapsto \sum_{g' \in G/H_{0}}a_{g'}g'g^{-1}, 
\end{equation*}
which is an isomorphism. Then the direct summand of this map and the identity map on $\Z[G/H]$ for all $H\in \cH^{\set}$ induce a commutative diagram
\begin{equation*}
\xymatrix@C=40pt{
    0\ar[r]& I_{G/\cH} \ar[r] \ar[d]& 
    \bigoplus_{H\in \cH^{\set}} \Z[G/H] \ar[d]^{\cong} \ar[r]^{\hspace{35pt}(\varepsilon_{G/H})_{H}}& \Z
     \ar[r]\ar@{=}[d] & 0\\
    0\ar[r]& I_{G/\cH'} \ar[r]& \bigoplus_{H'\in \cH'} \Z[G/H'] \ar[r]^{\hspace{35pt}(\varepsilon_{G/H'})_{H'}}&\Z \ar[r]& 0. 
    }
\end{equation*}
Hence the assertion follows from snake lemma. 
\end{proof}

\begin{dfn}
Let $G$ be a finite group. We say that a set of subgroups $\cH$ of $G$ is \emph{strongly reduced} if $H \not\subset gH'g^{-1}$ for any $H,H'\in \cH$ with $H\neq H'$ as subgroups of $G$ and any $g\in G$. 
\end{dfn}

For a multiset $\cH$ of subgroups of a finite group $G$, we denote by $\cH^{\srd}$ a subset of $\cH^{\set}$ that is strongly reduced and maximal with respect to inclusion. Note that the subset $\cH^{\srd}$ is not uniquely determined. However, we have $\cH^{\srd}\subset \cH^{\red}$ by definition. 

\begin{prop}\label{prop:rdsr}
Let $G$ be a finite group, and $\cH$ a multiset of its subgroups. 
Then there exist isomorphisms of $G$-lattices
\begin{gather*}
    I_{G/\cH}\cong I_{G/\cH^{\srd}}\oplus \left( \bigoplus_{H\in \cH^{\srd}}\Z[G/H]^{\oplus m_{\cH}(H)-1}\right) \oplus \left( \bigoplus_{H\in \cH^{\set}\setminus \cH^{\srd}}\Z[G/H]^{\oplus m_{\cH}(H)}\right),\\ 
    J_{G/\cH}\cong J_{G/\cH^{\srd}}\oplus \left( \bigoplus_{H\in \cH^{\srd}}\Z[G/H]^{\oplus m_{\cH}(H)-1}\right) \oplus \left(\bigoplus_{H\in \cH^{\set}\setminus \cH^{\srd}}\Z[G/H]^{\oplus m_{\cH}(H)}\right). 
\end{gather*}
\end{prop}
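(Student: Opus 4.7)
The plan is to prove the isomorphism for $I_{G/\cH}$ by induction on $|\cH|-|\cH^{\srd}|$ (counting multiplicities in $\cH$); the isomorphism for $J_{G/\cH}$ then follows by applying $(-)^{\circ}$. The key idea, as in the proof of Corollary \ref{cor:rdrd}(ii), is to iteratively combine Proposition \ref{prop:conj} (to turn conjugate-inclusions into direct inclusions without altering $I_{G/\cH}$) with Lemma \ref{lem:rded} (to peel off a direct summand of the form $\Z[G/H]$).

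The base case $|\cH|=|\cH^{\srd}|$ forces $\cH=\cH^{\srd}$ as multisets and the statement is trivial. For the inductive step, I first treat the case that some $H\in\cH^{\srd}$ has $m_{\cH}(H)\geq 2$; then Corollary \ref{cor:rdrd}(i) peels off a copy of $\Z[G/H]$ and decreases $m_{\cH}(H)$ by one while preserving $\cH^{\srd}$ as a maximal strongly reduced subset, and the inductive hypothesis applies. Otherwise every element of $\cH^{\srd}$ has multiplicity one, and one can pick $H_{0}\in\cH^{\set}\setminus\cH^{\srd}$. The maximality of $\cH^{\srd}$ furnishes $H'\in\cH^{\srd}$ and $g\in G$ with $H_{0}\subseteq gH'g^{-1}$ or $gH'g^{-1}\subseteq H_{0}$. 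In the former case I would use Proposition \ref{prop:conj} to replace one copy of $H_{0}$ by $g^{-1}H_{0}g\subseteq H'$, then use Lemma \ref{lem:rded} to split off $\Z[G/g^{-1}H_{0}g]\cong\Z[G/H_{0}]$; this reduces $m_{\cH}(H_{0})$ by one while keeping $\cH^{\srd}$ intact, and the inductive hypothesis finishes the argument.

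The principal obstacle is the remaining case $gH'g^{-1}\subsetneq H_{0}$. Here, applying Proposition \ref{prop:conj} to replace $H'$ by $gH'g^{-1}\subseteq H_{0}$ and then Lemma \ref{lem:rded} with $H_{0}$ in the role of the larger subgroup gives $I_{G/\cH}\cong I_{G/\cH_{1}}\oplus\Z[G/H']$, with $\cH_{1}$ obtained from $\cH$ by removing $H'$; since $m_{\cH}(H')=1$, the subset $\cH^{\srd}$ is no longer contained in $\cH_{1}^{\set}$. My plan is then to pick a new maximal strongly reduced subset of $\cH_{1}^{\set}$ (containing $H_{0}$ in place of $H'$ whenever the swap preserves strong reducedness), apply the inductive hypothesis to $\cH_{1}$, and reconcile the resulting decomposition with the target formula by exploiting the augmentation-compatible $G$-equivariant map $\Z[G/H']\to\Z[G/H_{0}]$, $xH'\mapsto xg^{-1}H_{0}$, induced by $gH'g^{-1}\subseteq H_{0}$. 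The delicate combinatorial task of verifying that the formula holds for any choice of $\cH^{\srd}$---equivalently, that distinct choices yield isomorphic right-hand sides---is the heart of this final case.
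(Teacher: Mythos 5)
Your inductive approach diverges from the paper's, which is a one-shot argument: it forms the multiset $\cH'$ with $(\cH')^{\set}=\cH^{\srd}$ and $m_{\cH'}(H)=\sum_{g\in G/N_G(H)}m_{\cH}(gHg^{-1})$, invokes Proposition \ref{prop:conj} repeatedly to obtain $I_{G/\cH}\cong I_{G/\cH'}$, and then concludes with Corollary \ref{cor:rdrd}(ii). Your steps (a)--(c) are correct and essentially carry out the same conjugation-and-peel manipulations one at a time. The genuine gap is your case (d), and it is not merely delicate: the ``combinatorial task'' you set yourself --- showing that distinct choices of $\cH^{\srd}$ yield isomorphic right-hand sides --- is false, so the plan cannot be completed as stated.

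Here is a concrete instance of case (d). Take $G=S_4$, $H_1=\langle(12)\rangle$, $H_2=\langle(13),(24)\rangle$, $\cH=\{H_1,H_2\}$. Since $H_1\not\subset H_2$ one has $\cH^{\red}=\cH^{\set}$; and since $(23)H_1(23)^{-1}=\langle(13)\rangle\subsetneq H_2$, no strongly reduced subset of $\cH^{\set}$ contains both, so $\{H_1\}$ and $\{H_2\}$ are each maximal strongly reduced subsets of $\cH^{\red}$. Taking $G$-cohomology of the defining sequence of $I_{G/\cH}$ and using $H^1(H_i,\Z)=0$ gives
\begin{equation*}
H^1(G,I_{G/\cH})\cong \Z/\gcd(12,6)\Z=\Z/6\Z, \qquad H^1(G,\,I_{G/H_1}\oplus\Z[G/H_2])=H^1(G,I_{G/H_1})\cong\Z/12\Z.
\end{equation*}
Hence $I_{G/\cH}\not\cong I_{G/H_1}\oplus\Z[G/H_2]$: the claimed decomposition fails for $\cH^{\srd}=\{H_1\}$, although it does hold for $\cH^{\srd}=\{H_2\}$, exactly as your case (c) predicts. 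Note that $\cH^{\srd}=\{H_1\}$ is precisely the choice that produces your configuration $gH'g^{-1}\subsetneq H_0$.

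The remedy is therefore not to resolve case (d) but to rule it out by normalizing $\cH^{\srd}$: one should take $\cH^{\srd}$ to consist of one representative from each conjugacy class of subgroups $H\in\cH^{\set}$ that is maximal with respect to \emph{conjugate}-inclusion, i.e.\ for which no $H'\in\cH^{\set}$ and $g\in G$ satisfy $H\subsetneq gH'g^{-1}$. Such a set always exists, is a maximal strongly reduced subset, and is contained in $\cH^{\red}$; with this choice every $H_0\in\cH^{\set}\setminus\cH^{\srd}$ conjugates into some element of $\cH^{\srd}$, so only your cases (a)--(c) can arise and your induction closes, matching the paper's one-shot argument in substance. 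The paper leaves this normalization of $\cH^{\srd}$ implicit (its displayed formula for $m_{\cH'}$ also needs it, and needs the conjugation to tuck strictly smaller members \emph{into} elements of $\cH^{\srd}$ before Corollary \ref{cor:rdrd}(ii) is applied); you should state it explicitly.
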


\begin{proof}
Let $\cH'$ be the multiset of subgroups of $G$ that satisfies $(\cH')^{\set}=\cH^{\srd}$ and
\begin{equation*}
    m_{\cH'}(H)=\sum_{g\in G/N_{G}(H)}m_{\cH}(gHg^{-1})
\end{equation*}
for every $H\in \cH^{\srd}$. Then, Proposition \ref{prop:conj} implies that there exist isomorphisms of $G$-lattices
\begin{equation*}
    I_{G/\cH}\cong I_{G/\cH'},\quad J_{G/\cH'}\cong J_{G/\cH}. 
\end{equation*}
Hence the assertion follows from Corollary \ref{cor:rdrd} (ii). 
\end{proof}

The following is the second type of reduction to smaller $G$-lattices. 

\begin{prop}\label{prop:rdtr}
Let $G$ be a finite group, $\cH$ a multiset of its subgroups, and $\varphi$ be a weight function on $\cH$. Assume that there exist $H_{0},H'_{0}\in \cH$ and $i_{0}\in \{1,\ldots,m_{\cH}(H)\}$ such that
\begin{itemize}
\item $H_{0}'\subset H_{0}$; and
\item $\varphi(H_{0})_{i_{0}}\in (H_{0}:H'_{0})d_{\varphi}(H'_{0})\Z$. 
\end{itemize}
We denote by $\cH'$ the multiset of subgroups of $G$ that satisfies the following for every subgroup $H$ of $G$: 
\begin{equation*}
m_{\cH'}(H)=
\begin{cases}
    m_{\cH}(H_{0})-1&\text{if }H=H_{0};\\
    m_{\cH}(H)&\text{if }H\neq H_{0}.
\end{cases}
\end{equation*}
Furthermore, we define a weight function $\varphi'$ of $\cH'$ as 
\begin{equation*}
    \varphi'(H):=
    \begin{cases}
        (\varphi(H_{0})_{1},\ldots,\varphi(H_{0})_{i-1},\varphi(H_{0})_{i+1},\ldots,\varphi(H_{0})_{i})&\text{if }H=H_{0}\in \cH^{\set};\\
        \varphi(H)&\text{if }H\neq H_{0},
    \end{cases}
\end{equation*}
which is normalized.
Then there exist isomorphisms of $G$-lattices
\begin{equation}\label{eq:incd}
I_{G/\cH}^{(\varphi)} \cong I_{G/\cH'}^{(\varphi')}\oplus \Z[G/H_{0}],\quad 
J_{G/\cH}^{(\varphi)} \cong J_{G/\cH'}^{(\varphi')}\oplus \Z[G/H_{0}]. 
\end{equation}
In particular, the $G$-lattice $J_{G/\cH}^{(\varphi)}$ is quasi-permutation (resp.~quasi-invertible) if and only if $J_{G/\cH'}^{(\varphi')}$ is so. 
\end{prop}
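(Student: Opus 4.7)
The plan is to mimic the proof of Lemma \ref{lem:rded}, applying Lemma \ref{lem:spke} to the evident split short exact sequence of $G$-lattices
\begin{equation*}
0 \to \bigoplus_{H \in (\cH')^{\set}} \Z[G/H]^{\oplus m_{\cH'}(H)} \xrightarrow{\iota} \bigoplus_{H \in \cH^{\set}} \Z[G/H]^{\oplus m_{\cH}(H)} \xrightarrow{\pi} \Z[G/H_{0}] \to 0,
\end{equation*}
where $\pi$ is the projection onto the $i_{0}$-th copy of $\Z[G/H_{0}]$ and $\iota$ is the complementary inclusion. After invoking Lemma \ref{lem:dvci} to normalize $\varphi$, the lattices $I_{G/\cH}^{(\varphi)}$ and $I_{G/\cH'}^{(\varphi')}$ are identified with the kernels of the corresponding sum-of-weighted-augmentations maps to $\Z$.

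The heart of the argument is to produce a left splitting $\Phi$ satisfying $\varphi' \circ \Phi = \varphi$. Writing $d := d_{\varphi}(H_{0}')$ and using Bezout to choose integers $b_{j}$ with $\sum_{j} b_{j}\varphi(H_{0}')_{j} = d$, the divisibility hypothesis yields $c \in \Z$ with $\varphi(H_{0})_{i_{0}} = c(H_{0}:H_{0}') d$, so that
\begin{equation*}
\varphi(H_{0})_{i_{0}} = (H_{0}:H_{0}') \sum_{j} (cb_{j}) \varphi(H_{0}')_{j}.
\end{equation*}
The key computational input is the identity $\varepsilon_{G/H_{0}'} \circ \Ind_{H_{0}}^{G} \varepsilon_{H_{0}/H_{0}'}^{\circ} = (H_{0}:H_{0}') \varepsilon_{G/H_{0}}$, which is immediate from the description $gH_{0} \mapsto \sum_{h \in H_{0}/H_{0}'} ghH_{0}'$ of the transfer.

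I then define $\Phi$ to be the identity on every summand of the middle term matching $(\cH')^{\set}$, and on the distinguished $i_{0}$-th copy of $\Z[G/H_{0}]$ to be the map $\sum_{j}(cb_{j}) \Ind_{H_{0}}^{G}\varepsilon_{H_{0}/H_{0}'}^{\circ}$ landing in the $m_{\cH}(H_{0}')$ copies of $\Z[G/H_{0}']$ inside the target. By construction $\Phi \circ \iota$ is the identity, and the arithmetic above shows $\varphi' \circ \Phi = \varphi$. Lemma \ref{lem:spke} then supplies a split exact sequence
\begin{equation*}
0 \to I_{G/\cH'}^{(\varphi')} \to I_{G/\cH}^{(\varphi)} \to \Z[G/H_{0}] \to 0,
\end{equation*}
which gives the claimed $I$-isomorphism. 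Dualizing, using that $\Z[G/H_{0}]$ is self-dual as a $G$-lattice, yields the corresponding $J$-isomorphism; the final quasi-permutation/quasi-invertible claim then follows since $\Z[G/H_{0}]$ is permutation. I expect the only genuine obstacle to be the bookkeeping: verifying the compatibility $\varphi' \circ \Phi = \varphi$ on the distinguished summand and checking that the explicit formula for $\Phi$ is $G$-equivariant — both of which reduce to the displayed key identity for the transfer.
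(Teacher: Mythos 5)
Your proof is correct and mirrors the paper's argument: both apply Lemma~\ref{lem:spke} to the split exact sequence isolating the $i_0$-th copy of $\Z[G/H_0]$ and construct a left splitting $\Phi$ compatible with the weighted augmentation maps via the transfer identity $\varepsilon_{G/H_0'}\circ\Ind_{H_0}^G\varepsilon^\circ_{H_0/H_0'}=(H_0:H_0')\varepsilon_{G/H_0}$. Your B\'ezout choice of coefficients $cb_j$ spread across the copies of $\Z[G/H_0']$ is actually a small refinement over the paper's single-coefficient formula $\frac{\varphi(H_0)_{i_0}}{(H_0:H_0')d_\varphi(\cdot)}\Ind_{H_0}^G\varepsilon^\circ_{H_0/H_0'}$ aimed at one copy, which implicitly requires some $\varphi(H_0')_j$ to equal $d_\varphi(H_0')$ (automatic in all the paper's applications, where $\cH$ has first been reduced to a set so that $\varphi=d_\varphi$, but not forced by the stated hypotheses).
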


\begin{rem}
Assume that
\begin{itemize}
\item $G\cong (C_{p})^{\nu}$ for some prime number $p$ and $\nu \in \Zpn$; 
\item $\cH$ consists of $G$ and some subgroups of index $p$ in $G$; and
\item $\varphi(H)=
\begin{cases}
    (\underbrace{1,\ldots,1}_{m_{\cH}(H)})&\text{if }H\neq G;\\
    (\underbrace{p,\ldots,p}_{m_{\cH}(H)})&\text{if }H=G. 
\end{cases}
$
\end{itemize}
Then Theorem \ref{prop:rdtr} implies \cite[p.~29, Lemma]{Endo2001}. 
\end{rem}

\begin{proof}
It suffices to prove the isomorphism on $I_{G/\cH}^{(\varphi)}$. Consider the commutative diagram
\begin{equation}\label{eq:cnsp}
\xymatrix{
0\ar[r]& \bigoplus_{H\in (\cH')^{\set}}\Z[G/H]^{\oplus m_{\cH'}(H)}\ar[r]\ar[d]_{(\varphi'(H)\cdot \varepsilon_{G/H})_{H\in (\cH')^{\set}}}& 
\bigoplus_{H\in \cH^{\set}}\Z[G/H]^{\oplus m_{\cH}(H)} \ar[r] \ar[d]^{(\varphi(H)\cdot \varepsilon_{G/H})_{H\in \cH^{\set}}}&
\Z[G/H_{0}]\ar[r]& 0\\
&\Z \ar@{=}[r]& \Z, &&
}
\end{equation}
where the horizontal sequence is the canonical split exact sequence. By the definitions of $\cH'$ and $\varphi'$, the images of the vertical maps coincide. Now, we define the map $\Psi$ as the direct sum of the identity maps on $\Z[G/H_{0}]^{\oplus m_{\cH}(H_{0})-1}$ and $\Z[G/H]^{\oplus m_{\cH'}(H)}$ for all $H\in (\cH')^{\set}$, and the map
\begin{equation*}
\Z[G/H_{0}]^{\oplus m_{\cH}(H)}\rightarrow \Z[G/H]^{\oplus m_{\cH'}(H)}
\end{equation*}
defined as
\begin{equation*}
    (x_{i})_{i}\mapsto (x_{i})_{i\neq i_{0}}+\frac{\varphi(H_{0})_{i_{0}}}{(H_{0}:H'_{0})d_{\varphi}(H_{0})}\Ind_{H_{0}}^{G}\varepsilon_{H_{0}/H'_{0}}(x_{i_{0}}). 
\end{equation*}
Then it gives a left splitting of the exact sequence in \eqref{eq:cnsp}. Moreover, the diagram
\begin{equation*}
\xymatrix{
\bigoplus_{H\in \cH}\Z[G/H]^{\oplus m_{\cH}(H)}\ar[r]^{\Psi\hspace{10pt}}\ar[d]_{(\varphi(H)\cdot \varepsilon_{G/H})_{H\in \cH^{\set}}}&\bigoplus_{H\in \cH'}\Z[G/H]^{\oplus m_{\cH'}(H)}\ar[d]^{(\varphi(H)\cdot \varepsilon_{G/H})_{H\in (H')^{\set}}}\\
\Z \ar@{=}[r]& \Z
}
\end{equation*}
is commutative. Hence we obtain the left isomorphism in \eqref{eq:incd} as desired. 
\end{proof}

\begin{cor}\label{cor:rdmn}
Let $G$ be a finite group, and $\cH$ a multiset of its subgroups. Consider a normalized weight function $\varphi$ on $\cH$, and define a set of subgroup of $G$ as follows: 
\begin{equation*}
    \cH_{\varphi}[1]:=\{H\in \cH^{\set} \mid d_{\varphi}(H)=1\}. 
\end{equation*}
Assume that 
\begin{itemize}
    \item[(a)] $\cH_{\varphi}[1]\neq \emptyset$; and 
    \item[(b)] for each $H\in \cH^{\set}\setminus \cH_{\varphi}[1]$, there is $H'\in \cH_{\varphi}[1]$ such that $d_{\varphi}(H)\in (H:H\cap H')\Z$. 
\end{itemize}
Then the following hold: 
\begin{equation*}
[I_{G/\cH}^{(\varphi)}]=[I_{G/\cH_{\varphi}[1]}],\quad 
[J_{G/\cH}^{(\varphi)}]=[J_{G/\cH_{\varphi}[1]}]. 
\end{equation*}
In particular, the $G$-lattice $J_{G/\cH}^{(\varphi)}$ is quasi-permutation (resp.~quasi-invertible) if and only if $J_{G/\cH_{\varphi}[1]}$ is so. 
\end{cor}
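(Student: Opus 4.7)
The plan is two-stage. First, since $\varphi$ is normalized, Corollary \ref{cor:rdrd}(i) yields $[I_{G/\cH}^{(\varphi)}]=[I_{G/\cH^{\set}}^{(d_\varphi)}]$ in $\sS(G)$, and analogously for $J$, so the problem reduces to the case where $\cH$ is a set of multiplicity one equipped with the function $d_\varphi$. Crucially, the subset $\cH_\varphi[1]$ is unaffected by this step, and hypothesis (b) is preserved.

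Next, I would proceed by induction on $|\cH^{\set}\setminus \cH_\varphi[1]|$. The base case $\cH^{\set}=\cH_\varphi[1]$ is tautological because $d_\varphi\equiv 1$. For the inductive step, fix $H\in \cH^{\set}\setminus \cH_\varphi[1]$ together with a witness $H'\in \cH_\varphi[1]$ as in hypothesis (b); it suffices to establish
\[
[I_{G/\cH^{\set}}^{(d_\varphi)}] = [I_{G/\cH^{\set}\setminus\{H\}}^{(d_\varphi|_{\cH^{\set}\setminus\{H\}})}],
\]
since the witness $H'$ survives in $\cH^{\set}\setminus\{H\}$ and still verifies (b) for the remaining troublesome elements.

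To execute this single step I introduce an intermediate multiset $\cH^{*}$, obtained from $\cH^{\set}$ by inserting one extra copy of $L:=H\cap H'$, together with the function $\varphi^{*}$ that agrees with $d_\varphi$ on $\cH^{\set}$ and attaches the extra value $1$ to the inserted copy of $L$; this ensures $d_{\varphi^{*}}(L)=1$. On the one hand, since $L\subset H'$ and $\varphi^{*}(H')=1$, Lemma \ref{lem:rded} peels off the inserted copy of $L$ and yields $[I_{G/\cH^{*}}^{(\varphi^{*})}]=[I_{G/\cH^{\set}}^{(d_\varphi)}]$. On the other hand, since $L\subset H$ and $d_\varphi(H)\in (H:L)\,\Z=(H:L)\,d_{\varphi^{*}}(L)\,\Z$ by the choice of $H'$, Proposition \ref{prop:rdtr} peels off $H$ and yields $[I_{G/\cH^{*}}^{(\varphi^{*})}]=[I_{G/\cH^{*}\setminus\{H\}}^{(\varphi^{**})}]$. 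A further application of Lemma \ref{lem:rded} to $\cH^{*}\setminus\{H\}$, once more using $L\subset H'$, strips the residual $1$-valued copy of $L$ and produces exactly $[I_{G/\cH^{\set}\setminus\{H\}}^{(d_\varphi|_{\cH^{\set}\setminus\{H\}})}]$, closing the inductive step. The corresponding chain of equalities for $J$ follows from the simultaneous isomorphisms built into those same reduction statements.

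The principal bookkeeping obstacle I anticipate is handling the case when $L$ already lies in $\cH^{\set}$: the insertion then raises the multiplicity of $L$ in $\cH^{*}$ from one to two, so $\varphi^{*}(L)\in \Delta_{2}$, and one has to identify the correct coordinate $i_0$ at which to apply Lemma \ref{lem:rded} and Proposition \ref{prop:rdtr} so that the literal hypotheses of those lemmas are met. This is an indexing nuisance rather than a conceptual difficulty, and once the indices are arranged so that the extra value $1$ is the one acted on, the two reduction lemmas go through without change.
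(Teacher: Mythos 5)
Your proof is correct and uses the same two reduction tools as the paper, namely Lemma~\ref{lem:rded} and Proposition~\ref{prop:rdtr}, preceded by the same Corollary~\ref{cor:rdrd}(i) reduction to the case $\cH=\cH^{\set}$, $\varphi=d_\varphi$. The only organizational difference is that you process the elements of $\cH^{\set}\setminus\cH_{\varphi}[1]$ one at a time by induction, whereas the paper inserts all the intersections $H_i\cap H_i^{\dagger}$ simultaneously into an auxiliary multiset $\widetilde{\cH}$ and strips them in a single batch; the two are essentially the same argument.
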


\begin{proof}
It suffices to prove the isomorphism on $I_{G/\cH}^{(\varphi)}$. By Corollary \ref{cor:rdrd} (i), we may assume $\cH=\cH^{\set}$. In particular, we have $\varphi=d_{\varphi}$. Write $\cH \setminus \cH_{\varphi}[1]=\{H_{1},\ldots,H_{s}\}$. For each $i\in \{1,\ldots,s\}$, take $H_{i}^{\dagger}\in \cH_{\varphi}[1]$ so that 
\begin{equation}\label{eq:hdir}
    d_{\varphi}(H_{i})\in (H_{i}:H_{i}\cap H_{i}^{\dagger})\Z. 
\end{equation}
Note that this is possible by (b). Consider a set $\cH^{\dagger}:=\{H_{i}\cap H_{i}^{\dagger}\mid i\in \{1,\ldots,s\}\}$, and we define a multiset $\widetilde{\cH}$ of subgroups of $G$ as the disjoint union of $\cH$ and $\cH^{\dagger}$. Moreover, let $\widetilde{\varphi}$ be the normalized weight function on $\widetilde{\cH}$ defined as
\begin{equation*}
    \widetilde{\varphi}(H'):=
    \begin{cases}
        (1,\varphi(H'))&\text{if }H'\in \cH\cap \cH^{\dagger};\\
        1&\text{if $H'\in \cH^{\dagger}\setminus \cH$;}\\
        \varphi(H')&\text{if }H'\in \cH\setminus \cH^{\dagger}. 
    \end{cases}
\end{equation*}
Since $\widetilde{\varphi}(H')_{1}=1$ for any $H'\in \cH^{\dagger}$, Lemma \ref{lem:rded} gives an equality
\begin{equation*}
[I_{G/\widetilde{\cH}}^{(\widetilde{\varphi})}]=[I_{G/\cH}^{(\varphi)}]. 
\end{equation*}
Moreover, since $\widetilde{\cH}^{\set}=\cH\cup \cH^{\dagger}$, Corollary \ref{cor:rdrd} (i) implies
\begin{equation*}
[I_{G/\widetilde{\cH}}^{(\widetilde{\varphi})}]=[I_{G/(\cH\cup \cH^{\dagger})}^{(d_{\widetilde{\varphi}})}].  
\end{equation*}
On the other hand, take $H\in (\cH \cup \cH^{\dagger})\setminus \cH'$. Then we have $H\in \cH \setminus \cH_{\varphi}[1]$, and hence $H=H_{i}$ for some $i$. Moreover, one has
\begin{equation*}
d_{\widetilde{\varphi}}(H_{i})=\varphi(H_{i})\in (H_{i}:H_{i}\cap H_{i}^{\dagger})\Z=(H_{i}:H_{i}\cap H_{i}^{\dagger})d_{\widetilde{\varphi}}(H_{i}\cap H_{i}^{\dagger})\Z
\end{equation*}
by \eqref{eq:hdir}. Therefore, we can apply Proposition \ref{prop:rdtr} to $\cH\cup \cH^{\dagger}$, $d_{\widetilde{\varphi}}$ and the inclusion $H_{i}\cap H_{i}^{\dagger}\subset H_{i}=H$. Consequently, we obtain an equality
\begin{equation*}
    [I_{G/(\cH \cup \cH^{\dagger})}^{(d_{\widetilde{\varphi}})}]=[I_{G/\cH'}^{(\varphi')}]. 
\end{equation*}
Here, $\cH':=\cH_{\varphi}[1]\cup \cH^{\dagger}$, and $\varphi'$ is the restriction to $\cH'$ of $d_{\widetilde{\varphi}}$. Then we have $\varphi'(H')=1$ for any $H'\in \cH'$, and hence we obtain an equality
\begin{equation*}
    I_{G/\cH'}^{(\varphi')}=I_{G/\cH'}. 
\end{equation*}
Now, recall that any element $H'$ of $\cH^{\dagger}\setminus \cH_{\varphi}[1]$ satisfies $H'=H_{i}\cap H_{i}^{\dagger}$ for some $i$. Since $H_{i}^{\dagger}$ is an element of $\cH_{\varphi}[1]$, we can apply Lemma \ref{lem:rded} to $\cH'$, $\varphi'$ and the inclusion $H'\subset H_{i}^{\dagger}$. Repeating this argument for all $H'\in \cH^{\dagger}\setminus \cH_{\varphi}[1]$, we get
\begin{equation*}
    [I_{G/\cH'}]=[I_{G/\cH_{\varphi}[1]}]. 
\end{equation*}
Consequently, we obtain the desired assertion. 
\end{proof}

\subsection{Reduction to lattices over smaller groups}

We first describe $I_{G/\cH}^{(\varphi)}$ and $J_{G/\cH}^{(\varphi)}$ as $P$-lattices, where $P$ is a subgroup of $G$. 

\begin{prop}\label{prop:rtmc}
Let $G$ be a finite group, and $P$ a subgroup of $G$. Consider a multiset of subgroups $\cH$ of $G$ and a normalized weight function $\varphi$ on $\cH$. Then there are isomorphisms of $P$-modules
\begin{equation*}
    I_{G/\cH}^{(\varphi)}\cong I_{P/\cH_{P}}^{(\varphi_{P})},\quad J_{G/\cH}^{(\varphi)}\cong J_{P/\cH_{P}}^{(\varphi_{P})}. 
\end{equation*}
Here $\varphi_{P}$ is defined as follows:
\begin{itemize}
    \item $C_{H}$ is a complete representative of $P\backslash G/H$ in $G$ for each $H \in \cH$; 
    \item $\cH_{P}$ the multiset of subgroups of $G$ consisting $P\cap gHg^{-1}$ for all $H\in \cH$ and $g\in C_{H}$; and 
    \item $\varphi_{P}$ is the normalized weight function on $\cH_{P}$ which sends $H'\in \cH_{P}$ to the element of $\Delta$ defined by $\varphi(H)$ for all $H\in \cH$ with $H'=P\cap gHg^{-1}$ for some $g\in C_{H}$. 
\end{itemize}
In particular, if $J_{G/\cH}^{(\varphi)}$ is quasi-permutation (resp.~quasi-invertible), then $J_{P/\cH_{P}}^{(\varphi_{P})}$ is so. 
\end{prop}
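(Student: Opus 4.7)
The plan is to apply the Mackey (double coset) decomposition to each induced module appearing in the defining exact sequence of $I_{G/\cH}^{(\varphi)}$. For any subgroup $H$ of $G$ and any complete set $C_{H}$ of representatives of $P\backslash G/H$ in $G$, there is a standard isomorphism of $P$-lattices
\begin{equation*}
\Z[G/H]\cong \bigoplus_{g\in C_{H}}\Z[P/(P\cap gHg^{-1})],
\end{equation*}
sending a coset $p\cdot gH$ (with $p\in P$) to the coset $p(P\cap gHg^{-1})$ in the $g$-th summand. The first step is to verify that, under this isomorphism, the augmentation $\varepsilon_{G/H}\colon \Z[G/H]\to \Z$ restricts on the $g$-th summand to $\varepsilon_{P/(P\cap gHg^{-1})}$; this follows immediately by tracking coset representatives, since both maps send every standard basis vector to $1$.

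Next, I would apply this Mackey decomposition to every summand $\Z[G/H]^{\oplus m_{\cH}(H)}$ appearing in Definition \ref{dfn:gnmn}. This yields a direct-sum decomposition of $\bigoplus_{H\in \cH^{\set}}\Z[G/H]^{\oplus m_{\cH}(H)}$ as a $P$-lattice, and the map $(\varphi(H)_{i}\cdot \varepsilon_{G/H})_{H,i}$ becomes precisely the map whose restriction to each $g$-th factor is $\varphi(H)_{i}\cdot \varepsilon_{P/(P\cap gHg^{-1})}$. By the very definition of $\cH_{P}$ (where each pair $(H,g)$ with $g\in C_{H}$ contributes a copy of $P\cap gHg^{-1}$, with multiplicities accumulated when the intersections coincide) and of $\varphi_{P}$ (which collects the corresponding values of $\varphi$ on these copies), this is exactly the defining exact sequence of $I_{P/\cH_{P}}^{(\varphi_{P})}$ as a sequence of $P$-lattices. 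This establishes the first isomorphism. Taking $\Z$-duals, which is compatible with the restriction of the $G$-action to $P$, gives the second isomorphism $J_{G/\cH}^{(\varphi)}\cong J_{P/\cH_{P}}^{(\varphi_{P})}$.

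Finally, the \emph{In particular} clause is immediate from Proposition \ref{prop:rtrd}, which says that quasi-permutation and quasi-invertibility are preserved under restriction to subgroups. The main step of the argument is really just bookkeeping; the one point that needs a little care is matching the multiplicities in $\cH_{P}$ correctly when two pairs $(H,g)$ and $(H',g')$ happen to yield the same intersection $P\cap gHg^{-1}=P\cap g'H'g'^{-1}$, since in that case the corresponding components of $\varphi_{P}$ must be grouped together. I do not anticipate any substantive obstacle beyond this indexing verification.
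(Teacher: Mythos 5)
Your proof is correct and is essentially the paper's own argument: the paper disposes of this proposition with the single line ``This is a consequence of Mackey's decomposition,'' and your proposal simply fleshes out the bookkeeping that citation is shorthand for. The points you flag (compatibility of the Mackey isomorphism with the augmentations, grouping of multiplicities in $\cH_P$ when intersections coincide, dualizing to pass from $I$ to $J$, and invoking Proposition~\ref{prop:rtrd} for the final clause) are exactly the right ones and all go through as you say.
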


\begin{proof}
This is a consequence of Mackey's decomposition. See \cite[Section 7.3, Proposition 22]{Serre1977}. 
\end{proof}

Next, we describe $(I_{G/\cH}^{(\varphi)})^{N}$ and $(J_{G/\cH}^{(\varphi)})^{[N]}$ for a normal subgroup $N$ of $G$. 

\begin{lem}\label{lem:augd}
Let $G$ be a finite group, and $H$ its subgroup. 
\begin{enumerate}
\item For any subgroup $H'$ of $G$ containing $H$, the diagram
\begin{equation*}
\xymatrix@C=57pt{
\Z[G/H']\ar[r]\ar[d]_{\varepsilon_{G/H'}}&
\Z[G/H]\ar[d]^{\varepsilon_{G/H}}\ar[r]^{\Ind_{H'}^{G}(\varepsilon_{H'/H})}& \Z[G/H']\ar[d]^{\varepsilon_{G/H'}}\\
\Z \ar[r]^{(H':H)}&\Z\ar@{=}[r]& \Z
}
\end{equation*}
is commutative. 
\item Let $N$ be a normal subgroup of $G$. Then the image of the canonical injection
\begin{equation*}
\Z[G/HN]\hookrightarrow \Z[G/H]
\end{equation*}
coincides with $\Z[G/H]^N$. 
\end{enumerate}
\end{lem}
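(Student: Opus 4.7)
The plan is to prove both parts by unwinding the relevant maps on $\Z$-basis elements, after first making the unnamed arrows explicit.

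For (i), the unlabeled top-left horizontal arrow should be identified with the canonical inclusion $\Ind_{H'}^{G}(\varepsilon_{H'/H}^{\circ})\colon \Z[G/H']\to \Z[G/H]$, which sends a coset $gH'$ to the sum $\sum_{xH \subset gH'} xH$ of the $(H':H)$ cosets of $H$ contained in $gH'$. With this understood, commutativity of the left square is immediate: applying $\varepsilon_{G/H}$ to the image yields $(H':H)\cdot 1 = (H':H)$, which also equals the bottom composition $gH'\mapsto 1 \mapsto (H':H)$. For the right square, $\Ind_{H'}^{G}(\varepsilon_{H'/H})$ sends a basis element $gH$ to $gH'$ since $\varepsilon_{H'/H}$ is the augmentation collapsing $\Z[H'/H]$ onto $\Z$, so both compositions map $gH$ to $1$.

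For (ii), the canonical injection is $\Ind_{HN}^{G}(\varepsilon_{HN/H}^{\circ})\colon \Z[G/HN]\to \Z[G/H]$, which sends $gHN$ to the sum $\sum_{xH\subset gHN} xH$ of the $(HN:H)$ cosets of $H$ contained in $gHN$. Its image is therefore the $\Z$-span of the fiber sums of the projection $\pi\colon G/H\to G/HN$. The key point is that, since $N$ is normal in $G$, left multiplication by $N$ on $G/H$ permutes each fiber of $\pi$ transitively and preserves the fiber decomposition; equivalently, the stabilizer of $gH$ under the $N$-action is $N\cap gHg^{-1}$, whose orbit has size $[N:N\cap gHg^{-1}] = [HN:H]$. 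Hence the $N$-orbits on $G/H$ are exactly the fibers of $\pi$, and the associated orbit sums form a $\Z$-basis of $\Z[G/H]^{N}$. This identifies the image of the canonical injection with $\Z[G/H]^{N}$.

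The content is entirely formal, and the only care needed is the explicit description of the inclusions $\Z[G/H'] \hookrightarrow \Z[G/H]$ and $\Z[G/HN] \hookrightarrow \Z[G/H]$ via $\Ind(\varepsilon^{\circ})$. There is no substantive obstacle.
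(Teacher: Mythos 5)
Your proof is correct and is a careful expansion of the paper's one-line justification ("this follows from the definitions of the augmentation maps and their induced maps"). The explicit identification of the unlabeled arrow as $\Ind_{H'}^{G}(\varepsilon_{H'/H}^{\circ})$, and the orbit-counting argument in (ii) using normality of $N$ to match the $N$-orbits on $G/H$ with the fibers of $G/H\to G/HN$, are exactly the computations the paper leaves implicit.
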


\begin{proof}
This follows from the definitions of the augmentation maps and their induced maps.
\end{proof}

\begin{prop}\label{prop:tkfx}
Let $G$ be a finite group, and $N$ its normal subgroup. Take a multiset of subgroups $\cH$ of $G$ and a normalized weight function $\varphi$ of $\cH$. Then there are isomorphisms
\begin{equation*}
(I_{G/\cH}^{(\varphi)})^{N}\cong I_{(G/N)/\cH^{N}}^{(\overline{\varphi}_{G/N}^{\nor})}, \quad 
(J_{G/\cH}^{(\varphi)})^{[N]}\cong J_{(G/N)/\cH^{N}}^{(\overline{\varphi}_{G/N}^{\nor})}. 
\end{equation*}
Here $\cH^{N}$ and $\overline{\varphi}_{G/N}^{\nor}$ are defined as follows: 
\begin{itemize}
\item $\cH^{N}$ is the multiset of subgroups of $G/N$ consisting of $HN/N$ for all $H\in \cH$ (in particular, the multiplicity of $\overline{H}$ in $\cH^{N}$ is the sum of $m_{\cH}(H)$ for all $H\in \cH$ with $HN/N=\overline{H}$); and
\item the weight function $\overline{\varphi}_{G/N}$ on $\cH^{N}$ maps $\overline{H}\in \cH^{N}$ to the element of $\Delta$ defined by $(HN:H)\varphi(H)$ for all $H\in \cH$ with $HN/N=\overline{H}$; and
\item $\overline{\varphi}_{G/N}^{\nor}$ is as in Lemma \ref{lem:pnor}. 
\end{itemize}
In particular, if $J_{G/\cH}^{(\varphi)}$ is quasi-permutation (resp.~quasi-invertible), then $J_{G/\cH^{N}}^{(\overline{\varphi}_{G/N}^{\nor})}$ is so.  
\end{prop}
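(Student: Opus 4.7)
The plan is to obtain the first isomorphism by taking $N$-invariants of the defining exact sequence of $I_{G/\cH}^{(\varphi)}$ and then deducing the second isomorphism by dualizing.

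First, I would apply the left-exact functor $(-)^N$ to the exact sequence
\begin{equation*}
0\rightarrow I_{G/\cH}^{(\varphi)}\rightarrow \bigoplus_{H\in \cH^{\set}}\Z[G/H]^{\oplus m_{\cH}(H)}\xrightarrow{\Phi} \Z,
\end{equation*}
where $\Phi=(\varphi(H)_{i}\cdot\varepsilon_{G/H})_{H,i}$. Since $N$ acts trivially on $\Z$, this yields an exact sequence with middle term $\bigoplus_{H\in \cH^{\set}}(\Z[G/H]^{N})^{\oplus m_{\cH}(H)}$. By Lemma \ref{lem:augd}(ii) the natural inclusion identifies $\Z[G/H]^{N}$ with $\Z[G/HN]$. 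Under this identification the restriction of $\varepsilon_{G/H}$ to $\Z[G/HN]$ is $(HN:H)\cdot\varepsilon_{G/HN}$, because the inclusion $\Z[G/HN]\hookrightarrow\Z[G/H]$ sends $gHN$ to the sum $\sum_{h\in HN/H}ghH$ of $(HN:H)$ basis elements, each mapping to $1$ under $\varepsilon_{G/H}$.

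Second, I would regroup the direct summands by $\overline{H}\in\cH^{N}$. By the definition of the multiset $\cH^{N}$ and of $\overline{\varphi}_{G/N}$, the components of $H\in\cH^{\set}$ with $HN/N=\overline{H}$ precisely contribute the coefficients $(HN:H)\varphi(H)_{i}$ attached to $\varepsilon_{(G/N)/\overline{H}}$. Consequently the exact sequence becomes
\begin{equation*}
0\rightarrow (I_{G/\cH}^{(\varphi)})^{N}\rightarrow \bigoplus_{\overline{H}\in (\cH^{N})^{\set}}\Z[(G/N)/\overline{H}]^{\oplus m_{\cH^{N}}(\overline{H})}\xrightarrow{\overline{\Phi}}\Z,
\end{equation*}
where $\overline{\Phi}$ is the map determined by $\overline{\varphi}_{G/N}$. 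By definition this identifies the $(G/N)$-lattice $(I_{G/\cH}^{(\varphi)})^{N}$ with $I_{(G/N)/\cH^{N}}^{(\overline{\varphi}_{G/N})}$, which coincides with $I_{(G/N)/\cH^{N}}^{(\overline{\varphi}_{G/N}^{\nor})}$ by Lemma \ref{lem:dvci}.

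Third, using $J_{G/\cH}^{(\varphi)}=(I_{G/\cH}^{(\varphi)})^{\circ}$ and the definition $(-)^{[N]}=(((-)^{\circ})^{N})^{\circ}$, the first isomorphism dualizes to
\begin{equation*}
(J_{G/\cH}^{(\varphi)})^{[N]}=((I_{G/\cH}^{(\varphi)})^{N})^{\circ}\cong J_{(G/N)/\cH^{N}}^{(\overline{\varphi}_{G/N}^{\nor})}.
\end{equation*}
The final assertion about quasi-permutation and quasi-invertibility then follows at once from Corollary \ref{cor:qtac}(i) applied to the quotient $G\to G/N$.

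The main technical point is the step where $\varepsilon_{G/H}|_{\Z[G/H]^{N}}$ acquires the multiplicative factor $(HN:H)$; this is what forces the appearance of $(HN:H)\varphi(H)$ in the definition of $\overline{\varphi}_{G/N}$ and is the sole substantive input beyond Lemma \ref{lem:augd}. Once this is verified, the bookkeeping of grouping terms by the image $HN/N$ and invoking Lemma \ref{lem:dvci} to normalize is routine.
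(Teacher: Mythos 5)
Your proof is correct and follows essentially the same approach as the paper's: take $N$-invariants of the defining exact sequence, use Lemma \ref{lem:augd} to identify $\Z[G/H]^N$ with $\Z[G/HN]\cong\Z[(G/N)/(HN/N)]$ and to pick up the factor $(HN:H)$ on the augmentation, normalize via Lemma \ref{lem:dvci}, and then dualize. The paper's own proof is terser (essentially just citing Lemma \ref{lem:augd}), but what you spell out — in particular the computation showing $\varepsilon_{G/H}|_{\Z[G/H]^N}=(HN:H)\varepsilon_{G/HN}$ — is exactly the content that citation is meant to carry, and your appeal to Corollary \ref{cor:qtac}(i) for the quasi-permutation/quasi-invertible consequence is the intended justification.
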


\begin{proof}
By Lemma \ref{lem:augd}, we obtain an isomorphism of $G/N$-lattice $(I_{G/\cH}^{(\varphi)})^{N}\cong I_{(G/N)/\cH^{N}}^{(\overline{\varphi}_{G/N})}$. On the other hand, we have $I_{G/N}^{(\overline{\varphi}_{G/N})}=I_{G/N}^{(\overline{\varphi}_{G/N}^{\nor})}$ by Lemma \ref{lem:dvci}. Hence we obtain $(I_{G/\cH}^{(\varphi)})^{N}\cong I_{G/N}^{(\overline{\varphi}_{G/N}^{\nor})}$ as desired. The isomorphism on $(J_{G/\cH}^{(\varphi)})^{[N]}$ is a consequence of that on $(I_{G/\cH}^{(\varphi)})^{N}$. 
\end{proof}

For a multiset $\cH$ of subgroups of $G$, we denote by $N^{G}(\cH)$ the maximum normal subgroup of $G$ which is contained in $H$ for all $H\in \cH$. Moreover, we simply denote $N^{G}(\cH)$ by $N^{G}(H)$ if $\cH$ consists of a single subgroup $H$. 

\begin{cor}\label{cor:trfx}
Let $G$ be a finite group, and $\cH$ a multiset of subgroups of $G$. Consider a normal subgroup $N$ of $G$ that is contained in $N^{G}(\cH)$. Then there exist isomorphisms of $G$-lattices
\begin{equation*}
    I_{G/\cH}\cong I_{(G/N)/\cH^{N}},\quad J_{G/\cH}\cong J_{(G/N)/\cH^{N}}. 
\end{equation*}
Here we regard $I_{(G/N)/\cH^{N}}$ and $J_{(G/N)/\cH^{N}}$ as $G$-lattices by the natural surjection $G\twoheadrightarrow G/N$. 
\end{cor}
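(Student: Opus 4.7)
The plan is to deduce Corollary \ref{cor:trfx} immediately from Proposition \ref{prop:tkfx}, applied to the constant function $\varphi(H) = (1,\ldots,1)$ on $\cH$ (which recovers the unadorned lattices $I_{G/\cH}$ and $J_{G/\cH}$ by the remark following Definition \ref{dfn:gnmn}). The key point is that the hypothesis $N \subset N^{G}(\cH)$ forces $N$ to act trivially on every $\Z[G/H]$ with $H\in \cH$, so that taking $N$-invariants (resp.~the $[N]$-construction) changes nothing.

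First I would check the triviality of the $N$-action. Since $N$ is normal in $G$ and contained in each $H\in \cH$, for any $g\in G$ and $n\in N$ we have $g^{-1}ng\in N\subset H$, hence $n\cdot gH=gH$. Therefore $N$ acts trivially on $G/H$, on $\Z[G/H]$, on the direct sum $\bigoplus_{H\in \cH^{\set}}\Z[G/H]^{\oplus m_{\cH}(H)}$, on its submodule $I_{G/\cH}$, and on the dual $J_{G/\cH}=I_{G/\cH}^{\circ}$. Consequently
\begin{equation*}
(I_{G/\cH})^{N}=I_{G/\cH}, \qquad (J_{G/\cH})^{[N]} = \bigl((I_{G/\cH})^{N}\bigr)^{\circ}=J_{G/\cH},
\end{equation*}
and both lattices carry canonical $G/N$-actions compatible with their $G$-actions via the projection $G\twoheadrightarrow G/N$.

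Next I would rewrite the induced data of Proposition \ref{prop:tkfx} under these hypotheses. Since $N\subset H$ gives $HN=H$ and hence $(HN:H)=1$ for every $H\in \cH$, the function $\overline{\varphi}_{G/N}$ on $\cH^{N}$ associated to $\varphi\equiv 1$ is again the constant function $1$, which is already normalized. Hence
\begin{equation*}
I_{(G/N)/\cH^{N}}^{(\overline{\varphi}_{G/N}^{\nor})} = I_{(G/N)/\cH^{N}}, \qquad J_{(G/N)/\cH^{N}}^{(\overline{\varphi}_{G/N}^{\nor})} = J_{(G/N)/\cH^{N}}.
\end{equation*}
Substituting these two simplifications into the conclusion of Proposition \ref{prop:tkfx} yields
\begin{equation*}
I_{G/\cH}=(I_{G/\cH})^{N}\cong I_{(G/N)/\cH^{N}}, \qquad J_{G/\cH}=(J_{G/\cH})^{[N]}\cong J_{(G/N)/\cH^{N}},
\end{equation*}
which is the desired statement.

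No serious obstacle is anticipated; the only mild subtlety is remembering that $(J_{G/\cH})^{[N]}=((J_{G/\cH})^{\circ N})^{\circ}$ and verifying that this equals $J_{G/\cH}$ precisely because the $N$-action on $I_{G/\cH}$ is trivial, which is immediate from the first step above.
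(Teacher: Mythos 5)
Your proof is correct and follows essentially the same route as the paper, which simply observes that the actions of $N$ on $I_{G/\cH}$ and $J_{G/\cH}$ are trivial and invokes Proposition \ref{prop:tkfx}. You have merely spelled out the necessary verifications (triviality of the $N$-action via normality of $N$ and $N\subset H$, the vanishing of the index factors $(HN:H)=1$, and the compatibility of the $[N]$-construction with duality), all of which are accurate.
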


\begin{proof}
This follows from Proposition \ref{prop:tkfx} since the actions of $N$ on $I_{G/\cH}$ and $J_{G/\cH}$ are trivial. 
\end{proof}

\section{$p$-groups}\label{sect:pgrp}

For a finite group $G$, we write for $\Phi(G)$ the Frattini subgroup of $G$, that is, the intersection of all maximal subgroups of $G$. Here maximal subgroups mean \emph{proper} subgroups which are maximal with respect to inclusion. 

\begin{prop}[{\cite[Theorem 4.3.2]{Hall1959}}]\label{prop:mxps}
Let $G$ be a $p$-group, where $p$ is a prime number. Then all maximal subgroups of $G$ are normal of index $p$. In particular, the Frattini subgroup $\Phi(G)$ contains the derived subgroup of $G$, and $G/\Phi(G)$ is an elementary $p$-abelian group. 
\end{prop}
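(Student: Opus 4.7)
The plan is to follow the classical argument for $p$-groups, which splits into (a) normality and index of maximal subgroups, and (b) the structure of the quotient by $\Phi(G)$.

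First I would establish the normalizer growth lemma: if $G$ is a nontrivial $p$-group and $H \lneq G$ is a proper subgroup, then $H \lneq N_G(H)$. The standard way to prove this is by induction on $|G|$, using the fact that a nontrivial $p$-group has nontrivial center $Z(G)$ (from the class equation, since all non-central conjugacy classes have size divisible by $p$). If $Z(G) \not\subset H$, then $H \lneq H\cdot Z(G) \subset N_G(H)$ and we are done. Otherwise $Z(G) \subset H$, and we pass to $\overline{G} = G/Z(G)$ and $\overline{H} = H/Z(G)$, applying the inductive hypothesis to get $\overline{H} \lneq N_{\overline{G}}(\overline{H})$, which lifts to $H \lneq N_G(H)$.

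Next, apply this to any maximal subgroup $M$ of $G$: since $M \subsetneq N_G(M) \subseteq G$ and $M$ is maximal, we conclude $N_G(M) = G$, i.e.\ $M \trianglelefteq G$. The quotient $G/M$ is a nontrivial $p$-group with no nontrivial proper subgroups (any such would correspond to a subgroup strictly between $M$ and $G$), so $|G/M| = p$.

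Finally, for the Frattini quotient, consider the natural homomorphism
\begin{equation*}
    \varphi\colon G \longrightarrow \prod_{M} G/M,
\end{equation*}
where the product runs over all maximal subgroups $M$ of $G$. Its kernel is exactly $\bigcap_M M = \Phi(G)$, so $\varphi$ factors through an injection $G/\Phi(G) \hookrightarrow \prod_{M} G/M$. Since each $G/M \cong \Z/p\Z$ by the previous step, the right-hand side is an elementary abelian $p$-group, hence so is $G/\Phi(G)$. As $G/\Phi(G)$ is abelian, the derived subgroup $[G,G]$ lies in $\Phi(G)$.

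The main obstacle is the normalizer growth lemma, which is the only substantive input; everything else is a formal consequence. Once that is in place, the rest reduces to bookkeeping about maximal subgroups and kernels of natural maps.
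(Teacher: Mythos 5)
Your proof is correct and follows the standard classical argument. The paper gives no proof of its own here, citing \cite[Theorem 4.3.2]{Hall1959}, whose proof runs along the same lines (normalizers grow strictly in $p$-groups, hence maximal subgroups are normal of index $p$, and the Frattini quotient embeds into a product of copies of $\Z/p\Z$).
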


\begin{cor}\label{cor:pgfg}
Let $p$ be a prime number, $G$ a $p$-group, and $H$ its subgroup. 
\begin{enumerate}
\item The subgroups $\Phi(G)$ and $H$ do not generate $G$. 
\item We further assume $(N_{G}(H):H)=p$. If a subgroup $P$ of $G$ contains $H$ properly, then we have $N_{G}(H)\subset P$. 
\end{enumerate}
\end{cor}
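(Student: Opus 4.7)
I would prove (i) and (ii) separately, in each case reducing to a short argument that combines Proposition \ref{prop:mxps} with the standard \emph{normalizer growth} phenomenon in $p$-groups.

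For (i), which I read as implicitly assuming $H$ is a proper subgroup of $G$ (otherwise the claim is vacuous), I would first use finiteness to embed $H$ in some maximal subgroup $M$ of $G$. By the very definition of the Frattini subgroup as the intersection of all maximal subgroups, $\Phi(G) \subseteq M$. Hence $\langle \Phi(G),H\rangle \subseteq M$, and by Proposition \ref{prop:mxps} $M$ has index $p$ in $G$; in particular $M \neq G$, which gives the desired conclusion at once.

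For (ii), the main input is the classical lemma that in any $p$-group $P$, every proper subgroup is strictly smaller than its normalizer in $P$: namely $H \subsetneq N_P(H)$ whenever $H \subsetneq P$. Applying this to the assumed inclusion $H \subsetneq P$, I obtain an element $x \in N_P(H)\setminus H$, which lies in $N_G(H) \cap P$. Setting $H':=\langle H,x\rangle$, I have
\begin{equation*}
H \subsetneq H' \subseteq N_G(H)\cap P.
\end{equation*}
The hypothesis $(N_G(H):H)=p$ asserts that $H$ is normal in $N_G(H)$ with quotient isomorphic to the simple group $C_p$, so the only subgroups of $N_G(H)$ containing $H$ are $H$ and $N_G(H)$ itself. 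Since $H \subsetneq H'$, this forces $H' = N_G(H)$, and combined with $H' \subseteq P$ we conclude $N_G(H)\subseteq P$, as required.

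The only ingredient beyond Proposition \ref{prop:mxps} is the normalizer growth lemma in $p$-groups, which is entirely standard (it follows by induction on $|G|$ from the nontriviality of the center of a nontrivial $p$-group). No step should present a serious obstacle; the argument is essentially a matter of assembling well-known structural facts.
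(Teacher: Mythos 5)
Your proof is correct, and it is essentially the same argument as the paper's. For (i) you do exactly what the paper does: take a maximal subgroup $M \supseteq H$, note $\Phi(G)\subseteq M$ by definition of the Frattini subgroup, and conclude $\langle \Phi(G),H\rangle \subseteq M \subsetneq G$. For (ii), the paper produces a subgroup $P'\subseteq P$ of order $p\cdot\#H$ containing $H$ (via subgroup chains / Proposition \ref{prop:mxps}), observes $P'\subseteq N_G(H)$ because an index-$p$ subgroup of a $p$-group is normal, and then concludes $P'=N_G(H)$ by order count; you instead invoke normalizer growth to get $x\in N_P(H)\setminus H$, form $H'=\langle H,x\rangle\subseteq N_G(H)\cap P$, and conclude $H'=N_G(H)$ using that $(N_G(H):H)=p$ forces $H$ to be maximal in $N_G(H)$. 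These two routes rest on equivalent $p$-group facts (normalizer growth is what the existence of such a $P'$ amounts to) and conclude by the same index-$p$ counting, so the difference is one of packaging rather than substance.
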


\begin{proof}
(i): Let $P$ be a maximal subgroup of $G$ containing $H$. Then it contains $\Phi(G)$ by definition, and hence $\Phi(G)H\subset P\subsetneq G$ as desired. 

(ii): By Proposition \ref{prop:mxps}, there exists a subgroup $P'$ of $P$ of order $p\cdot \#H$ that contains $H$. On the other hand, the assumption $(N_{G}(H):H)=p$ implies that $N_{G}(H)$ is the unique subgroup of $G$ of order $p\cdot \#H$ that contains $H$. Hence we obtain $P'=N_{G}(H)$, which concludes the desired assertion. 
\end{proof}

We use a basic result of $p$-groups as follows:

\begin{lem}\label{lem:nznt}
Let $p$ be a prime number, $G$ a $p$-group, and $N$ a normal subgroup of $G$. Then we have $Z(G)\cap N\neq\{1\}$.
\end{lem}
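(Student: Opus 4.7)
The plan is to apply the standard orbit-counting (class-equation) argument for $p$-groups, taking $G$ to act on $N$ by conjugation rather than on itself. Of course the statement is vacuously false when $N=\{1\}$, so I will implicitly assume $N\neq\{1\}$, which I read as the intended hypothesis.

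First I would observe that since $N$ is normal in $G$, the conjugation action $G\times N\to N$, $(g,n)\mapsto gng^{-1}$, is well-defined. The set of fixed points is
\[
N^{G}=\{n\in N\mid gng^{-1}=n\text{ for all }g\in G\}=Z(G)\cap N.
\]
Every orbit of this action has size dividing $|G|$, which is a power of $p$; hence every orbit size is a power of $p$, and in particular a non-singleton orbit has size divisible by $p$.

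Next I would write the orbit decomposition
\[
|N|=|Z(G)\cap N|+\sum_{\mathcal{O}}|\mathcal{O}|,
\]
where the sum runs over non-singleton orbits. Under the assumption $N\neq\{1\}$, the left-hand side $|N|$ is a power of $p$ strictly greater than $1$, hence divisible by $p$; each term $|\mathcal{O}|$ on the right is also divisible by $p$. Therefore $|Z(G)\cap N|$ is divisible by $p$. Since $1\in Z(G)\cap N$, this forces $|Z(G)\cap N|\geq p>1$, proving the claim.

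There is essentially no obstacle: the only subtlety is making sure the action chosen is conjugation on $N$ (not the conjugation action of $G$ on itself, which would only give the statement $Z(G)\neq\{1\}$). The normality of $N$ is exactly what makes the conjugation action preserve $N$, and once this is in place the argument is the standard class equation for $p$-groups.
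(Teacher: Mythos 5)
Your proof is correct and takes essentially the same approach as the paper: decompose $N$ into conjugacy classes under $G$, identify the singleton orbits with $Z(G)\cap N$, and apply the class equation modulo $p$. You also correctly flag two minor points the paper glosses over: the hypothesis $N\neq\{1\}$ is needed, and the class equation should read $\#N=\#(Z(G)\cap N)+\sum_i \#C_i$ (the paper's proof has a typo writing $\#G$ on the left).
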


\begin{comment}
\begin{proof}
Since $N$ is normal in $G$, it is a disjoint union of $Z(G)\cap N$ and finitely many conjugacy classes $C_{1},\ldots,C_{l}$ in $G$ with $\#C_{i}\neq 1$. In particular, one has an equality
\begin{equation*}
    \#G=\#(Z(G)\cap N)+\sum_{i=1}^{l}\#C_{i}. 
\end{equation*}
On the other hand, since $G$ is a $p$-group, the integers $\#G$ and $\#C_{i}$ are powers of $p$. Hence we obtain $\#(Z(G)\cap N)\in p\Z$, which implies $Z(G)\cap N\neq \{1\}$ as desired. 
\end{proof}
\end{comment}

In what follows, for a positive integer $n$, we denote by $D_{n}$ the dihedral group of order $2n$, that is,
\begin{equation*}
    D_{n}=\langle \sigma_{n},\tau_{n}\mid \sigma_{n}^{n}=\tau_{n}^{2}=1,\tau_{n}\sigma_{n}\tau_{n}=\sigma_{n}^{-1} \rangle. 
\end{equation*}

Let $p$ be a prime number. Recall that a finite group $G$ of order $p^{n}$ is said to be \emph{of maximal class} if its nilpotency class is $n-1$. 

\begin{prop}[{\cite[Corollary 1.7]{Berkovich2008}}]\label{prop:clmc}
Let $G$ be a $2$-group of maximal class. Then $G$ is isomorphic to one of the following: 
\begin{enumerate}
\item the dihedral group $D_{2^{\nu}}$ of order $2^{\nu+1}$ for $\nu \geq 2$; 
\item the semi-dihedral group $SD_{2^{\nu+1}}$ of order $2^{\nu+1}$ for $\nu \geq 3$; 
\item the generalized quaternion group $Q_{2^{\nu}}$ of order $2^{\nu}$ for $\nu \geq 3$, that is,
\begin{equation*}
    Q_{2^{\nu}}=\langle i_{\nu},j_{\nu}\mid i_{\nu}^{2^{\nu-1}}=1,j_{\nu}^{2}=i_{\nu}^{2^{\nu-2}},j_{\nu}i_{\nu}j_{\nu}^{-1}=i_{\nu}^{-1}\rangle. 
\end{equation*}
\end{enumerate}
\end{prop}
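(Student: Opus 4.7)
The plan is to prove this classification by induction on $n$, where $|G| = 2^n$. The maximal class hypothesis forces $|Z(G)| = 2$ and $|G'| = 2^{n-2}$ via the lower central series. For $n \leq 3$ the claim is verified by direct inspection: groups of order $\leq 4$ are abelian (class $\leq 1$), and among the five isomorphism classes of groups of order $8$, exactly two have class $2 = n - 1$, namely $D_4 = D_{2^2}$ (case (i), $\nu = 2$) and $Q_8 = Q_{2^3}$ (case (iii), $\nu = 3$). Hereafter assume $n \geq 4$.

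The key reduction is to show that $G$ contains a cyclic subgroup of index $2$. By Proposition~\ref{prop:mxps}, $G/\Phi(G)$ is elementary $2$-abelian and $G' \subseteq \Phi(G)$. Since $G/G'$ cannot be cyclic (otherwise $G$ would be abelian), we deduce $G/G' \cong (C_2)^2$, so $G$ has exactly three maximal subgroups $M_1, M_2, M_3$, each normal of index $2$. The crucial claim is that at least one $M_i$ is cyclic of order $2^{n-1}$. This is proved by induction: the quotient $G/Z(G)$ is a $2$-group of order $2^{n-1}$ and maximal class $n - 2$, so by hypothesis contains a cyclic subgroup $\overline{A}$ of index $2$; its preimage $A$ in $G$ is one of the $M_i$, and Lemma~\ref{lem:nznt} together with the constraint $|Z(G)| = 2$ rules out the non-cyclic alternative $A \cong C_{2^{n-2}} \times C_2$ (which would force $|Z(A)| \geq 2^{n-2}$ and inject additional invariants into $Z(G)$).

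Fix such an $A = \langle a \rangle$ with $a$ of order $2^{n-1}$ and any $b \in G \setminus A$. Conjugation by $b$ is an involution in $\Aut(\langle a \rangle) \cong C_2 \times C_{2^{n-3}}$, whose three involutions are $a \mapsto a^{-1}$, $a \mapsto a^{1 + 2^{n-2}}$, and $a \mapsto a^{-1 + 2^{n-2}}$. The middle case forces $a^2$ into $Z(G)$, contradicting $|Z(G)| = 2$ for $n \geq 4$, so it is discarded. Since $b^2$ centralizes both $a$ and $b$, it lies in $Z(G) \cap A = \langle a^{2^{n-2}}\rangle$, giving $b^2 \in \{1, a^{2^{n-2}}\}$. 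A direct substitution $b \mapsto ba^k$ with $k$ odd reduces the semi-dihedral case ($bab^{-1} = a^{-1 + 2^{n-2}}$) with $b^2 = a^{2^{n-2}}$ to the case with $b^2 = 1$, whereas in the inversion case ($bab^{-1} = a^{-1}$) the value of $b^2$ is invariant under such substitutions and produces two non-isomorphic groups. The resulting three groups are precisely $D_{2^{n-1}}$, $Q_{2^n}$, and $SD_{2^n}$, matching cases (i), (iii), and (ii) with parameters $\nu = n - 1$, $\nu = n$, and $\nu = n - 1$ respectively.

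The main obstacle is the existence of a cyclic maximal subgroup. Enumerating the three maximal subgroups is immediate, but showing one is cyclic rather than $C_{2^{n-2}} \times C_2$ genuinely requires the maximal-class hypothesis beyond orders alone. The induction step is delicate because lifting a cyclic subgroup of $G/Z(G)$ need not produce a cyclic subgroup of $G$; the tight constraint $|Z(G)| = 2$ is precisely what rules out the non-cyclic lift, and setting up the correct commutator bookkeeping (in particular, the fact that $[G, M_i] = M_i \cap G_3$ has a specific index in $M_i$) is the technical heart of the argument.
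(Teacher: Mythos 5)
The paper offers no proof of this proposition; it is cited directly from Berkovich's book, so your attempt must be judged on its own merits. The high-level structure you propose — establish a cyclic subgroup of index $2$, then classify the extensions by analyzing the action of an outside element on the cyclic part — is the standard route, and the endgame (computing the involutions in $\Aut(C_{2^{n-1}})$, discarding the modular case since it would force $a^2 \in Z(G)$, and normalizing $b^2$ by the substitution $b \mapsto ba^k$) is carried out correctly.

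However, there is a genuine gap at the step you yourself flag as ``the technical heart,'' namely that the preimage $A$ of a cyclic maximal subgroup $\overline{A}$ of $G/Z(G)$ is cyclic rather than $C_{2^{n-2}}\times C_2$. The justification you give — ``which would force $|Z(A)|\geq 2^{n-2}$ and inject additional invariants into $Z(G)$'' — does not hold up. Since $A$ is abelian (it is a central extension of the cyclic group $\overline{A}$ by $Z(G)$), one has $Z(A)=A$ and $|Z(A)|=2^{n-1}$ \emph{regardless} of whether $A$ is cyclic, so the inequality carries no information. More importantly, $Z(A)$ is simply not contained in $Z(G)$ in general (already in $D_4$ the cyclic maximal subgroup equals its own center, yet $Z(D_4)$ has order $2$), so nothing is ``injected'' into $Z(G)$, and Lemma~\ref{lem:nznt} gives no purchase here. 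Ruling out $A\cong C_{2^{n-2}}\times C_2$ genuinely requires the extra commutator analysis you allude to (or, alternatively, showing that a $2$-group of maximal class and order $\geq 16$ has no normal Klein four subgroup, since $\Omega_1(A)$ would be one) — but you never actually supply that argument, so as written the induction step does not close.
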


\begin{prop}\label{prop:nmne}
Let $G$ be a $2$-group of order a multiple of $8$ which is not of maximal class. 
\begin{enumerate}
\item There exists an abelian normal subgroup $E$ of $G$ of order $8$. 
\item Under the notation in \emph{(i)}, we further assume $E\cong C_{4}\times C_{2}$. Then $\Phi(E)$ is contained in the center of $G$.
\end{enumerate}
\end{prop}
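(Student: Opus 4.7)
The plan is to handle part (ii) as a one-line formal consequence of the normality of $E$, and to prove part (i) via the self-centralizing property of maximal normal abelian subgroups in $p$-groups, after disposing of the case $|G|=8$ separately.

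For part (ii), I would observe that $\Phi(E)$ is characteristic in $E$, hence normal in $G$ because $E\trianglelefteq G$. Since $E\cong C_{4}\times C_{2}$, one has $\Phi(E)=E^{2}\cong C_{2}$, so $\Phi(E)$ is a normal subgroup of $G$ of order $2$. Any such subgroup is central because the conjugation action of $G$ on $\Phi(E)$ factors through $\Aut(C_{2})=\{1\}$. This gives $\Phi(E)\subset Z(G)$, and the whole argument is essentially one line.

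For part (i), I would first dispose of the base case $|G|=8$: the only non-abelian groups of order $8$ are $D_{4}$ and $Q_{8}$, both of maximal class (nilpotency class $2=\log_{2}8-1$). So the hypothesis forces $G$ abelian, and $E=G$ does the job. Assume now $|G|\geq 16$, and pick any maximal normal abelian subgroup $A\trianglelefteq G$ (which exists because the poset of normal abelian subgroups is non-empty, containing $Z(G)$, and $G$ is finite). The key input is the standard self-centralizing identity $C_{G}(A)=A$: if $C_{G}(A)\supsetneq A$, then $C_{G}(A)/A$ is a non-trivial normal subgroup of the $2$-group $G/A$, and by Lemma \ref{lem:nznt} it meets $Z(G/A)$ non-trivially, yielding an element $\bar{y}$ of order $2$ that lifts to $y\in C_{G}(A)\setminus A$; then $\langle A,y\rangle$ is abelian (since $y$ commutes with $A$), normal in $G$ (since $\langle\bar{y}\rangle$ is normal in $G/A$), and strictly larger than $A$, contradicting maximality. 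Given $C_{G}(A)=A$, conjugation embeds $G/A\hookrightarrow\Aut(A)$, so $|G/A|$ divides the $2$-part of $|\Aut(A)|$. Running through the abelian $2$-groups of order at most $4$ (namely $C_{2}$, $C_{4}$, and $C_{2}\times C_{2}$), a direct check yields $|\Aut(A)|_{2}\leq 2$; hence $|A|\leq 4$ would force $|G|\leq 8$, contradicting $|G|\geq 16$. Therefore $|A|\geq 8$.

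To finish, I extract from the abelian normal subgroup $A$ a $G$-stable subgroup $E\leq A$ of order exactly $8$ by a standard descending induction: Lemma \ref{lem:nznt} gives an element of order $2$ in $A\cap Z(G)$, yielding a $G$-stable subgroup of order $2$ inside $A$; one then passes to $A/\langle x\rangle$ inside $G/\langle x\rangle$ and iterates until the resulting $G$-stable subgroup of $A$ has order $8$. It is automatically abelian as a subgroup of $A$. The only moderately delicate step in the whole proof is the self-centralizer claim $C_{G}(A)=A$; everything else is bookkeeping.
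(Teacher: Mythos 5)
Your part~(ii) is the same argument the paper has in mind, just spelled out: $\Phi(E)$ is characteristic in $E$, hence normal in $G$, and a normal subgroup of order~$2$ is automatically central since $\Aut(C_2)$ is trivial. That part is fine.

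Your part~(i) is correct but follows a genuinely different route. The paper invokes two structural facts from Berkovich's book: since $G$ is not of maximal class it contains a non-cyclic normal subgroup $E'$ of order~$4$, and moreover $C_G(E')\supsetneq E'$; then $E$ is produced directly by adjoining to $E'$ a suitable element of $C_G(E')$ whose image in $G/E'$ is central of order~$2$. You instead dispose of $|G|=8$ by hand (the only non-abelian groups of order~$8$ are $D_4$ and $Q_8$, both of maximal class, so $G$ must be abelian), and for $|G|\geq 16$ you run the classical self-centralizing argument: a maximal normal abelian subgroup $A$ satisfies $C_G(A)=A$ (your proof of this via Lemma~\ref{lem:nznt} is correct, and the normality of $C_G(A)$ and of $A\langle y\rangle$ both check out), whence $G/A\hookrightarrow\Aut(A)$; a tabulation of $|\Aut(A)|_2$ for $|A|\leq 4$ forces $|A|\geq 8$, and one then carves out a $G$-invariant subgroup of $A$ of order exactly~$8$ by repeatedly applying Lemma~\ref{lem:nznt} modulo central subgroups. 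Your route is longer but more self-contained (it uses only Lemma~\ref{lem:nznt} of the paper rather than Berkovich's Lemma~1.4 and Proposition~1.8), and it incidentally shows the maximal-class hypothesis is only needed when $|G|=8$; the paper's route is shorter and additionally hands you an $E$ that contains a non-cyclic normal subgroup of order~$4$, which is sometimes structurally convenient. One small wording slip: your step~6 is an \emph{ascending} construction (from order~$2$ up to order~$8$), not a descending induction, but the mathematics is sound.
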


\begin{proof}
This is explained in \cite[p.~91, Proof of Step 4]{Endo2011}. However, we give a proof for reader's convenience. 

(i): By \cite[Lemma 1.4]{Berkovich2008}, one can take a non-cyclic normal subgroup $E'$ of $G$ of order $4$ since $G$ is not of maximal class. Furthermore, \cite[Proposition 1.8]{Berkovich2008} implies that the centralizer of $E'$ in $G$ does not coincide with $E'$. Now, let $E$ be the subgroup of $G$ generated by $E'$ and an element whose image in $G/E'$ is central of order $2$. Then $E$ is normal in $G$ and has order $8$. 

(ii): This follows from the fact that $\Phi(E)$ is a characteristic subgroup of $E$. 
\end{proof}

The following two lemmas can be obtained by direct computation.

\begin{lem}\label{lem:nodt}
Let $\nu \geq 2$ be an integer. 
\begin{enumerate}
\item For any $m\in \Z$, we have $N_{D_{2^{\nu}}}(\langle \sigma_{2^{\nu}}^{m}\tau_{2^{\nu}}\rangle)=\langle \sigma_{2^{\nu}}^{2^{\nu-1}}, \sigma_{2^{\nu}}^{m}\tau_{2^{\nu}}\rangle$. 
\item For two integers $m$ and $m'$, $\langle \sigma_{2^{\nu}}^{m}\tau_{2^{\nu}}\rangle$ and $\langle \sigma_{2^{\nu}}^{m'}\tau_{2^{\nu}}\rangle$ are conjugate in $D_{2^{\nu}}$ if and only if $m-m'$ is even. 
\item Every non-normal subgroup of $D_{2^{\nu}}$ is of the form $\langle \sigma_{2^{\nu}}^{m}\tau_{2^{\nu}}\rangle$ for some integer $m$. 
\end{enumerate}
\end{lem}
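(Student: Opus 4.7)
The plan is to carry out everything inside $D := D_{2^{\nu}}$ by direct manipulation of the presentation $\tau_{2^{\nu}}\sigma_{2^{\nu}}\tau_{2^{\nu}}^{-1} = \sigma_{2^{\nu}}^{-1}$. Every element is uniquely of the form $\sigma_{2^{\nu}}^{i}$ or $\sigma_{2^{\nu}}^{i}\tau_{2^{\nu}}$ for $0 \le i < 2^{\nu}$, each element of the second type is an involution, and the two key identities I would derive from $\tau_{2^{\nu}}\sigma_{2^{\nu}} = \sigma_{2^{\nu}}^{-1}\tau_{2^{\nu}}$ are
\[
\sigma_{2^{\nu}}^{j}\cdot\sigma_{2^{\nu}}^{m}\tau_{2^{\nu}}\cdot\sigma_{2^{\nu}}^{-j} = \sigma_{2^{\nu}}^{m+2j}\tau_{2^{\nu}}, \qquad (\sigma_{2^{\nu}}^{j}\tau_{2^{\nu}})\cdot\sigma_{2^{\nu}}^{m}\tau_{2^{\nu}}\cdot(\sigma_{2^{\nu}}^{j}\tau_{2^{\nu}})^{-1} = \sigma_{2^{\nu}}^{2j-m}\tau_{2^{\nu}}.
\]
Parts (i) and (ii) then both reduce to reading off congruences modulo $2^{\nu}$ from these formulas.

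For (i), the subgroup $\langle \sigma_{2^{\nu}}^{m}\tau_{2^{\nu}}\rangle$ has order two, so $g$ normalizes it precisely when $g\sigma_{2^{\nu}}^{m}\tau_{2^{\nu}} g^{-1} = \sigma_{2^{\nu}}^{m}\tau_{2^{\nu}}$. The first identity gives $2j \equiv 0 \pmod{2^{\nu}}$, i.e.\ $j \in \{0, 2^{\nu-1}\}$, and the second gives $2(j-m) \equiv 0 \pmod{2^{\nu}}$, i.e.\ $j \in \{m, m+2^{\nu-1}\}$; the resulting four elements are exactly those of $\langle \sigma_{2^{\nu}}^{2^{\nu-1}}, \sigma_{2^{\nu}}^{m}\tau_{2^{\nu}}\rangle$. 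For (ii), the same two identities show that the full set of $D$-conjugates of the reflection $\sigma_{2^{\nu}}^{m}\tau_{2^{\nu}}$ is $\{\sigma_{2^{\nu}}^{m'}\tau_{2^{\nu}} : m' \equiv m \pmod 2\}$, and conversely every such $m'$ is achieved by appropriate choice of $j$; since a subgroup of order two is determined by its non-trivial element, this yields the conjugacy criterion on subgroups.

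For (iii), I would classify the subgroups $H \le D$ according to whether $H \subset \langle \sigma_{2^{\nu}}\rangle$ or not. In the first case $H = \langle \sigma_{2^{\nu}}^{d}\rangle$ for some $d\mid 2^{\nu}$, and because $\tau_{2^{\nu}}\sigma_{2^{\nu}}^{d}\tau_{2^{\nu}}^{-1} = \sigma_{2^{\nu}}^{-d} \in \langle \sigma_{2^{\nu}}^{d}\rangle$, such an $H$ is automatically normal in $D$. In the second case $H$ contains at least one reflection $\sigma_{2^{\nu}}^{m}\tau_{2^{\nu}}$, and the issue is to isolate the non-normal $H$ of interest and show each is generated by a single such reflection; the cleanest way is to observe that any non-normal $H$ must be cyclic (subgroups of $D$ properly containing a reflection and also containing a non-trivial rotation are of the shape $\langle \sigma_{2^{\nu}}^{d}, \sigma_{2^{\nu}}^{i}\tau_{2^{\nu}}\rangle$, whose normality behavior I would analyze via the first displayed identity above) and then use that the only cyclic subgroups outside $\langle \sigma_{2^{\nu}}\rangle$ are the order-two reflections $\langle \sigma_{2^{\nu}}^{m}\tau_{2^{\nu}}\rangle$. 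The main subtlety is the bookkeeping in this last step — specifically, matching the scope of the assertion to the way this lemma is invoked later (Theorem \ref{mth2}(b), where only the conjugacy classes of order-two non-normal subgroups intervene); (i) and (ii) themselves are pure calculation.
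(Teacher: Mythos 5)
Parts (i) and (ii) are correct: the two conjugation formulas $\sigma_{2^{\nu}}^{j}(\sigma_{2^{\nu}}^{m}\tau_{2^{\nu}})\sigma_{2^{\nu}}^{-j}=\sigma_{2^{\nu}}^{m+2j}\tau_{2^{\nu}}$ and $(\sigma_{2^{\nu}}^{j}\tau_{2^{\nu}})(\sigma_{2^{\nu}}^{m}\tau_{2^{\nu}})(\sigma_{2^{\nu}}^{j}\tau_{2^{\nu}})^{-1}=\sigma_{2^{\nu}}^{2j-m}\tau_{2^{\nu}}$ are right, and the congruence bookkeeping you describe establishes both assertions. The paper itself offers no written argument (it introduces the lemma with ``the following two lemmas can be obtained by direct computation''), and this is evidently the intended calculation.

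Part (iii) has a genuine gap, and in fact the lemma's statement (iii) is simply false once $\nu\geq 3$. Your intermediate claim that ``any non-normal $H$ must be cyclic'' does not survive the analysis you defer. Take $\nu=3$, so $D:=D_{8}$ has order $16$, and set $H:=\langle\sigma_{8}^{4},\tau_{8}\rangle$, a Klein four subgroup containing the centre. Your first identity gives $\sigma_{8}\tau_{8}\sigma_{8}^{-1}=\sigma_{8}^{2}\tau_{8}\notin H$, so $H$ is non-normal, non-cyclic, and certainly not of the form $\langle\sigma_{8}^{m}\tau_{8}\rangle$. More generally, for a subgroup $\langle\sigma_{2^{\nu}}^{d},\sigma_{2^{\nu}}^{i}\tau_{2^{\nu}}\rangle$ with $d\mid 2^{\nu}$, conjugation by $\sigma_{2^{\nu}}$ replaces $\sigma_{2^{\nu}}^{i}\tau_{2^{\nu}}$ by $\sigma_{2^{\nu}}^{i+2}\tau_{2^{\nu}}$, so normality forces $\sigma_{2^{\nu}}^{2}\in\langle\sigma_{2^{\nu}}^{d}\rangle$, i.e.\ $d\mid 2$; for $4\leq d\leq 2^{\nu-1}$ these mixed subgroups are non-normal. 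The statement that is true — and is what the paper actually uses in Proposition \ref{prop:mxmu} and Lemma \ref{lem:qphc}, where the hypothesis $\mu(\cH)=\#G/2$ forces the subgroups at hand to have order $2$ — is that every non-normal subgroup of $D_{2^{\nu}}$ \emph{of order $2$} is of the form $\langle\sigma_{2^{\nu}}^{m}\tau_{2^{\nu}}\rangle$. That version is immediate: the only involutions in $D_{2^{\nu}}$ are the central element $\sigma_{2^{\nu}}^{2^{\nu-1}}$ (whose subgroup is normal) and the reflections $\sigma_{2^{\nu}}^{m}\tau_{2^{\nu}}$. Your closing remark about needing to ``match the scope of the assertion to the way this lemma is invoked'' is exactly the right instinct, but the resolution is that the scope must actually be narrowed to order-$2$ subgroups; a proof of (iii) as literally written will run into the counterexample above.
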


\begin{lem}\label{lem:nomx}
Let $\nu \geq 3$ be an integer. 
\begin{enumerate}
\item All non-normal subgroups of order $2$ in $SD_{2^{\nu+1}}$ are conjugate to each other. 
\item There exists a unique subgroup of order $2$ in $Q_{2^{\nu}}$, which is the center of $Q_{2^{\nu}}$. 
\end{enumerate}
\end{lem}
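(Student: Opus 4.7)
The plan is to carry out direct computations using explicit presentations of $SD_{2^{\nu+1}}$ and $Q_{2^{\nu}}$, classify all elements of order $2$, and then check conjugation by hand.

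For (i), I would start by fixing the standard presentation
\[
SD_{2^{\nu+1}}=\langle s,t\mid s^{2^{\nu}}=t^{2}=1,\ tst^{-1}=s^{2^{\nu-1}-1}\rangle,
\]
so that every element can be written uniquely as $s^{i}$ or $s^{i}t$ with $0\le i<2^{\nu}$. From $ts=s^{2^{\nu-1}-1}t$ one deduces by induction that $ts^{k}=s^{(2^{\nu-1}-1)k}t$ for all $k\in\Z$. A quick calculation using this identity gives $(s^{i}t)^{2}=s^{i\cdot 2^{\nu-1}}$, so $s^{i}t$ has order $2$ if and only if $i$ is even, and among the $s^{i}$ only $s^{2^{\nu-1}}$ has order $2$. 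The element $s^{2^{\nu-1}}$ is central, so $\langle s^{2^{\nu-1}}\rangle$ is the unique normal subgroup of order $2$. Hence the non-normal subgroups of order $2$ are precisely $\langle s^{2j}t\rangle$ for $0\le j<2^{\nu-1}$.

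To see these are mutually conjugate, I would compute
\[
s(s^{2j}t)s^{-1}=s^{2j+1}(ts^{-1})=s^{2j+1}\cdot s^{1-2^{\nu-1}}t=s^{2j+2-2^{\nu-1}}t,
\]
using $ts^{-1}=s^{1-2^{\nu-1}}t$ from the induction above. So conjugation by $s$ sends the index $2j$ to $2j+2-2^{\nu-1}$, i.e.\ it acts on $j\in\Z/2^{\nu-1}\Z$ by translation by $1-2^{\nu-2}$. Since $\nu\ge 3$, the number $1-2^{\nu-2}$ is odd and hence a unit modulo $2^{\nu-1}$, so its $\langle s\rangle$-orbit is all of $\Z/2^{\nu-1}\Z$. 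Consequently every $\langle s^{2j}t\rangle$ is conjugate to $\langle t\rangle$ in $SD_{2^{\nu+1}}$, proving (i).

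For (ii), I would use
\[
Q_{2^{\nu}}=\langle i_{\nu},j_{\nu}\mid i_{\nu}^{2^{\nu-1}}=1,\ j_{\nu}^{2}=i_{\nu}^{2^{\nu-2}},\ j_{\nu}i_{\nu}j_{\nu}^{-1}=i_{\nu}^{-1}\rangle
\]
and write every element as $i_{\nu}^{k}$ or $i_{\nu}^{k}j_{\nu}$. Among the $i_{\nu}^{k}$, only $k=2^{\nu-2}$ gives order $2$. For the remaining elements a direct computation
\[
(i_{\nu}^{k}j_{\nu})^{2}=i_{\nu}^{k}\cdot (j_{\nu}i_{\nu}^{k}j_{\nu}^{-1})\cdot j_{\nu}^{2}=i_{\nu}^{k}\cdot i_{\nu}^{-k}\cdot i_{\nu}^{2^{\nu-2}}=i_{\nu}^{2^{\nu-2}}
\]
shows that $i_{\nu}^{k}j_{\nu}$ has order $4$ (since $i_{\nu}^{2^{\nu-2}}$ itself has order $2$). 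Therefore the unique involution in $Q_{2^{\nu}}$ is $i_{\nu}^{2^{\nu-2}}=j_{\nu}^{2}$, which is plainly central, yielding (ii).

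The arguments are essentially routine once the conjugation formulas $ts^{k}=s^{(2^{\nu-1}-1)k}t$ and $j_{\nu}i_{\nu}^{k}j_{\nu}^{-1}=i_{\nu}^{-k}$ are in hand; the only mild subtlety is ensuring that $1-2^{\nu-2}$ is odd (which requires $\nu\ge 3$, as assumed) so that $s$ acts transitively on the set of non-normal involutions in (i).
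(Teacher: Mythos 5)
Your computations are correct and supply exactly the "direct computation" the paper invokes without spelling out. In part (i) the identity $ts^{k}=s^{(2^{\nu-1}-1)k}t$ gives $(s^{i}t)^{2}=s^{i\cdot 2^{\nu-1}}$, correctly isolating the $2^{\nu-1}$ non-central involutions $s^{2j}t$, and the conjugation formula $s(s^{2j}t)s^{-1}=s^{2j+2-2^{\nu-1}}t$ does translate $j$ by $1-2^{\nu-2}$, which is odd (hence a unit modulo $2^{\nu-1}$) precisely because $\nu\ge 3$, so $\langle s\rangle$ already acts transitively on these subgroups. In part (ii) the computation $(i_{\nu}^{k}j_{\nu})^{2}=i_{\nu}^{2^{\nu-2}}$ correctly shows every element outside $\langle i_{\nu}\rangle$ has order $4$, so $i_{\nu}^{2^{\nu-2}}=j_{\nu}^{2}$ is the unique involution, and it is visibly central. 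This matches the paper's intended (unwritten) argument.
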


\section{Proof of Theorem \ref{mth1}}\label{sect:pfod}

First, we specify previous results given by Endo--Miyata (\cite{Endo1975}) and Endo (\cite{Endo2001}, \cite{Endo2011}). 

\begin{prop}[{\cite[Theorem 2.3]{Endo1975}}]\label{prop:emqp}
For a finite group $G$, the following are equivalent: 
\begin{enumerate}
    \item the $G$-lattice $J_{G}$ is quasi-permutation; 
    \item there is an isomorphism $G\cong C_{m}\rtimes C_{2^{\nu}}$, where $m$ is an odd integer, $\nu \in \Znn$, and the action of $C_{2^{\nu}}$ on $C_{m}$ factors through a homomorphism $C_{2^{\nu}}\rightarrow C_{2}$. 
\end{enumerate}
\end{prop}

\begin{prop}\label{prop:emn1}
Let $p$ be a prime number, $G$ a $p$-group, and $H$ a subgroup of $G$. Then the following are equivalent: 
\begin{enumerate}
\item $J_{G/H}$ is a quasi-permutation $G$-lattice; 
\item $J_{G/H}$ is a quasi-invertible $G$-lattice; 
\item $G/N^{G}(H)$ is cyclic. 
\end{enumerate}
\end{prop}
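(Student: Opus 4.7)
The plan handles the three implications separately. First, (i)~$\Rightarrow$~(ii) is immediate from the hierarchy displayed after Proposition~\ref{prop:edlr}.

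For (iii)~$\Rightarrow$~(i), I would begin by observing that when $G/N^{G}(H)$ is cyclic, every subgroup of $G/N^{G}(H)$ is normal; in particular $H/N^{G}(H)$ is normal in $G/N^{G}(H)$, whence $H$ is normal in $G$. This forces $N^{G}(H)=H$ and $G/H$ cyclic. Because $H$ then acts trivially on $J_{G/H}$, Corollary~\ref{cor:qtac}(ii) reduces matters to showing that $J_{C}$ is quasi-permutation over a cyclic group $C=G/H=\langle\sigma\rangle$ of order $n$. For this, I would use the acyclic complex
\begin{equation*}
0\to \Z\xrightarrow{N}\Z[C]\xrightarrow{\sigma-1}\Z[C]\xrightarrow{\varepsilon}\Z\to 0,
\end{equation*}
which identifies $J_{C}$ with the augmentation ideal $I_{C}$, and then the short exact sequence $0\to I_{C}\to\Z[C]\to\Z\to 0$ of permutation lattices to conclude.

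The substantive step is the contrapositive of (ii)~$\Rightarrow$~(iii). Starting from the assumption that $G/N^{G}(H)$ is non-cyclic, Corollary~\ref{cor:trfx} allows me to replace $G$ by $G/N^{G}(H)$ and $H$ by its image, reducing to the situation $N^{G}(H)=1$ with $G$ a non-cyclic $p$-group. The idea is then to locate a normal subgroup $N$ of $G$ with $H\subset N$ such that $G/N$ admits $(C_{p})^{2}$ as a quotient. For such an $N$, Proposition~\ref{prop:tkfx} yields $(J_{G/H})^{[N]}\cong J_{G/N}$ (since $HN/N$ is trivial), and passing once more via Proposition~\ref{prop:tkfx} produces $J_{(C_{p})^{2}}$; by Corollary~\ref{cor:qtac}(i), any failure of quasi-invertibility at that stage propagates back to $J_{G/H}$. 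The base case one then needs is that $J_{(C_{p})^{2}}$ is not quasi-invertible, which I plan to prove by exhibiting a non-zero class in $\Sha_{\omega}^{2}((C_{p})^{2},J_{(C_{p})^{2}})$ through an explicit flabby resolution and invoking Proposition~\ref{prop:rrs2}.

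The principal difficulty lies in producing the normal subgroup $N$ described above. When $H=\{1\}$ the choice $N=\Phi(G)$ works, thanks to Proposition~\ref{prop:mxps}. When $H\neq 1$ the natural candidate $N=\Phi(G)\cdot H^{G}$ (with $H^{G}$ the normal closure of $H$) may fail to yield a non-cyclic quotient if the image of $H$ in $G/\Phi(G)$ has codimension one, and the argument must instead proceed either by a more delicate choice of normal subgroup or by restriction to a proper subgroup $K\leq G$ via Proposition~\ref{prop:rtrd}. The key structural ingredients will be the condition $N^{G}(H)=1$, Corollary~\ref{cor:pgfg}(i) (guaranteeing $\Phi(G)H\neq G$), and Lemma~\ref{lem:nznt} (non-triviality of the intersection of the center with any non-trivial normal subgroup of a $p$-group). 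For $p=2$ the existence of $2$-groups of maximal class (Proposition~\ref{prop:clmc}) forces additional case analysis using Proposition~\ref{prop:nmne}. The complete execution of this direction follows the arguments of~\cite[(1.5), (2.3)]{Endo1975} and~\cite[Theorem~2.1]{Endo2011}, which the proposition cites.
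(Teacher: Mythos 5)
Your handling of (i)$\Rightarrow$(ii) and (iii)$\Rightarrow$(i) is correct. In particular the observation that cyclicity of $G/N^G(H)$ forces $H$ to be normal and hence $N^G(H)=H$, followed by Corollary~\ref{cor:qtac}(ii) and the identification $J_C\cong I_C$ for $C$ cyclic via the standard acyclic complex, is exactly right.

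The gap is in (ii)$\Rightarrow$(iii), and you have correctly diagnosed where it sits but have not closed it. After reducing to $N^G(H)=\{1\}$ with $G$ a non-cyclic $p$-group, you propose to locate a normal subgroup $N\supseteq H$ such that $G/N$ admits a $(C_p)^2$ quotient and then push down via Proposition~\ref{prop:tkfx} and Corollary~\ref{cor:qtac}(i). However, when $H\neq\{1\}$ is non-normal and its image in $G/\Phi(G)$ has codimension exactly one, no such $N$ exists: any proper normal $N\supseteq H$ has $N\Phi(G)\supseteq H\Phi(G)$ of index $p$, and $N\Phi(G)\neq G$ since $\Phi(G)$ consists of non-generators, so $N\Phi(G)=H\Phi(G)$, whence $G/N$ has cyclic Frattini quotient and is therefore cyclic. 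The ``more delicate choice of normal subgroup'' you hope for simply does not exist (already for $G=D_4$ and $H=\langle\tau_4\rangle$). The alternative you mention --- restriction to a maximal subgroup $P\supseteq N_G(H)$ via Proposition~\ref{prop:rtrd} --- is the correct move, but it is not a small adjustment: the restricted set $\cH_P$ then contains at least $p$ distinct conjugates of $H$, so the induction necessarily takes place in the multinorm setting and cannot be phrased purely as a statement about a single $J_{G/H}$. This is precisely the observation recorded in the Remark following Theorem~\ref{thm:ndqp}, which notes that Case~2 of that proof gives an alternative argument for this proposition when $H$ is not normal, but only as part of the broader induction over reduced multisets. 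As written, your outline does not establish the hard direction, and the deferral to~\cite{Endo1975} and~\cite{Endo2011} is load-bearing; this is also how the paper treats the proposition (it is stated with a ``cf.''\ attribution and no proof is supplied), but your sketch should not be read as a self-contained proof.
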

\begin{proof}
\textbf{Case 1.~$H$ is normal in $G$. }
It suffices to prove (ii) $\Rightarrow$ (iii) $\Rightarrow$ (i). We may assume $H=\{1\}$. We first assume that $J_{G/H}$ is quasi-invertible. Then, \cite[Theorem 1.5]{Endo1975} implies the cyclicity of all the Sylow subgroups of $G$. This implies that $G$ is cyclic, because it is a $p$-group. Hence, (iii) is valid. On the other hand, if $G$ is cyclic, then (i) follows from Proposition \ref{prop:emqp}. 

\textbf{Case 2.~$H$ is not normal in $G$. }
We may assume $N^{G}(H)=\{1\}$. It suffices to prove that $J_{G/H}$ is not quasi-invertible. However, it is the same as \cite[Theorem 2.1]{Endo2011}. 
\end{proof}

\begin{prop}[{\cite[Theorem 1]{Endo2001}}]\label{prop:endo}
Let $p$ be a prime number, and $G$ an elementary $p$-abelian group. Take a reduced set $\cH$ of subgroups of $G$. If $(G:H)=p$ for all $H\in \cH$, then the following are equivalent: 
\begin{enumerate}
    \item $J_{G/\cH}$ is a quasi-permutation $G$-lattice; 
    \item $J_{G/\cH}$ is a quasi-invertible $G$-lattice; 
    \item $\#\cH=1$ or $p=\#\cH=2$. 
\end{enumerate}
\end{prop}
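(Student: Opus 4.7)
The implication (i) $\Rightarrow$ (ii) is immediate from definitions. I split the remaining argument into the direction (iii) $\Rightarrow$ (i) (checked case by case) and (ii) $\Rightarrow$ (iii) (via reduction to a small base case combined with a cohomological obstruction).

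For (iii) $\Rightarrow$ (i), the case $\#\cH = 1$ is handled by Proposition \ref{prop:emn1}: since $G$ is abelian, the unique $H \in \cH$ is normal, so $N^{G}(H) = H$ and $G/N^{G}(H) \cong C_{p}$ is cyclic. For the case $p = \#\cH = 2$, write $\cH = \{H_{1}, H_{2}\}$, set $N := H_{1} \cap H_{2}$, and apply Corollary \ref{cor:trfx} to reduce to the concrete situation $G = (C_{2})^{2}$ with $\cH$ two distinct index-$2$ subgroups. Here $J_{G/\cH}$ has rank $3$, and I would verify it is stably permutation by a direct calculation: using the third index-$2$ subgroup $H_{3} \leq G$ to compare the defining exact sequence of $J_{G/\cH}$ (from Proposition \ref{prop:coch}) with a permutation resolution involving $\Z[G/H_{3}]$, one obtains an identity of the form $[J_{G/\cH}]^{\fl} = 0$ in $\sS(G)$.

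The main direction (ii) $\Rightarrow$ (iii) is argued contrapositively. Assume $\#\cH \geq 2$ and that we are not in the case $p = \#\cH = 2$; the goal is to exhibit a nontrivial element of $\Sha_{\omega}^{2}(G, J_{G/\cH})$, which by Proposition \ref{prop:rrs2} obstructs quasi-invertibility. The first step is reduction to a base case on a small quotient of $G$. When $p \geq 3$, pick any two $H_{1}, H_{2} \in \cH$ and set $N := H_{1} \cap H_{2}$, so $G/N \cong (C_{p})^{2}$. Since each $H \in \cH$ has index $p$, one has either $H \supseteq N$ or $HN = G$. Applying Proposition \ref{prop:tkfx} together with Corollary \ref{cor:rdmn} (the weight-$p$ contributions coming from the $H$'s with $HN = G$ are absorbed by the weight-$1$ contributions from $H_{1}/N, H_{2}/N \in \cH_{\varphi}[1]$), and using Corollary \ref{cor:qtac}, the problem reduces to proving non-quasi-invertibility of $J_{(C_{p})^{2}/\cH'}$ for some set $\cH'$ of index-$p$ subgroups of $(C_{p})^{2}$ with $2 \leq \#\cH' \leq p+1$. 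The case $p = 2$, $\#\cH \geq 3$ is handled analogously with $N := H_{1} \cap H_{2} \cap H_{3}$, landing in $(C_{2})^{n}$ with $n \leq 3$ and $\#\cH' \geq 3$.

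The main obstacle is the base-case computation: for $G = (C_{p})^{2}$ with $p \geq 3$ and $2 \leq \#\cH' \leq p+1$, or for $G = (C_{2})^{n}$ with $2 \leq n \leq 3$ and $\#\cH' \geq 3$, one must show $\Sha_{\omega}^{2}(G, J_{G/\cH'}) \neq 0$. From the short exact sequence $0 \to \Z \to \bigoplus_{H \in \cH'} \Z[G/H] \to J_{G/\cH'} \to 0$ supplied by Proposition \ref{prop:coch}, Shapiro's lemma, and the long exact sequence in cohomology, one expresses $H^{i}(G, J_{G/\cH'})$ and its restrictions to cyclic subgroups in terms of the well-understood cohomology of elementary abelian $p$-groups. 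The delicate combinatorial step is to locate a class in $H^{2}(G, J_{G/\cH'})$ whose image in $\bigoplus_{H \in \cH'} H^{2}(H, \Z)$ vanishes and which restricts to zero on every cyclic subgroup of $G$; the existence of such a class amounts to a covering condition on the family $\cH'$ by hyperplanes of $G$ that fails precisely outside the configurations allowed in (iii).
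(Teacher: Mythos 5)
The paper does not actually prove Proposition \ref{prop:endo}; it is cited directly from Endo's 2001 paper, so there is no in-house argument to compare yours against. Your direction (iii)~$\Rightarrow$~(i) and the structure of your reductions are sound: the case $\#\cH=1$ is handled exactly as you say by Proposition~\ref{prop:emn1}, the case $p=\#\cH=2$ reduces via Corollary~\ref{cor:trfx} to $(C_2)^2$ where a direct permutation resolution is available (this is essentially the $m=1$ case of Theorem~\ref{thm:dnqp}), and the chain Proposition~\ref{prop:tkfx} $\to$ Corollary~\ref{cor:rdmn} $\to$ Corollary~\ref{cor:qtac} for the direction (ii)~$\Rightarrow$~(iii) is a correct use of the paper's machinery: quasi-invertibility descends to $J_{(G/N)/\cH'}$ for the smaller group $G/N$.

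The gap is in the base case, and you have already singled it out as ``the main obstacle.'' Your plan is to show $\Sha_\omega^2(G,J_{G/\cH'})\neq 0$ for $G=(C_p)^2$ (or $(C_2)^n$, $n\leq 3$) and conclude non-quasi-invertibility via Proposition~\ref{prop:rrs2}. But that implication runs only one way: quasi-invertible implies $\Sha_\omega^2=0$, and the converse is false. Thus even if you carried out the proposed cohomology computation, obtaining $\Sha_\omega^2=0$ in some base case would not establish (ii)~$\Rightarrow$~(iii); you would still need a finer invariant. And there is genuine reason to expect the failure: the paper itself, when confronting analogous small-group non-invertibility statements (Propositions~\ref{prop:apt1} and~\ref{prop:apt2}, and Case~(iii) of Proposition~\ref{prop:c4nr}), does \emph{not} argue via $\Sha_\omega^2$; it instead builds an explicit coflabby resolution $0\to F\to R\to I\to 0$, bounds the exponent of $\widehat H^0(G,F)$, and passes to $\Z_2[G]$-modules where Krull--Schmidt forces $F_2$ to be permutation if $F$ were invertible, yielding a contradiction from ranks of fixed points and exponents. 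This $\widehat H^0$-plus-$p$-adic localization argument detects non-invertibility even when $H^1(G,F)=\Sha_\omega^2(G,J)$ vanishes, which is exactly why it is used. Endo's original proof of \cite[Theorem~1]{Endo2001} proceeds in the same spirit. So your plan is incomplete on the crucial step, and the specific obstruction you chose is, in general, too coarse: you should replace the $\Sha_\omega^2$ computation by the explicit flabby-resolution/$\widehat H^0$/$\Z_p$-Krull--Schmidt argument, or verify case by case that $\Sha_\omega^2$ happens to be nonzero (which you have not done and which I do not believe holds in all the base cases you need, e.g.\ $(C_p)^2$ with $\#\cH'=2$, $p$ odd).
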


\begin{dfn}\label{dfn:mudf}
For a multiset $\cH$ of subgroups of a finite group $G$, put
\begin{equation*}
\mu(\cH):=\min\{(G:H)\in \Zpn \mid H\in \cH\},\quad M(\cH):=\max\{(G:H)\in \Zpn \mid H\in \cH\}. 
\end{equation*}
\end{dfn}

\begin{lem}\label{lem:quot}
Let $G$ be a finite group, and $\cH$ a reduced set of its subgroups. If $\#\cH \geq 2$, then we have $(G:N^{G}(\cH))>M(\cH)$. 
\end{lem}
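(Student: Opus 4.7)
The plan is straightforward and does not require heavy machinery. The statement should be read as $(G:N^{G}(\cH))>m$ for every $m\in M(\cH)$, i.e.\ $(G:N^{G}(\cH))$ strictly exceeds the largest element of $M(\cH)$.

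First, by definition $N^{G}(\cH)$ is contained in every member of $\cH$, so in particular
\begin{equation*}
    N^{G}(\cH)\subseteq \bigcap_{H\in \cH}H.
\end{equation*}
Thus it suffices to show that $\bigcap_{H\in\cH}H$ is a proper subgroup of every $H_{0}\in\cH$.

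The key observation is that reducedness rules out the intersection equaling some element of $\cH$. Indeed, suppose $\bigcap_{H\in \cH}H=H_{0}$ for some $H_{0}\in\cH$. Then $H_{0}\subseteq H$ for every $H\in\cH$, and since $\cH$ is reduced (Definition \ref{dfn:reds}), this forces $H=H_{0}$ for every $H\in\cH$, contradicting $\#\cH\geq 2$. Consequently $\bigcap_{H\in\cH}H$ is properly contained in each $H_{0}\in\cH$, and therefore so is $N^{G}(\cH)$. Taking indices gives $(G:N^{G}(\cH))>(G:H_{0})$ for every $H_{0}\in\cH$, which is the desired conclusion.

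There is no genuine obstacle; the only thing to be careful about is making the elementary reducedness argument precise, so that we really conclude a \emph{strict} containment $N^{G}(\cH)\subsetneq H_{0}$ rather than a weak one. Once this is in hand, the inequality on indices is automatic since all groups in sight are finite.
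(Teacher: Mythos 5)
Your proof is correct, and it follows a cleaner, more direct route than the paper's own argument. The paper's proof begins by choosing $H_{0}\in\cH$ of maximal index $M(\cH)$ and then splits into cases according to whether $(G:N^{G}(H_{0}))$ already exceeds $M(\cH)$; in the remaining case it deduces that $H_{0}$ is normal and then passes to $N^{G}(\{H,H_{0}\})$ for some other $H\in\cH$, using reducedness to see that this normal core is properly contained in $H_{0}$. Your argument bypasses both the case split and the normality considerations by working with the naive intersection $\bigcap_{H\in\cH}H$: reducedness with $\#\cH\geq 2$ forbids this intersection from equaling any member of $\cH$, so it is strictly smaller than every $H_{0}$, and the inclusion $N^{G}(\cH)\subseteq\bigcap_{H\in\cH}H$ finishes the job. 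In fact you prove the slightly stronger uniform statement that $\bigcap_{H\in\cH}H\subsetneq H_{0}$ for every $H_{0}\in\cH$, whereas the paper only explicitly handles the extremal one. Both proofs rest on the same elementary consequence of reducedness, but your packaging avoids the normality detour and is the more economical of the two.
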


\begin{proof}
Take $H_{0}\in \cH$ with $(G:H_{0})=M(\cH)$. If $(G:N^{G}(H_{0}))>M(\cH)$, then the assertion is clear. Otherwise, $H_{0}$ is normal in $G$. Take $H\in \cH \setminus \{H_{0}\}$, then $N^{G}(H)$ does not contain $H_{0}$ since $\cH$ is reduced. Hence $(G:N^{G}(\{H,H_{0}\}))>M(\cH)$. This implies the desired assertion since $N^{G}(\cH)$ is contained in $N^{G}(\{H,H_{0}\})$. 
\end{proof}

\begin{dfn}\label{dfn:hnor}
For a multiset $\cH$ of subgroups of a finite group $G$, set
\begin{equation*}
    \cH^{\nor}:=\{H\in \cH \mid H \triangleleft G\}. 
\end{equation*}
\end{dfn}

\begin{lem}\label{lem:sglh}
Let $p$ be a prime number, $G$ a $p$-group, and $\cH$ a reduced set of its subgroups. If $G/N^{G}(\cH)$ is cyclic, then $\#\cH=1$ and $\cH^{\nor}=\cH$. 
\end{lem}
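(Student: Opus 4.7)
The plan is to exploit the fact that a cyclic $p$-group has its subgroup lattice totally ordered by inclusion. By the correspondence theorem applied to the natural surjection $G \twoheadrightarrow G/N^{G}(\cH)$, subgroups of $G$ containing $N^{G}(\cH)$ correspond bijectively to subgroups of $G/N^{G}(\cH)$, and in a cyclic group any two subgroups are comparable.

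Since every $H \in \cH$ contains $N^{G}(\cH)$ by definition of the latter, I would conclude that any two members of $\cH$ are comparable as subgroups of $G$. Combined with the reducedness of $\cH$, which forbids strict containments among its distinct members, this forces $\#\cH \leq 1$; as $\cH$ is nonempty in the setting where the lemma is applied (it arises from the multinorm setup with $r \geq 1$), we obtain $\#\cH = 1$.

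Writing $\cH = \{H\}$, the maximal normal subgroup of $G$ contained in $\cH$ is just $N^{G}(\cH) = N^{G}(H)$. Then $H/N^{G}(H)$ is a subgroup of the cyclic group $G/N^{G}(H)$, and every subgroup of a cyclic group is normal; hence $H$ is normal in $G$, yielding $\cH^{\nor} = \cH$.

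The argument is completely elementary and no step poses a genuine obstacle. The only point worth flagging is the twofold role of the cyclicity hypothesis on $G/N^{G}(\cH)$: first to force the chain of subgroups above $N^{G}(\cH)$ to be totally ordered (so that reducedness cuts $\cH$ down to a single element), and second to guarantee that this single element is automatically normal in $G$.
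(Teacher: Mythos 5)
Your proof is correct and follows essentially the same route as the paper's: both use that a cyclic $p$-group has a totally ordered subgroup lattice to conclude via reducedness that $\#\cH=1$, and both deduce normality of the unique member from the cyclicity of the quotient (the paper phrases this via the derived subgroup being contained in $N^{G}(\cH)$; you phrase it via the correspondence theorem, which is the same underlying fact).
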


\begin{proof}
Take $H,H'\in \cH$. By definition, $H$ and $H'$ contain $N^{G}(\cH)$. Since $G/N^{G}(\cH)$ is cyclic, we have $H\subset H'$ or $H'\subset H$. This implies $H=H'$ since $\cH$ is reduced. Hence, we obtain $\#\cH=1$. The equality $\cH^{\nor}=\cH$ follows from the fact that all subgroups of $G$ containing its derived subgroup are normal in $G$. 
\end{proof}

\begin{dfn}\label{dfn:hctp}
Let $G$ be a finite group, and $P$ its subgroup. For a multiset $\cH$ of subgroups of $G$, set
\begin{equation*}
\cH_{\subset P}:=\{H\in \cH\mid H\subset P\}. 
\end{equation*}
\end{dfn}

\begin{lem}\label{lem:nmlz}
Let $p$ be a prime number, $\nu \geq 3$ a positive integer, and $G$ a finite group of order $p^{\nu}$.
Consider a reduced set $\cH$ of subgroups of $G$ satisfying $\#\cH\geq 2$ and $\mu(\cH)\geq p^{2}$. Take a maximal subgroup $P$ of $G$. Assume that there is $H_{1}\in \cH$ with $(G:H_{1})=\mu(\cH)$ such that all elements of $\cH_{P}^{\red}$ are conjugate to $H_{1}$. 
If $H_{0}\in \cH^{\nor} \setminus \{H_{1}\}$ has index $\mu(\cH)$ in $G$, then it is contained in $N_{G}(H_{1})$. Moreover, we can take a maximal subgroup $P'$ of $G$ so that $\cH_{\subset P'}$ contains $H_{0}$ and $H_{1}$ (in particular, $\cH_{P}^{\red}$ contains $H_{0}$). 
\end{lem}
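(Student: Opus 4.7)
The plan is to produce a maximal subgroup $P'$ of $G$ containing both $H_0$ and $H_1$; combined with Corollary~\ref{cor:pgfg}(ii), this forces $P' = N_{G}(H_1)$ in the principal case $\mu(\cH)=p^{2}$ (with $H_1$ non-normal), so both conclusions of the lemma follow simultaneously. The trivial case $H_1\triangleleft G$ gives $N_{G}(H_1)=G\supset H_0$, and any maximal subgroup containing $H_0H_1$ serves as $P'$; so we may assume $H_1$ is not normal in $G$. Since $H_0\triangleleft G$, the product $H_0H_1$ is a subgroup of $G$ containing $H_1$ properly (as $H_0\not\subset H_1$ by reducedness), and if it is a proper subgroup of $G$ then any maximal subgroup containing it is a valid $P'$. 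The task thus reduces to ruling out $H_0H_1=G$.

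A preliminary observation is that $H_0\not\subset P$. Otherwise $H_0$ contributes itself to the multiset $\cH_{P}$ of Proposition~\ref{prop:rtmc}, and since $|H_0|$ equals the maximum possible size of a member of $\cH_{P}^{\set}$, it would belong to $\cH_{P}^{\red}$ and hence, by hypothesis, be $G$-conjugate to $H_1$; but a normal subgroup is conjugate only to itself, contradicting $H_0\neq H_1$. Consequently $PH_0=G$ and $|P\cap H_0|=|H_0|/p$, and $P\cap H_0$ is normal in $G$ as an intersection of $G$-normal subgroups.

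Now suppose for contradiction that $H_0H_1=G$. Writing $\mu(\cH)=p^{k}$, size counts give $|H_0\cap H_1|=p^{\nu-2k}$ and $|P\cap H_0|=p^{\nu-k-1}$, so $|P\cap H_0|>|H_0\cap H_1|$ whenever $k\geq 2$. The element $P\cap H_0\in \cH_{P}^{\set}$ (arising from the unique Mackey double coset attached to $H_0$, since $PH_0=G$) has size strictly less than $|H_1|=p^{\nu-k}$, so it is not maximal in $\cH_{P}^{\set}$; any maximal element above it is, by hypothesis, of the form $g_{0}H_1g_{0}^{-1}$. Conjugating this inclusion by an arbitrary $g\in G$ and using $G$-normality of $P\cap H_0$, we obtain $P\cap H_0\subset (gg_{0})H_1(gg_{0})^{-1}$ for every $g\in G$; intersecting over $g$ yields $P\cap H_0\subset K:=\bigcap_{g\in G}gH_1g^{-1}$, the core of $H_1$ in $G$. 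Since $K\subset H_1$ and $P\cap H_0\subset H_0$, we conclude $P\cap H_0\subset H_0\cap H_1$, contradicting the size inequality above.

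The main obstacle is this final step: promoting a single containment in one conjugate of $H_1$ to containment in the core $K$, which crucially relies on the $G$-normality of $P\cap H_0$ and on the hypothesis that all maximal elements of $\cH_{P}^{\set}$ lie in a single $G$-conjugacy class. A secondary subtlety is the case $\mu(\cH)>p^{2}$: the same argument still rules out $H_0H_1=G$ and produces a maximal $P'\supset H_0,H_1$, but since $(N_{G}(H_1):H_1)$ may exceed $p$ the identification $P'=N_{G}(H_1)$ is not immediate from Corollary~\ref{cor:pgfg}(ii), and one must iterate through the normalizer chain of $H_1$ (or argue by descent on $|G|$) to recover $H_0\subset N_{G}(H_1)$.
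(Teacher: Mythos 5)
Your argument for producing the maximal subgroup $P'$ is essentially sound: you correctly observe that $H_0\not\subset P$ (else $H_0\in\cH_P^{\red}$ would be a normal subgroup conjugate to $H_1$, forcing $H_0=H_1$), that $P\cap H_0$ is normal in $G$, and that if $H_0H_1=G$ then $|H_0\cap H_1|=p^{\nu-2k}<p^{\nu-k-1}=|P\cap H_0|$ while the hypothesis on $\cH_P^{\red}$ together with normality of $P\cap H_0$ forces $P\cap H_0\subset H_0\cap H_1$, a contradiction. (The detour through the full core of $H_1$ is unnecessary: once $P\cap H_0\subset g_0H_1g_0^{-1}$, conjugating by $g_0^{-1}$ and using $G$-normality of $P\cap H_0$ already gives $P\cap H_0\subset H_1$, which is all you need.)

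The genuine gap is in the first conclusion, $H_0\subset N_G(H_1)$. Your plan is to get this from $P'=N_G(H_1)$ via Corollary~\ref{cor:pgfg}(ii), but that only yields $N_G(H_1)\subset P'$, and the identification $P'=N_G(H_1)$ requires $|N_G(H_1)|=|G|/p$, i.e.\ $(N_G(H_1):H_1)=p$ and $\mu(\cH)=p^2$. For $\mu(\cH)>p^2$ the containment you obtain, $H_0\subset P'\supset N_G(H_1)$, says nothing about $H_0$ versus $N_G(H_1)$, and the suggested ``iterate through the normalizer chain'' is not carried out and does not obviously close the gap (there is no reason a given maximal $P'$ should be compatible with the whole chain). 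The paper avoids this entirely: having shown $P\cap H_0\subset H_1$, one computes $H_0\cap H_1=P\cap H_0$ (it contains $P\cap H_0$, is properly contained in $H_0$ by reducedness, and $P\cap H_0$ has index $p$ in $H_0$), so $(H_1:H_0\cap H_1)=p$ and hence $(H_0H_1:H_1)=p$ by the second isomorphism theorem and normality of $H_0$. Since maximal subgroups of a $p$-group are normal (Proposition~\ref{prop:mxps}), $H_1\triangleleft H_0H_1$, giving $H_0\subset H_0H_1\subset N_G(H_1)$ with no case split on $\mu(\cH)$; the same index computation shows $(G:H_0H_1)=p^{-1}\mu(\cH)\geq p$, so $H_0H_1$ is proper and $P'$ exists. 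You should replace your final step with this direct index argument.
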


\begin{proof}
We first prove $H_{0}\subset N_{G}(H_{1})$. By assumption, $P\cap H_{0}$ is contained in $gH_{1}g^{-1}$ for some $g\in G$. This is equivalent to $P\cap H_{0}\subset H_{1}$ since $P$ and $H_{0}$ are normal in $G$ Hence one has
\begin{equation*}
H_{0}\cap H_{1}=P\cap H_{0}\cap H_{1}=P\cap H_{1}. 
\end{equation*}
Combining this equality with the normality of $H_{0}$ in $G$, we obtain
\begin{equation*}
(H_{0}H_{1}:H_{1})=(H_{0}H_{1}:H_{0})=(H_{1}:H_{0}\cap H_{1})=p. 
\end{equation*}
On the other hand, $H_{0}H_{1}$ is a $p$-group since $G$ is so. Consequently we have 
\begin{equation*}
H_{0}\subset H_{0}H_{1}\subset N_{G}(H_{1})
\end{equation*}
by Proposition \ref{prop:mxps}. 

Secondly, we construct a maximal subgroup $P'$ of $G$ that satisfies $\{H_{0},H_{1}\}\subset \cH_{\subset P}$. Since $\mu(\cH)\geq p^{2}$, one has
\begin{equation*}
    (G:H_{0}H_{1})=p^{-1}(G:H_{1})=p^{-1}\mu(\cH)\geq p. 
\end{equation*}
Hence we may take $P'$ so that $H_{0}H_{1}$ is contained. 
\end{proof}

\begin{thm}\label{thm:ndqp}
Let $p$ be an odd prime number, and $G$ a $p$-group. Consider a reduced set $\cH$ of subgroups of $G$. Then the following are equivalent: 
\begin{enumerate}
    \item $J_{G/\cH}$ is a quasi-permutation $G$-lattice; 
    \item $J_{G/\cH}$ is a quasi-invertible $G$-lattice; 
    \item $\#\cH=1$ and $G/N^{G}(\cH)$ is cyclic. 
\end{enumerate}
\end{thm}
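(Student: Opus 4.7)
The implication (i)$\Rightarrow$(ii) is immediate, and (iii)$\Rightarrow$(i) follows by combining Lemma \ref{lem:sglh} (which forces $\#\cH=1$, say $\cH=\{H\}$ with $N^G(\cH)=N^G(H)$) with Proposition \ref{prop:emn1} (which then gives that $J_{G/H}$ is quasi-permutation). The content is in (ii)$\Rightarrow$(iii).

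I would argue (ii)$\Rightarrow$(iii) by induction on $|G|$. When $\#\cH=1$ the claim is exactly Proposition \ref{prop:emn1}, so it suffices to prove that if $\cH$ is a reduced set of subgroups of a $p$-group $G$ with $\#\cH\geq 2$, then $J_{G/\cH}$ is not quasi-invertible. By Corollary \ref{cor:trfx} I replace $G$ with $G/N^G(\cH)$ and assume $N^G(\cH)=1$; reducedness and $\#\cH\geq 2$ are preserved. For the base case $|G|\leq p^2$: if $G$ is cyclic no reduced set has two elements, and if $G\cong C_p\times C_p$ then $\cH$ consists of distinct subgroups of index $p$, so Proposition \ref{prop:endo} (using that $p$ is odd) forces $J_{G/\cH}$ to be non-quasi-invertible.

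For the inductive step ($|G|=p^\nu$, $\nu\geq 3$), I use two principal reductions: (a) restriction to a subgroup $P\subset G$ via Proposition \ref{prop:rtmc}, which together with Corollary \ref{cor:rdrd}(ii) makes $J_{P/\cH_P^{\red}}$ quasi-invertible; and (b) passage to the quotient by a normal $N\triangleleft G$ via Proposition \ref{prop:tkfx} and Corollary \ref{cor:qtac}, producing the quasi-invertible $G/N$-lattice $J^{(\overline{\varphi}_{G/N}^{\nor})}_{(G/N)/\cH^N}$, whose function one would like to trivialize via Corollary \ref{cor:rdmn}. I split on $\mu(\cH)$. When $\mu(\cH)\geq p^2$, I pick $H_1\in\cH$ realizing $\mu(\cH)$ and a maximal subgroup $P\supset H_1$; by induction on $P$, $\cH_P^{\red}$ is empty or a singleton of $G$-conjugates of $H_1$, which is exactly the setting of Lemma \ref{lem:nmlz}, producing a maximal $P'\subset G$ containing both $H_1$ and a normal element $H_0\in\cH$ of the same minimum index. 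Restricting to $P'$ via (a) then gives a reduced two-element family in a smaller $p$-group, contradicting the inductive hypothesis. When $\mu(\cH)=p$, some $H_1\in\cH$ is normal of index $p$ by Proposition \ref{prop:mxps}; taking $N$ to be a central subgroup of order $p$ contained in $H_1$ (existing by Lemma \ref{lem:nznt} applied to $H_1\cap Z(G)$) and applying (b), I verify the hypotheses of Corollary \ref{cor:rdmn} to trivialize the function on $\cH^N$, and conclude by the inductive hypothesis on $G/N$.

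The chief obstacle will be verifying the hypotheses of Corollary \ref{cor:rdmn} in case (b): for every $\overline{H}\in\cH^N$ arising from some $H\not\supset N$, one must produce $\overline{H}'\in\cH^N_{\overline{\varphi}^{\nor}}[1]$ with $p\in(\overline{H}:\overline{H}\cap\overline{H}')\Z$. Securing this requires a careful choice of the central $N$ (possibly iterated along the central series) driven by the $p$-group combinatorics of Section \ref{sect:pgrp}, especially Proposition \ref{prop:mxps} and Corollary \ref{cor:pgfg}. The odd-prime hypothesis enters only at the base case through Proposition \ref{prop:endo}, which motivates the separate, more delicate treatment of $p=2$ in Theorem \ref{mth2}.
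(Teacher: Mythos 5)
Your framework (induction on $|G|$, reduce to $N^{G}(\cH)=\{1\}$, restrict to maximal subgroups, trace the odd-prime hypothesis to Proposition~\ref{prop:endo}) is the right one and matches the paper's. However, your case split on $\mu(\cH)$ diverges from the paper's and produces two genuine gaps.

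\textbf{Gap in the case $\mu(\cH)\geq p^{2}$.} After using induction on $P$ to get $\cH_{P}^{\red}=\{H_{1}\}$, you invoke Lemma~\ref{lem:nmlz} as if it ``produces'' a normal $H_{0}$ of index $\mu(\cH)$. It does not: the lemma's conclusion is conditional on the \emph{hypothesis} that such an $H_{0}\in\cH^{\nor}\setminus\{H_{1}\}$ with $(G:H_{0})=\mu(\cH)$ exists. If $\cH^{\nor}=\emptyset$, or $\cH^{\nor}=\{H_{1}\}$, or every normal member has index $>\mu(\cH)$, the lemma says nothing and your contradiction evaporates. A concrete instance: $\cH=\{H_{1},H_{2}\}$ with $H_{1}$ normal of index $p^{2}$ and $H_{2}$ non-normal of index $p^{2}$, where $\cH^{\nor}\setminus\{H_{1}\}=\emptyset$. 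The paper splits on whether $\cH=\cH^{\nor}$; Lemma~\ref{lem:nmlz} is applied only in the all-normal, constant-index subcase (Case 1-b), whereas the presence of a non-normal member is handled by an entirely different argument (Case 2): take a non-normal $H$ of minimal index among non-normal members, pick $P\supset N_{G}(H)$, and observe that the $p$ distinct conjugates $gHg^{-1}$ ($g\in P\backslash G$) all land in $\cH_{P}^{\red}$. You have no analogue of this.

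\textbf{Gap in the case $\mu(\cH)=p$.} Quotienting by a central $N\subset H_{1}$ of order $p$ and then trivializing via Corollary~\ref{cor:rdmn} can lose the non-quasi-invertibility. Take $G=(C_{p})^{3}=\langle a,b,c\rangle$, $\cH=\{H_{1},H_{2}\}$ with $H_{1}=\langle b,c\rangle$ and $H_{2}=\langle a\rangle$, so $N^{G}(\cH)=\{1\}$ and $\mu(\cH)=p$. Any $N\subset H_{1}$ of order $p$ misses $H_{2}$, so $\overline{\varphi}(\overline{H_{2}})=p$ while $\overline{\varphi}(\overline{H_{1}})=1$; Corollary~\ref{cor:rdmn} reduces to $\cH_{\overline{\varphi}}[1]=\{\overline{H_{1}}\}$, and $J_{(G/N)/\{\overline{H_{1}}\}}$ is quasi-permutation. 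No contradiction is reached — the passage via Proposition~\ref{prop:tkfx} and Corollary~\ref{cor:qtac} is only a one-way implication. The paper sidesteps this entirely by branching on $M(\cH)$ (the \emph{maximum} index) rather than $\mu(\cH)$: if $M(\cH)=p$ then every $H\in\cH$ is maximal, so $\Phi(G)\subset N^{G}(\cH)=\{1\}$ makes $G$ elementary abelian and Proposition~\ref{prop:endo} finishes; if $M(\cH)\geq p^{2}$ one picks $H_{0}$ of maximal index, a maximal $P\supset H_{0}$, and already in the abelian example above gets $\#\cH_{P}^{\red}\geq 2$. You would need to adopt this pivot on $M(\cH)$ (and the normal/non-normal dichotomy) to close both gaps.
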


\begin{proof}
(i) $\Rightarrow$ (ii) is clear. (iii) $\Rightarrow$ (i) follows from Proposition \ref{prop:emn1}. In the following, we prove (ii) $\Rightarrow$ (iii), which is achieved by giving a proof of the contraposition. We may assume that $\cH$ contains at least two elements. In particular, $G$ is not cyclic according to Lemma \ref{lem:sglh}. It suffices to prove that $J_{G/H}$ is not quasi-invertible if $\#\cH \geq 2$. Write $\#G=p^{\nu}$. We give a proof of the above assertion by induction on $\nu$. If $\nu=2$, the assertion follows from Proposition \ref{prop:endo}. Now suppose $\nu \geq 3$, and the assertion holds for all $\nu-1$. If $N^{G}(\cH)\neq \{1\}$, take a subgroup $N$ of order $p$ in $Z(G)\cap N^{G}(\cH)$. Note that $Z(G)\cap N^{G}(\cH)$ is non-trivial by Lemma \ref{lem:nznt}. Then it suffices to prove the assertion for the $G/N$-lattice $J_{(G/N)/\cH^{N}}$. Since $\#(G/N)=p^{\nu-1}$, the assertion follows from the induction hypothesis. Hence, we may further assume $N^{G}(\cH)=\{1\}$. If $M(\cH)=p$, then $G$ is elementary $p$-abelian since $N^{G}(\cH)=\{1\}$. Therefore, the $G$-lattice $J_{G/\cH}$ is not quasi-invertible by Proposition \ref{prop:endo}. Therefore, we can impose $M(\cH)\geq p^{2}$ in the sequel. By the induction hypothesis, it suffices to prove that there is a maximal subgroup $P$ of $G$ that satisfies $\#\cH_{P}^{\red}\geq 2$. 

\textbf{Case 1.~$\cH=\cH^{\nor}$. }

Recall the notations in Definition \ref{dfn:mudf}, that is,
\begin{equation*}
M(\cH):=\max\{(G:H)\in \Zpn \mid (G:H)\in \cH\},\quad \mu(\cH):=\min\{(G:H)\in \Zpn \mid (G:H)\in \cH\}. 
\end{equation*}

\textbf{Case 1-a.~$\mu(\cH)<M(\cH)$. }
Take $H_{0}\in \cH$ with $(G:H_{0})=M(\cH)$. Pick a maximal subgroup $P$ of $G$ containing $H_{0}$. Then, for any $H\in \cH$ with $(G:H)<M(\cH)$, one has $H_{0}\not\subset P\cap H$ and $H_{0}\not\supset P\cap H$. In particular, $\cH_{P}^{\red}$ contains at least two elements. This completes the proof in this case. 

\textbf{Case 1-b.~$\mu(\cH)=M(\cH)$. }
In this case, the assertion follows from Lemma \ref{lem:nmlz} since $\#\cH \geq 2$ and $\mu(\cH)\geq p^{2}$. 

\textbf{Case 2.~$\cH\setminus \cH^{\nor}$ is non-empty. }

Note that one has $M(\cH)\geq p^{2}$ by Proposition \ref{prop:mxps}. Put
\begin{equation*}
\sigma:=\mu(\cH\setminus \cH^{\nor}). 
\end{equation*}
Take $H\in \cH \setminus \cH^{\nor}$ with $(G:H)=\sigma$, and pick a maximal subgroup $P$ of $G$ containing $N_{G}(H)$. Fix a complete representative $C$ of $P\backslash G/H\cong G/P$ in $G$. Let $H'\in \cH$. If $H'\in \cH^{\nor}$, then $gHg^{-1}\not\subset P\cap H'$ since $H'$ is normal in $G$. On the other hand, if $H'\in \cH \setminus \cH^{\nor}$, then the inclusion $gHg^{-1}\not\subset P\cap g'H'(g')^{-1}$ holds for any $g'\in G$ since $(G:H)=\sigma$. Therefore, $\cH_{P}^{\red}$ contains $gHg^{-1}$ for any $g\in C$. In particular, $\cH_{P}^{\red}$ contains at least $p$ elements. Hence the proof is complete.  
\end{proof}

\begin{rem}
In the proof of Theorem \ref{thm:ndqp}, Case 2 does not use the assumption $\#\cH \geq 2$. In particular, we also obtain an alternative proof of Proposition \ref{prop:emn1} in the case where $H$ is not normal in $G$. 
\end{rem}

\begin{proof}[Proof of Theorem \ref{mth1}]
Put $G:=\Gal(L/k)$ and
\begin{equation*}
\cH:=\{\Gal(L/K_{1}),\ldots,\Gal(L/K_{r})\}. 
\end{equation*}
Then Proposition \ref{prop:coch} gives an isomorphism $X^{*}(T_{\bK/k})\cong J_{G/\cH}$. Hence, by Proposition \ref{prop:coch}, it suffices to prove that the $G$-lattice $J_{G/\cH}$ is not quasi-invertible. In this case, $\cH$ is a reduced set. Therefore, the assertion follows from Theorem \ref{thm:ndqp}. 
\end{proof}

\section{Flabby resolutions of particular lattices}\label{sect:flrs}

\subsection{Quasi-permutation lattices}

Here we give some examples of $G$-lattices $J_{G/\cH}^{(\varphi)}$ that are quasi-permutation. For $n\in \Zpn$, we denote by $D_{n}$ the dihedral group of order $2n$, that is, 
\begin{equation*}
D_{n}:=\langle \sigma_{n},\tau_{n} \mid \sigma_{n}^{n}=\tau_{n}^{2}=1,\tau_{n}\sigma_{n}\tau_{n}=\sigma_{n}^{-1}\rangle. 
\end{equation*}

\begin{thm}\label{thm:dnqp}
Let $m$ be a positive integer. Consider a strongly reduced set 
\begin{equation*}
\cH:=\{\langle \tau_{2m} \rangle, \langle \sigma_{2m}\tau_{2m} \rangle \}
\end{equation*}
of subgroups of $D_{2m}$. Then there is an exact sequence of $D_{2m}$-lattices 
\begin{equation*}
    0\rightarrow J_{D_{2m}/\cH}\rightarrow \Z[D_{2m}]\oplus \Z \rightarrow \Z[D_{2m}/\langle \sigma \rangle]\rightarrow 0. 
\end{equation*}
In particular, $J_{D_{2m}/\cH}$ is quasi-permutation. 
\end{thm}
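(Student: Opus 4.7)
The plan is to first realize $J_{D_{2m}/\cH}$ as a submodule of $\Z[D_{2m}]$ with a rank-one cokernel, and then splice this with a natural resolution to produce the desired sequence. Write $G = D_{2m}$, $\sigma = \sigma_{2m}$, $\tau = \tau_{2m}$, $H_1 = \langle\tau\rangle$, $H_2 = \langle\sigma\tau\rangle$. By Frobenius reciprocity, each $H_i$-invariant element $N_{H_i} \in \Z[G]$ defines a $G$-equivariant map $\phi_i\colon \Z[G/H_i] \to \Z[G]$ sending $[gH_i] \mapsto g \cdot N_{H_i}$. Taking $N_{H_1} = 1 + \tau$ and $N_{H_2} = 1 + \sigma\tau$, set $\Phi := \phi_1 - \phi_2$. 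A direct calculation shows $\phi_1(N_{H_1}) = \phi_2(N_{H_2}) = \sum_{g \in G} g$, so $\Phi$ vanishes on the image of the diagonal inclusion $\Z \hookrightarrow \Z[G/H_1] \oplus \Z[G/H_2]$ defining $J_{G/\cH}$, and hence descends to $\bar\Phi\colon J_{G/\cH} \to \Z[G]$.

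Next I would carry out the key calculation. Decompose $\Z[G] = \Z[\sigma] \oplus \Z[\sigma]\tau$ and write elements as pairs $(x, y)$ with $x, y \in \Z[\sigma]$. The image of $\Phi$ is spanned by $(\sigma^i, \sigma^i)$ and $(\sigma^i, \sigma^{i+1})$ for $i \in \Z/2m\Z$, and an elementary check identifies this as $\{(x, y) : \epsilon(x) = \epsilon(y)\}$, where $\epsilon$ denotes the augmentation map on $\Z[\sigma]$; solving the corresponding system of equations also shows $\ker \Phi = \Z \cdot (N_{H_1}, N_{H_2})$, so $\bar\Phi$ is injective. The cokernel $\Z[G]/\mathrm{Im}\,\Phi$ is therefore $\Z$ via $(x, y) \mapsto \epsilon(x) - \epsilon(y)$, and a short computation of the $G$-action shows that $\sigma$ acts trivially while $\tau$ acts by $-1$; denote this rank-one lattice by $\Z_-$. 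We thus obtain a short exact sequence
\begin{equation*}
0 \longrightarrow J_{G/\cH} \xrightarrow{\bar\Phi} \Z[G] \xrightarrow{p} \Z_- \longrightarrow 0.
\end{equation*}

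Finally, I would splice this with the canonical sequence
\begin{equation*}
0 \longrightarrow \Z \longrightarrow \Z[G/\langle\sigma\rangle] \xrightarrow{r} \Z_- \longrightarrow 0,
\end{equation*}
in which $1 \mapsto [1]+[\tau]$ and $r([1]) = 1,\ r([\tau]) = -1$. Form the pullback $P := \Z[G] \times_{\Z_-} \Z[G/\langle\sigma\rangle]$. The projection $P \to \Z[G/\langle\sigma\rangle]$ is surjective (because $p$ is) with kernel $\ker p \cong J_{G/\cH}$, giving
\begin{equation*}
0 \to J_{G/\cH} \to P \to \Z[G/\langle\sigma\rangle] \to 0.
\end{equation*}
The other projection $P \to \Z[G]$ is also surjective, with kernel $\ker r \cong \Z$; since $\Z[G]$ is projective as a $G$-module, this second sequence splits, so $P \cong \Z[G] \oplus \Z$. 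Substituting gives precisely the exact sequence claimed in the theorem, and the quasi-permutation conclusion follows immediately since $\Z[G] \oplus \Z$ and $\Z[G/\langle\sigma\rangle]$ are both permutation $G$-lattices.

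The main technical hurdle is the second step, namely the explicit determination of $\ker \Phi$ and the identification of the cokernel as the twisted lattice $\Z_-$; once that is in place, the splicing step is purely formal, using only the projectivity of $\Z[G]$ and the universal property of pullbacks.
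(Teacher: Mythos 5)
Your proof is correct, and while the essential computation is closely related to the paper's, the route you take is genuinely different in its final assembly. The paper works on the dual side: it defines three explicit maps $\iota_{m}\colon \Z[D_{2m}/\langle\sigma\rangle]\to\Z[D_{2m}]\oplus\Z$, $\omega_{m}\colon\Z[D_{2m}]\to I_{D_{2m}/\cH}$ (sending $1\mapsto(1,-1)$), and $\psi_{m}\colon\Z\to I_{D_{2m}/\cH}$, and verifies directly that $0\to\Z[D_{2m}/\langle\sigma\rangle]\xrightarrow{\iota_{m}}\Z[D_{2m}]\oplus\Z\xrightarrow{(\omega_{m},\psi_{m})}I_{D_{2m}/\cH}\to 0$ is exact; the statement about $J_{D_{2m}/\cH}$ then follows by dualizing, since the outer terms are self-dual permutation lattices. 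Your $\bar\Phi$ is (up to sign) the dual of the paper's $\omega_{m}$, and your identification of $\Ima\Phi=\{(x,y):\epsilon(x)=\epsilon(y)\}$ with cokernel the sign lattice $\Z_{-}$ is precisely the dual of the paper's computation that $\Ker(\omega_{m})$ is generated by $(1-\tau_{2m})(1+\cdots+\sigma_{2m}^{2m-1})$, a rank-one lattice on which $\sigma$ acts trivially and $\tau$ acts by $-1$. The real divergence is how the three-term sequence is built: where the paper verifies exactness of its candidate sequence in one pass by tracking elements of $\Ker(\omega_{m},\psi_{m})$, you isolate the intermediate object $\Z_{-}$, splice the sequence $0\to J\to\Z[G]\to\Z_{-}\to 0$ with the evident coflabby resolution $0\to\Z\to\Z[G/\langle\sigma\rangle]\to\Z_{-}\to 0$ by a pullback, and then use projectivity of $\Z[G]$ to split off the $\Z$ factor. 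Your version is more modular and makes the role of the twisted rank-one lattice explicit, at the cost of an appeal to the universal property of pullbacks rather than a bare-hands check; both are complete.
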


Theorem \ref{mth4} follows from Theorem \ref{thm:dnqp} by the same argument as Theorem \ref{mth1}. 

\begin{proof}
Consider two three homomorphisms of $D_{n}$-lattices as follows: 
\begin{gather*}
\iota_{m}\colon \Z[D_{2m}/\langle \sigma_{2m}\rangle]\rightarrow \Z[D_{2m}]\oplus \Z;\,1\mapsto (1+\cdots +\sigma_{2m}^{2m-1},-1);\\
\omega_{m}\colon \Z[D_{2m}]\rightarrow I_{D_{2m}/\cH};\,1\mapsto (1,-1);\\
\psi_{m}\colon \Z \rightarrow I_{D_{2m}/\cH};\,1\mapsto 
(1+\cdots +\sigma_{2m}^{2m-1},-(1+\cdots +\sigma_{2m}^{2m-1})). 
\end{gather*}
It suffices to prove that the sequence of $D_{n}$-lattices
\begin{equation}\label{eq:dnfl}
0\rightarrow \Z[D_{2m}/\langle \sigma_{2m}\rangle] \xrightarrow{\iota_{m}} \Z[D_{2m}]\oplus \Z \xrightarrow{(\omega_{m},\psi_{m})} I_{D_{2m}/\cH}\rightarrow 0
\end{equation}
is exact. Inclusion $\Ima(\iota_{m})\subset \Ker(\omega_{m},\psi_{m})$ can be confirmed by direct computation. For reverse inclusion, pick an element $x=(x_{1},x_{2})$ from $\Ker(\omega_{m},\psi_{m})$. Then we have
\begin{equation*}
x-x_{2}\iota_{m}(1)=(x_{1}-x_{2}(1+\cdots +\sigma_{2m}^{2m-1}),0). 
\end{equation*}
This implies that the first factor of $x-x_{2}\iota_{m}(1)$ is contained in $\Ker(\omega_{m})$. However, since $\Ker(\omega_{m})$ is generated by $(1-\tau_{2m})(1+\cdots+\sigma_{2m}^{2m-1})$ as an abelian group, we see that $x-x_{2}\iota_{m}(1)$ is a multiple of $\iota_{m}(1-\tau_{2m})$ by an integer. Consequently, one has $x\in \Ima(\iota_{m})$ as desired. 
\end{proof}

\begin{rem}
    If $m=1$, then the exact sequence \eqref{eq:dnfl} is given by \cite[p.~30]{Endo2001} (note that the map $\nu$ in \cite[p.30, l.18]{Endo2001} needs to be modified as $\nu(1)=(1+\tau, -1-\sigma )$). 
\end{rem}

\begin{lem}\label{lem:cfad}
Let $G$ be a finite group, and 
\begin{equation*}
0\rightarrow F \rightarrow R \xrightarrow{\Phi} M \rightarrow 0
\end{equation*}
a coflabby resolution of a $G$-lattice $M$. Consider a homomorphism of $G$-lattices $\psi \colon R'\rightarrow M$, where $R'$ is a permutation $G$-lattice, and we denote by $\Phi'$ the sum of $\Phi$ and $\psi$. Then there is an exact sequence
\begin{equation}\label{eq:adex}
0\rightarrow F\oplus R'\xrightarrow{\iota} R\oplus R'\xrightarrow{\Phi'}M \rightarrow 0
\end{equation}
which satisfies a commutative diagram
\begin{equation*}
\xymatrix@C=35pt{
R'\ar[r]^{x \mapsto (0,x)\hspace{10pt}}\ar@{=}[d]& F\oplus R'\ar[d]^{\iota}\\
R'& R\oplus R'. \ar[l]_{\pr_{2}}
}
\end{equation*}
In particular, \eqref{eq:adex} is also a coflabby resolution of $M$. 
\end{lem}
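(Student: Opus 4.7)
The plan is to reduce the assertion to the tautological exact sequence $0 \to F \oplus R' \hookrightarrow R \oplus R' \xrightarrow{\Phi \circ \pr_{1}} M \to 0$ by producing a $G$-equivariant automorphism of $R \oplus R'$ that intertwines $\Phi'$ with $\Phi \circ \pr_{1}$ and that realises the desired splitting against $R'$. Concretely, I will lift $\psi$ through $\Phi$ and then translate by the lift.

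First, I would construct a $G$-equivariant lift $\widetilde{\psi} \colon R' \to R$ of $\psi$. Writing $R' = \bigoplus_{j} \Z[G/H_{j}]$, such a lift is determined by choosing, for each $j$, an $H_{j}$-invariant preimage in $R$ of $\psi(e_{H_{j}}) \in M^{H_{j}}$, where $e_{H_{j}}$ is the canonical generator of $\Z[G/H_{j}]$. Applying the long exact sequence of $H_{j}$-cohomology to the given coflabby resolution yields
\begin{equation*}
R^{H_{j}} \longrightarrow M^{H_{j}} \longrightarrow H^{1}(H_{j},F),
\end{equation*}
and the coflabbiness of $F$ gives $H^{1}(H_{j},F)=0$, so the required lift exists.

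Next, I would define the $G$-equivariant automorphism $\alpha \colon R \oplus R' \to R \oplus R'$ by $\alpha(r,r') = (r+\widetilde{\psi}(r'),r')$, whose inverse is $(r,r')\mapsto (r-\widetilde{\psi}(r'),r')$. A direct calculation using $\Phi\circ \widetilde{\psi} = \psi$ gives $\Phi' = (\Phi \circ \pr_{1}) \circ \alpha$. Setting $\iota(f,r') := \alpha^{-1}(f,r') = (f-\widetilde{\psi}(r'),r')$ (with $F$ viewed inside $R$ via the coflabby resolution), the exactness of
\begin{equation*}
0 \to F \oplus R' \xrightarrow{\iota} R \oplus R' \xrightarrow{\Phi'} M \to 0
\end{equation*}
follows by transporting the obvious exactness of $0 \to F \oplus R' \hookrightarrow R \oplus R' \xrightarrow{\Phi\circ\pr_{1}} M \to 0$ through $\alpha$. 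The commutativity of the displayed diagram is immediate from $\pr_{2}\circ\iota(0,r') = r'$.

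Finally, I would verify that $F\oplus R'$ is coflabby. By hypothesis $F$ is coflabby, and the permutation lattice $R'$ is coflabby as well: for any subgroup $K\leq G$, Shapiro's lemma together with Mackey's decomposition gives $H^{1}(K,\Z[G/H_{j}]) \cong \bigoplus_{K\backslash G/H_{j}} H^{1}(K\cap gH_{j}g^{-1},\Z)$, and each summand vanishes since $\Hom$ from a finite group to $\Z$ is trivial. Hence $F\oplus R'$ is coflabby and \eqref{eq:adex} is indeed a coflabby resolution of $M$. The only non-formal step is the production of the equivariant lift $\widetilde{\psi}$, for which the coflabbiness hypothesis on $F$ is precisely what is needed.
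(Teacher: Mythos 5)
Your proof is correct, and it takes a mildly different (more constructive) route than the paper's. The paper applies the snake lemma to the diagram with vertical maps $\Phi$ and $\Phi'$, obtaining a short exact sequence $0 \to F \to F' \to R' \to 0$ with $F' = \Ker(\Phi')$, and then splits it abstractly by invoking $\Ext^{1}_{\Z[G]}(R',F) = 0$ from \cite[Lemme 1]{ColliotThelene1977}. You instead construct an explicit $G$-equivariant lift $\widetilde{\psi}\colon R' \to R$ of $\psi$ through $\Phi$ using the vanishing $H^{1}(H_{j},F)=0$, and then build the automorphism $\alpha(r,r')=(r+\widetilde{\psi}(r'),r')$ of $R\oplus R'$ intertwining $\Phi'$ with $\Phi\circ\pr_{1}$. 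These are the same cohomological input in two guises, since $\Ext^{1}_{\Z[G]}(\Z[G/H_{j}],F)\cong H^{1}(H_{j},F)$ by Shapiro; the paper's splitting is exactly your $\widetilde{\psi}$ once unwound. The advantage of your version is that the required commutative diagram $\pr_{2}\circ\iota(0,r')=r'$ is then an immediate computation, whereas in the paper's proof it relies on the implicit fact that the abstract splitting can be chosen compatibly with the projection to $R'$; the paper's version is terser and avoids writing down the lift. One small stylistic note: your verification that $R'$ is coflabby by Shapiro--Mackey is correct but can be cited directly, since the paper records that permutation lattices are coflabby in Section \ref{sect:rttr}.
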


\begin{proof}
Consider the commutative diagram
\begin{equation*}
\xymatrix{
0\ar[r]& R \ar[r]\ar[d]^{\Phi}&R\oplus R'\ar[r]\ar[d]^{\Phi'}& R'\ar[r]\ar[d]&0\\
0\ar[r]& M \ar@{=}[r]&M\ar[r]& 0\ar[r]& 0,
}
\end{equation*}
where the horizontal sequences are exact. Applying the snake lemma to this diagram, we get an exact sequence
\begin{equation}\label{eq:cdse}
0\rightarrow F \rightarrow F'\rightarrow R'\rightarrow 0. 
\end{equation}
Here $F'$ is the kernel of $\Phi'$. However, one has $\Ext_{\Z[G]}^{1}(R',F)=0$ by \cite[Lemme 1]{ColliotThelene1977}. Hence \eqref{eq:cdse} splits, and the proof is complete. 
\end{proof}

\begin{lem}\label{lem:lmis}
Consider a commutative diagram of finite free abelian groups
\begin{equation*}
\xymatrix{
0\ar[r]& M_{1}\ar[r] \ar[d]^{f_{1}} & M_{2} \ar[r]\ar[d]^{f_{2}}& M_{3} \ar[r]\ar[d]^{f_{3}}& 0\\
0\ar[r]& M'_{1}\ar[r] & M'_{2} \ar[r] & M'_{3}, &
}
\end{equation*}
where the horizontal sequences are exact. We further assume that 
\begin{itemize}
    \item $\rk_{\Z}(M_{1})=\rk_{\Z}(M'_{1})$; 
    \item $f_{2}$ and $f_{3}$ are injective; and 
    \item the cokernel of $f_{2}$ is torsion-free. 
\end{itemize}
Then the homomorphism $f_{1}$ is an isomorphism. 
\end{lem}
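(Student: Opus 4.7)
The plan is to apply the snake lemma to a slight truncation of the given diagram, then combine the resulting left-exact sequence of cokernels with a rank count to squeeze $\Coker(f_1)$ between being torsion-free and being torsion.

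First, I would replace $M'_3$ by $N'_3:=\Ima(M'_2\to M'_3)$, so that the bottom row becomes a short exact sequence $0\to M'_1\to M'_2\to N'_3\to 0$. Since $M_2\to M_3$ is surjective and the rightmost square commutes, $f_3$ takes values in $N'_3$, so it corestricts to a map $\tilde{f}_3\colon M_3\to N'_3$, which inherits injectivity from $f_3$. This yields a commutative diagram with short exact rows, whose middle and right vertical arrows $f_2$ and $\tilde{f}_3$ are injective.

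Next, I would apply the snake lemma. The boundary map emanates from $\Ker(\tilde{f}_3)=0$, so one obtains an injection $\Coker(f_1)\hookrightarrow \Coker(f_2)$. By hypothesis $\Coker(f_2)$ is torsion-free, hence so is $\Coker(f_1)$. At the same time, the snake lemma gives $\Ker(f_1)\hookrightarrow \Ker(f_2)=0$, so $f_1$ is injective; combined with the rank equality $\rk_{\Z} M_1=\rk_{\Z} M'_1$, this shows that $f_1\otimes_{\Z} \Q$ is an isomorphism, whence $\Coker(f_1)$ is a finite (hence torsion) abelian group.

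A group that is simultaneously torsion and torsion-free must vanish, so $f_1$ is surjective, and therefore an isomorphism. There is no substantial obstacle here; the one point to be attentive to is that the bottom row is not assumed to be surjective at the right-hand side, which is precisely why the preliminary truncation step to $N'_3$ is necessary before the snake lemma can be invoked in its standard form.
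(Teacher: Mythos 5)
Your proof is correct and follows essentially the same route as the paper: apply the snake lemma, deduce $f_1$ is injective with torsion-free cokernel, and then use the rank equality to conclude the cokernel is also torsion, hence zero. The only minor difference is your preliminary corestriction to $N'_3$; the paper invokes the snake lemma directly in the form that allows the bottom row to be merely left-exact, which already yields the six-term sequence ending in $\Coker(f_2)$ without requiring surjectivity at $M'_3$.
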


\begin{proof}
By the snake lemma, one has an exact sequence
\begin{equation*}
0\rightarrow \Ker(f_{1})\rightarrow \Ker(f_{2})\rightarrow \Ker(f_{3})\rightarrow \Coker(f_{1})\rightarrow \Coker(f_{2}). 
\end{equation*}
Since $\Ker(f_{2})$ is trivial by assumption, we have $\Ker(f_{1})=0$. On the other hand, since $\Ker(f_{3})=0$ and $\Coker(f_{2})$ is torsion-free, we obtain that $\Coker(f_{1})$ has no torsion. However, the equality $\rk_{\Z}(M_{1})=\rk_{\Z}(M'_{1})$ implies $\rk_{\Z}(\Coker(f_{1}))=0$, and hence $\Coker(f_{1})=0$. This completes the proof. 
\end{proof}

\begin{lem}\label{lem:fgdc}
Let $G$ be a finite group, and $N$ its normal subgroup. Consider a commutative diagram of $G$-lattices as follows: 
\begin{equation*}
\xymatrix{
0\ar[r]& M_{1}\ar[r]\ar[d]^{f_{1}}& M_{2}\ar[r]\ar[d]^{f_{2}}& M_{3}\ar[r]\ar[d]^{f_{3}}&0\\
0\ar[r]& M'_{1}\ar[r]& M'_{2}\ar[r]& M'_{3}\ar[r]&0,  
}
\end{equation*}
where the horizontal sequences are exact. We further assume that 
\begin{itemize}
\item[(a)] $H^{1}(N,\Ker(f_{1}))=0$; and 
\item[(b)] $N$ acts trivially on $M_{3}$.  
\end{itemize}
Then $\Ker(f_{2})$ is generated by $\Ker(f_{1})$ and $\Ker(f_{2})^{N}$. 
\end{lem}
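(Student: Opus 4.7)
The plan is to run the snake lemma on the diagram, then take $N$-invariants and exploit hypothesis (a) to get surjectivity, with hypothesis (b) used to identify $N$-invariants of the quotient with the quotient itself.

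First I would apply the snake lemma to the given commutative diagram. Since the horizontal rows are exact and the vertical maps form a map of short exact sequences, this yields an exact sequence of $G$-lattices
\begin{equation*}
0 \to \Ker(f_1) \to \Ker(f_2) \xrightarrow{\pi} \Ker(f_3),
\end{equation*}
where $\pi$ is induced by the map $M_2 \to M_3$. Let $K := \Ima(\pi) \subset \Ker(f_3)$, so that we have a short exact sequence of $G$-lattices
\begin{equation*}
0 \to \Ker(f_1) \to \Ker(f_2) \xrightarrow{\pi} K \to 0.
\end{equation*}

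Next, I would take $N$-invariants of this sequence. Passing to the long exact sequence in group cohomology gives
\begin{equation*}
0 \to \Ker(f_1)^N \to \Ker(f_2)^N \xrightarrow{\pi^N} K^N \to H^1(N, \Ker(f_1)).
\end{equation*}
By hypothesis (a), the rightmost term vanishes, so $\pi^N \colon \Ker(f_2)^N \to K^N$ is surjective. By hypothesis (b), $N$ acts trivially on $M_3$, hence on the subgroup $\Ker(f_3)$, and therefore on $K$; this means $K^N = K$. Consequently $\pi$ restricted to $\Ker(f_2)^N$ already surjects onto $K$.

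Finally, I would deduce the conclusion by a diagram chase. Given any $x \in \Ker(f_2)$, its image $\pi(x) \in K$ lies in $\pi(\Ker(f_2)^N)$, so there exists $y \in \Ker(f_2)^N$ with $\pi(y) = \pi(x)$. Then $x - y \in \Ker(\pi) = \Ker(f_1)$, whence $x = (x - y) + y \in \Ker(f_1) + \Ker(f_2)^N$, as desired. No step here presents a genuine obstacle; the only care needed is to verify that all maps in the snake diagram are $N$-equivariant (automatic, since the original diagram is one of $G$-lattices) so that taking $N$-invariants is legitimate.
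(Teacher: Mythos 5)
Your proof is correct and essentially identical to the paper's: both identify the image of $\Ker(f_2)$ in $M_3$ (your $K$, the paper's $M_3^\dagger$), take $N$-invariants of the resulting short exact sequence, use (a) to get surjectivity on $H^1$, and use (b) to identify $K^N$ with $K$. The final diagram chase you spell out is the content of the paper's one-line "this implies the desired assertion."
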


\begin{proof}
We denote by $M_{3}^{\dagger}$ the image of $\Ker(f_{2})$ under the surjection $M_{2}\twoheadrightarrow M_{3}$. Then one has an exact sequence
\begin{equation}\label{eq:krex}
0\rightarrow \Ker(f_{1})\rightarrow \Ker(f_{2})\rightarrow M_{3}^{\dagger}\rightarrow 0. 
\end{equation}
Since $M^{\dagger}_{3}$ is contained in $M_{3}$, the assumption (b) implies that the action of $N$ on $M_{3}^{\dagger}$ is trivial. Taking $N$-fixed parts of \eqref{eq:krex}, we obtain an exact sequence
\begin{equation*}
0\rightarrow \Ker(f_{1})^{N}\rightarrow \Ker(f_{2})^{N}\rightarrow M_{3}^{\dagger}\rightarrow 0. 
\end{equation*}
Here we use (a) for the surjectivity. This implies the desired assertion. 
\end{proof}

\begin{lem}\label{lem:cssp}
Let $G$ be a finite group, $H$ its subgroup, and $N_{1}$ and $N_{2}$ normal subgroups of $G$. Consider homomorphisms of $G$-lattices as follows: 
\begin{gather*}
f_{1}\colon M_{1}\oplus \Z[G/H]^{N_{1}}\rightarrow M'_{1}\oplus \Z[G/H];\\
f_{2}\colon M_{2}\oplus \Z[G/H]^{N_{2}}\rightarrow M'_{1}\oplus \Z[G/H]\oplus M'_{2}. 
\end{gather*}
Denote by $M' \subset M'_{1}\oplus \Z[G/H]\oplus M'_{2}$ the image of the sum of $f_{1}$ and $f_{2}$. We further assume that
\begin{itemize}
\item[(1)] $\gcd((HN_{1}N_{2}:HN_{1}),(HN_{1}N_{2}:N_{2}))=1$; 
\item[(2)] $H\cap N_{1}N_{2}=\{1\}$; 
\item[(3)] there exist commutative diagrams
\begin{equation*}
\xymatrix@C=35pt{
\Z[G/H]^{N_{1}}\ar[r]^{x\mapsto (0,x)\hspace{15pt}}\ar[d]^{x\mapsto x}&M_{1}\oplus \Z[G/H]^{N_{1}}\ar[d]^{f_{1}}\\
\Z[G/H]&M'_{1}\oplus \Z[G/H],\ar[l]_{\pr_{2}\hspace{10pt}}
}\quad
\xymatrix@C=30pt{
\Z[G/H]^{N_{2}}\ar[r]^{x\mapsto (0,x)\hspace{15pt}}\ar[d]^{x\mapsto x}&M_{2}\oplus \Z[G/H]^{N_{2}}\ar[d]^{f_{2}}\\
\Z[G/H]&M'_{1}\oplus \Z[G/H]\oplus M'_{2};\ar[l]_{\pr_{2}\hspace{25pt}}
}
\end{equation*}
\item[(4)] $M'\cap (M'_{1}\oplus \{0\}\oplus M'_{2})^{N_{1}N_{2}}\subset f_{1}(M_{1}\oplus \{0\})+f_{2}(M_{2}\oplus \{0\})\subset M'_{1}\oplus \{0\}\oplus M'_{2}$; and 
\item[(5)] $\rk_{\Z}\Ker(f)=(G:HN_{1}N_{2})$. 
\end{itemize}
Then there is an isomorphism
\begin{equation*}
M' \oplus \Z[G/HN_{1}N_{2}]\cong M_{1}\oplus M_{2}\oplus \Z[G/HN_{1}] \oplus \Z[G/HN_{2}]. 
\end{equation*}
\end{lem}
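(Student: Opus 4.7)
The plan is to view the combined map $f \coloneqq f_1 + f_2$ as a $G$-equivariant surjection from the source
\[
F \coloneqq M_1 \oplus \Z[G/H]^{N_1} \oplus M_2 \oplus \Z[G/H]^{N_2}
\]
onto $M'$, identify its kernel with $\Z[G/HN_1N_2]$, and then split the resulting short exact sequence. Applying the standard $G$-module identification $\Z[G/H]^{N_i} \cong \Z[G/HN_i]$ (orbit sums under the $N_i$-action correspond to cosets of $HN_i$), the source $F$ is rewritten as $M_1 \oplus M_2 \oplus \Z[G/HN_1] \oplus \Z[G/HN_2]$, so a split exact sequence
\[
0 \to \Z[G/HN_1N_2] \to F \to M' \to 0
\]
will yield the desired isomorphism.

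To compute the kernel, condition (3) gives $f(0,x_1,0,x_2) = (0, x_1+x_2, 0)$, while the inclusion $f_1(M_1 \oplus \{0\}) + f_2(M_2 \oplus \{0\}) \subset M_1' \oplus \{0\} \oplus M_2'$ from (4) ensures that the middle $\Z[G/H]$-component of $f(m_1,x_1,m_2,x_2)$ depends only on $x_1+x_2$. Hence $\Ker(f)$ decomposes as $K' \oplus L$, where $K' \subset M_1 \oplus M_2$ is cut out by the $M_1'$- and $M_2'$-coordinate equations of $f$, and $L \coloneqq \{(0,x_1,0,-x_1) : x_1 \in \Z[G/H]^{N_1} \cap \Z[G/H]^{N_2}\}$. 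Since $\Z[G/H]^{N_1} \cap \Z[G/H]^{N_2} = \Z[G/H]^{N_1N_2}$ is canonically isomorphic to $\Z[G/HN_1N_2]$, we have $L \cong \Z[G/HN_1N_2]$ of rank $(G:HN_1N_2)$. Condition (5) then forces $\rk K' = 0$, and since $K'$ is a sublattice of the torsion-free module $M_1 \oplus M_2$, we conclude $K' = 0$, so $\Ker(f) = L \cong \Z[G/HN_1N_2]$.

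For the splitting, let $\iota_i \colon \Z[G/HN_1N_2] \hookrightarrow \Z[G/HN_i]$ denote the natural norm-type inclusion and $p_i \colon \Z[G/HN_i] \twoheadrightarrow \Z[G/HN_1N_2]$ the surjection induced by $HN_i \subset HN_1N_2$; a routine coset-sum calculation gives $p_i \circ \iota_i = (HN_1N_2 : HN_i) \cdot \id$. Condition (2) yields $(HN_1N_2 : N_2) = |H|(HN_1N_2 : HN_2)$, and combined with (1) this gives $\gcd((HN_1N_2 : HN_1), (HN_1N_2 : HN_2)) = 1$. Choosing Bezout coefficients $\alpha, \beta \in \Z$ with $\alpha (HN_1N_2 : HN_1) + \beta (HN_1N_2 : HN_2) = 1$, I would define the $G$-linear retraction
\[
\rho \colon F \to \Z[G/HN_1N_2], \quad \rho(m_1,m_2,y_1,y_2) \coloneqq \alpha p_1(y_1) - \beta p_2(y_2),
\]
which restricts to the identity on the identified kernel $L$. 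The expected main obstacle is bookkeeping rather than conceptual: the key step $K' = 0$ rests entirely on the rank hypothesis (5), and the formula $p_i \circ \iota_i = (HN_1N_2 : HN_i) \cdot \id$ requires careful tracking of coset sums. The first inclusion in (4) does not enter the argument directly but is presumably used to verify the other hypotheses in applications.
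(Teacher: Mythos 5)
Your outline has the right machinery — the identification $\Z[G/H]^{N_i}\cong\Z[G/HN_i]$, the Bezout coefficients, the retraction $\rho$, and the rank count from (5) — and this is essentially the paper's strategy (the paper's map $\pi$ is your $\rho$, up to a typo there). But there is a genuine gap in your kernel computation, and it is exactly where the first inclusion of condition (4), which you flagged as unused, is required.

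The gap is the claim that condition (3) gives $f(0,x_1,0,x_2)=(0,\,x_1+x_2,\,0)$. Condition (3) is a \emph{partial} statement: it only says $\pr_2\circ f_i(0,x)=x$, i.e.\ the $\Z[G/H]$-coordinate of $f_i(0,x)$ is $x$. It says nothing about the $M'_1$- or $M'_2$-coordinates of $f_i(0,x)$, which may well be nonzero. (In the intended application of the lemma inside Theorem~\ref{thm:nlqp}, $f_1$ is built via Lemma~\ref{lem:cfad}, whose commutative square again controls only the $\pr_2$-component, so the spurious $M'_1$-contribution really does occur there.) Consequently your set $L=\{(0,x,0,-x): x\in\Z[G/H]^{N_1N_2}\}$ need not lie in $\Ker(f)$, the decomposition $\Ker(f)=K'\oplus L$ fails, and the conclusion $\rk K'=0$ does not follow. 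Likewise, $\rho$ need not restrict to the identity on $\Ker(f)$, since you only verified that on $L$.

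What rescues the argument is precisely the first inclusion of (4). Let $y\in\Z[G/H]^{N_1N_2}$ be the $N_1N_2$-orbit sum of the trivial coset $H$. Since $(0,y)$ and $(0,-y)$ are $N_1N_2$-fixed and their $\Z[G/H]$-coordinates cancel by (3), the element $f\bigl((0,y),(0,-y)\bigr)$ lies in $M'\cap(M'_1\oplus\{0\}\oplus M'_2)^{N_1N_2}$. Condition (4) then produces $m_1\in M_1$, $m_2\in M_2$ with $f_1(m_1,0)+f_2(m_2,0)=f\bigl((0,y),(0,-y)\bigr)$, so $\bigl((-m_1,y),(-m_2,-y)\bigr)\in\Ker(f)$. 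Since $\rho$ kills the $M_1,M_2$ components, this corrected element still has $\rho$-value $\alpha(HN_1N_2\!:\!HN_1)+\beta(HN_1N_2\!:\!HN_2)=1$, which generates $\Z[G/HN_1N_2]$ over $\Z[G]$; together with the rank hypothesis (5) this shows $\rho|_{\Ker(f)}$ is an isomorphism, and the splitting follows. So the fix is not cosmetic: you must replace your claim that $L\subset\Ker(f)$ by the adjustment via (4), and prove surjectivity of $\rho|_{\Ker(f)}$ rather than identity on $L$. Your Bezout computation $\gcd\bigl((HN_1N_2\!:\!HN_1),(HN_1N_2\!:\!HN_2)\bigr)=1$ from (1) and (2) is correct and is in fact cleaner than what the paper writes.
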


\begin{proof}
For each $i\in \{1,2\}$, the homomorphism $\Ind_{H}^{G}\varepsilon_{HN_{i}/H}^{\circ}$ induces an isomorphism
\begin{equation*}
\varepsilon_{i}\colon \Z[G/HN_{i}]\xrightarrow{\cong}\Z[G/H]^{N_{i}}. 
\end{equation*}
Take two integers $c_{1}$ and $c_{2}$ which satisfy $c_{1}(HN_{1}N_{2}:N_{1})-c_{2}(HN_{1}N_{2}:N_{2})=1$. Note that it is possible by (2). We define $\pi$ as the composite
\begin{align*}
(M_{1}\oplus \Z[G/H]^{N_{1}})\oplus (M_{2}\oplus \Z[G/H]^{N_{2}})\xrightarrow{(\varepsilon_{1}^{-1}\circ \pr_{2})\oplus (\varepsilon_{2}^{-1}\circ \pr_{2})}&\,\,\Z[G/HN_{1}]\oplus \Z[G/HN_{2}]\\
\xrightarrow{(c_{1}\Ind_{HN_{1}N_{2}}^{G}\varepsilon_{HN_{1}N_{2}/HN_{1}},\,c_{2}\Ind_{HN_{1}N_{2}}^{G}\varepsilon_{HN_{1}N_{2}/HN_{2}})}&\,\,\Z[G/HN_{1}N_{2}]
\end{align*}
We denote by $f$ the sum of $f_{1}$ and $f_{2}$, and put $E:=\Ker(f)$. 

\begin{claim}
The restriction of $\pi$ to $E$ is an isomorphism. 
\end{claim}

We first prove the surjectivity. Put
\begin{equation*}
y:=\Ind_{H}^{G}\varepsilon_{HN_{1}N_{2}/H}^{\circ}(1)\in \Z[G/H]^{N_{1}N_{2}}. 
\end{equation*}
Then we have the following: 
\begin{gather*}
\pi((0,y),(0,-y))=c_{1}(HN_{1}N_{2}:HN_{1})-c_{2}(HN_{1}N_{2}:HN_{2})=1,\\
f((0,y),(0,-y))\in M'\cap (M'_{1}\oplus \{0\}\oplus M'_{2})^{N_{1}N_{2}}. 
\end{gather*}
Here we use (2) and (3) for the lower assertion. By (4), there exist $x_{1}\in M_{1}$ and $x_{2}\in M_{2}$ so that $f((x_{1},0),(x_{2},0))=f((0,y),(0,-y))$. Then $((-x_{1},y),(-x_{2},-y))$ lies in $E$ and
\begin{equation*}
\pi((-x_{1},y),(-x_{2},-y))=\pi((0,y),(0,-y))=1. 
\end{equation*}

Now, (5) implies that the kernel of the restriction of $\pi$ to $E$ has rank $0$. Hence it must be trivial, and the proof of Claim is complete. \qed

\vspace{5pt}
By Claim, we get a commutative diagram
\begin{equation*}
\xymatrix{
0\ar[r]&E \ar[r]\ar[rd]_{\cong}& (M_{1}\oplus \Z[G/H]^{N_{1}})\oplus (M_{2}\oplus \Z[G/H]^{N_{2}})\ar[r]^{\hspace{95pt}f}\ar[d]^{\pi}& M'\ar[r]&0\\
&&\Z[G/HN_{1}N_{2}],&&
}
\end{equation*}
where the horizontal sequence is exact. In particular, this exact sequence splits, and therefore we obtain an isomorphism
\begin{equation*}
M'\oplus \Z[G/HN_{1}N_{2}] \cong M_{1}\oplus M_{2}\oplus \Z[G/H]^{N_{1}}\oplus \Z[G/H]^{N_{2}}. 
\end{equation*}
This implies the desired assertion. 
\end{proof}

The following generalizes Theorem \ref{thm:dnqp} in the case that $n$ is a $2$-power.

\begin{thm}\label{thm:nlqp}
Let $m$ be an odd positive, and $\nu \in \Zpn$. Consider the finite group
\begin{align*}
G_{m,\nu}:&=C_{m}\times D_{2^{\nu}}\\
&=\langle \rho_{m},\sigma_{2^{\nu}},\tau_{2^{\nu}} \mid \rho_{m}^{m}=\sigma_{2^{\nu}}^{2^{\nu}}=\tau_{2^{\nu}}^{2}=1,\rho_{m}\sigma_{2^{\nu}}=\sigma_{2^{\nu}}\rho_{m},\rho_{m}\tau_{2^{\nu}}=\tau_{2^{\nu}}\rho_{m},\tau_{2^{\nu}}\sigma_{2^{\nu}}\tau_{2^{\nu}}^{-1}=\sigma_{2^{\nu}}^{-1} \rangle, 
\end{align*}
and define $\cH:=\{\langle \tau_{2^{\nu}} \rangle,\langle \sigma_{2^{\nu}}\tau_{2^{\nu}} \rangle \}$. Then the $G_{m,\nu}$-lattice $J_{G_{m,\nu}/\cH}$ is quasi-permutation. 
\end{thm}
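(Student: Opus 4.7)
The plan is to exploit the direct product structure $G := G_{m,\nu} = C_m \times D$ with $D := D_{2^\nu}$, combined with the fact that every element of $\cH$ is a subgroup of $D$. For each $H \in \cH$ one has a canonical $G$-equivariant isomorphism $\Z[G/H] \cong \Z[C_m] \otimes_\Z \Z[D/H]$, with $C_m$ acting on the first factor and $D$ on the second. Under this identification the total augmentation $\Z[G/H_1] \oplus \Z[G/H_2] \to \Z$ factors through $\Z[C_m]$. Since both factorizing maps are surjective, the kernel-kernel exact sequence yields an exact sequence of $G$-modules
\[
0 \to \Z[C_m] \otimes_\Z I_{D/\cH} \to I_{G/\cH} \to I_{C_m} \to 0,
\]
whose $\Z$-dual is the key sequence
\[
(\dagger)\colon \quad 0 \to J_{C_m} \to J_{G/\cH} \to \Z[C_m] \otimes_\Z J_{D/\cH} \to 0.
\]

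Next, I would produce quasi-permutation resolutions for the two endpoints of $(\dagger)$. Multiplication by $1-\rho_m$ induces a $G$-equivariant isomorphism $J_{C_m} \cong I_{C_m}$, and the tautological sequence $0 \to I_{C_m} \to \Z[C_m] \to \Z \to 0$ inflated from $C_m = G/D$ is a quasi-permutation resolution of $J_{C_m}$. For the other endpoint, I would tensor the resolution of $J_{D/\cH}$ supplied by Theorem \ref{thm:dnqp} with $\Z[C_m]$ over $\Z$, obtaining
\[
0 \to \Z[C_m] \otimes_\Z J_{D/\cH} \to \Z[G] \oplus \Z[C_m] \to \Z[G/\langle \sigma_{2^\nu}\rangle] \to 0.
\]

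The final step is to combine these two resolutions with $(\dagger)$ via the horseshoe construction, yielding an embedding $0 \to J_{G/\cH} \to P \to Q \to 0$ in which $P$ sits in an extension $0 \to \Z[C_m] \to P \to \Z[G] \oplus \Z[C_m] \to 0$ and $Q$ in an extension $0 \to \Z \to Q \to \Z[G/\langle\sigma_{2^\nu}\rangle] \to 0$. Both extensions split provided the corresponding $\Ext^1_G$-groups vanish. Shapiro's lemma identifies $\Ext^1_G(\Z[G/\langle\sigma_{2^\nu}\rangle], \Z)$ with $H^1(\langle\sigma_{2^\nu}\rangle, \Z) = 0$ (a finite group's cohomology in a trivial torsion-free module), and identifies $\Ext^1_G(\Z[C_m] \otimes_\Z J_{D/\cH}, \Z[C_m])$ with $H^1(D, J_{D/\cH})^{\oplus m}$.

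The main technical obstacle is therefore the vanishing $H^1(D, J_{D/\cH}) = 0$. I would establish it from the long exact sequence in $D$-cohomology attached to the resolution of Theorem \ref{thm:dnqp}: since $H^1(D, \Z[D]) = 0$ by Shapiro and $H^1(D, \Z) = 0$, the group $H^1(D, J_{D/\cH})$ equals the cokernel of the induced map on $D$-invariants $\Z \cdot N_D \oplus \Z \to \Z \cdot N_{D/\langle\sigma_{2^\nu}\rangle}$; the explicit description of the dual of $\iota_m\colon 1 \mapsto (N_{\sigma_{2^\nu}}, -1)$ from the proof of Theorem \ref{thm:dnqp} shows that the summand $\Z$ maps onto $-\Z \cdot N_{D/\langle\sigma_{2^\nu}\rangle}$, so this cokernel is zero. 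Granted this, the horseshoe output is the quasi-permutation resolution
\[
0 \to J_{G/\cH} \to \Z[C_m] \oplus \Z[G] \oplus \Z[C_m] \to \Z \oplus \Z[G/\langle\sigma_{2^\nu}\rangle] \to 0,
\]
and we conclude that $J_{G/\cH}$ is quasi-permutation.
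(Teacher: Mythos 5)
The strategy of splitting off the $C_m$-factor via the exact sequence $(\dagger)$ and then running a horseshoe construction is a genuinely different route from the paper's proof, which builds an explicit coflabby resolution of $I_{m,\nu}$ and shows the kernel lattice is stably permutation by a lengthy induction on $\nu$. Your decomposition $(\dagger)\colon 0 \to J_{C_m} \to J_{G/\cH} \to \Z[C_m]\otimes_\Z J_{D/\cH} \to 0$, the identification $J_{C_m}\cong I_{C_m}$ via $1-\rho_m$, and the tensoring of the Theorem \ref{thm:dnqp} resolution with $\Z[C_m]$ are all correct. The problem is in the obstruction computation.

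The horseshoe construction requires extending the embedding $J_{C_m}\hookrightarrow \Z[C_m]$ along $J_{C_m}\hookrightarrow J_{G/\cH}$, and the obstruction lies in $\Ext^1_G(\Z[C_m]\otimes_\Z J_{D/\cH}, \Z[C_m])$. You identify this group ``by Shapiro'' with $H^1(D,J_{D/\cH})^{\oplus m}$ and then correctly show the latter vanishes. But that identification is wrong. Writing $\Z[C_m]\otimes_\Z J_{D/\cH}\cong \Ind_D^G J_{D/\cH}$ and $\Z[C_m]\cong \Ind_D^G\Z$, Frobenius reciprocity gives
\begin{equation*}
\Ext^1_G\bigl(\Ind_D^G J_{D/\cH},\,\Ind_D^G\Z\bigr)\ \cong\ \Ext^1_D\bigl(J_{D/\cH},\,\Res_D\Ind_D^G\Z\bigr)\ \cong\ \Ext^1_D(J_{D/\cH},\Z)^{\oplus m},
\end{equation*}
and by exactness of $\Z$-duality on lattices, $\Ext^1_D(J_{D/\cH},\Z)\cong \Ext^1_D(\Z,I_{D/\cH})=H^1(D,I_{D/\cH})$; this is \emph{not} $H^1(D,J_{D/\cH})$, since $I_{D/\cH}$ and $J_{D/\cH}$ are mutually dual but not isomorphic. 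From the defining sequence $0\to I_{D/\cH}\to \Z[D/\langle\tau\rangle]\oplus\Z[D/\langle\sigma\tau\rangle]\to\Z\to 0$, the connecting map on $H^0$ identifies $H^1(D,I_{D/\cH})$ with $\Z/2^{\nu}\Z$, since both invariant generators map to $(D:\langle\tau\rangle)=2^\nu$. (One can also see this from your Theorem \ref{thm:dnqp} resolution: applying $\Hom_D(-,\Z)$, $\Ext^1_D(J_{D/\cH},\Z)$ injects into $\Ext^2_D(\Z[D/\langle\sigma\rangle],\Z)=H^2(\langle\sigma\rangle,\Z)\cong\Z/2^\nu\Z$, and the next map is the transfer $\Cor^D_{\langle\sigma\rangle}$ on $H^2(\cdot,\Z)$, which vanishes since the Verlagerung $D\to\langle\sigma\rangle$ is trivial; so $\Ext^1_D(J_{D/\cH},\Z)\cong\Z/2^\nu\Z$.)

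So the obstruction group is $(\Z/2^\nu\Z)^{\oplus m}\neq 0$, not $0$, and your argument breaks at precisely this point: vanishing of the group would suffice, but it fails, and you would instead have to show that the \emph{specific} class, namely the push-forward of $[(\dagger)]$ along $J_{C_m}\xrightarrow{\,1-\rho_m\,}\Z[C_m]$, vanishes. Your proposal does not address this, and there is no evident reason for it to hold; indeed the length and intricacy of the paper's actual construction suggests the extension is genuinely nontrivial. Until the obstruction class is controlled, the horseshoe step is unjustified and the proof has a gap.
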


\begin{proof}
In this proof, put $I_{m,\nu}:=I_{G_{m,\nu}/\cH_{m,\nu}}$. By Theorem \ref{thm:dnqp}, we may assume $m>1$. Consider homomorphisms of $G$-lattices as follows: 
\begin{gather*}
\psi_{m,\nu,0}\colon R_{m,\nu,0}:=\Z[G_{m,\nu}/\langle \rho_{m} \rangle]\rightarrow I_{m,\nu};\,1\mapsto \left(\sum_{i=0}^{m-1}\rho_{m}^{i},-\sum_{i=0}^{m-1}\rho_{m}^{i} \right);\\
\psi_{m,\nu,1}\colon R_{m,\nu,1}:=\Z \rightarrow I_{m,\nu};\,1\mapsto \left(\left(\sum_{i=0}^{m-1}\rho_{m}^{i}\right)\left(\sum_{j=0}^{2^{\nu}-1}\sigma_{2^{\nu}}^{j}\right),-\left(\sum_{i=0}^{m-1}\rho_{m}^{i}\right)\left(\sum_{j=0}^{2^{\nu}-1}\sigma_{2^{\nu}}^{j}\right) \right);\\
\psi_{m,\nu,2}\colon R_{m,\nu,2}:=\Z[G_{m,\nu}/\langle \tau_{2^{\nu}} \rangle]\rightarrow I_{m,\nu};\,1\mapsto (1+\rho_{m},-(1+\sigma_{2^{\nu}}^{-1}));\\
\omega_{m,\nu,i}\colon R'_{m,\nu,i}:=\Z[G_{m,\nu}/\langle \sigma_{2^{\nu}}^{2^{\nu-i}n'}, \sigma \tau \rangle]\rightarrow I_{m,\nu};\,1\mapsto \left(0,\left(\sum_{j=0}^{2^{i}-1}\sigma_{2^{\nu}}^{2^{\nu-i}j}\right)(1-\rho_{m}\sigma_{2^{\nu}}^{2^{\nu-i-1}}) \right). 
\end{gather*}
Here $i\in \{0,\ldots,\nu-1\}$. We denote by $\Phi_{m,\nu,\ast}\colon R_{m,\nu,\ast}\rightarrow I_{m,\nu}$ the sum of $\psi_{m,\nu,i}$ for all $i\in \{0,1,2\}$ and $\omega_{m,\nu,0}$. Then there is an isomorphism $\Coker(\Phi_{m,\nu,\ast})\cong (\Z/m\Z)^{\oplus 2^{\nu-1}-1}$, which follows from the following: 
\begin{gather*}
\left(1,\sum_{j=0}^{(m-3/2)}\rho_{m}^{1+2j}(1+\sigma_{2^{\nu}}^{-1})-\sum_{i=0}^{m-1}\rho_{m}^{i}\right)=\psi_{m,\nu,0}(1)-\sum_{j=0}^{(m-3)/2}\psi_{m,\nu,2}(\rho_{m}^{1+2j});\\
(0,1-\rho_{m})=\omega_{m,\nu,0}((1+\rho_{m}^{2}+\cdots+\rho_{m}^{m-1})(1+\rho_{m}\sigma_{2^{\nu}}^{2^{\nu-1}}));\\
(0,1-\sigma_{2^{\nu}}^{2^{\nu-1}})=\omega_{m,\nu,0}(1+\rho_{m}\sigma_{2^{\nu}}^{2^{\nu-1}}+\cdots +(\rho_{m}\sigma_{2^{\nu}}^{2^{\nu-1}})^{m-1});\\
m(0,1-\sigma_{2^{\nu}}^{-1})=\psi_{m,\nu,0}(\tau_{2^{\nu}}-1)+(1-\sigma_{2^{\nu}}^{-1})\sum_{i=1}^{m-1}(0,1-\rho_{m}^{i}). 
\end{gather*}
On the other hand, we write for $\Psi_{m,\nu,\bullet}\colon R'_{m,\nu,\bullet}\rightarrow I_{m,\nu}$ for the sum of $\omega_{m,\nu,i}$ for all $i\in \{1,\ldots,n-1\}$. Moreover, $\Phi_{m,\nu} \colon R_{m,\nu}\rightarrow I_{m,\nu}$ denotes the sum of $\Phi_{m,\nu,\ast}$ and $\Phi_{m,\nu,\bullet}$. Then, for $n\geq 2$, we have
\begin{equation*}
2^{i}(0,1-\rho\sigma_{2^{\nu}}^{2^{\nu-i-1}})=\omega_{m,\nu,i}(1)+\sum_{j=1}^{2^{\nu}-1}(0,1-\rho\sigma^{2^{\nu-i}j})
\end{equation*}
for any $i\in \{1,\ldots,\nu-1\}$. Hence, by induction we obtain that the sum $R_{m,\nu} \rightarrow I_{m,\nu}$ is surjective. In particular, on has an exact sequence of $G$-lattices
\begin{equation*}
0\rightarrow U_{m,\nu} \rightarrow R_{m,\nu} \rightarrow I_{m,\nu} \rightarrow 0. 
\end{equation*}
In the following, we write $C_{m}$ and $Z_{\nu}$ for the subgroups of $G_{m,\nu}$ generated by $\rho_{m}$ and $\sigma_{2^{\nu}}^{2^{\nu-1}}$ respectively. In addition, we define $G_{m,\nu}$-sublattices of $R_{m,\nu}$ as follows: 
\begin{equation*}
R_{m,\nu,\ast}^{-}:=R_{m,\nu,0}\oplus R_{m,\nu,1}\oplus R_{m,\nu,2},\quad R_{m,\nu}^{-}:=R_{m,\nu,\ast}^{-}\oplus R'_{m,\nu,\bullet}, 
\end{equation*}

\begin{claim}
\begin{enumerate}
\item There exist isomorphisms of $G_{m,\nu}$-lattices
\begin{align}
U_{m,\nu} \cap R_{m,\nu,\ast}^{-}&\cong \Z[G_{m,\nu}/\langle \rho_{m},\sigma_{2^{\nu}}\rangle]\oplus R_{m,\nu,2}^{C_{m}},\label{eq:gmnr}\\
U_{m,\nu} \cap R_{m,\nu,\ast}&\cong (U_{m,\nu} \cap R_{m,\nu,\ast}^{-}) \oplus (R'_{m,\nu,0})^{C_{m}}\label{eq:gmna}. 
\end{align}
Moreover, the following holds: 
\begin{equation*}
\xymatrix@C=40pt{
(R'_{m,\nu,0})^{C_{m}}\ar[r]^{x\mapsto (0,x)\hspace{50pt}}\ar[d]& 
(U_{m,\nu} \cap R_{m,\nu,\ast}^{-})\oplus (R'_{m,\nu,0})^{C_{m}}\ar[d]\\
R'_{m,n,0}& R_{m,\nu,\ast}^{-}\oplus R'_{m,n,0}. \ar[l]_{\pr_{2}}
}
\end{equation*}
Here the left vertical map is the natural inclusion, and the right vertical map is defined by \eqref{eq:gmna}. 
\item We have $U_{m,\nu}=(U_{m,\nu}\cap R_{n,\ast})+U_{m,\nu}^{Z_{\nu}}$. 
\end{enumerate}
\end{claim}

\begin{proof}[Proof of Claim]
(i): By the definition of $\Phi_{m,\nu,\ast}$, we have a commutative diagram
\begin{equation*}
\xymatrix@C=35pt{
0\ar[r]& U_{m,\nu}\cap R_{m,\nu,\ast}^{C_{m}} \ar[r] \ar[d] & R_{m,\nu,\ast}^{C_{m}} \ar[r]^{\Phi_{m,\nu,\ast}\hspace{12pt}}\ar[d]& I_{G_{m,\nu}/\cH_{m,\nu}}^{C_{m}} \ar[r]\ar[d]& 0\\
0\ar[r]& U_{m,\nu}\cap R_{m,\nu,\ast} \ar[r] & R_{m,\nu,\ast} \ar[r]^{\Phi_{m,\nu,\ast}\hspace{12pt}} & I_{G_{m,\nu}/\cH_{m,\nu}},&
}
\end{equation*}
where the horizontal sequences are exact. Note that the central and the rightmost vertical maps are injective, and the cokernel of the central vertical map is torsion-free. Moreover, since the cokernel of $\Phi_{m,\nu,\ast}$ is torsion, we have 
\begin{align*}
\rk_{\Z}(U_{m,\nu}\cap R_{m,\nu,\ast})&=\rk_{\Z}(R_{m,\nu,\ast})-\rk_{\Z}(I_{m,\nu})\\
&=(2^{\nu+1}(m+1)+1)-(2^{\nu+1}m-1)\\
&=2^{\nu+1}+2. 
\end{align*}
Therefore, the ranks of $U_{m,\nu}\cap R_{m,\nu,\ast}$ and $U_{m,\nu}\cap R_{m,\nu,\ast}^{C_{m}}$ coincide. Hence Lemma \ref{lem:lmis} implies
\begin{equation*}
U_{m,\nu}\cap R_{m,\nu,\ast}^{C_{m}}=U_{m,\nu}\cap R_{m,\nu,\ast}. 
\end{equation*}
On the other hand, Theorem \ref{thm:dnqp} gives an exact sequence of $G_{m,\nu}$-lattices
\begin{equation*}
0\rightarrow \Z[G_{m,\nu}/\langle \rho_{m},\sigma_{2^{\nu}} \rangle]\rightarrow R_{m,\nu,0}\oplus R_{m,\nu,1} \rightarrow I_{G_{m,\nu}/\cH_{m,\nu}}^{C_{m}}\rightarrow 0. 
\end{equation*}
Hence the assertion follows from Lemma \ref{lem:cfad}. 

(ii): By (i), $U_{m,\nu}\cap R_{m,\nu,\ast}=\Ker(\Phi_{m,\nu,\ast})$ is a permutation $G_{m,\nu}$-lattice. In particular, we have $H^{1}(Z_{\nu},U_{m,\nu}\cap R_{m,\nu,\ast})=0$. Moreover, one has $(R'_{m,\nu,\bullet})^{Z_{\nu}}=R'_{m,\nu,\bullet}$ by definition. Hence the assertion follows follows from Lemma \ref{lem:fgdc} for the commutative diagram
\begin{equation*}
\xymatrix{
0\ar[r]& R_{m,\nu,\ast} \ar[r] \ar[d]^{\Phi_{m,\nu,\ast}} & R_{m,\nu} \ar[r]^{\pi'_{m,\nu}}\ar[d]^{\Phi_{m,\nu}}& R'_{m,\nu,\bullet} \ar[r]\ar[d]& 0\\
0\ar[r]& I_{m,\nu} \ar@{=}[r] & I_{m,\nu} \ar[r] & 0 \ar[r]&0, 
}
\end{equation*}
where $\pi'_{m,\nu}$ is the canonical projection $R_{m,\nu}\twoheadrightarrow R'_{m,\nu,\bullet}$. 
\end{proof}

\vspace{6pt}

In the following, we prove that there is an isomorphism of $G_{m,\nu}$-lattices
\begin{equation}\label{eq:fmns}
U_{m,\nu}\oplus S_{m,\nu} \cong (U_{m,\nu}\cap R_{m,\nu,\ast})\oplus S'_{m,\nu}\oplus S_{m,\nu}, 
\end{equation}
where
\begin{equation*}
S_{m,\nu}:=\bigoplus_{i=1}^{\nu-1}\Z[G_{m,\nu}/\langle \rho_{m},\sigma_{2^{\nu}}^{2^{\nu-i}},\sigma_{2^{\nu}}\tau_{2^{\nu}}\rangle],\quad
S'_{m,\nu}:=\bigoplus_{i=1}^{\nu-1}\Z[G_{m,\nu}/\langle \sigma_{2^{\nu}}^{2^{\nu-i}},\sigma_{2^{\nu}}\tau_{2^{\nu}}\rangle]. 
\end{equation*}
This clearly implies the desired assertion. We prove \eqref{eq:fmns} by induction on $n$. If $n=1$, then the $G_{m,\nu}$-lattice $R_{m,\nu,\bullet}$ is zero. Hence the assertion follows from Claim (i). Next, let $n \geq 2$, and suppose that the assertion holds for $n-1$. By definition, there is a canonical isomorphism $I_{m,\nu-1}\cong I_{m,\nu}^{Z_{\nu}}$. Moreover, the restriction of $\Phi_{m,\nu}$ to $R_{m,\nu}^{-}$ induces a coflabby resolution of $I_{m,\nu}^{Z_{\nu}}$ as follows: 
\begin{equation*}
0\rightarrow U_{m,\nu}\cap (R_{m,\nu}^{-})^{Z_{\nu}} \rightarrow (R_{m,\nu}^{-})^{Z_{\nu}} \rightarrow I_{m,\nu}^{Z_{\nu}}\rightarrow 0. 
\end{equation*}
Consequently, Lemma \ref{lem:cfad} gives an isomorphism
\begin{equation*}
U_{m,\nu}^{Z_{\nu}} \cong (U_{m,\nu}\cap (R_{m,\nu}^{-})^{Z_{\nu}})\oplus (R'_{m,\nu,0})^{Z_{\nu}}
\end{equation*}
that satisfy the following: 
\begin{equation}\label{eq:xitc}
\xymatrix{
(R'_{m,\nu,0})^{Z_{\nu}}\ar[r]\ar[d]^{x\mapsto x}&U_{m,\nu}^{Z_{\nu}}\oplus S_{m,\nu-1}\ar[d]\\
R'_{m,\nu,0}& R_{m,\nu}^{-}\oplus R'_{m,\nu,0}\oplus S_{m,\nu-1}. \ar[l]_{\pr_{2}\hspace{40pt}}
}
\end{equation}
On the other hand, by the induction hypothesis, we obtain an isomorphism of $G_{m,\nu}$-lattices
\begin{equation*}
(U_{m,\nu}\cap (R_{m,\nu}^{-})^{Z_{\nu}}) \oplus S_{m,\nu-1} \cong (U_{m,\nu}\cap R_{m,\nu,\diamond}^{Z_{\nu}})\oplus S'_{m,\nu-1} \oplus S_{m,\nu-1},
\end{equation*}
where $R_{m,\nu,\diamond}:=R_{m,\nu,0}\oplus R_{m,\nu,1}\oplus R_{m,\nu,2}\oplus R'_{m,\nu,1}$. In addition, by Claim (i), one has an isomorphism
\begin{equation*}
U_{m,\nu}\cap R_{m,\nu,\diamond}^{Z_{\nu}}\cong (U_{m,\nu}\cap (R_{m,\nu,\ast}^{-})^{Z_{\nu}})\oplus (R'_{m,\nu,1})^{C_{m}}. 
\end{equation*}
In summary, we obtain an isomorphism of $G_{m,\nu}$-lattices
\begin{equation}\label{eq:fsdd}
(U_{m,\nu}\cap (R_{m,\nu}^{-})^{Z_{\nu}})\oplus S_{m,\nu-1}\cong (U_{m,\nu}\cap (R_{m,\nu,\ast}^{-})^{Z_{\nu}}) \oplus U_{m,\nu-1}^{\dagger}. 
\end{equation}
where
\begin{equation*}
U_{m,\nu-1}^{\dagger}:=(R'_{m,\nu,1})^{C_{m}}\oplus S'_{m,\nu-1} \oplus S_{m,\nu-1}=S'_{m,\nu-1}\oplus S_{m,\nu}. 
\end{equation*}
Now, consider two homomorphisms as follows: 
\begin{gather*}
\Xi_{m,\nu,1}\colon U_{m,\nu}\cap R_{m,\nu,\ast}=(U_{m,\nu}\cap R_{m,\nu,\ast}^{-})\oplus (R'_{m,\nu,0})^{C_{m}}\rightarrow R_{m,\nu,\ast}=R_{m,\nu,\ast}^{-}\oplus R'_{m,\nu,0};\\
\Xi_{m,\nu,2}\colon U_{m,\nu-1}^{\dagger}\oplus (R'_{m,\nu,0})^{Z_{\nu}}\rightarrow R_{m,\nu}\oplus S_{m,\nu-1}=R_{m,\nu,\ast}^{-}\oplus R'_{m,\nu,0}\oplus (R'_{m,\nu,\bullet}\oplus S_{m,\nu-1}),
\end{gather*}
where $\Xi_{m,\nu,1}$ and $\Xi_{m,\nu,2}$ are induced by $U_{m,\nu}\cap R_{m,\nu,\ast} \subset R_{m,\nu,\ast}$ and \eqref{eq:fsdd} respectively. Denote by $\Xi_{m,\nu}$ the sum of $\Xi_{m,\nu,1}$ and $\Xi_{m,\nu,2}$. Then we have $\Ima(\Xi_{m,\nu})=U_{m,\nu}\oplus S_{m,\nu-1}$, which is a consequence of Claim (ii) and \eqref{eq:fsdd}. In particular, one has an exact sequence
\begin{equation}\label{eq:ffex}
0\rightarrow E_{m,\nu}\rightarrow (U_{m,\nu}\cap R_{m,\nu,\ast}) \oplus U_{m,\nu-1}^{\dagger}\xrightarrow{\Xi_{m,\nu}} U_{m,\nu}\oplus S_{m,\nu-1}\rightarrow 0. 
\end{equation}

Now, we confirm that $\Xi_{m,\nu,1}$ and $\Xi_{m,\nu,2}$ satisfy the five assumptions in Lemma \ref{lem:cssp}. The conditions (1) and (2) are not difficult by using the property that $m$ is odd. Moreover, (3) is a consequence of Claim (i) and \eqref{eq:xitc}. On the other hand, \eqref{eq:fsdd} implies the following: 
\begin{align*}
\Xi_{m,\nu,1}(U_{m,\nu}\cap (R_{m,\nu,\ast}^{-})^{Z_{\nu}})+\Xi_{m,\nu,2}(U_{m,\nu-1}^{\dagger})&=(U_{m,\nu}\cap (R_{m,\nu}^{-})^{Z_{\nu}})\oplus S_{m,\nu-1}\\
&=(U_{m,\nu}\oplus S_{m,\nu-1})\cap (R_{m,\nu}^{-}\oplus S_{m,\nu-1})^{Z_{\nu}}. 
\end{align*}
Note that the second equality follows from the fact that $Z_{\nu}$ acts trivially on $S_{m,\nu-1}$. Hence (4) is valid. Finally, \eqref{eq:ffex} implies $\rk_{\Z}(\Ker(\Xi_{m,\nu}))=2^{\nu-1}m$, that is, (5) holds true. Therefore, we can apply Lemma \ref{lem:cssp}, and hence one has an isomorphism
\begin{equation*}
U_{m,\nu}\oplus S_{m,\nu-1}\oplus \Z[G_{m,\nu}/\langle \rho_{m},\sigma_{2^{\nu}}^{2^{\nu-1}},\sigma_{2^{\nu}}\tau_{2^{\nu}}\rangle]\cong (U_{m,\nu}\cap R_{m,\nu,\ast})\oplus U_{m,\nu-1}^{\dagger}\oplus (R'_{m,\nu,0})^{Z_{\nu}}. 
\end{equation*}
Then the equality
\begin{equation*}
S_{m,\nu-1}\oplus \Z[G_{m,\nu}/\langle \rho_{m},\sigma_{2^{\nu}}^{2^{\nu-1}},\sigma_{2^{\nu}}\tau_{2^{\nu}}\rangle]=S_{m,\nu}
\end{equation*}
and an isomorphism
\begin{equation*}
U_{m,\nu-1}^{\dagger}\oplus (R'_{m,\nu,0})^{Z_{\nu}}=S'_{m,\nu-1}\oplus S_{m,\nu}\oplus (R'_{m,\nu,0})^{Z_{\nu}}\cong S'_{m,\nu}\oplus S_{m,\nu}
\end{equation*}
imply \eqref{eq:fmns}. Hence the proof is complete. 
\end{proof}
\subsection{Non-quasi-invertible lattices}

we prove that some $J_{G/\cH}$ are not quasi-invertible. The proofs are based on \cite[\S\S 3--4]{Endo2001} and \cite[Lemma 2.2]{Endo2011}. For a $G$-lattice $M$, put $M_{2}:=M\otimes_{\Z}\Z_{2}$ where $\Z_{2}$ is the ring of $2$-adic integers.

The following will be used in the next proposition. 

\begin{lem}\label{lem:v4qp}
Let $G:=(C_{2})^{2}=\langle \sigma,\tau \mid \sigma^{2}=\tau^{2}=1,\sigma\tau=\tau \sigma \rangle$, $\cH:=\{\{1\},G\}$, and $\varphi$ the weight function on $\cH$ defined as $\varphi(\{1\})=1$ and $\varphi(G)=2$. Then there is an exact sequence of $G$-lattices
\begin{equation*}
0\rightarrow \Z^{\oplus 2}\rightarrow \Z[G/\langle \sigma \rangle]\oplus \Z[G/\langle \tau \rangle]\oplus \Z[G/\langle \sigma \tau \rangle] \rightarrow I_{G/\cH}^{(\varphi)}\rightarrow 0. 
\end{equation*}
\end{lem}

\begin{proof}
Consider homomorphisms as follows: 
\begin{align*}
f_{1}\colon \Z[G/\langle \sigma \rangle]\rightarrow I_{G/\cH}^{(\varphi)};&\, 1\mapsto (1+\sigma,-1);\\
f_{2}\colon \Z[G/\langle \tau \rangle]\rightarrow I_{G/\cH}^{(\varphi)};&\, 1\mapsto (1+\tau,-1);\\
f_{3}\colon \Z[G/\langle \sigma \tau \rangle]\rightarrow I_{G/\cH}^{(\varphi)};&\, 1\mapsto (1+\sigma \tau,-1). 
\end{align*}
We set
\begin{equation*}
f\colon \Z[G/\langle \sigma \rangle]\oplus \Z[G/\langle \tau \rangle]\oplus \Z[G/\langle \sigma \tau \rangle]\rightarrow I_{G/\cH}^{(\varphi)};\,(x_{1},x_{2},x_{3})\mapsto f_{1}(x_{1})+f_{2}(x_{2})+f_{3}(x_{3}). 
\end{equation*}
By definition, the kernel of the sum of $f_{1}$ is generated by $(1+\tau,-(1+\sigma),0)$ and $(0,1+\sigma,-(1+\sigma))$. These elements are fixed under $G$, and hence the assertion holds. 
\end{proof}

\begin{prop}\label{prop:apt1}
Let $G:=(C_{2})^{3}$ and $\cH:=\{C_{2}\times \{1\}\times \{1\},\{1\}\times C_{2}\times C_{2} \}$. Then the $G$-lattice $J_{G/\cH}$ is not quasi-invertible. 
\end{prop}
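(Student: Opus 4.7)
The plan is to show $\Sha^2_\omega(G, J_{G/\cH}) \neq 0$; by Proposition \ref{prop:rrs2} this forces $J_{G/\cH}$ to not be quasi-invertible. The argument follows the approach of \cite[Lemma 2.2]{Endo2011}, working 2-adically via $M_2 = M \otimes_\Z \Z_2$ where convenient, since all relevant positive-degree cohomology is 2-torsion.

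First, I would compute $H^2(G, J_{G/\cH})$ using the long exact sequence attached to
\begin{equation*}
0 \to \Z \to \Z[G/H_1] \oplus \Z[G/H_2] \to J_{G/\cH} \to 0,
\end{equation*}
together with Shapiro's lemma. Since $H_1 \cap H_2 = \{1\}$ and $H_1 H_2 = G$, we have $G = H_1 \times H_2$, so the K\"unneth formula gives each $H^n(G, \Z)$ and identifies the restrictions $H^n(G, \Z) \to H^n(H_i, \Z)$. A direct check shows that $H^2(G, \Z) \to H^2(H_1, \Z) \oplus H^2(H_2, \Z)$ is an isomorphism $(\Z/2)^3 \to (\Z/2)^3$, whence
\begin{equation*}
H^2(G, J_{G/\cH}) \cong \ker\bigl(H^3(G, \Z) \to H^3(H_1, \Z) \oplus H^3(H_2, \Z)\bigr).
\end{equation*}
Again by K\"unneth, this kernel is the $\Tor_1$-summand in $H^3(G, \Z)$, isomorphic to $\Tor_1(\Z/2, (\Z/2)^2) \cong (\Z/2)^2$.

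Next, for each of the seven cyclic subgroups $P \leq G$ of order $2$, I would use the Mackey decomposition of $\Z[G/H_i]|_P$ and the LES for $P$ to compute $H^2(P, J_{G/\cH})$ and the restriction $H^2(G, J_{G/\cH}) \to H^2(P, J_{G/\cH})$. The aim is to show that both generators of $H^2(G, J_{G/\cH}) \cong (\Z/2)^2$ lie in every cyclic kernel, i.e.\ in $\Sha^2_\omega(G, J_{G/\cH})$.

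The hard part will be this final step. Naturality of the LES together with $H^3(P, \Z) = 0$ for cyclic $P$ of order 2 constrains each restriction but does not by itself prove vanishing. A cocycle-level analysis appears needed: the $\Tor_1$-generators of $H^3(G, \Z)$ arise as Bockstein cup products $\beta(x_1) \cup \beta(x_j)$ with one factor coming from $H^*(H_1, \Z)$ and the other from $H^*(H_2, \Z)$. Restricting to a cyclic $P$ kills at least one of these factors, and a chain-level chase through the LES should show that the restricted cocycle lifts into the image of $H^2(P, \Z) \to H^2(P, \Z[G/H_1] \oplus \Z[G/H_2])$, hence vanishes in the cokernel $H^2(P, J_{G/\cH})$.
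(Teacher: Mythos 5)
Your plan is to produce a nonzero element of $\Sha^2_\omega(G, J_{G/\cH})$ and then invoke Proposition \ref{prop:rrs2}. The difficulty you flag in the final step is not a gap that can be filled: the strategy cannot succeed, because in fact $\Sha^2_\omega(G, J_{G/\cH}) = 0$ for this lattice. The implication ``quasi-invertible $\Rightarrow \Sha^2_\omega = 0$'' in Proposition \ref{prop:rrs2} is strictly one-way, and this example is precisely one where the converse fails.

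To see that $\Sha^2_\omega(G, J_{G/\cH}) = 0$, write $J = J_{G/\cH}$, $H_1 = C_2\times\{1\}\times\{1\}$ and $H_2 = \{1\}\times C_2\times C_2$, so that $J$ is presented by $0 \to \Z \to \Z[G/H_1]\oplus\Z[G/H_2] \to J \to 0$. A direct check shows that $J^{H_1}$ is the image of $\Z[G/H_1]$, hence isomorphic to $\Z[G/H_1]$ as a $G/H_1$-lattice (a free module), and that $H^1(H_1, J) = 0$ (computing $\widehat{H}^{-1}(H_1,J)$ on the explicit basis is a one-line calculation). The inflation--restriction sequence for $H_1 \triangleleft G$ then gives an injection
\begin{equation*}
0 = H^2(G/H_1, J^{H_1}) \longrightarrow H^2(G, J) \longrightarrow H^2(H_1, J),
\end{equation*}
so the restriction to the cyclic subgroup $H_1$ is already injective on $H^2(G,J)$, and $\Sha^2_\omega(G, J) = 0$. (Your K\"unneth computation of $H^2(G,J) \cong (\Z/2)^2$ is correct; it is the vanishing of the restrictions that fails.)

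Because the weaker obstruction vanishes, the paper is forced to use a finer one. It constructs an explicit coflabby resolution $0 \to F \to R \to I_{G/\cH} \to 0$ (bootstrapping from Proposition \ref{prop:v4qp} applied to $I^{H_1}$, together with two extra maps $\Z[G]\to I$ and $\Z[G/H_2]\to I$), and works with the exponent of $\widehat{H}^0(G,F)$. Using $\widehat{H}^{-1}(G,I)=0$ and $\widehat{H}^0(G,R)\cong(\Z/4\Z)^4$ one sees that the exponent of $\widehat{H}^0(G,F)$ divides $4$. If $J$ were quasi-invertible, then $F$ would be invertible, so $F\otimes\Z_2$ would be a permutation $\Z_2[G]$-lattice; since $\rk_\Z(F)=11$ is odd and every nontrivial summand $\Z_2[G/H]$ has even rank, $F\otimes\Z_2$ would contain a trivial summand $\Z_2$, forcing $\widehat{H}^0(G,F)$ to contain $\Z/8\Z$, a contradiction. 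This exponent argument, modelled on \cite[Lemma 2.2]{Endo2011}, is genuinely stronger than the $\Sha^2_\omega$ criterion. The $\Sha^2_\omega$-based method you are attempting is the right tool for a nearby statement, Case (ii) of Proposition \ref{prop:c4nr}, where the paper shows $\Sha^2_\omega\cong\Z/2$ via Lee's formula; but for the present proposition that invariant is blind, and you must argue as the paper does.
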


\begin{proof}
Write
\begin{equation*}
  G=\langle \rho,\sigma,\tau \mid \rho^{2}=\sigma^{2}=\tau^{2}=1,\rho \sigma=\sigma\rho,\sigma \tau=\tau\sigma, \tau \rho=\rho\tau \rangle.  
\end{equation*}
We may assume $\cH:=\{H_{1},H_{2}\}$, where $H_{1}=\langle \rho \rangle$ and $H_{2}=\langle \sigma,\tau \rangle$. Let $I=I_{G/\cH}$ and $J=J_{G/\cH}$. We regard $I$ as a $G$-submodule of $\Z[G/H_{1}]\oplus \Z[G/H_{2}]$, which induces an exact sequence
\begin{equation}\label{eq:2mvr}
    0\rightarrow I \rightarrow \Z[G/H_{1}]\oplus \Z[G/H_{2}] \xrightarrow{(\varepsilon_{G/H_{1}},\,\varepsilon_{G/H_{2}})} \Z \rightarrow 0. 
\end{equation}
Put $\overline{G}:=G/H_{1} \cong (C_{2})^{2}$. By definition, there is an isomorphism of $\overline{G}$-lattices
\begin{equation*}
    I^{H_{1}}\cong I_{\overline{G}/\overline{\cH}}^{(\varphi)}.
\end{equation*}
Here $\overline{\cH}:=\{\{1\},\overline{G}\}$ and $\varphi$ is the weight function on $\overline{\cH}$ defined as $\varphi(\{1\})=1$ and $\varphi(\overline{G})=2$. Hence, by Lemma \ref{lem:v4qp}, we obtain a coflabby resolution
\begin{equation*}
    0\rightarrow \Z^{\oplus 2}\rightarrow \Z[G/\langle \rho, \sigma \rangle]\oplus \Z[G/\langle \rho,\tau \rangle]\oplus \Z[G/\langle \rho,\sigma \tau \rangle] \xrightarrow{f'} I^{\langle \rho \rangle}\rightarrow 0. 
\end{equation*}
On the other hand, we have the following for any $g\in \{\sigma,\tau,\sigma\tau \}$: 
\begin{equation*}
    I^{\langle g\rangle}=I^{\langle \rho,g\rangle}+\langle (0,1-\rho) \rangle_{\Z},\quad
    I^{\langle g\rangle}=I^{\langle \rho,g\rangle}. 
\end{equation*}
Moreover, $I$ is generated by $(1,-1)$ as a $G$-lattice. Consider homomorphisms of $G$-lattices
\begin{align*}
f_{1}\colon \Z[G]\rightarrow I&;\,1\mapsto (1,-1);\\
f_{2}\colon \Z[G/\langle \sigma, \tau \rangle]\rightarrow I&;\,1\mapsto  (0,1-\rho). 
\end{align*}
Then $f'$, $f_{1}$ and $f_{2}$ induce a coflabby resolution of $I$: 
\begin{equation*}
0\rightarrow U \rightarrow R \xrightarrow{f} I \rightarrow 0. 
\end{equation*}

\begin{claim}
    The exponent of $\Hhat^{0}(G,F)$ is a divisor of $4$. 
\end{claim}

\begin{proof}[Proof of Claim]
Consider an exact sequence
\begin{equation*}
    \Hhat^{-1}(G,I)\rightarrow \Hhat^{0}(G,U)\rightarrow \Hhat^{0}(G,R). 
\end{equation*}
Then we have $\Hhat^{-1}(G,I)=0$, which follows from the surjectivity of the horizontal homomorphisms of the commutative diagram
\begin{equation*}
\xymatrix@C=80pt{
    \Hhat^{-2}(G,\Z)\oplus \Hhat^{-2}(G,\Z)\ar[r]^{\hspace{35pt}(\Cor_{G/H_{1}},\,\Cor_{G/H_{2}})} \ar[d]^{\cong}& \Hhat^{-2}(G,\Z) \ar[d]^{\cong}\\
    H_{1}\oplus H_{2}\ar[r]& G. 
}
\end{equation*}
Moreover, there is an isomorphism $\Hhat^{0}(G,P)\cong (\Z/4\Z)^{\oplus 4}$. This implies that the exponent of $\Hhat^{0}(G,U)$ is a divisor of $4$.
\end{proof}

\vspace{6pt}
Now suppose that $J$ is quasi-invertible, that is, $U^{\circ}$ is invertible. Then $F$ is also invertible. Moreover, by the same argument as \cite[Lemma 2.2]{Endo2011}, $U_{2}$ is a permutation $\Z_{2}[G]$-lattice. Hence $F_{2}$ contains $\Z_{2}$ as a direct summand of $\Z_{2}[G]$-modules since $\rk_{\Z}(U)=11$. In particular, $\Hhat^{0}(G,U)$ has exponent $8$, which contradicts Claim. Therefore, the $G$-lattice $J$ is not quasi-invertible as desired. 
\end{proof}

\begin{prop}\label{prop:apt2}
Let $G:=C_{4}\times C_{2}$ and $\cH:=\{C_{4}\times \{1\},\{1\}\times C_{2}\}$. Then the $G$-lattice $J_{G/\cH}$ is not quasi-invertible. 
\end{prop}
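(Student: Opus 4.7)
The plan is to mimic the proof of Proposition~\ref{prop:apt1}: construct a coflabby resolution $0 \to F \to R \to I \to 0$ of $I := I_{G/\cH}$ in which $R$ is a permutation $G$-lattice containing no $\Z[G/G]$-summand and $\rk_{\Z} F$ is odd. Given this, $\widehat{H}^{0}(G, R)$ has exponent dividing $4$, and combined with the vanishing $\widehat{H}^{-1}(G, I) = 0$, the long exact sequence in Tate cohomology forces $\widehat{H}^{0}(G, F)$ to have exponent dividing $4$ as well. If $J := I^{\circ}$ were quasi-invertible, dualizing the resolution yields a flabby resolution $0 \to J \to R^{\circ} \to F^{\circ} \to 0$, so $[J]^{\fl} = [F^{\circ}]$ is invertible; hence $F$ itself is invertible and $F_{2} := F \otimes_{\Z} \Z_{2}$ is a permutation $\Z_{2}[G]$-lattice, by the same reasoning as in \cite[Lemma~2.2]{Endo2011}. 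Since $|G/K'|$ is odd only when $K' = G$, the parity $\rk_{\Z} F = 11$ would force $\Z_{2} = \Z_{2}[G/G]$ to appear as a direct summand of $F_{2}$, yielding $\widehat{H}^{0}(G, F_{2}) \supseteq \Z/8\Z$, which contradicts the exponent bound.

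Write $G = \langle \sigma, \tau \mid \sigma^{4} = \tau^{2} = 1,\, \sigma\tau = \tau\sigma\rangle$ with $H_{1} := \langle\sigma\rangle$ and $H_{2} := \langle\tau\rangle$; then $\rk_{\Z} I = 5$ and $I^{G} \cong \Z$ is generated by $(-2(1+\tau),\, 1+\sigma+\sigma^{2}+\sigma^{3})$. I will take
\[
R := \Z[G] \oplus \Z[G/H_{1}] \oplus \Z[G/H_{2}] \oplus \Z[G/P], \qquad P := \langle \sigma^{2}, \tau\rangle \cong (C_{2})^{2},
\]
with generators $f_{1}(1) = (1,-1)$, $f_{2}(1) = (1-\tau, 0)$, $f_{3}(1) = (1+\tau, -(1+\sigma))$, and $f_{4}(1) = (-(1+\tau), 1+\sigma^{2})$. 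The decisive choice is the fourth summand: since $N_{G/P} = 1+\sigma$, a direct computation gives $f_{4}(N_{G/P}) = (-2(1+\tau),\, 1+\sigma+\sigma^{2}+\sigma^{3})$, i.e., exactly the generator of $I^{G}$. For every other proper subgroup $K<G$ a short calculation shows that the image of $N_{G/K}$ under any $G$-equivariant $\Z[G/K] \to I$ lies in $2\cdot I^{G}$, so $P$ is indispensable if one wishes to avoid a $\Z[G/G]$-summand in $R$. With this choice $\rk_{\Z} R = 8+2+4+2 = 16$ and $\rk_{\Z} F = 11$, as required.

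The main obstacle is verifying coflabbiness of $F$: since $H^{1}(K, R) = 0$ for permutation $R$, this amounts to checking surjectivity of $R^{K} \to I^{K}$ for each of the eight subgroups $K \le G$, which is a routine but tedious case analysis (most cases follow readily once one knows that $f_{1}$ surjects onto $I$, $f_{4}^{G}$ covers $I^{G}$, and $f_{2}, f_{3}$ together fill out $I^{H_{1}}$ and $I^{H_{2}}$). The vanishing $\widehat{H}^{-1}(G, I) = 0$ will follow from the long exact sequence attached to $0 \to I \to \Z[G/H_{1}] \oplus \Z[G/H_{2}] \to \Z \to 0$: after applying Shapiro's lemma, the relevant connecting map becomes the transfer $H_{1} \oplus H_{2} = \langle\sigma\rangle \oplus \langle\tau\rangle \to G^{\ab} = G$, which is the canonical isomorphism $C_{4} \oplus C_{2} \xrightarrow{\sim} C_{4} \times C_{2}$. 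Combined with $\widehat{H}^{0}(G, R) = 0 \oplus \Z/4\Z \oplus \Z/2\Z \oplus \Z/4\Z$ (the $\Z[G]$-summand vanishes by Shapiro), this gives the required exponent bound on $\widehat{H}^{0}(G, F)$, after which the parity-contradiction sketched above closes the argument.
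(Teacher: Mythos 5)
Your proposal is correct and takes essentially the same approach as the paper: the permutation module $R = \Z[G] \oplus \Z[G/\langle\sigma\rangle] \oplus \Z[G/\langle\tau\rangle] \oplus \Z[G/\langle\sigma^2,\tau\rangle]$ is exactly the one the paper uses (with a different but equally valid choice of generating maps $f_3,f_4$), and the contradiction via $\widehat{H}^{-1}(G,I)=0$ together with the exponent bound $4$ on $\widehat{H}^0(G,R)$ against the forced $\Z_2$-summand of $F_2$ is the paper's argument. The one small improvement is that you derive the $\Z_2$-summand purely from the odd rank $\rk_\Z F = 11$, whereas the paper additionally uses $\rk_\Z F^G = 3$ to pin down $F_2 \cong \Z_2[G]\oplus\Z_2[G/H]\oplus\Z_2$; your version is a touch more economical (though note the minor terminological slip of calling the corestriction $H_i^{\ab}\to G^{\ab}$ a ``transfer,'' and that $f_2,f_3$ alone do not exhaust $I^{H_1}$ --- $f_4$ is needed there too).
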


\begin{proof}
Write
\begin{equation*}
    G=\langle \sigma,\tau\mid \sigma^{4}=\tau^{2}=1,\sigma\tau=\tau\sigma\rangle, 
\end{equation*}
and put $H_{1}:=\langle \sigma\rangle$ and $H_{2}:=\langle \tau \rangle$. We may assume $\cH=\{H_{1},H_{2}\}$. Let $I=I_{G/\cH}$ and $J=J_{G/\cH}$. We regard $I$ as a sublattice of $\Z[G/\langle \sigma \rangle] \oplus \Z[G/\langle \tau \rangle]$. Then one has an exact sequence
\begin{equation}\label{eq:42mn}
    0\rightarrow I \rightarrow \Z[G/H_{1}]\oplus \Z[G/H_{2}] \xrightarrow{(\varepsilon_{G/H_{1}},\,\varepsilon_{G/H_{2}})} \Z \rightarrow 0. 
\end{equation}
We regard $I$ as a sublattice of $\Z[G/H_{1}]\oplus \Z[G/H_{2}]$. Write
\begin{equation*}
G=\langle \sigma,\tau \mid \sigma^{4}=\tau^{2}=1,\sigma\tau=\tau\sigma\rangle. 
\end{equation*}
Then we have
\begin{equation*}
    I=\langle (1-\tau,0),(0,1-\sigma),(0,\sigma(1-\sigma)),(0,\sigma^{2}(1-\sigma)),(1,-1) \rangle_{\Z}. 
\end{equation*}
Moreover, the following hold: 
\begin{gather*}
    I^{\langle \sigma^{2} \rangle}=\langle (1-\tau,0), (1+\tau,-(1+\sigma^{2})),(1+\tau,-\sigma(1+\sigma^{2}))\rangle_{\Z},\\
    I^{\langle \tau\rangle}=\langle (0,1-\sigma),(0,\sigma(1-\sigma)),(0,\sigma^{2}(1-\sigma)),(1+\tau,-(1+\sigma^{2}))\rangle_{\Z},\\
    I^{\langle \sigma^{2}\tau \rangle}=I^{\langle \sigma^{2},\tau \rangle}=\langle (1+\tau,-(1+\sigma^{2})),(1+\tau,-\sigma(1+\sigma^{2})))\rangle_{\Z}=\Z[G/\langle \sigma^{2},\tau \rangle](1+\tau,-(1+\sigma^{2})),\\
    I^{\langle \sigma \rangle}=\langle (1-\tau,0),(2(1+\tau),-(1+\sigma+\sigma^{2}+\sigma^{3})) \rangle_{\Z},\\
    I^{\langle \sigma\tau\rangle}=I^{G}=\langle (2(1+\tau),-(1+\sigma+\sigma^{2}+\sigma^{3}))\rangle_{\Z},\\
\end{gather*}
Now, consider homomorphisms of $G$-lattices as follows: 
\begin{align*}
f_{1}\colon \Z[G]\rightarrow I&;\,1\mapsto (1,-1),\\
f_{2}\colon \Z[G/\langle \tau \rangle]\rightarrow I&;\,1\mapsto (0,1-\sigma),\\
f_{3}\colon \Z[G/\langle \sigma^{2},\tau \rangle]\rightarrow I&;\,1\mapsto (1+\tau,-(1+\sigma^{2})),\\
f_{4}\colon \Z[G/\langle \sigma \rangle]\rightarrow I&;\,1\mapsto  (1-\tau,0). 
\end{align*}
Then the sum of these maps induce a coflabby resolution
\begin{equation*}
0\rightarrow U \rightarrow R \xrightarrow{f} I \rightarrow 0
\end{equation*}
of $I$ which admits an isomorphism
\begin{equation}\label{eq:stfx}
U^{\langle \sigma^{2}\tau \rangle}\cong \Z[G/\langle \sigma^{2}\tau \rangle]\oplus \Z[G/\langle \sigma^{2},\tau \rangle]\oplus \Z. 
\end{equation}

Now, suppose that $J$ is quasi-invertible. Since $\rk_{\Z}(U)=11$ and $\rk_{\Z}(F^{G})=3$, there exist a subgroup $H$ of $G$ of order $4$ and an isomorphism
\begin{equation*}
    U_{2}\cong \Z_{2}[G]\oplus \Z_{2}[G/H]\oplus \Z_{2}. 
\end{equation*}
In particular, we obtain an isomorphism
\begin{equation*}
    \Hhat^{0}(G,U)\cong \Z/4\Z \oplus \Z/8\Z. 
\end{equation*}
However, since $\Hhat^{0}(G,R)\cong \Z/2\Z\oplus (\Z/4\Z)^{\oplus 2}$, the same argument as Claim in Proposition \ref{prop:apt1} implies that the exponent of $\Hhat^{0}(G,U)$ is a divisor of $4$. Hence, we obtain a contradiction, and the proof is complete. 
\end{proof}

\section{Proof of Theorem \ref{mth2}}\label{sect:pfev}

We give a proof of Theorem \ref{mth2} by dividing into four steps.

\subsection{First step: The case for groups of order $8$}\label{pf21}

\begin{prop}\label{prop:fixd}
Let $G$ be a $p$-group, where $p$ is a prime number. Consider a reduced set $\cH$ of subgroups of $G$ which contains a normal subgroup $H_{0}$ of $G$ of index $\mu(\cH)$ such that $G/H_{0}$ is not cyclic. Then the $G$-lattice $J_{G/\cH}$ is not quasi-invertible. 
\end{prop}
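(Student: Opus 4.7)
The plan is to reduce, via the quotient by $H_0$, to the case of a single subgroup, where Proposition~\ref{prop:emn1} applies directly. Since $H_0$ is normal in $G$, Proposition~\ref{prop:tkfx} with $N = H_0$ yields an isomorphism of $G/H_0$-lattices
\begin{equation*}
J_{G/\cH}^{[H_0]} \cong J_{(G/H_0)/\cH^{H_0}}^{(\overline{\varphi}^{\nor})},
\end{equation*}
where $\overline{\varphi}$ sends $\overline{H} := HH_0/H_0$ to $(HH_0:H) = |H_0|/|H \cap H_0|$. Since $\overline{\varphi}(\overline{H_0}) = 1$, the gcd in the definition of normalization equals $1$, so $\overline{\varphi}^{\nor} = \overline{\varphi}$. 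By Corollary~\ref{cor:qtac}(i), it suffices to show that this $G/H_0$-lattice fails to be quasi-invertible.

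Next I would apply Corollary~\ref{cor:rdmn} to collapse $\cH^{H_0}$ onto $\cH_{\overline{\varphi}}[1]$, which I claim equals the singleton $\{\overline{H_0}\} = \{\{1\}\}$. Indeed, $d_{\overline{\varphi}}(\overline{H}) = (HH_0:H) = 1$ iff $H_0 \subseteq H$, and combined with the reducedness of $\cH$ together with $H_0 \in \cH$ this forces $H = H_0$; meanwhile $\overline{H_0} = \{1\}$ lies trivially in $\cH_{\overline{\varphi}}[1]$, so assumption~(a) holds. For assumption~(b), given a nontrivial $\overline{H} \in \cH^{H_0}$, take $\overline{H'} = \{1\}$; the required divisibility reduces to $|H| \mid |H_0|$, which follows from $(G:H) \geq (G:H_0) = \mu(\cH)$ together with the fact that $|H|$ and $|H_0|$ are powers of the prime $p$. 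This is precisely where the hypothesis that $H_0$ attains the minimum index $\mu(\cH)$ is essential. Corollary~\ref{cor:rdmn} then gives
\begin{equation*}
[J_{(G/H_0)/\cH^{H_0}}^{(\overline{\varphi})}] = [J_{(G/H_0)/\{\{1\}\}}],
\end{equation*}
reducing the problem to showing that $J_{(G/H_0)/\{1\}}$ is not quasi-invertible as a $G/H_0$-lattice.

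This final step is immediate from Proposition~\ref{prop:emn1}: writing $\overline{G} = G/H_0$ and taking the subgroup $\{1\} \subseteq \overline{G}$, one has $N^{\overline{G}}(\{1\}) = \{1\}$, so $\overline{G}/N^{\overline{G}}(\{1\}) = \overline{G}$, which is non-cyclic by hypothesis. Hence $J_{\overline{G}/\{1\}}$ is not quasi-invertible, completing the argument. The argument is uniform in $p$, although for odd $p$ the conclusion can alternatively be read off directly from Theorem~\ref{thm:ndqp}. The only mildly delicate point in the proof is the simultaneous verification that $\cH_{\overline{\varphi}}[1] = \{\{1\}\}$ and that condition~(b) of Corollary~\ref{cor:rdmn} holds; both ultimately reduce to $|H| \leq |H_0|$ for $H \in \cH$, which is exactly the minimality of $(G:H_0)$ among indices of members of $\cH$.
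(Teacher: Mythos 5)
Your proof is correct and follows the same route as the paper: pass to the quotient by $H_0$ via Proposition~\ref{prop:tkfx}, check that the resulting function satisfies the hypotheses of Corollary~\ref{cor:rdmn} (using that $H_0$ attains the minimal index in $\cH$ so that the relevant divisibilities hold in a $p$-group), reduce to the norm-one lattice $J_{G/H_0}$, and invoke Proposition~\ref{prop:emn1}. If anything, your verification of condition (b) of Corollary~\ref{cor:rdmn} is spelled out a bit more fully than the paper's, which is fine.
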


\begin{proof}
Let $\varphi$ be the weight function on $\cH$ that takes the value $1$ for all $H\in \cH$. By Proposition \ref{prop:tkfx}, one has an isomorphism
\begin{equation*}
J_{G/\cH}^{[H_{0}]}\cong J_{G/\cH^{H_{0}}}^{(\overline{\varphi}_{G/H_{0}})}. 
\end{equation*}
Note that $\overline{\varphi}_{G/H_{0}}$ is normalized since $\overline{\varphi}_{G/H_{0}}(\{1\})=1$. Moreover, since $(G:H_{0})=\mu(H)$, all the factors of $\overline{\varphi}_{G/H_{0}}(\overline{H})$ are divisible by $(\pi^{-1}(\overline{H}):H_{0})$ for any $\overline{H}\in \cH^{H_{0}}$. Here $\pi$ denotes the natural surjection from $G$ onto $G/H_{0}$. Therefore Corollary \ref{cor:rdmn} implies the existence of permutation $G$-lattices $R_{1}$ and $R_{2}$ and an isomorphism of $G$-lattices
\begin{equation*}
J_{(G/H_{0})/\cH^{H_{0}}}^{(\overline{\varphi}_{G/H_{0}})}\oplus R_{1}\cong J_{G/H_{0}}\oplus R_{2}. 
\end{equation*}
However, the $G$-lattice $J_{G/H_{0}}$ is not quasi-invertible, which is a consequence of Proposition \ref{prop:emn1}. Hence the assertion follows from Proposition \ref{prop:rtrd} (ii). 
\end{proof}

\begin{lem}\label{lem:nocy}
Let $p$ be a prime number, and $G$ a $p$-group. Consider a reduced set of subgroups $\cH$ of $G$. If $\cH$ contains all maximal subgroups that are not cyclic, then $N^{G}(\cH)$ contains $\Phi(G)$. 
\end{lem}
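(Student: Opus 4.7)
The plan is to fix $H \in \cH$ and prove the stronger claim that $\Phi(G) \subseteq H$. Since $\Phi(G)$ is a normal subgroup of $G$---being the intersection of the maximal subgroups of $G$, each of which is normal of index $p$ by Proposition~\ref{prop:mxps}---the conclusion $\Phi(G) \subseteq N^{G}(\cH)$ will then follow at once from the definition of $N^{G}(\cH)$.

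We may assume $H \neq G$. Since $H$ is a proper subgroup of the $p$-group $G$, Corollary~\ref{cor:pgfg}~(i) shows that $\Phi(G)H$ is still a proper subgroup, so it is contained in some maximal subgroup $M$ of $G$; in particular, both $H$ and $\Phi(G)$ lie in $M$. Now I would split into two cases depending on whether $M$ is cyclic. If $M$ is not cyclic, the hypothesis places $M$ in $\cH$; combined with $H \subseteq M$ and the reducedness of $\cH$, this forces $H = M$, and hence $\Phi(G) \subseteq M = H$ as required.

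The remaining case is that every maximal subgroup of $G$ containing $H$ is cyclic. Here both $H$ and $\Phi(G)$ sit inside the cyclic $p$-group $M$, and since the subgroup lattice of a cyclic group is a chain, $H$ and $\Phi(G)$ are comparable. The desired conclusion then reduces to ruling out $H \subsetneq \Phi(G)$; this is the step I expect to be the main obstacle. The plan for this step is to observe that $H \subsetneq \Phi(G)$ would force every maximal subgroup of $G$ to contain $\Phi(G) \supsetneq H$, so by reducedness no maximal subgroup of $G$ belongs to $\cH$, and the hypothesis on $\cH$ would then pin every maximal subgroup of $G$ down to being cyclic---a very restrictive condition which, by the standard structural classification of $p$-groups whose maximal subgroups are all cyclic, leaves only a tiny list of possibilities (cyclic groups, or $Q_{8}$ when $p=2$) to be excluded directly.
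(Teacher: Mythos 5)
Your overall strategy is the same as the paper's: reduce to showing $\Phi(G)\subseteq H$ for each $H\in\cH$ (using that $\Phi(G)$ is normal), and in the case where some non-cyclic maximal subgroup $M$ contains $H$, use the hypothesis together with reducedness to force $H=M\supseteq\Phi(G)$. For the remaining case the paper argues a bit differently from you: it takes a maximal $P\supseteq H$ (necessarily cyclic), notes via Proposition~\ref{prop:mxps} that the unique index-$p$ subgroup of the cyclic $P$ lies in $\Phi(G)$ because $G/\Phi(G)$ is elementary abelian, and concludes that $H\subsetneq P$ would put $H\subseteq\Phi(G)$, hence inside \emph{every} maximal subgroup --- in particular a non-cyclic one --- contradicting the choice of $H$; so $H=P$ is maximal and contains $\Phi(G)$. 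Your move of placing $H$ and $\Phi(G)$ inside a common cyclic maximal $M$ and invoking comparability in a chain is a legitimate variant of the same idea and arrives at the same final reduction.

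The closing step, however --- ``excluding directly'' the $p$-groups all of whose maximal subgroups are cyclic --- is a genuine gap, because those groups cannot in fact be excluded. For $G$ cyclic of order $\geq p^{2}$, or $G\cong Q_{8}$, take $\cH=\{\{1\}\}$: this is reduced, the hypothesis of the lemma holds vacuously (there are no non-cyclic maximal subgroups), yet $N^{G}(\cH)=\{1\}$ does not contain $\Phi(G)\neq\{1\}$. So the statement as literally written fails for those groups and no amount of direct case analysis will rescue it. (Your list also omits $G\cong C_{p}\times C_{p}$, though that case is harmless since there $\Phi(G)=\{1\}$.) You have, in effect, located the same soft spot that the paper's own proof glosses over: its opening line, ``If all maximal subgroups of $G$ are cyclic, then the assertion follows from the definition of $\Phi(G)$,'' is not a valid argument for exactly these groups. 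In the lemma's sole application (Proposition~\ref{prop:c4nr}, with $G\cong C_{4}\times C_{2}$) the group does have a non-cyclic maximal subgroup, so the problematic configuration never arises; but as a freestanding statement the lemma should carry the implicit proviso that $G$ admits a non-cyclic maximal subgroup, and both your proof and the paper's really rely on that.
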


\begin{proof}
If all maximal subgroups of $G$ are cyclic, then the assertion follows from the definition of $\Phi(G)$. From now on, assume that $G$ admits its cyclic maximal subgroup. Pick $H\in \cH$ which is not contained in any non-cyclic maximal subgroup of $G$. It suffices to prove that $H$ is maximal in $G$. Take a maximal subgroup $P$ of $G$ containing $H$. Then the assumption on $\cH$ implies that $P$ is cyclic. On the other hand, Proposition \ref{prop:mxps} implies that $\Phi(G)$ contains the unique subgroup of $P$ of index $p$. Hence we must have $H=P$, which completes the proof. 
\end{proof}

\begin{prop}\label{prop:c2nr}
Let $G:=(C_{2})^{3}$, and $\cH$ be a reduced set of its subgroups with $\#\cH \geq 2$ and $N^{G}(\cH)=\{1\}$. Then the $G$-lattice $J_{G/\cH}$ is not quasi-invertible. 
\end{prop}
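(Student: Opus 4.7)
The plan is to perform a case analysis on the $\F_{2}$-ranks of the subgroups appearing in $\cH$. Since $\cH$ is reduced with $\#\cH \geq 2$, neither $\{1\}$ nor $G$ belongs to $\cH$, so every $H \in \cH$ has rank $1$ or $2$. If every $H \in \cH$ has rank $1$, then $\mu(\cH) = 4$, each $H \in \cH$ is normal (since $G$ is abelian), and $G/H \cong (C_{2})^{2}$ is noncyclic; Proposition~\ref{prop:fixd} applied to any $H_{0} \in \cH$ then yields the claim. If every $H \in \cH$ has rank $2$, the hypothesis $N^{G}(\cH) = \{1\}$ combined with the fact that two distinct rank-$2$ subgroups of $G$ intersect in a rank-$1$ subgroup forces $\#\cH \geq 3$; since $G$ is elementary $2$-abelian and $(G:H) = 2$ for every $H \in \cH$, Proposition~\ref{prop:endo} applies and gives the result.

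The remaining case is when $\cH$ contains both a rank-$1$ subgroup $H_{1}$ and a rank-$2$ subgroup $K_{1}$; reducedness forces $H_{1} \not\subset K_{1}$. Here the strategy will be to restrict $J_{G/\cH}$ to a carefully chosen rank-$2$ subgroup $P$ of $G$: Proposition~\ref{prop:rtrd} reduces quasi-invertibility of $J_{G/\cH}$ to that of $J_{G/\cH}|_{P}$, which by Proposition~\ref{prop:rtmc} equals $J_{P/\cH_{P}}^{(\varphi_{P})}$, and Corollary~\ref{cor:rdrd} further reduces this to $J_{P/\cH_{P}^{\red}}$ modulo permutation summands. Since $P \cong (C_{2})^{2}$ and the elements of $\cH_{P}^{\red}$ in each instance will turn out to be rank-$1$ subgroups of $P$, Proposition~\ref{prop:endo} will yield non-quasi-invertibility as soon as $\#\cH_{P}^{\red} = 3$, i.e.\ as soon as the reduction covers all three rank-$1$ subgroups of $P$.

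The verification that such a $P$ exists will split into three subsubcases. If $\cH = \{H_{1}, K_{1}\}$ exactly, the restriction strategy never produces $\#\cH_{P}^{\red} = 3$, so this base case will be handled directly by Proposition~\ref{prop:apt1}. If $\cH$ contains a second rank-$1$ subgroup $H_{2} \neq H_{1}$, I will set $P := \langle H_{1}, H_{2}\rangle$; reducedness gives $P \neq K$ and $P \cap K \neq H_{1}, H_{2}$ for every rank-$2$ $K \in \cH$, whence $P \cap K$ is the unique third rank-$1$ subgroup of $P$, and $\cH_{P}^{\red}$ consists of all three rank-$1$ subgroups of $P$. If $\cH$ has $H_{1}$ as its only rank-$1$ subgroup but contains two distinct rank-$2$ subgroups $K_{1}, K_{2}$, I will choose $P$ among the three rank-$2$ subgroups through $H_{1}$ with $P \neq \langle H_{1}, K_{1} \cap K_{2}\rangle$ (this excludes at most one of the three); then $P \cap K_{1} \neq P \cap K_{2}$, for the equality would force $K_{1} \cap K_{2} \subset P$ and hence $P = \langle H_{1}, K_{1} \cap K_{2}\rangle$, and both $P \cap K_{1}, P \cap K_{2}$ differ from $H_{1}$ by reducedness, producing the three required rank-$1$ subgroups of $P$.

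The subtle point in the mixed case is that the base configuration $\cH = \{H_{1}, K_{1}\}$ cannot be reached by the restrict-and-reduce strategy---every candidate $P$ gives $\#\cH_{P}^{\red} \leq 2$---so the argument will rely crucially on the direct cohomological result of Proposition~\ref{prop:apt1} for that single instance. Once that base case is in hand, the enlargement to arbitrary reduced mixed $\cH$ will be uniform via the explicit choices of $P$ outlined above, and the result will follow.
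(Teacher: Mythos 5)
Your proposal is correct and takes essentially the same route as the paper's proof: reduce to the mixed-rank case via Propositions \ref{prop:endo} and \ref{prop:fixd}, invoke Proposition \ref{prop:apt1} for the two-element base case $\cH=\{H_1,K_1\}$, and otherwise restrict to a rank-$2$ subgroup $P$ for which $\cH_P^{\red}$ exhausts the three order-$2$ subgroups of $P$, so that Proposition \ref{prop:endo} applies. Your case split (disjoint subcases on the number of rank-$1$ members) and your coordinate-free choice of $P$ differ from the paper's organization (overlapping cases $r_4\geq 2$ and $r_2\geq 2$, with $P$ given by explicit generators) only in cosmetic detail.
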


\begin{proof}
For $i\in \{2,4\}$, put
\begin{equation*}
r_{i}:=\#\{H\in \cH \mid (G:H)=i\}. 
\end{equation*}
By Propositions \ref{prop:endo} and \ref{prop:fixd}, we may assume that $r_{2}$ and $r_{4}$ are grater than $0$. 

\textbf{Case 1.~$r_{2}=r_{4}=1$. }

In this case, the assertion follows from Proposition \ref{prop:apt1}. 

\textbf{Case 2.~$r_{4}\geq 2$. }

Pick $H_{0},H_{1},H_{2}\in \cH$ which satisfy $(G:H_{0})=2$ and $(G:H_{1})=(G:H_{2})=4$. Put $P:=H_{1}H_{2}$, which is isomorphic to $(C_{2})^{2}$. Then we have $(P:P\cap H_{0})=2$ and $(P\cap H_{0})\cap H_{1}=(P\cap H_{0})\cap H_{2}=\{1\}$. Hence we have $\cH_{P}^{\red}=\{P\cap H_{0},H_{1},H_{2}\}$. On the other hand, Proposition \ref{prop:endo} implies that the $P$-lattice $J_{P/\cH_{P}^{\red}}$ is not quasi-invertible. Therefore, the assertion follows from Proposition \ref{prop:rtrd}. 

\textbf{Case 3.~$r_{2}\geq 2$. }

Write $G=\langle \rho,\sigma,\tau \mid \rho^{2}=\sigma^{2}=\tau^{2}=1,\rho\sigma=\sigma\rho,\sigma\tau=\tau\sigma,\tau\rho=\rho\tau\rangle$. Take $H_{1},H_{2},H_{3}\in \cH$ with $(G:H_{1})=(G:H_{2})=2$ and $(G:H_{3})=4$. We may assume
\begin{equation*}
H_{1}=\langle \rho,\sigma \rangle,\quad H_{2}=\langle \sigma,\tau \rangle,\quad H_{3}=\langle \rho\tau\rangle. 
\end{equation*}
Now put $P:=\langle \rho\sigma,\sigma\tau \rangle$, which is isomorphic to $(C_{2})^{2}$. Note that $P$ is not contained in $\cH$ since $H_{3}\subset P$ and $\cH$ is reduced. Then we have
\begin{equation*}
P\cap H_{1}=\langle \rho\sigma \rangle,\quad P\cap H_{2}=\langle \sigma\tau \rangle,\quad P\cap H_{3}=H_{3}=\langle \rho\tau \rangle. 
\end{equation*}
Therefore, the assertion follows from the same argument as Case 2. 
\end{proof}

\begin{prop}\label{prop:c4nr}
Let $G:=C_{4}\times C_{2}$, and $\cH$ a reduced set of its subgroups with $\#\cH \geq 2$ and $N^{G}(\cH)=\{1\}$. Then the $G$-lattice $J_{G/\cH}$ is not quasi-invertible. 
\end{prop}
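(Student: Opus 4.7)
The plan is to proceed by case analysis on $\cH$ in $G = C_4 \times C_2 = \langle \sigma, \tau \mid \sigma^4 = \tau^2 = 1, \sigma\tau = \tau\sigma\rangle$. Since $G$ is abelian of order $8$, every subgroup is normal; the proper nontrivial subgroups are $\langle \sigma^2\rangle, \langle \tau\rangle, \langle \sigma^2\tau\rangle$ (order $2$) and $\langle \sigma\rangle, \langle \sigma^2, \tau\rangle, \langle \sigma\tau\rangle$ (order $4$). Reducedness of $\cH$, together with $\#\cH \geq 2$ and $N^G(\cH) = \{1\}$, forces every $H \in \cH$ to have index $2$ or $4$ and excludes $\langle \sigma^2, \tau\rangle$ from $\cH$ (otherwise $\cH$ would lie inside $\{\langle \sigma\rangle, \langle \sigma^2, \tau\rangle, \langle \sigma\tau\rangle\}$, all containing $\langle \sigma^2\rangle$, contradicting $N^G(\cH) = \{1\}$). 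Use the automorphisms $\sigma \mapsto \sigma, \tau \mapsto \sigma^2\tau$ and $\sigma \mapsto \sigma\tau, \tau \mapsto \tau$ of $G$ --- which swap $\langle \tau\rangle \leftrightarrow \langle \sigma^2\tau\rangle$ and $\langle \sigma\rangle \leftrightarrow \langle \sigma\tau\rangle$ respectively --- to cut down the number of inequivalent $\cH$ to a short list.

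Next, dispatch most cases by standard reductions. If $\mu(\cH) = 4$ and $\langle \sigma^2\rangle \in \cH$, then $H_0 = \langle \sigma^2\rangle$ is normal of index $\mu(\cH)$ with $G/H_0 \cong (C_2)^2$ non-cyclic, so Proposition~\ref{prop:fixd} applies. If, on the other hand, $\cH$ contains an index-$2$ subgroup (necessarily $\langle \sigma\rangle$ or $\langle \sigma\tau\rangle$) together with both $\langle \tau\rangle$ and $\langle \sigma^2\tau\rangle$, then restriction to $P = \langle \sigma^2, \tau\rangle \cong (C_2)^2$ via Proposition~\ref{prop:rtmc} yields $\cH_P^{\red} = \{\langle \sigma^2\rangle, \langle \tau\rangle, \langle \sigma^2\tau\rangle\}$ (the three distinct index-$2$ subgroups of $P$); Proposition~\ref{prop:endo} (the case $p = 2, r = 3$) together with Proposition~\ref{prop:rtrd} then yields non-quasi-invertibility. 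Finally, the case $\cH = \{\langle \sigma\rangle, \langle \tau\rangle\}$ (and its $G$-automorphism class) is exactly Proposition~\ref{prop:apt2}.

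What remains, up to $G$-automorphism, are $\cH = \{\langle \tau\rangle, \langle \sigma^2\tau\rangle\}$ and $\cH = \{\langle \sigma\rangle, \langle \sigma\tau\rangle, \langle \tau\rangle\}$. For each, construct an explicit coflabby resolution $0 \to F \to R \xrightarrow{f} I_{G/\cH} \to 0$ in the style of Proposition~\ref{prop:apt2}: take $f$ to be the sum of the ``diagonal'' map $\Z[G] \to I_{G/\cH}$ sending $1 \mapsto (1,-1)$, together with several fixed-point-enhanced maps $\Z[G/H'] \to I_{G/\cH}$ (for instance $\Z[G/\langle \sigma^2, \tau\rangle] \to I_{G/\cH}$, $\overline{1} \mapsto (1 + \sigma^2, -(1+\sigma^2))$) chosen so that the image exhausts $I_{G/\cH}$ and $F^G$ has the expected rank. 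Bound the exponent of $\widehat{H}^0(G, F)$ by comparison with $\widehat{H}^0(G, R)$, which decomposes as a direct sum of cyclic groups of orders $[G:H']$. If $J_{G/\cH} = I_{G/\cH}^\circ$ were quasi-invertible, $F^\circ$ would be invertible and hence $F \otimes \Z_2$ would be a specific permutation $\Z_2[G]$-module dictated by the ranks of $F$ and $F^G$; this forces the exponent of $\widehat{H}^0(G, F)$ to be larger than the bound above, giving a contradiction.

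The main obstacle is the direct cohomological computation for the two residual cases $\cH = \{\langle \tau\rangle, \langle \sigma^2\tau\rangle\}$ and $\cH = \{\langle \sigma\rangle, \langle \sigma\tau\rangle, \langle \tau\rangle\}$. All reductions via Propositions~\ref{prop:tkfx} (modding out a nontrivial normal subgroup) and~\ref{prop:rtmc} (restricting to a proper subgroup) produce quasi-invertible --- indeed quasi-permutation --- lattices for these two $\cH$; in particular, restriction to $\langle \sigma^2, \tau\rangle \cong (C_2)^2$ yields $\cH_P^{\red}$ consisting of only two distinct index-$2$ subgroups (the quasi-permutation case of Proposition~\ref{prop:endo}). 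Thus no simpler-group result applies, and the explicit coflabby-resolution-plus-Tate-cohomology computation modelled on Proposition~\ref{prop:apt2} is unavoidable.
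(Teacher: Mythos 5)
Your case enumeration and the reductions you use to dispatch most cases agree with the paper's proof: Proposition \ref{prop:fixd} for $\mu(\cH)=4$ with $\langle\sigma^2\rangle\in\cH$, restriction to the unique non-cyclic maximal subgroup $N=\langle\sigma^2,\tau\rangle$ together with Propositions \ref{prop:rtmc}, \ref{prop:rtrd} and \ref{prop:endo} for the case containing both $\langle\tau\rangle$ and $\langle\sigma^2\tau\rangle$ plus an index-$2$ subgroup, and Proposition \ref{prop:apt2} for $\{\langle\sigma\rangle,\langle\tau\rangle\}$; your identification of the two residual cases $\{\langle\tau\rangle,\langle\sigma^2\tau\rangle\}$ and $\{\langle\sigma\rangle,\langle\sigma\tau\rangle,\langle\tau\rangle\}$ also matches. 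For the first residual case the paper simply cites \cite[Lemma 2.2]{Endo2011}, which is exactly the coflabby-resolution/Tate-cohomology argument you propose to reproduce, so no real difference there.

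The genuine divergence is in the second residual case $\cH=\{\langle\sigma\rangle,\langle\sigma\tau\rangle,\langle\tau\rangle\}$. You propose an explicit coflabby resolution modelled on Proposition \ref{prop:apt2} and a Tate-cohomology exponent comparison. The paper instead realises $G$ as a Galois group over $\Q$ via class field theory and applies the structural results of \cite{Lee2022} to compute $\Sha^2_\omega(G,J_{G/\cH})\cong\Z/2\Z$ directly, then concludes by Proposition \ref{prop:rrs2}. This shows that the explicit computation is not ``unavoidable,'' contrary to what you assert; the paper deliberately sidesteps it with an arithmetic argument. Your route is more self-contained (it stays inside integral representation theory rather than invoking Lee's global machinery) and would be a clean alternative \emph{if} carried out, but this is precisely where your proposal has a gap: you give only the shape of the resolution (``fixed-point-enhanced maps chosen so that the image exhausts $I_{G/\cH}$'') without actually exhibiting generators, verifying coflabbiness, computing $\rk F$, $\rk F^G$, and $\widehat H^0(G,F)$, or checking that the rank constraints really force a $\Z_2$-permutation decomposition that contradicts the exponent bound. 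In Propositions \ref{prop:apt1} and \ref{prop:apt2} these steps require careful choices (e.g.\ the rank of $F$ being odd, or the additional invariant $F^{\langle\sigma^2\tau\rangle}$ in \eqref{eq:stfx}), and for $\rk I_{G/\cH}=7$ here the numerics are different; until the resolution is exhibited and the exponent clash verified, this case is not actually closed. You should either carry out that computation or import the $\Sha^2_\omega$ computation from \cite{Lee2022} as the paper does.
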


\begin{proof}
By Propositions \ref{prop:endo} and \ref{prop:fixd}, we may assume that $\cH$ contains a subgroup $H_{0}$ of index $4$ such that $G/H_{0}$ is cyclic. Write $G=\langle \sigma,\tau\mid \sigma^{4}=\tau^{2}=1,\sigma\tau=\tau\sigma \rangle$, and set
\begin{equation*}
N:=\langle \sigma^{2},\tau \rangle. 
\end{equation*}
Since $N$ is the unique non-cyclic maximal subgroup of $G$, it must not be contained in $\cH$ by Lemma \ref{lem:nocy}. Consequently we may consider the case where $\cH$ satisfies at least one of (i)--(iv) as follows: 
\begin{enumerate}
\item $\cH=\{\langle \sigma \rangle,\langle \tau \rangle\}$; 
\item $\cH=\{\langle \sigma \rangle,\langle \sigma \tau \rangle,\langle \tau \rangle\}$; 
\item $\cH=\{\langle \tau\rangle,\langle \sigma^{2}\tau\rangle \}$; 
\item $\{\langle \tau \rangle,\langle \sigma^{2}\tau \rangle \}\subsetneq \cH$. 
\end{enumerate}
In the following, we prove the assertion case-by-case. 

\textbf{Case (i): }In this case, the assertion follows from Proposition \ref{prop:apt2}. 

\textbf{Case (ii): }It suffices to prove that there is an isomorphism
\begin{equation}\label{eq:leei}
\Sha_{\omega}^{2}(G,J_{G/\cH})\cong \Z/2\Z. 
\end{equation}
In fact, if it holds, then Proposition \ref{prop:rrs2} gives the desired assertion. Put
\begin{equation*}
H_{0}:=\langle \sigma \rangle,\quad H_{1}:=\langle \sigma \tau \rangle,\quad H_{2}:=\langle \tau \rangle, 
\end{equation*}
By the global class field theory, we can take a finite abelian extension $L/\Q$ with group $G$. Let $\bK:=\prod_{i=0}^{2}K_{i}$, where $K_{i}$ is the $H_{i}$-fixed subfield of $L$ for each $i\in \{0,1,2\}$. Then, combining Proposition \ref{prop:coch} with \cite[Lemma 3.3]{BayerFluckiger2020}, we get an isomorphism
\begin{equation*}
    \Sha_{\omega}^{2}(G,J_{G/\cH})\cong \Sha_{\omega}^{2}(\Q,X^{*}(T_{\bK/\Q})). 
\end{equation*}
Here the right-hand side is defined as follows: 
\begin{equation*}
    \Sha_{\omega}^{2}(\Q,X^{*}(T_{\bK/\Q})):=\{x\in H^{2}(\Q,X^{*}(T_{\bK/\Q}))\mid \Res_{\Q_{v}/\Q}(x)=0\text{ for almost all place $v$ of $\Q$}\}. 
\end{equation*}
Therefore, we can apply the theory of \cite{Lee2022}. 
First, note that the numbering of elements of $\cH$ satisfies the assumption in \cite[p.~8, l.~6]{Lee2022}. Put $\cI:=\{1,2\}$. Since $H_{0}H_{1}=H_{0}H_{2}=G$, we have $U_{0}=\cI$. Furthermore, one has
\begin{equation*}
n_{l}(\cI):=\#(\cI/\sim_{l})=
\begin{cases}
1&\text{if }l=0,\\
2&\text{if }l>0. 
\end{cases}
\end{equation*}
In particular, we obtain 
\begin{itemize}
\item $L(\cI)=0$ (see \cite[p.~17, Section 5, l.~17--18]{Lee2022}); and 
\item $n_{l+1}(c)=1$ for every $l>0$ and $c\in \cI/\sim_{l}$. 
\end{itemize}
On the other hand, since $H_{0}$ contains $H_{1}\cap N=\Phi(G)$, the integer $f_{\cI}^{\omega}$ in \cite[p.~18, l.10--12]{Lee2022} must be $1$. Here we use the fact that $N$ is the unique subgroup of index $2$ containing $H_{2}$. Therefore \cite[Corollary 6.3 (1)]{Lee2022} gives an isomorphism
\begin{equation*}
\Sha_{\omega}^{2}(\Q,X^{*}(T_{K/\Q}))\cong (\Z/2\Z^{f_{\cI}^{\omega}})^{\oplus n_{1}(\cI)-1}=\Z/2\Z, 
\end{equation*}
which concludes the proof of \eqref{eq:leei}.

\textbf{Case (iii): }This is the same as \cite[Lemma 2.2]{Endo2011}. 

\textbf{Case (iv): }By assumption, there is $H\in \cH$ which has index $2$ in $G$. We may assume $H=\langle \sigma \rangle$. Then $\cH_{N}^{\red}$ contains $P\cap H$, $H_{1}$ and $H_{2}$. Since they are distinct from each other, the assertion follows from the same argument as Case 1 in the proof of Proposition \ref{prop:c2nr}. 
\end{proof}

\begin{prop}\label{prop:d4nr}
Let $G:=D_{4}$, and $\cH$ be a reduced set of its subgroups satisfying $\#\cH\geq 2$ and $N^{G}(\cH)=\{1\}$. We further assume that $\cH^{\nor}$ is non-empty. Then the $G$-lattice $J_{G/\cH}$ is not quasi-invertible. 
\end{prop}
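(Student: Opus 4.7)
The plan is to restrict the problem from $G = D_{4}$ to a Klein four subgroup $P$, and then apply Proposition \ref{prop:endo} to the restricted multiset. Write $G = \langle \sigma, \tau \mid \sigma^{4} = \tau^{2} = 1, \tau\sigma\tau = \sigma^{-1}\rangle$, so that $Z(G) = \Phi(G) = \langle \sigma^{2}\rangle$. The non-central order-$2$ subgroups of $G$ fall into two $G$-conjugacy classes $\{\langle \tau\rangle, \langle \sigma^{2}\tau\rangle\}$ and $\{\langle \sigma\tau\rangle, \langle \sigma^{3}\tau\rangle\}$, each contained in exactly one of the two Klein four maximal subgroups $P_{1} = \langle \sigma^{2}, \tau\rangle$ and $P_{2} = \langle \sigma^{2}, \sigma\tau\rangle$; every other proper subgroup of $G$ contains $\sigma^{2}$.

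The first step will be to locate a non-central reflection $\theta \in \cH$. If no such $\theta$ existed, every $H \in \cH$ would contain $\sigma^{2}$, forcing $\langle \sigma^{2}\rangle \subset N^{G}(\cH)$ and contradicting $N^{G}(\cH) = \{1\}$. By the symmetry between the two $V_{4}$ maximal subgroups, I will assume $\theta \subset P := P_{1}$, so that $\theta \in \{\langle \tau\rangle, \langle \sigma^{2}\tau\rangle\}$. Because $\theta \subsetneq P$ and $\cH$ is reduced, $P \notin \cH$; since also $G \notin \cH$, the nonemptiness of $\cH^{\nor}$ forces some $H_{0} \in \cH^{\nor}$ to lie in $\{\langle \sigma^{2}\rangle, \langle \sigma\rangle, \langle \sigma^{2}, \sigma\tau\rangle\}$.

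Next I will compute $\cH_{P}^{\red}$ via Proposition \ref{prop:rtmc}. The double coset set $P \backslash G / \theta$ has representatives $\{1, \sigma\}$ in $G$, and a direct calculation shows that $P \cap \theta$ and $P \cap \sigma\theta\sigma^{-1}$ equal, in some order, $\langle \tau\rangle$ and $\langle \sigma^{2}\tau\rangle$. For each of the three possibilities for $H_{0}$ above, the normality of $H_{0}$ together with the identity $H_{0} \cap P = \langle \sigma^{2}\rangle$ shows that every double-coset contribution from $H_{0}$ to $\cH_{P}$ equals $\langle \sigma^{2}\rangle$. Moreover, $P$ itself cannot appear in $\cH_{P}^{\red}$: this would force a conjugate $gHg^{-1}$ of some $H \in \cH$ to contain $P$, hence (by the size constraint $\lvert H \rvert \leq \lvert P\rvert = 4$) to equal $P$; the normality of $P$ would then give $H = P$, contradicting $P \notin \cH$. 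Therefore $\cH_{P}^{\red} = \{\langle \sigma^{2}\rangle, \langle \tau\rangle, \langle \sigma^{2}\tau\rangle\}$ consists of precisely the three index-$2$ subgroups of $P \cong (C_{2})^{2}$.

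To conclude, I will invoke Proposition \ref{prop:endo} with $p = 2$ and $\#\cH_{P}^{\red} = 3$: this gives that $J_{P/\cH_{P}^{\red}}$ is not quasi-invertible, hence so is $J_{P/\cH_{P}}$ by Corollary \ref{cor:rdrd}~(ii), hence so is $J_{G/\cH}$ viewed as a $P$-lattice by Proposition \ref{prop:rtmc}, and finally $J_{G/\cH}$ is not quasi-invertible as a $G$-lattice by Proposition \ref{prop:rtrd}. The main subtlety, and where the hypothesis $\cH^{\nor} \neq \emptyset$ is essential, lies in securing the third element $\langle \sigma^{2}\rangle$ in $\cH_{P}^{\red}$: without a normal element $H_{0}$ in $\cH$, one could be left with only the two reflections $\langle \tau\rangle, \langle \sigma^{2}\tau\rangle$, which by Theorem \ref{thm:dnqp} yield a quasi-permutation $P$-lattice --- this is exactly the family appearing in case (b) of Theorem \ref{mth2}.
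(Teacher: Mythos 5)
Your proof is correct and follows essentially the same route as the paper: restrict to the Klein four subgroup $P=\langle\sigma^{2},\tau\rangle$, show that $\cH_{P}^{\red}$ is exactly the three index-$2$ subgroups of $P$, and then apply Proposition \ref{prop:endo} together with Propositions \ref{prop:rtmc} and \ref{prop:rtrd}. The one small refinement is that you avoid the paper's preliminary appeal to Proposition \ref{prop:fixd} to exclude $\langle\sigma^{2}\rangle$ from $\cH$; instead you check directly that any admissible normal $H_{0}$ (including $\langle\sigma^{2}\rangle$ itself) meets $P$ in $\langle\sigma^{2}\rangle$, which handles all cases uniformly.
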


\begin{proof}
By Proposition \ref{prop:endo}, we may assume that $\cH$ contains a subgroup of index $4$. Furthermore, we may further assume that $\cH$ does not contain $\langle \sigma_{4}^{2}\rangle$, which is a consequence of Proposition \ref{prop:fixd}. Hence, it suffices to consider the case where $\cH$ contains $\langle \tau_{4} \rangle$. combining this with the non-emptiness of $\cH^{\nor}$, we obtain that $\cH$ contains a subgroup $H_{0}$ of index $2$. Then $H_{0}$ coincides with $\langle \sigma_{4} \rangle$ or $\langle \sigma_{4}^{2},\sigma_{4}\tau_{4} \rangle$ since $\cH$ is reduced. Now put $P:=\langle \sigma_{4}^{2},\tau_{4} \rangle$, which is isomorphic to $(C_{2})^{2}$. Then $\cH_{P}^{\red}$ contains $\langle \sigma_{4}^{2}\rangle$, $\langle \tau_{4} \rangle$ and $\langle \sigma_{4}^{2}\tau_{4} \rangle$. Therefore, the same argument as Case 1 in the proof of Proposition \ref{prop:c2nr} gives the desired assertion. 
\end{proof}

In summary, we obtain the following. 

\begin{thm}\label{thm:ind1}
Let $G$ be a $2$-group of order $8$, and $\cH$ a reduced set of its subgroups satisfying $\#\cH \geq 2$ and $N^{G}(\cH)=\{1\}$. We further assume that $\cH^{\nor}$ is non-empty. Then the $G$-lattice $J_{G/\cH}$ is not quasi-invertible. 
\end{thm}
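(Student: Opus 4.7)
The plan is to prove this by exhausting the five isomorphism classes of $2$-groups of order $8$: the cyclic group $C_{8}$, $C_{4}\times C_{2}$, $(C_{2})^{3}$, the dihedral group $D_{4}$, and the quaternion group $Q_{8}$. For three of these groups the assertion follows immediately from results already established in this subsection, so the real content is to verify that the remaining two cases ($C_{8}$ and $Q_{8}$) do not occur under the standing hypotheses.

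First I would dispatch the three non-trivial cases. If $G\cong C_{4}\times C_{2}$, Proposition \ref{prop:c4nr} gives the conclusion directly. If $G\cong (C_{2})^{3}$, it follows from Proposition \ref{prop:c2nr}. If $G\cong D_{4}$, the additional hypothesis that $\cH^{\nor}$ is non-empty is exactly what is needed to apply Proposition \ref{prop:d4nr}.

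It then remains to argue that the cases $G\cong C_{8}$ and $G\cong Q_{8}$ are vacuous under the assumption $N^{G}(\cH)=\{1\}$ together with $\#\cH\geq 2$ and reducedness of $\cH$. For $G\cong C_{8}$, every subgroup is normal and the lattice of subgroups is a chain, so $N^{G}(\cH)$ equals the smallest member of $\cH$ (under inclusion); for this to be trivial we would need $\{1\}\in \cH$, but any other $H\in \cH$ then strictly contains $\{1\}$, contradicting reducedness. For $G\cong Q_{8}$, every subgroup is again normal, and every non-trivial subgroup contains the center $Z(Q_{8})$ of order $2$; hence $N^{G}(\cH)\supset Z(Q_{8})\neq \{1\}$ unless $\{1\}\in \cH$, and the same reducedness argument as above forces a contradiction with $\#\cH\geq 2$.

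There is no genuine obstacle here: the main difficulty has already been absorbed into Propositions \ref{prop:c2nr}, \ref{prop:c4nr}, and \ref{prop:d4nr}, and the only thing to be careful about is checking that the two edge groups $C_{8}$ and $Q_{8}$ are excluded by the combinatorial hypotheses. The role of the assumption that $\cH^{\nor}$ is non-empty becomes visible only in the $D_{4}$ case, where Proposition \ref{prop:d4nr} genuinely requires it.
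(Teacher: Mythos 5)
Your proposal is correct and takes essentially the same route as the paper: both classify the groups of order $8$, reduce to $(C_2)^3$, $C_4\times C_2$, and $D_4$, and invoke Propositions \ref{prop:c2nr}, \ref{prop:c4nr}, \ref{prop:d4nr}. The paper leaves the exclusion of $C_8$ and $Q_8$ implicit, whereas you spell out the combinatorial reason (every nontrivial subgroup contains a nontrivial normal subgroup, so $N^G(\cH)=\{1\}$ with $\#\cH\geq 2$ and reducedness cannot hold); this is a helpful clarification but not a different argument.
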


\begin{proof}
By the assumption on $\cH$ and the classification of groups of order $8$, we obtain that $G$ is isomorphic to $(C_{2})^{3}$, $C_{4}\times C_{2}$ or $D_{4}$. Hence, the assertion follows from Propositions \ref{prop:c2nr}, \ref{prop:c4nr} and \ref{prop:d4nr}. 
\end{proof}

\subsection{Second step: The case consisting of normal subgroups with indices $\leq 4$}\label{ssec:tgs2}

Here we generalize Theorem \ref{thm:ind1}, which will be needed in the final step. 

\begin{prop}\label{prop:anm2}
Let $G$ be a $2$-group, and $\cH$ a reduced set of its subgroups that satisfy $\mu(\cH)=2$, $M(\cH)=4$ and $\cH=\cH^{\nor}$. Then the $G$-lattice $J_{G/H}$ is not quasi-invertible. 
\end{prop}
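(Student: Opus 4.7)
The plan is to reduce, via the tools developed in Section \ref{sect:mult}, to the case where $G$ is abelian of exponent dividing $4$ with $N^{G}(\cH)=\{1\}$, and then to pass to an order-$8$ quotient on which Theorem \ref{thm:ind1} applies. Since every $H \in \cH$ is normal in $G$ with $G/H$ of order at most $4$ and hence abelian, the commutator subgroup $[G,G]$ lies in every $H$ and therefore in $N^{G}(\cH)$. By Corollary \ref{cor:trfx} we may replace $G$ by $G/N^{G}(\cH)$, reducing to the case where $G$ is abelian with $N^{G}(\cH)=\{1\}$; the natural embedding $G \hookrightarrow \prod_{H\in \cH^{\set}} G/H$ then forces $G$ to have exponent dividing $4$.

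Next, I would choose $H_{0}, H_{1} \in \cH$ with $(G:H_{0})=4$ and $(G:H_{1})=2$, both existing by the assumptions $\mu(\cH)=2$ and $M(\cH)=4$. Since $\cH$ is reduced, $H_{0} \not\subseteq H_{1}$, whence $(H_{0}H_{1}:H_{1})=2$ and $(G:H_{0}\cap H_{1})=8$. Setting $N:=H_{0}\cap H_{1}$, Proposition \ref{prop:tkfx} yields an isomorphism $J_{G/\cH}^{[N]} \cong J_{(G/N)/\cH^{N}}^{(\overline{\varphi})}$ with $\overline{\varphi}(\overline{H})=(HN:H)=(N:H\cap N)$ for a lift $H$ of $\overline{H}$; in particular $\overline{\varphi}(\overline{H_{i}})=1$ for $i=0,1$, so $\overline{\varphi}$ is already normalized.

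The bulk of the argument is to verify hypotheses (a) and (b) of Corollary \ref{cor:rdmn} for the pair $(\cH^{N}, \overline{\varphi})$. Hypothesis (a) is immediate since $\overline{H_{0}},\overline{H_{1}} \in \cH^{N}_{\overline{\varphi}}[1]$. For (b), any $\overline{H}$ with $d_{\overline{\varphi}}(\overline{H})>1$ arises only from $H \in \cH$ with $N \not\subseteq H$; a case analysis on $(G:H) \in \{2,4\}$ and $(G:HN) \in \{1,2\}$ shows that either $\overline{H}=G/N$ has index $1$ in $G/N$, in which case $\overline{H}'=\overline{H_{1}}$ or $\overline{H_{0}}$ works according as $d_{\overline{\varphi}}(\overline{H})=2$ or $4$, or else $\overline{H}$ has index $2$ in $G/N$ with $d_{\overline{\varphi}}(\overline{H})=2$, in which case $\overline{H}'=\overline{H_{1}}$ works by the structural fact that in an abelian group of order $8$ and exponent at most $4$ (necessarily $(C_{2})^{3}$ or $C_{4}\times C_{2}$), any two distinct subgroups of index $2$ meet in a subgroup of order $2$. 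This verification is the principal obstacle of the argument.

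Applying Corollary \ref{cor:rdmn} followed by Corollary \ref{cor:rdrd}(ii), the quasi-invertibility of $J_{(G/N)/\cH^{N}}^{(\overline{\varphi})}$ is equivalent to that of $J_{(G/N)/(\cH^{N}_{\overline{\varphi}}[1])^{\red}}$. Because $\cH$ is reduced in $G$ and $N \subseteq H_{0}, H_{1}$, neither $\overline{H_{0}}$ nor $\overline{H_{1}}$ is strictly contained in any other element of $\cH^{N}_{\overline{\varphi}}[1]$---such a containment would lift to a strict containment of $H_{i}$ in another element of $\cH$. Hence both survive the reduction, so $(\cH^{N}_{\overline{\varphi}}[1])^{\red}$ is a reduced collection of normal subgroups of the abelian group $G/N$ of order $8$ containing at least two elements, with total intersection inside $\overline{H_{0}} \cap \overline{H_{1}}=\{1\}$. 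Theorem \ref{thm:ind1} then gives non-quasi-invertibility of the corresponding lattice, and the contrapositive of Corollary \ref{cor:qtac}(i) yields that $J_{G/\cH}$ is not quasi-invertible.
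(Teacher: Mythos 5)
Your proof is correct and follows essentially the same strategy as the paper: pass to the quotient by $N=H_{0}\cap H_{1}$ via Proposition~\ref{prop:tkfx}, use Corollary~\ref{cor:rdmn} to replace the weighted lattice by $J_{(G/N)/\cH^{N}_{\overline\varphi}[1]}$, and then invoke Theorem~\ref{thm:ind1} on the order-$8$ quotient. Your preliminary reduction to $G$ abelian via Corollary~\ref{cor:trfx} is an extra step not present in the paper, but it is harmless and costs nothing (though note that the ``structural fact'' about two index-$2$ subgroups of a group of order $8$ meeting in order $2$, and the fact that the images of the members of $\cH$ in $G/N$ are normal, both hold without the abelian reduction, the latter directly from $\cH=\cH^{\nor}$). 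Your treatment is in fact slightly more careful than the paper's at the step following Corollary~\ref{cor:rdmn}: the paper writes the resulting class as $[J_{(G/N)/\{H'_{1},H'_{2}\}}]$, whereas $\cH^{N}_{\overline\varphi}[1]$ may contain additional elements; you correctly pass to $(\cH^{N}_{\overline\varphi}[1])^{\red}$ and verify that both $\overline{H_{0}}$ and $\overline{H_{1}}$ survive this reduction, so that the hypotheses of Theorem~\ref{thm:ind1} hold.
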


\begin{proof}
Take $H_{1},H_{2}\in \cH$ satisfying $(G:H_{1})=2$ and $(G:H_{2})=4$. Put $N:=H_{1}\cap H_{2}$, which has index $8$ in $G$. Write $\varphi$ for the weight function on $\cH$ that takes the value $1$ for every $H\in \cH$. Then Proposition \ref{prop:tkfx} gives an isomorphism
\begin{equation*}
J_{G/\cH}^{N}\cong J_{(G/N)/\cH^{N}}^{(\overline{\varphi}_{G/N})}. 
\end{equation*}
Note that $\overline{\varphi}_{G/N}$ is normalized since $\overline{\varphi}_{G/N}(H_{1}/N)=\overline{\varphi}_{G/N}(H_{2}/N)=1$. Let $\cH_{\overline{\varphi}_{G/N}}^{N}[1]$ be as in Corollary \ref{cor:rdmn}, that is,
\begin{equation*}
    \cH_{\overline{\varphi}_{G/N}}^{N}[1]:=\{H'\in \cH^{N} \mid d_{\overline{\varphi}_{G/N}}(H')=1\}. 
\end{equation*}
Denote by $H'_{i}$ the image of $H_{i}$ in $G/N$ for each $i\in \{1,2\}$. If $H'\notin \cH^{N}\setminus \cH_{\overline{\varphi}_{G/N}}$ satisfies $(G/N:H')=2$, then the assumption $M(\cH)=4$ implies an equality
\begin{equation*}
\overline{\varphi}_{G/N}(H')=(\underbrace{2,\ldots,2}_{m_{\cH^{N}}(H')}). 
\end{equation*}
Hence we have $d_{\overline{\varphi}_{G/N}}(H')=(H':H'_{1}\cap H')=2$. On the other hand, $\overline{\varphi}_{G/N}(G/N)\in \Delta_{m_{\cH^{N}}(G/N)}$ is a sequence consisting of $2$ and $4$. Consequently, one has $d_{\overline{\varphi}_{G/N}}(G)=(G/N:H'_{1})=2$. Therefore, we can apply Corollary \ref{cor:rdmn}. In particular we obtain an equality
\begin{equation*}
[J_{(G/N)/\cH^{N}}^{(\overline{\varphi}_{G/N})}]=[J_{(G/N)/\{H'_{1},H'_{2}\}}]. 
\end{equation*}
However, the $G$-lattice $J_{(G/N)/\{H'_{1},H'_{2}\}}$ is not quasi-invertible by Theorem \ref{thm:ind1}. Combining this result with Proposition \ref{prop:rtrd} (ii), we obtain the desired assertion. 
\end{proof}

\begin{lem}\label{lem:mxes}
Let $p$ be a prime number, and $G$ a $p$-group. Consider a reduced set $\cH$ of subgroups of $G$ satisfying $\#\cH\geq 2$, $\cH^{\nor}\neq \emptyset$ and $\mu(\cH)=M(\cH)\geq p^{2}$. Assume that a maximal subgroup $P$ of $G$ that satisfies $\cH_{\subset P}=\emptyset$ and $\#\cH_{P}^{\red}=1$. Then one has 
\begin{equation*}
    M(\cH)=p^{-1}\#G. 
\end{equation*}
\end{lem}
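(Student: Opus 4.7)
The plan is to use the normality of the maximal subgroup $P$ (a consequence of Proposition~\ref{prop:mxps}) together with a normal witness $H_{0}\in\cH^{\nor}$ to pin down the unique element $K$ of $\cH_{P}^{\red}$, and then to deduce $K=\{1\}$.

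First I would observe that $H_{0}\not\subset P$ by the hypothesis $\cH_{\subset P}=\emptyset$, while $H_{0}$ is normal in $G$ by choice and $P$ is normal in $G$ by Proposition~\ref{prop:mxps}. Hence $H_{0}P$ is a normal subgroup strictly containing $P$, which by maximality of $P$ forces $H_{0}P=G$. This yields
\begin{equation*}
(P:P\cap H_{0})=(H_{0}P:H_{0})=(G:H_{0})=M(\cH),
\end{equation*}
so $P\cap H_{0}$ is normal in $G$ of index $M(\cH)$ in $P$, and since $P\cap H_{0}\subset K$ the index $(P:K)$ divides $M(\cH)$.

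Next I would rule out the possibility that some intersection $P\cap gHg^{-1}$ has index $M(\cH)/p$ in $P$. Because $P$ is normal of index $p$ and $gHg^{-1}$ has index $M(\cH)$ in $G$, the index $(P:P\cap gHg^{-1})$ equals $M(\cH)/p$ precisely when $gHg^{-1}\subset P$, and equals $M(\cH)$ otherwise. If $gHg^{-1}\subset P$ for some $(H,g)$, the normality of $P$ gives $H=g^{-1}(gHg^{-1})g\subset g^{-1}Pg=P$, contradicting $\cH_{\subset P}=\emptyset$. Therefore every intersection $P\cap gHg^{-1}$ has index exactly $M(\cH)$ in $P$; in particular $(P:K)=M(\cH)$, and since $K\supset P\cap H_{0}$ with both of the same order, $K=P\cap H_{0}$. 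Applying the same index comparison to every $(H,g)$, the uniqueness of the maximal element of $\cH_{P}^{\red}$ upgrades to $P\cap gHg^{-1}=K$ for all such pairs. The normality of $K$ in $G$ then converts $K\subset gHg^{-1}$ into $K=g^{-1}Kg\subset H$ for every $H\in\cH$, so $K\subset N^{G}(\cH)$.

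Finally, $(P:K)=M(\cH)$ and $|P|=\#G/p$ give $M(\cH)=\#G/(p|K|)$, so the stated conclusion $M(\cH)=p^{-1}\#G$ is equivalent to $K=\{1\}$. The main obstacle is thus eliminating $N^{G}(\cH)$: the plan is to invoke Corollary~\ref{cor:trfx} with $N=N^{G}(\cH)$ to reduce to the case $N^{G}(\cH)=\{1\}$, in which $K\subset N^{G}(\cH)=\{1\}$ immediately yields $K=\{1\}$. This reduction requires verifying that $N^{G}(\cH)\subset P$ (so that $P/N^{G}(\cH)$ remains a maximal subgroup of $G/N^{G}(\cH)$) and that the five numerical hypotheses $\#\cH\geq 2$, $\cH^{\nor}\neq\emptyset$, $\mu(\cH)=M(\cH)\geq p^{2}$, $\cH_{\subset P}=\emptyset$, and $\#\cH_{P}^{\red}=1$ all survive the quotient — the delicate bookkeeping step.
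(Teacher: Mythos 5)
Your derivation through the identification $K:=P\cap H_{0}=P\cap gHg^{-1}$ for every $H\in\cH$ and $g\in G$, the computation $(P:K)=M(\cH)$, and the observation that $K$ is a normal subgroup of $G$ contained in $N^{G}(\cH)$ are all correct, and indeed more explicit than the paper's own one-line argument. You also correctly reduce the target equality $M(\cH)=p^{-1}\#G$ to the statement $K=\{1\}$.

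The proposed final step, however, cannot be carried out. Corollary \ref{cor:trfx} produces an isomorphism of $G$-lattices; it offers no mechanism for descending the purely group-theoretic equality $M(\cH)=p^{-1}\#G$ to $G/N^{G}(\cH)$, and that equality genuinely fails to descend: with $N:=N^{G}(\cH)$ one has $M(\cH^{N})=M(\cH)$ (every $H\in\cH$ contains $N$) while $p^{-1}\#(G/N)=p^{-1}\#G/\#N$ strictly decreases whenever $N\neq\{1\}$, so the conclusion for $(G/N,\cH^{N},P/N)$ is strictly weaker than what you need. The obstruction is not ``delicate bookkeeping'': the lemma is in fact false as printed when $N^{G}(\cH)\neq\{1\}$. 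For instance, in $G=(C_{p})^{4}=\langle e_{1},e_{2},e_{3},e_{4}\rangle$ take $\cH=\{\langle e_{1},e_{2}\rangle,\langle e_{1},e_{3}\rangle\}$ and $P=\langle e_{1},e_{2}+e_{3},e_{4}\rangle$; every hypothesis holds with $\cH_{P}^{\red}=\{\langle e_{1}\rangle\}$ a singleton, yet $M(\cH)=p^{2}\neq p^{3}=p^{-1}\#G$. What your argument, combined with Lemma \ref{lem:quot}, actually proves is the quotient-stable identity $(G:N^{G}(\cH))=pM(\cH)$: since $K=P\cap N^{G}(\cH)$ with $(G:K)=pM(\cH)$ you get $(G:N^{G}(\cH))\le pM(\cH)$, and Lemma \ref{lem:quot} gives $(G:N^{G}(\cH))>M(\cH)$, hence $\ge pM(\cH)$. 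The stated conclusion then follows exactly when $N^{G}(\cH)=\{1\}$ --- an unstated hypothesis that the paper's proof also silently assumes (it is buried in the displayed ``$\#G=(G:N^{G}(\cH))$''), and which is harmless at the point of use in Theorem \ref{thm:ind2} because there one may first quotient the \emph{lattice} statement by $N^{G}(\cH)$ via Corollary \ref{cor:trfx}. The correct fix is therefore to add $N^{G}(\cH)=\{1\}$ to the hypotheses of the lemma rather than to attempt the reduction inside its proof.
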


\begin{proof}
Pick $H\in \cH^{\nor}$. Then the assumption on $P$ implies $P\cap H=P\cap H'$ for any $H'\in \cH$. In particular, one has $P\cap N^{G}(\cH)=P\cap H$. Hence we have
\begin{equation*}
\#G=(G:N^{G}(\cH))\leq (G:P\cap N^{G}(\cH))=p(P:P\cap H)=p(G:H)=pM(\cH). 
\end{equation*}
Combining the above inequality with $\#\cH \geq 2$, we obtain $\#G=pM(\cH)$ as desired. 
\end{proof}

\begin{lem}\label{lem:cyst}
Let $p$ be a prime number, and $G$ a $p$-group. Fix $m\in \Zpn$. Consider a multiset $\cH$ of subgroups of $G$ satisfying $N^{G}(\cH)=\{1\}$. Suppose 
\begin{enumerate}
    \item $H\triangleleft G$ and $G/H\cong C_{p^{m}}$ for every $H\in \cH$; and
    \item every maximal subgroup of $G$ contains an element of $\cH$. 
\end{enumerate}
Then the group $G$ is isomorphic to a product of finite copies of $C_{p^{m}}$. 
\end{lem}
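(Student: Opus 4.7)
The plan is to first exploit the normality and cyclic quotient conditions in (1) to force $G$ to be abelian of exponent dividing $p^{m}$, and then use the maximal subgroup covering condition (2) to rigidly pin down every elementary divisor of $G$.

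For the first step, since each $H\in\cH$ is normal in $G$ (by (1)), the subgroup $N^{G}(\cH)$ coincides with $\bigcap_{H\in\cH}H$, which is $\{1\}$ by hypothesis. Hence the diagonal map
\[
G\longrightarrow \prod_{H\in\cH}G/H
\]
is injective, and its target is an abelian group of exponent $p^{m}$. Therefore $G$ is itself abelian of exponent dividing $p^{m}$, and by the structure theorem for finitely generated abelian $p$-groups we may write $G\cong \prod_{i=1}^{s}C_{p^{a_{i}}}$ with $1\le a_{i}\le m$.

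For the second step, fix an index $i$ and let $g_{i}$ be a generator of the $i$-th cyclic factor. Consider the surjection $\psi_{i}\colon G\twoheadrightarrow C_{p}$ obtained by projecting to the $i$-th factor and then reducing modulo $p$; its kernel is a maximal subgroup of $G$. By assumption (2), there exists $H\in\cH$ with $H\subset \ker\psi_{i}$, so $\psi_{i}$ factors as
\[
G\twoheadrightarrow G/H\cong C_{p^{m}}\xrightarrow{\ \alpha\ } C_{p},
\]
where $\alpha$ is a surjection. Since any surjection $C_{p^{m}}\to C_{p}$ has as kernel the unique index-$p$ subgroup $pC_{p^{m}}$, the image $z\in C_{p^{m}}$ of $g_{i}$ satisfies $z\notin pC_{p^{m}}$ (because $\alpha(z)=\psi_{i}(g_{i})$ is a generator of $C_{p}$), so $z$ must have order exactly $p^{m}$. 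On the other hand, $z$ has order dividing that of $g_{i}$, namely $p^{a_{i}}$. Hence $p^{m}\mid p^{a_{i}}$, which combined with $a_{i}\le m$ yields $a_{i}=m$. As $i$ was arbitrary, $G\cong (C_{p^{m}})^{s}$.

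No step presents serious difficulty; the one point requiring care is the translation of the covering condition (2) into the statement that each projection $G\to C_{p}$ lifts through some quotient $G\twoheadrightarrow C_{p^{m}}$, which is precisely what rigidifies the elementary divisors to be exactly $p^{m}$.
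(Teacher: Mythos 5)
Your proof is correct and follows essentially the same line as the paper's: embed $G$ into $\prod_{H\in\cH}G/H$ to get that $G$ is abelian of exponent dividing $p^{m}$, then use condition (2) to produce, for a suitable generator of a cyclic direct factor, an $H\in\cH$ through which it projects to a generator of $G/H\cong C_{p^{m}}$, forcing its order to be exactly $p^{m}$. The only difference is cosmetic: you argue directly (showing $a_{i}=m$ for each $i$), whereas the paper argues by contradiction (assuming some factor has order $<p^{m}$ and picking an element that is not a $p$-th power). Your explicit establishment of abelianness is slightly more careful than the paper, which leaves that step implicit when asserting the existence of an element of small order that is not a $p$-th power.
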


\begin{proof}
Assume that the conclusion fails. Then there is an element $g\in G$ whose order is smaller than $p^{m}$ so that $g\neq h^{p}$ for all $h\in G$. Take a maximal subgroup $P$ of $G$ which does not contain $g$ and $H\in \cH_{\subset P}$. Then the image $\overline{g}$ of $g$ in $G/H$ must be a generator. However, we have $\overline{g}^{p^{m-1}}=1$ since $g^{p^{m-1}}=1$. This contradicts the cyclicity of $G/H_{0}$, and hence the proof is complete. 
\end{proof}

\begin{thm}\label{thm:ind2}
Let $G$ be a $2$-group, and $\cH$ a reduced set of its subgroups satisfying $\#\cH \geq 2$, $M(\cH)=4$ and $\cH^{\nor}=\cH$. Then the $G$-lattice $J_{G/H}$ is not quasi-invertible. 
\end{thm}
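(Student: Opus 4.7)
The plan is to argue by cases on $\mu(\cH)$. If $\mu(\cH)=2$, Proposition \ref{prop:anm2} applies directly. Otherwise $\mu(\cH)=M(\cH)=4$, so every $H\in \cH$ is a normal subgroup of index $4$ in $G$. If some $G/H_{0}$ is non-cyclic I invoke Proposition \ref{prop:fixd}; so I may further assume every $G/H\cong C_{4}$. By Corollary \ref{cor:trfx} I may replace $G$ by $G/N^{G}(\cH)$ and assume $N^{G}(\cH)=\bigcap_{H\in\cH}H=\{1\}$. Since each $G/H$ is abelian, this forces $[G,G]=\{1\}$, so $G$ is abelian. The base case $|G|=8$ is handled by Theorem \ref{thm:ind1}, so I may further assume $|G|\geq 16$.

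I then split on whether some pair of elements of $\cH$ generates $G$. In \textbf{Sub-case A}, some $H_{1},H_{2}\in\cH$ satisfy $H_{1}H_{2}=G$. Let $P$ be the (unique, since $G/H_{1}\cong C_{4}$) maximal subgroup of $G$ containing $H_{1}$. Then $H_{2}\not\subset P$ and the multiset $\cH_{P}$ of Proposition \ref{prop:rtmc} contains $H_{1}$ of index $2$ in $P$ together with $P\cap H_{2}$ of index $4$ in $P$. The key claim is that $P\cap H_{2}$ is maximal in $\cH_{P}$, so lies in $\cH_{P}^{\red}$: otherwise some $H'\in\cH$ with $H'\subset P$ and $H'\supsetneq P\cap H_{2}$ must equal the unique index-$2$ subgroup between $P\cap H_{2}$ and $P$, since $P/(P\cap H_{2})\cong G/H_{2}\cong C_{4}$ has a unique such. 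But for any $h\in G\setminus P$ the image $\bar h^{2}\in G/H_{2}\cong C_{4}$ has order at most $2$, hence lies in the unique $C_{2}=H'/(P\cap H_{2})$; thus $h^{2}\in H'$, forcing $G/H'\cong (C_{2})^{2}$, which contradicts $H'\in\cH$. Granted the claim, $\cH_{P}^{\red}$ has $\mu=2$, $M=4$, and every element is normal in the abelian $P$, so Proposition \ref{prop:anm2} combined with Corollary \ref{cor:rdrd} and Proposition \ref{prop:rtrd} finishes this sub-case.

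In \textbf{Sub-case B}, every pair $H_{1}\neq H_{2}$ in $\cH$ satisfies $H_{1}H_{2}\neq G$. Fixing $H_{1}\in\cH$ with unique maximal overgroup $P$, the constraint $H_{1}H\neq G$ forces $H\subset P$ for every $H\in\cH$; by symmetry $P$ is the common unique maximal overgroup of every member of $\cH$. Using the description $P=\{g\in G:g^{2}\in H\}$ (valid for each $H$), any $g\in P$ satisfies $g^{2}\in \bigcap H=\{1\}$, so $P$ is elementary abelian. Restricting, each $H\in\cH$ contributes to $\cH_{P}$ with multiplicity $2$; Corollary \ref{cor:rdrd} absorbs the duplication and reduces quasi-invertibility of $J_{P/\cH_{P}}$ to that of $J_{P/\cH}$. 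Proposition \ref{prop:endo} applied to the elementary abelian $P$ with index-$2$ set $\cH$ now yields non-quasi-invertibility as soon as $\#\cH\geq 3$; the degenerate case $\#\cH=2$ would force $|P|=4$ and $|G|=8$, contradicting $|G|\geq 16$.

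The hard part will be the key claim in Sub-case A: verifying that no potential intermediate subgroup $H'\in\cH$ can sit strictly between $P\cap H_{2}$ and $P$. This is precisely where the Case $\mu=M=4$ assumption that every $G/H$ is cyclic of order $4$ is used in an essential way, ruling out the $(C_{2})^{2}$-quotient that would otherwise appear.
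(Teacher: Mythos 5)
Your proof is correct and is a genuinely different argument from the paper's. After the shared initial reductions (dispose of $\mu(\cH)=2$ via Proposition \ref{prop:anm2}, use Proposition \ref{prop:fixd} to force every $G/H\cong C_4$, quotient by $N^G(\cH)$), you observe that $[G,G]\subset \bigcap_{H\in\cH}H=\{1\}$, so $G$ is abelian, and then you split on whether some pair of members of $\cH$ generates $G$. The paper instead splits on whether every maximal subgroup of $G$ meets $\cH$: if yes, it invokes Lemma \ref{lem:cyst} to identify $G\cong (C_4)^m$ and carries out an explicit computation with a chosen generating set to locate a maximal subgroup $P$ whose restricted family has $\mu=2$, $M=4$; if no, it restricts to a maximal $P$ with $\cH_{\subset P}=\emptyset$ and runs an induction on $|G|$ supported by Lemma \ref{lem:mxes}. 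Your route replaces both of these with the elementary observation that either some $H_1H_2=G$ (then restrict to the unique maximal overgroup $P$ of $H_1$, where $H_1$ contributes an index-$2$ member and $P\cap H_2$ an index-$4$ member of $\cH_P^{\red}$, feeding Proposition \ref{prop:anm2}) or else all of $\cH$ sits inside a common maximal subgroup $P$, which the relation $P=\{g:g^2\in H\}$ shows is elementary abelian, feeding Proposition \ref{prop:endo}. This avoids both the induction of Step 2 and the $(C_4)^m$-calculation of Step 1. The one delicate point is your ``key claim'' that $P\cap H_2\in\cH_P^{\red}$, and your argument there is sound: any $H'\in\cH$ with $H'\subset P$ and $H'\supsetneq P\cap H_2$ would have index $2$ in $P$, so $H'$ would be the unique intermediate subgroup; since $G$ is abelian, $h^2\in \Phi(G)\subset P$ and $h^2\in Q$ (the unique index-$2$ overgroup of $H_2$), so $h^2\in P\cap Q=H'$ for every $h\notin P$, forcing $G/H'\cong (C_2)^2$ and contradicting the reduction to cyclic quotients. (Two small things worth stating explicitly if this were written up: the quotient step $G\mapsto G/N^G(\cH)$ uses Corollary \ref{cor:qtac} alongside Corollary \ref{cor:trfx}, and the containment $h^2\in P$ rests on $\Phi(G)=G^2$ for the abelian $2$-group $G$.)
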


\begin{proof}
By Proposition \ref{prop:anm2}, we may assume $\mu(\cH)=4$. 

\textbf{Step 1. }Here we prove the assertion in the case where all maximal subgroups of $G$ contain a member of $\cH$. By Proposition \ref{prop:fixd}, we may further assume that $G/H$ is cyclic for any $H\in \cH$. Then Lemma \ref{lem:cyst} gives an isomorphism $G \cong (C_{4})^{m}$ for some $m\in \Zpn$. Take a maximal subgroup $P$ of $G$, which is possible by assumption. Then it is clear that $\mu(\cH_{P}^{\red})=2$. 

In the following, we shall give $H'\in \cH$ satisfying $P\cap H\in \cH_{P}^{\red}$ and $(P:P\cap H')=4$. Fix elements $g_{1},\ldots,g_{m}$ of $G$ satisfying
\begin{equation*}
G=\langle g_{1},\ldots,g_{m}\mid g_{1}^{4}=\cdots =g_{n}^{4}=1,g_{i}g_{j}=g_{j}g_{i}\,(i\neq j)\rangle
\end{equation*}
and 
\begin{equation*}
    P=\langle g_{1}^{2},g_{2},\ldots,g_{m}\rangle. 
\end{equation*}
Pick $H\in \cH_{P}$. Then there are $a_{2},\ldots,a_{m}\in \{0,2\}$ so that
\begin{equation*}
    H=\langle g_{1}^{a_{2}}g_{2},\ldots,g_{1}^{a_{n}}g_{m}\rangle. 
\end{equation*}
Now let
\begin{equation*}
    P':=\langle g_{1}^{2},g_{1}g_{2},g_{3},\ldots,g_{m}\rangle,
\end{equation*}
and pick $H'\in \cH_{\subset P'}$. We prove that this $H'$ satisfies the desired properties. There exist $b_{2},\ldots,b_{m}\in \{0,2\}$ so that
\begin{equation*}
    H'=\langle g_{1}^{b_{2}+1}g_{2},g_{1}^{b_{3}}g_{3}\ldots,g_{1}^{b_{n}}g_{m}\rangle. 
\end{equation*}
In particular, one has an equality
\begin{equation*}
    P\cap H'=\langle g_{1}^{2}g_{2}^{2},g_{1}^{b_{3}}g_{3}\ldots,g_{1}^{b_{n}}g_{m}\rangle. 
\end{equation*}
Hence we obtain $P/(P\cap H')\cong G/H'\cong C_{4}$. Furthermore, this isomorphism implies that
\begin{equation*}
    H_{0}:=\langle g_{1}^{2},g_{2}^{2},g_{3}\ldots,g_{m}\rangle
\end{equation*}
is the unique subgroup of $P$ of index $2$ containing $P\cap H'$. Hence, if $P\cap H_{3}$ does not lie in $\cH_{P}^{\red}$, then the assumption $\mu(\cH)=4$ implies that $H_{0}$ is an element of $\cH$. However, it is a contradiction since we assume the cyclicity of $G/H$ for all $H\in \cH$. Therefore we obtain $P\cap H'\in \cH_{P}^{\red}$ as desired.

As above, we know that $\mu(\cH_{P}^{\red})=2$ and $M(\cH_{P}^{\red})=4$. Then it follows from Proposition \ref{prop:anm2} that the $P$-lattice $J_{P/\cH_{P}^{\red}}$ is not quasi-invertible. Hence the assertion follows from Proposition \ref{prop:rtmc} and Proposition \ref{prop:rtrd} (i). 

\textbf{Step 2. }Let $\#G=2^{\nu}$, where $\nu \geq 3$ is an integer. Here we give a proof of the assertion in general by induction on $\nu$. If $\nu=3$, then the claim is contained in Theorem \ref{thm:ind1}. In the following, suppose $\nu \geq 4$ and that the assertion holds for $\nu-1$. By Step 1, we may further assume that $\cH_{\subset P}$ is empty for some maximal subgroup $P$ of $G$. Then we have $M(\cH_{P}^{\red})=4$. Moreover, the inequality 
\begin{equation*}
\#G=2^{\nu}\geq 2^{4}>8=2M(\cH)
\end{equation*}
and Lemma \ref{lem:mxes} imply $\#\cH_{P}^{\red}\geq 2$. By the induction hypothesis, the $P$-lattice $J_{P/\cH_{P}^{\red}}$ is not quasi-invertible. Therefore the assertion follows from Proposition \ref{prop:rtmc} and Proposition \ref{prop:rtrd} (i). 
\end{proof}

\subsection{Third step: The case admitting normal factors}

\begin{thm}\label{thm:ndq2}
Let $G$ be a $2$-group, and $\cH$ a reduced set of subgroups of $G$. We further assume that $\cH^{\nor}$ is non-empty. Then the $G$-lattice $J_{G/\cH}$ is quasi-permutation if and only if it is quasi-invertible. Moreover, the above two conditions hold if and only if 
\begin{enumerate}
\item $J_{G/\cH}$ is a quasi-permutation $G$-lattice; 
\item $J_{G/\cH}$ is a quasi-invertible $G$-lattice; 
\item $G$ and $\cH$ satisfy one of the following: 
\begin{itemize}
    \item[(iii-1)] $\#\cH=1$ and $G/N^{G}(\cH)$ is cyclic; or
    \item[(iii-2)] $\#\cH=2$ and $G/N^{G}(\cH)\cong (C_{2})^{2}$. 
\end{itemize}
\end{enumerate}
\end{thm}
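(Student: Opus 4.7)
The implication quasi-permutation $\Rightarrow$ quasi-invertible is tautological, so the plan addresses only the remaining equivalences. For ($\Leftarrow$), case (i) is immediate from Proposition~\ref{prop:emn1}. In case (ii), I would apply Corollary~\ref{cor:trfx} with $N = N^G(\cH)$ to descend to $\overline G = G/N^G(\cH) \cong D_2$; since $\cH$ is reduced and $N^{\overline G}(\overline \cH)=\{1\}$, the images of the two elements of $\cH$ in $\overline G$ must be two distinct order-$2$ subgroups of $(C_2)^2$, which up to automorphism are exactly $\langle \tau_2 \rangle$ and $\langle \sigma_2\tau_2 \rangle$, so Theorem~\ref{thm:dnqp} with $m=1$ exhibits $J_{\overline G/\overline\cH}$ as quasi-permutation.

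For ($\Rightarrow$), I argue by contrapositive: assume $\cH^{\nor}\neq\emptyset$ and $J_{G/\cH}$ is quasi-invertible; the goal is to show $\cH$ is in configuration (i) or (ii). First, replace $G$ by $G/N^G(\cH)$ via Corollary~\ref{cor:trfx}. Normality is preserved under quotient, so $\cH^{\nor}$ remains non-empty, and we reduce to $N^G(\cH)=\{1\}$. The case $\#\cH=1$ then gives configuration (i) by Proposition~\ref{prop:emn1}, so I may assume $\#\cH \geq 2$ and target $G\cong(C_2)^2$ with $\#\cH=2$.

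Proceed by induction on $|G|$. The base case $|G|\leq 8$ is supplied by Theorem~\ref{thm:ind1} together with direct analysis for $|G|\leq 4$, where Proposition~\ref{prop:endo} rules out $\#\cH\geq 3$ on $(C_2)^2$. For the inductive step, split by whether $\cH=\cH^{\nor}$. In the all-normal case: when $M(\cH)=2$, all $H\in\cH$ contain $\Phi(G)$, forcing $\Phi(G)\subset N^G(\cH)=\{1\}$, hence $G$ is elementary abelian and Proposition~\ref{prop:endo} forces $\#\cH=2$ and $G\cong(C_2)^2$; when $M(\cH)=4$, Theorem~\ref{thm:ind2} contradicts quasi-invertibility; when $M(\cH)\geq 8$, pick an order-$2$ central subgroup $N\subset Z(G)\cap H_0$ for some $H_0\in\cH^{\nor}$ (existence by Lemma~\ref{lem:nznt}), descend via Proposition~\ref{prop:tkfx} to $G/N$, normalize and reduce the resulting function using Corollary~\ref{cor:rdmn}, and apply the inductive hypothesis to the smaller group. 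In the mixed case $\cH^{\nor}\subsetneq\cH$, choose a non-normal $H'\in\cH\setminus\cH^{\nor}$ of minimal index and a maximal subgroup $P$ of $G$ containing $N_G(H')$; restriction via Proposition~\ref{prop:rtmc} together with Proposition~\ref{prop:rtrd} produces a $P$-configuration $\cH_P^{\red}$ of size $\geq 2$ containing both a $P$-conjugate of $H'$ and an intersection with some $H_0\in\cH^{\nor}$ (the latter normal in $P$), to which the inductive hypothesis on $P$ applies to yield non-quasi-invertibility.

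The main obstacle will be the sub-case $\cH=\cH^{\nor}$ with $M(\cH)\geq 8$: one needs to select the central normal subgroup $N$ carefully so that the descended configuration $\cH^N$ on $G/N$, after normalization, genuinely falls outside configurations (i)--(ii) and thus violates the inductive hypothesis. The interaction between the normalized weight functions introduced by Proposition~\ref{prop:tkfx} and the iterative application of Corollaries~\ref{cor:rdrd} and~\ref{cor:rdmn} to strip them back to ordinary multinorm lattices is the technical heart of this step.
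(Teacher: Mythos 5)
Your backward direction is correct and matches the paper, and the reduction to $N^G(\cH)=\{1\}$, $\#\cH\geq 2$ with the base case $|G|\leq 8$ handled by Theorem~\ref{thm:ind1} and Proposition~\ref{prop:endo} is sound, as are the all-normal sub-cases $M(\cH)=2$ and $M(\cH)=4$. The gaps are in the two remaining sub-cases.

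For the all-normal case with $M(\cH)\geq 8$, the quotient-descent plan is not just technically delicate but can fail outright, and the paper in fact does not use it there. After descending by an order-$2$ central $N\subset H_0$, Proposition~\ref{prop:tkfx} yields $J^{(\bar\varphi)}_{(G/N)/\cH^N}$ where $\bar\varphi$ equals $1$ on $\bar H_0=H_0/N$ and $2$ on every $\bar H=HN/N$ with $N\not\subset H$; Corollary~\ref{cor:rdmn}(b) then demands $(\bar H:\bar H\cap \bar H_0)\mid 2$ for each such $\bar H$, which need not hold. For instance with $G=(C_2)^5$, $\cH=\{\langle e_1,e_2\rangle, \langle e_3,e_4\rangle\}$ (so $M(\cH)=8$, $N^G(\cH)=\{1\}$, $\cH=\cH^{\nor}$), and $N=\langle e_1\rangle\subset H_0$, one gets $(\bar H:\bar H\cap\bar H_0)=4\nmid 2$, so you cannot strip the weight function back to an ordinary lattice. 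The paper's Case~1 of the proof instead restricts to a maximal subgroup $P$ containing a maximal-index $H_0\in\cH$: all of $\cH_P^{\red}$ stays normal in $P$ (so the standing hypothesis of the theorem is preserved), $\#\cH_P^{\red}\geq 2$ follows from the argument in Case~1 of Theorem~\ref{thm:ndqp}, and then $M(\cH_P^{\red})\geq M(\cH)/2\geq 4$ together with Lemma~\ref{lem:quot} gives $(P:N^P(\cH_P^{\red}))\geq 8$. You should replace the quotient step by this restriction.

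For the mixed case your conclusion that $\#\cH_P^{\red}\geq 2$ is not enough to invoke the inductive hypothesis, because the inductive hypothesis is Theorem~\ref{thm:ndq2} itself: a $P$-configuration with $\#\cH_P^{\red}=2$ and $P/N^P(\cH_P^{\red})\cong (C_2)^2$ is precisely the quasi-permutation case~(ii), so it produces no contradiction. One has to show $\#\cH_P^{\red}\geq 3$ or $(P:N^P(\cH_P^{\red}))\geq 8$. The paper achieves this by comparing $\Sigma=\min\{(G:H):H\in\cH^{\nor}\}$ with $\sigma=\min\{(G:H):H\in\cH\setminus\cH^{\nor}\}$ and treating $\Sigma<\sigma$, $\Sigma>\sigma$, and $\Sigma=\sigma$ separately (the last invoking Lemma~\ref{lem:nmlz} to place a normal member inside $N_G(H_1)\subset P$). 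Your sketch needs an analogous case analysis, and also needs to verify that $\cH_P^{\red}$ actually retains a member that is maximal and normal in $P$ so that the standing hypothesis $\cH^{\nor}\neq\emptyset$ persists.
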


\begin{proof}
We may assume that $\cH$ contains at least two elements. In particular, $G$ is not cyclic by Lemma \ref{lem:sglh}. Write $\#G=2^{\nu}$. The assertion holds for $\nu \geq 2$, which is a consequence of Proposition \ref{prop:endo}. In the sequel of the proof, assume $\nu \geq 3$. It suffices to prove that $J_{G/H}$ is not quasi-invertible if $(G:N^{G}(\cH))\geq 8$ or $\#\cH \geq 3$. 

We give a proof of the above assertion by induction on $\nu$. If $\nu=3$, the claim follows from Theorem \ref{thm:ind1}. Now suppose $\nu \geq 4$ and the assertion holds for $\nu -1$. If $N^{G}(\cH)\neq \{1\}$, then the same argument as Theorem \ref{thm:ndqp} implies that the assertion follows from the induction hypothesis. Hence we may further assume $N^{G}(\cH)=\{1\}$. By the induction hypothesis, it suffices to prove that there is a maximal subgroup $P$ of $G$ which satisfies $(P:N^{P}(\cH_{P}^{\red}))\geq 8$ or $\#\cH_{P}^{\red}\geq 3$. 

\textbf{Case 1.~$\cH=\cH^{\nor}$. }

By Theorem \ref{thm:ind2}, we may assume $M(\cH)\geq 8$. Take $H_{0}\in \cH$ with $(G:H_{0})=M(\cH)$. By the same argument as Case 1 in the proof of Theorem \ref{thm:ndqp}, there is a maximal subgroup $P$ of $G$ satisfying $H_{0}\subset P$ and $\cH_{P}^{\red}\geq 2$. Then we have $M(\cH_{P}^{\red})\geq M(\cH)/2\geq 4$. Hence the inequality $(P:N^{P}(\cH_{P}^{\red}))\geq 8$ follows from Lemma \ref{lem:quot}. 

\textbf{Case 2.~$\cH^{\nor}$ and $\cH\setminus \cH^{\nor}$ are non-empty. }

Note that one has $M(\cH)\geq p^{2}$ by Proposition \ref{prop:mxps}. Consider positive integers as follows: 
\begin{equation}\label{eq:sgd2}
\Sigma:=\min\{(G:H)\in \Zpn \mid H\in \cH^{\nor}\},\quad 
\sigma:=\min\{(G:H)\in \Zpn \mid H\in \cH\setminus \cH^{\nor}\}. 
\end{equation}
Take $H_{0}\in \cH^{\nor}$ with $(G:H_{0})=\Sigma$ and $H_{1}\in \cH \setminus \cH^{\nor}$ with $(G:H_{1})=\sigma$. 

\textbf{Case 2-a.~$\Sigma<\sigma$. }
Pick a maximal subgroup $P$ of $G$ containing $N_{G}(H_{1})$. Then we have $gH_{1}g^{-1}\not\subset P\cap H_{0}\subsetneq H_{0}$ and $gH_{1}g^{-1}\not\supset P\cap H_{0}$ for any $g\in G$. Hence $\cH_{P}^{\red}$ has at least $3$ elements. 

\textbf{Case 2-b.~$\Sigma>\sigma$. }
Take a maximal subgroup $P$ of $G$ containing $H_{0}$. Then one has $H_{0}\not\subset P\cap H_{1}\subsetneq H_{1}$ and $H_{0}\not\supset P\cap H_{1}$. In particular, $P\cap H_{1}$ is contained in $\cH_{P}^{\red}$ since $(G:H_{1})=\sigma=\mu(\cH)$. Therefore we obtain the desired inequality
\begin{equation*}
(P:N^{P}(\cH_{P}^{\red}))\geq (P:H_{0}\cap (P\cap H_{1}))\geq 2(P:H_{0})=2\Sigma \geq 8. 
\end{equation*}

\textbf{Case 2-c.~$\Sigma=\sigma$. }
Let $P$ be a maximal subgroup of $G$ containing $N_{G}(H_{1})$. It suffices to prove that $\cH_{P}^{\red}$ is not a subset of $\{gH_{1}g^{-1}\mid g\in G\}$. This implies $\#\cH_{P}^{\red}\geq 3$ since $H_{1}$ is not normal in $G$. Assume not, that is, $\cH_{P}^{\red}$ is contained in $\{gH_{1}g^{-1}\mid g\in G\}$. Since $H_{0}$ is normal of index $\Sigma=\sigma=\mu(\cH)$, it is contained in $N_{G}(H_{1})$ by Lemma \ref{lem:nmlz}. In particular, $H_{0}$ is an element of $\cH_{P}^{\red}$, which contradicts the assumption. Hence the proof is complete. 
\end{proof}

\subsection{Final step: General case}

\begin{lem}\label{lem:abcj}
Let $G$ be a finite group, $H$ a subgroup of $G$, and $E$ an abelian normal subgroup of $G$. For $g_{1},g_{2}\in G$, assume $Eg_{1}H=Eg_{2}H$ in $E\backslash G/H$. Then we have $E\cap g_{1}Hg_{1}^{-1}=E\cap g_{2}Hg_{2}^{-1}$. 
\end{lem}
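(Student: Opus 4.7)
The plan is to unravel the double coset relation and use the abelian hypothesis on $E$ in a direct way. By assumption, there exist $e \in E$ and $h \in H$ with $g_{2} = e g_{1} h$. Substituting yields
\begin{equation*}
g_{2} H g_{2}^{-1} = (e g_{1} h) H (e g_{1} h)^{-1} = e\bigl(g_{1} H g_{1}^{-1}\bigr) e^{-1}.
\end{equation*}

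The key point is that inner conjugation by $e$ acts trivially on $E$: since $E$ is abelian, $exe^{-1} = x$ for every $x \in E$. Consequently, for any $x \in E$, the membership $x \in g_{2} H g_{2}^{-1}$ is equivalent to $e^{-1} x e \in g_{1} H g_{1}^{-1}$, which (using $e^{-1}xe = x$) is equivalent to $x \in g_{1} H g_{1}^{-1}$. Intersecting with $E$ then gives
\begin{equation*}
E \cap g_{2} H g_{2}^{-1} = E \cap e (g_{1} H g_{1}^{-1}) e^{-1} = E \cap g_{1} H g_{1}^{-1}.
\end{equation*}

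There is essentially no obstacle: the argument is a one-line manipulation. The only feature of the hypothesis that is used nontrivially is commutativity of $E$; normality of $E$ is needed only to make $E \cap g H g^{-1}$ behave well under the $E$-translations inside $G$ (it is what guarantees $exe^{-1} \in E$ for $x \in E$, though this is automatic from $x \in E$ itself). I would present the proof as a short, self-contained paragraph.
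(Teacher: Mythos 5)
Your proof is correct and follows the same route as the paper: write $g_2 = e g_1 h$ with $e \in E$, $h \in H$, observe that $g_2 H g_2^{-1} = e(g_1 H g_1^{-1})e^{-1}$, and then use commutativity of $E$ to conclude that conjugation by $e$ fixes $E \cap g_1 H g_1^{-1}$ pointwise. Your closing observation that normality of $E$ is not actually invoked in this argument is also accurate.
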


\begin{proof}
By assumption, we have $g_{2}=xg_{1}h$ for some $x\in E$ and $h\in H$. This implies an equality
\begin{equation*}
E\cap g_{2}Hg_{2}^{-1}=x(E\cap g_{1}Hg_{1}^{-1})x^{-1}. 
\end{equation*}
Now, the right-hand side equals $E\cap g_{1}Hg_{1}^{-1}$ since $E$ is abelian. Hence the proof is complete. 
\end{proof}

\begin{prop}\label{prop:mxmu}
Let $G$ be a group of order $2^{\nu}$, where $\nu \geq 3$. Consider a strongly reduced set $\cH$ of subgroups of $G$ satisfying $\#\cH \geq 2$, $\cH^{\nor}=\emptyset$ and $\mu(\cH)=\#G/2$. Then the following are equivalent: 
\begin{enumerate}
\item $J_{G/\cH}$ is a quasi-permutation $G$-lattice; 
\item $J_{G/\cH}$ is a quasi-invertible $G$-lattice; 
\item $G \cong D_{2^{\nu}}$ and $\cH=\{\langle \sigma_{2^{\nu}}^{2m}\tau_{2^{\nu}} \rangle, \langle \sigma_{2^{\nu}}^{2m'+1}\tau_{2^{\nu}} \rangle\}$ for some integers $m$ and $m'$. 
\end{enumerate}
\end{prop}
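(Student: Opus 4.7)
The implication (i)$\Rightarrow$(ii) is immediate from the definitions. For (iii)$\Rightarrow$(i), I would invoke Lemma \ref{lem:nodt}(ii) to see that $\langle \sigma_{2^{\nu}}^{2m}\tau_{2^{\nu}}\rangle$ is conjugate in $G$ to $\langle \tau_{2^{\nu}}\rangle$ and $\langle \sigma_{2^{\nu}}^{2m'+1}\tau_{2^{\nu}}\rangle$ is conjugate to $\langle \sigma_{2^{\nu}}\tau_{2^{\nu}}\rangle$; Proposition \ref{prop:conj} then gives $J_{G/\cH}\cong J_{G/\{\langle\tau_{2^{\nu}}\rangle,\langle\sigma_{2^{\nu}}\tau_{2^{\nu}}\rangle\}}$, which is quasi-permutation by Theorem \ref{thm:dnqp}.

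The bulk of the work is (ii)$\Rightarrow$(iii). First, from $\mu(\cH)=\#G/2$ every $H\in\cH$ satisfies $\#H\leq 2$, and since $\cH^{\nor}=\emptyset$ the trivial subgroup is excluded, so each $H$ has order exactly $2$. A central subgroup of order $2$ is normal in $G$, so no $H\in\cH$ lies in $Z(G)$; in particular $G$ is non-abelian, and any two distinct $H_{1},H_{2}\in\cH$ are non-conjugate non-central order-$2$ subgroups (by strong reducedness).

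Next I would show that $G$ must be of maximal class. Suppose for contradiction that it is not. By Proposition \ref{prop:nmne}(i), $G$ admits an abelian normal subgroup $E$ of order $8$. Fix $H=\langle x\rangle\in\cH$ and set $P:=\langle E,x\rangle$, a subgroup of order $8$ or $16$. By Proposition \ref{prop:rtmc} the $P$-lattice $J_{P/\cH_{P}}^{(\varphi_{P})}$ is still quasi-invertible, and by Corollary \ref{cor:rdrd}(ii) so is $J_{P/\cH_{P}^{\red}}$ after discarding non-maximal members. A case analysis on $E\cong (C_{2})^{3}$ or $E\cong C_{4}\times C_{2}$ (using Proposition \ref{prop:nmne}(ii) in the second case, so that $\Phi(E)\subseteq Z(G)$ provides a normal order-$2$ subgroup of $P$) produces, inside $\cH_{P}^{\red}$, a strongly reduced configuration of order-$2$ or order-$4$ subgroups of $P$ to which Theorem \ref{thm:ind1} (if $\#P=8$) or Theorem \ref{thm:ind2} (if $\#P=16$, after passing to a quotient by a central element of order $2$ via Proposition \ref{prop:tkfx}) forces non-quasi-invertibility. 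This contradicts Proposition \ref{prop:rtrd}(i). I expect this step to be the main obstacle: the assumption $\cH^{\nor}=\emptyset$ deprives us of a canonical normal subgroup to quotient by, so the correct witness $P\supseteq H$ must be produced carefully from $E$, and the precise $P$-conjugacy pattern of $\cH_{P}^{\red}$ must be matched to the hypotheses of the previous subsection.

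Once $G$ is of maximal class, Proposition \ref{prop:clmc} leaves only the dihedral, semi-dihedral, and generalized quaternion types (and for $\nu=3$ a direct inspection of the non-abelian groups of order $8$ reaches the same trichotomy). The quaternion case is excluded by Lemma \ref{lem:nomx}(ii), since its unique order-$2$ subgroup is central and would force $\cH=\emptyset$, contradicting $\#\cH\geq 2$. The semi-dihedral case is excluded by Lemma \ref{lem:nomx}(i), which says all its non-normal order-$2$ subgroups are $G$-conjugate, so a strongly reduced $\cH$ has at most one element, again contradicting $\#\cH\geq 2$. Hence $G\cong D_{2^{\nu}}$. Finally, by Lemma \ref{lem:nodt}(iii) each $H\in\cH$ equals $\langle\sigma_{2^{\nu}}^{m_{i}}\tau_{2^{\nu}}\rangle$ for some $m_{i}\in\Z$, and by Lemma \ref{lem:nodt}(ii) these subgroups partition into two $G$-conjugacy classes according to the parity of $m_{i}$; strong reducedness and $\#\cH\geq 2$ then force $\#\cH=2$ with exactly one subgroup from each parity class, which is the statement of (iii).
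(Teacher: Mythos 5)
Your strategy is broadly the same as the paper's: (i)$\Rightarrow$(ii) is immediate; (iii)$\Rightarrow$(i) via Lemma \ref{lem:nodt}(ii), Proposition \ref{prop:conj} and Theorem \ref{thm:dnqp} matches the paper; and your use of Lemma \ref{lem:nomx} to exclude the semidihedral and quaternion types of maximal class is exactly what the paper does. The problem is in the middle step, which you correctly flag as the main obstacle, and the proposed route does not close it. You restrict to $P := \langle E, x\rangle$. If $x\notin E$ this is a non-abelian group of order $16$, and $\cH_{P}^{\red}$ consists of subgroups of order at most $2$, i.e.\ of index at least $8$ in $P$. Theorem \ref{thm:ind2} requires $M(\cH)=4$ and $\cH^{\nor}=\cH$, neither of which holds here; the quotient by a ``central order-$2$ element'' is neither exhibited nor is it established that the image of $\cH_{P}$ lands in a configuration covered by Theorem \ref{thm:ind1} or \ref{thm:ind2}. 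The enlargement from $E$ to $P$ therefore derails the argument.

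The paper restricts to $E$ itself, and this is what makes the preceding machinery apply. Since $E$ is abelian of order $8$, every subgroup of $E$ is normal, so $(\cH_{E}^{\red})^{\nor}=\cH_{E}^{\red}\neq\emptyset$ and Theorem \ref{thm:ndq2} applies with no extra manipulation. The case split is then on $\mu(\cH_{E}^{\red})\in\{4,8\}$ and on $E\cong (C_{2})^{3}$ versus $E\cong C_{4}\times C_{2}$: if $\mu=8$ then $\cH_{E}^{\red}=\{\{1\}\}$ and Proposition \ref{prop:emn1} finishes since $E$ is not cyclic; if $\mu=4$ and $E\cong(C_{2})^{3}$ one checks the two alternatives of Theorem \ref{thm:ndq2} directly; if $\mu=4$ and $E\cong C_{4}\times C_{2}$ then Proposition \ref{prop:nmne}(ii) puts $\Phi(E)$ in $Z(G)$, so any $H\in\cH_{E}^{\red}$, being a $G$-conjugate of a non-normal order-$2$ member of $\cH$, must be $\langle\tau\rangle$ or $\langle\sigma^{2}\tau\rangle$, and Lemma \ref{lem:abcj} shows both occur, so Theorem \ref{thm:ndq2} again yields non-quasi-invertibility. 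Replace the passage through $P$ by a direct restriction to $E$; the abelianness of $E$ is the point.
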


\begin{proof}
If $G\cong D_{2^{\nu}}$, then the assumption on $\cH$ implies $\cH=\{\langle \sigma_{2^{\nu}}^{2m}\tau_{2^{\nu}}\rangle,\langle \sigma_{2^{\nu}}^{2m'+1}\tau_{2^{\nu}}\rangle\}$ for some integers $m$ and $m'$. Hence the $G$-lattice $J_{G/\cH}$ is quasi-permutation by Theorem \ref{thm:dnqp}. Otherwise, by Proposition \ref{prop:clmc} and Lemma \ref{lem:nomx}, the group $G$ is not of maximal class. This implies that we can take a non-cyclic abelian normal subgroup $E$ of $G$ of order $8$, which is a consequence of Proposition \ref{prop:nmne}. 

\textbf{Case 1.~$\mu(\cH_{E}^{\red})=8$. }

In this case, we have $\cH_{E}^{\red}=\{\{1\}\}$ and $E/N^{E}(\cH_{E}^{\red})$ is not cyclic. Then Proposition \ref{prop:emn1} gives the desired assertion. 

\textbf{Case 2.~$\mu(\cH_{E}^{\red})=4$ and $E\cong (C_{2})^{3}$. }

By assumption, there is an isomorphism 
\begin{equation*}
E/N^{E}(\cH_{E}^{\red})\cong
\begin{cases}
    (C_{2})^{2}&\text{if }\#\cH_{E}^{\red}=1;\\
    (C_{2})^{3}&\text{if }\#\cH_{E}^{\red}\geq 2. 
\end{cases}
\end{equation*}
Then the assertion follows from Theorem \ref{thm:ndq2}. 

\textbf{Case 3.~$\mu(\cH_{E}^{\red})=4$ and $E\cong C_{4}\times C_{2}$. }

Write $E=\langle \sigma,\tau \mid \sigma^{4}=\tau^{2}=1,\sigma\tau=\tau\sigma\rangle$. Then, by Lemma \ref{lem:nomx}, $\Phi(G)$ is contained in the center of $G$. Pick $H\in \cH_{E}^{\red}$, then $\mu(\cH)=2$ implies $H\subset E$ and $H=g_{0}H_{0}g_{0}^{-1}$ for some $H_{0}\in \cH$ and $g\in G$. Then, $H$ coincides with $\langle \tau \rangle$ or $\langle \sigma^{2}\tau \rangle$ since it is not normal in $G$. We may assume $H=\langle \tau \rangle$. On the other hand, the elements $\tau$ and $\sigma^{2}\tau$ are conjugate in $G$. In particular, there is $g\in G$ such that $gHg^{-1}=\langle \sigma^{2}\tau\rangle$. Then one has $Eg_{0}H_{0}\neq Egg_{0}H_{0}$ in $E\backslash H/H_{0}$, which is a consequence of Lemma \ref{lem:abcj}. Therefore, $\cH_{E}^{\red}$ contains $\langle \tau \rangle$ and $\langle \sigma^{2}\tau \rangle$. In particular, we have $\cH_{N}^{\red}\geq 2$ and $N^{E}(\cH_{E}^{\red})=\{1\}$. Now, the assertion follows from Theorem \ref{thm:ndq2}. 
\end{proof}

\begin{lem}\label{lem:mxgt}
Let $G$ be a $2$-group, and $\cH$ a strongly reduced set of its subgroups. Assume
\begin{itemize}
\item $N^{G}(\cH)=\{1\}$; 
\item $\mu(\cH)=M(\cH)$; and
\item $(H:N^{G}(H))=(N_{G}(H):H)=2$ and $N^{G}(H)\neq \{1\}$ for any $H\in \cH$. 
\end{itemize}
Then there is a maximal subgroup $P$ of $G$ such that $\#\cH_{P}^{\srd}\geq 3$. 
\end{lem}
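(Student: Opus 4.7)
The plan is to produce a maximal subgroup $P$ of $G$ for which the Mackey decomposition $\cH_P$ contains at least three pairwise incomparable, pairwise non-$P$-conjugate subgroups. As preliminaries: the hypotheses force $\#\cH\geq 2$, since $\cH=\{H\}$ would give $N^{G}(\cH)=N^{G}(H)\neq\{1\}$; and every $H\in\cH$ is non-normal in $G$, because normality would yield $N_{G}(H)=G\neq H$, contradicting $(N_{G}(H):H)=2$.

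Fix $H_{1}\in\cH$ and set $E:=N^{G}(H_{1})$, which is a nontrivial normal subgroup of $G$. Using the identity $N^{G}(\cH)=\bigcap_{H\in\cH}N^{G}(H)=\{1\}$, I choose $H_{2}\in\cH$ with $E\not\subseteq H_{2}$; then $EH_{2}\supsetneq H_{2}$, and Corollary \ref{cor:pgfg}(ii) applied to the non-normal subgroup $H_{2}$ (using $(N_{G}(H_{2}):H_{2})=2$) yields $N_{G}(H_{2})\subseteq EH_{2}\subsetneq G$. Assuming for the main case that $H_{1}H_{2}\subsetneq G$, I extend $H_{1}H_{2}=H_{1}\cdot EH_{2}$ to a maximal subgroup $P$; this $P$ contains both $H_{1}$ and $H_{2}$.

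I claim $\cH_{P}^{\srd}$ then contains the three classes represented by (a) $H_{2}=P\cap H_{2}$; (b) $H_{1}=P\cap H_{1}$; and (c) $P\cap g_{0}H_{1}g_{0}^{-1}$ for a representative $g_{0}\in G\setminus P$ of a nontrivial double coset of $P\backslash G/H_{1}$, whose existence follows from $[G:P]=2$ together with $H_{1}\subseteq P\subsetneq G$. Pairwise incomparability up to $P$-conjugacy proceeds as follows: both (b) and (c) contain $E$ (since $E\lhd G$ and $E\subseteq H_{1}$), whereas $E\not\subseteq H_{2}$, so no $P$-conjugate of (a) contains $E$ and hence (a) cannot sit inside (b) or (c); conversely, any $P$-conjugate containment of (b) or (c) into (a) would produce $H_{2}\supseteq g H_{1}g^{-1}$ for some $g\in G$, and by $|H_{1}|=|H_{2}|$ (from $\mu(\cH)=M(\cH)$) this would force $H_{2}=gH_{1}g^{-1}$, contradicting strong reducedness of $\cH$. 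Finally, (b) and (c) arise from distinct double cosets of $P\backslash G/H_{1}$, so they are non-$P$-conjugate by the Mackey formula.

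The main obstacle is the residual case $H_{1}H_{2}=G$, in which no maximal $P$ contains both $H_{1}$ and $H_{2}$. Here $[H_{1}:H_{1}\cap H_{2}]=\mu(\cH)$, so $H_{1}$ and $H_{2}$ intersect minimally, and both constraints compete for space inside a single maximal subgroup. My plan is to replace the pair $(H_{1},H_{2})$ by another pair $(H_{1}',H_{2}')\in\cH\times\cH$ with $N^{G}(H_{1}')\not\subseteq H_{2}'$ but $H_{1}'H_{2}'\subsetneq G$: the triviality of $N^{G}(\cH)$ ensures that the family of cores $\{N^{G}(H)\}_{H\in\cH}$ is sufficiently non-collinear, while the uniform index hypotheses $(H:N^{G}(H))=(N_{G}(H):H)=2$ and the finiteness of $\cH$ prevent every pair in $\cH$ from simultaneously satisfying $H_{1}H_{2}=G$. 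Once such a pair is found, the construction above applies verbatim.
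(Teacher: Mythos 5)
Your argument in the case $H_1H_2\subsetneq G$ is essentially sound, though with minor imprecisions: the non-normality of each $H\in\cH$ follows from $(H:N^G(H))=2$ (not from $(N_G(H):H)=2$, which by itself permits $H$ normal and maximal); the non-$P$-conjugacy of $H_1$ and $g_0H_1g_0^{-1}$ is forced by $N_G(H_1)\subseteq P$ together with $g_0\notin P$, rather than by Mackey's formula; and you should note explicitly that $\mu(\cH)=M(\cH)$ ensures your three subgroups have maximal order among elements of $\cH_P$, so nothing in $\cH_P$ can dominate them up to $P$-conjugacy and they do survive into $\cH_P^{\srd}$.

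The genuine gap is the residual case $H_1H_2=G$. Your proposed repair---replace $(H_1,H_2)$ by a pair $(H_1',H_2')$ with $N^G(H_1')\not\subseteq H_2'$ and $H_1'H_2'\subsetneq G$---cannot work, because such a pair need not exist. Concretely, take $G=D_4\times C_2$ of order $16$ and set $H_1:=\langle(\tau_4,0),(1,1)\rangle$, $H_2:=\langle(\sigma_4\tau_4,0),(\sigma_4^2,1)\rangle$; then $\cH:=\{H_1,H_2\}$ is strongly reduced, $N^G(H_1)=\langle(1,1)\rangle$, $N^G(H_2)=\langle(\sigma_4^2,1)\rangle$, $N^G(\cH)=\{1\}$, $\mu(\cH)=M(\cH)=4$, $(H_i:N^G(H_i))=(N_G(H_i):H_i)=2$, and yet $H_1\cap H_2=\{1\}$, so $H_1H_2=G$; with $\#\cH=2$ there is no other pair to try. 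The paper sidesteps this by never requiring $P$ to contain both subgroups: choosing $H,H'$ with $N^G(H)\neq N^G(H')$, it takes a maximal $P\supseteq N^G(H)H'$ (when this is a proper subgroup), so that $H'\subseteq P$ but $P\cap H$ collapses to $N^G(H)$, and then runs the incomparability argument with the trio $\{N^G(H),H',gH'g^{-1}\}$, using that the $P$-invariant core $N^G(H)$ is not contained in $H'$ (otherwise $N^G(H)\subseteq N^G(H')$, forcing equality by cardinality). It also treats the separate sub-case $N^G(H)H'=G$ by a different counting argument. You would need both of these additional constructions to close the gap; ``find a better pair'' cannot be rescued.
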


\begin{proof}
By assumption, there exist $H,H'\in \cH$ such that $N^{G}(H)\neq N^{G}(H')$. 

\textbf{Case 1.~$N^{G}(H)H'=G$. }

Let $P$ be a maximal subgroup of $G$ containing $N_{G}(H)$. Fix $g\in G \setminus P$, then $\cH_{P}^{\srd}$ contains $H$ and $gHg^{-1}$. On the other hand, we have 
\begin{equation*}
    (G:N^{G}(H)\cdot (P\cap H'))\leq 2
\end{equation*}
since $N^{G}(H)H'=G$. Combining this inequality with the inclusion $N^{G}(H)\cdot (P\cap H')\subset P$, we obtain an equality
\begin{equation*}
N^{G}(H)\cdot (P\cap H')=P. 
\end{equation*}
In particular, $P\cap H'$ is not contained in $H$ or $gHg^{-1}$. If $P\cap H'\in \cH_{P}^{\red}$, then $\#\cH_{P}^{\red}\geq 3$ is clear. Otherwise, there exist $g_{0}\in G$ and $H_{0}\in \cH$ so that $P\cap H'\subsetneq P\cap g_{0}Hg_{0}^{-1}$. Since $\#(P\cap H')=\#H'/2=\#H_{0}/2$, we have $P\cap g_{0}Hg_{0}^{-1}=g_{0}H_{0}g_{0}^{-1}$, that is, $g_{0}H_{0}g_{0}^{-1}\subset P$. In particular, we obtain $H_{0}$ and $gH_{0}g^{-1}$ are contained in $P$. Therefore, one has $\#\cH_{P}^{\srd}\geq 4$, which completes the proof in this case. 

\textbf{Case 2.~$N^{G}(H)H'\neq G$. }

Let $P$ be a maximal subgroup of $G$ containing $N^{G}(H)H'$. Since $(N_{G}(H'):H')=2$, Corollary \ref{cor:pgfg} implies $N_{G}(H')\subset P$. Fix $g\in G\setminus P$. If $H\subset P$, then the same argument as above yields $N_{G}(H')\subset P$. Then, one has $H,H',gHg^{-1},gH'g^{-1}\in \cH_{P}^{\srd}$, in particular $\#\cH_{P}^{\srd}\geq 4$. If $H\not\subset P$, then we have $P\cap H=N^{G}(H)$ since $P$ contains $N^{G}(H)$. Therefore, the same argument as Case 1 implies the desired assertion. 
\end{proof}

\begin{lem}\label{lem:qphc}
Let $G$ be a $2$-group, and $\cH$ a strongly reduced set of subgroups of $G$. Then the following are equivalent: 
\begin{enumerate}
\item $G/N^{G}(\cH) \cong D_{2^{\nu}}$ and $\cH=\{\langle \sigma_{2^{\nu}}^{2m}\tau_{2^{\nu}} \rangle, \langle \sigma_{2^{\nu}}^{2m'+1}\tau_{2^{\nu}} \rangle\}$ for some integer $\nu \geq 2$ and $m,m'\in \Z$; 
\item $\cH$ satisfies all the conditions as follows: 
\begin{itemize}
\item $\#\cH=2$;
\item $(G:N^{G}(\cH))=2\mu(\cH)$; 
\item $G/N^{G}(\cH)$ is a $2$-group of maximal class; and
\item $(H:N^{G}(H))=(N_{G}(H):H)=2$ for all $H\in \cH$. 
\end{itemize}
\end{enumerate}
\end{lem}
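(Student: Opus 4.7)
My plan is to pass to the quotient $\overline{G}:=G/N^G(\cH)$ and study the images $\overline{H}_i:=H_i/N^G(\cH)$ of the members of $\cH$. The key point is the identification $N^{\overline{G}}(\overline{H}_i)=N^G(H_i)/N^G(\cH)$, which converts the numerical hypotheses in (ii) into statements about $\overline{G}$ and $\overline{H}_i$. These can then be matched against the classification of $2$-groups of maximal class (Proposition \ref{prop:clmc}) via Lemmas \ref{lem:nodt} and \ref{lem:nomx}.

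For (i) $\Rightarrow$ (ii), I would fix an isomorphism $\overline{G}\cong D_{2^\nu}$ carrying each $\overline{H}_i$ to $\langle \sigma_{2^\nu}^{m_i}\tau_{2^\nu}\rangle$ with $m_1=2m$ and $m_2=2m'+1$. The equalities $\#\cH=2$ and $(G:N^G(\cH))=|D_{2^\nu}|=2\mu(\cH)$ are immediate because each $|\overline{H}_i|=2$, and maximal class comes from Proposition \ref{prop:clmc}. Lemma \ref{lem:nodt}(i) identifies $N_{\overline{G}}(\overline{H}_i)=\langle \sigma_{2^\nu}^{2^{\nu-1}},\overline{H}_i\rangle$ of order $4$, giving $(N_G(H_i):H_i)=2$. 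Finally, since distinct conjugates of the non-normal $\overline{H}_i$ intersect trivially, $N^{\overline{G}}(\overline{H}_i)=\{1\}$, forcing $N^G(H_i)=N^G(\cH)$ and thus $(H_i:N^G(H_i))=|\overline{H}_i|=2$.

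For (ii) $\Rightarrow$ (i), the first step is to pin down the order of each $\overline{H}_i$. The identity $|\overline{H}_i|/|N^{\overline{G}}(\overline{H}_i)|=(H_i:N^G(H_i))=2$ yields $|\overline{H}_i|\geq 2$. After relabelling, $(G:H_1)=\mu(\cH)$ together with $(G:N^G(\cH))=2\mu(\cH)$ gives $|\overline{H}_1|=|\overline{G}|/\mu(\cH)=2$; then $(G:H_2)\geq \mu(\cH)$ forces $|\overline{H}_2|\leq 2$, hence $=2$. Consequently $N^{\overline{G}}(\overline{H}_i)=\{1\}$, so both $\overline{H}_i$ are non-normal of order $2$. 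This rules out $\overline{G}$ being abelian, giving $|\overline{G}|\geq 8$, so Proposition \ref{prop:clmc} puts $\overline{G}$ into one of three families.

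To conclude, I would eliminate the non-dihedral cases using Lemma \ref{lem:nomx}: if $\overline{G}\cong Q_{2^n}$, the unique subgroup of order $2$ forces $\overline{H}_1=\overline{H}_2$, hence $H_1=H_2$, contradicting $\#\cH=2$; if $\overline{G}\cong SD_{2^n}$, all non-normal order-$2$ subgroups are $\overline{G}$-conjugate, so $H_1$ and $H_2$ would be $G$-conjugate, violating strong reducedness. Thus $\overline{G}\cong D_{2^\nu}$ with $\nu\geq 2$, and Lemma \ref{lem:nodt}(iii) writes $\overline{H}_i=\langle \sigma_{2^\nu}^{m_i}\tau_{2^\nu}\rangle$; Lemma \ref{lem:nodt}(ii) combined with strong reducedness then forces $m_1-m_2$ odd, yielding (i) after relabelling. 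The main obstacle is the order analysis in this reverse direction: one must carefully juggle the two constraints $(H_i:N^G(H_i))=2$ and $(G:H_i)\geq \mu(\cH)$ through the quotient to conclude $|\overline{H}_i|=2$ exactly; once that is in place, the classification and the structural lemmas finish the argument mechanically.
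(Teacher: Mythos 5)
Your proof is correct and follows essentially the same route as the paper's: reduce modulo $N^G(\cH)$, invoke the classification of $2$-groups of maximal class (Proposition \ref{prop:clmc}), rule out the semi-dihedral and generalized quaternion cases via Lemma \ref{lem:nomx}, and then pin down the subgroups in the dihedral case via Lemma \ref{lem:nodt}. One point worth noting: you explicitly derive $|\overline{H}_i|=2$ from the hypothesis $(G:N^G(\cH))=2\mu(\cH)$ together with $(H_i:N^G(H_i))=2$; the paper's one-line proof only cites $\#\cH=2$ and $(H:N^G(H))=2$ before invoking Lemma \ref{lem:nomx}, which is genuinely insufficient without the $\mu$-condition (e.g.\ $G\cong SD_{16}$ with $\cH$ consisting of a non-normal subgroup of order $2$ and one of order $4$ satisfies all the other constraints in (ii)), so your more careful order bookkeeping closes a gap the paper leaves implicit.
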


\begin{proof}
(i) $\Rightarrow$ (ii) is clear by Lemma \ref{lem:nodt} (i). In the following, we prove the reverse implication. We may assume $N^{G}(\cH)=\{1\}$. Then $G$ is of maximal class. Hence Proposition \ref{prop:clmc} yields one of the following: 
\begin{itemize}
\item[(a)] $G\cong D_{2^{\nu}}$ for some $\nu \geq 2$; 
\item[(b)] $G\cong SD_{2^{\nu+1}}$ for some $\nu \geq 3$; or
\item[(c)] $G\cong Q_{2^{\nu}}$ for some $\nu\geq 3$. 
\end{itemize}
Moreover, since $\#\cH=2$ and $(H:N^{G}(H))=2$ for all $H\in \cH$, only (a) is valid by Lemma \ref{lem:nomx}. Hence the assertion follows from Lemma \ref{lem:nodt} (ii), (iii). 
\end{proof}

\begin{thm}\label{thm:tnqi}
Let $G$ be a $2$-group, and $\cH$ a strongly reduced set of its subgroups. Then, the following are equivalent:
\begin{enumerate}
\item $J_{G/\cH}$ is a quasi-permutation $G$-lattice; 
\item $J_{G/\cH}$ is a quasi-invertible $G$-lattice; 
\item $G$ and $\cH$ satisfy one of the following: 
\begin{itemize}
    \item[(iii-I)] $\#\cH=1$ and $G/N^{G}(\cH)$ is cyclic; or
    \item[(iii-I\hspace{-1pt}I)] $G/N^{G}(\cH) \cong D_{2^{\nu}}$, $\cH=\{H,H'\}$, $H/N^{G}(\cH)\cong \sigma_{2^{\nu}}^{2m}\tau_{2^{\nu}} \rangle$ and $H'/{N^{G}(\cH)}\cong \langle \sigma_{2^{\nu}}^{2m'+1}\tau_{2^{\nu}} \rangle$ for some $\nu \in \Zpn$ and $m,m'\in \Z$. 
\end{itemize}
\end{enumerate}
\end{thm}

\begin{proof}
If $\#\cH=1$, then the assertion follows from Proposition \ref{prop:emn1}. Hence, we may assume $\#\cH\geq 2$. 

(i) $\Rightarrow$ (ii): This is clear. 

(iii) $\Rightarrow$ (i): If (iii-I) is valid, then Proposition \ref{prop:emn1} gives the desired assertion. On the other hand, if (iii-I\hspace{-1pt}I) is satisfied, then the assertion follows from Theorem \ref{thm:dnqp}. 

(ii) $\Rightarrow$ (iii): It suffices to prove that the $G$-lattice $J_{G/\cH}$ is not quasi-invertible if (iii-I\hspace{-1pt}I) fails. Write $\#G=2^{\nu}$, which satisfies $\nu \geq 2$. We prove the assertion by induction on $\nu$. If $\nu \in \{2,3\}$, then the failure of (iii-I\hspace{-1pt}I) implies that $\cH^{\nor}$ is non-empty. Hence, the assertion follows from Theorem \ref{thm:ndq2}. In the following, suppose $\nu \geq 4$ and that the assertion holds for $\nu-1$. We may assume that $\cH^{\nor}$ is empty, which is a consequence of Theorem \ref{thm:ndq2}. Moreover, it suffices to discuss the case $N^{G}(\cH)=\{1\}$. Furthermore, we may assume $\cH^{\nor}=\emptyset$, which follows from Theorem \ref{thm:ndq2}. In addition, we only need a consideration for $\mu(\cH)<\#G/2$, which is a consequence of Proposition \ref{prop:mxmu}. 

\textbf{Case 1.~$\mu(\cH)<M(\cH)$. }

Take $H_{0}\in \cH$ with $(G:H_{0})=M(\cH)$, and pick a maximal subgroup of $P$ of $G$ containing $N_{G}(H_{0})$. Fix $g\in G\setminus P$. Then $P\cap H$ does not contain $H_{0}$ or $gH_{0}g^{-1}$ for any $H\in \cH$ with $(G:H)<M(\cH)$. This implies $\#\cH_{P}^{\srd}\geq 3$, and hence the $P$-lattice $J_{P/\cH_{P}^{\srd}}$ is not quasi-invertible by the induction hypothesis and Lemma \ref{lem:qphc}. Therefore the assertion follows from Proposition \ref{prop:rtmc}. 

\textbf{Case 2.~$(N_{G}(H):H)\geq 4$ for some $H\in \cH$. }

Let $P$ be a maximal subgroup of $G$ containing $N_{G}(H)$. Pick $g\in G\setminus P$. Then $H$ and $gHg^{-1}$ lie in $\cH^{\srd}$. Moreover, we have $N_{P}(H)=N_{G}(H)$ since $P$ contains $N_{G}(H)$. This implies $(N_{P}(H):H)\geq 4$, and hence the $P$-lattice $J_{P/\cH_{P}^{\srd}}$ is not quasi-invertible by the induction hypothesis and Lemma \ref{lem:qphc}. Hence the assertion follows from Proposition \ref{prop:rtmc}. 

\textbf{Case 3.~$(H:N^{G}(H))\geq 4$ for some $H\in \cH$. }

By assumption, we have $(G:H)\geq 8$. Take a maximal subgroup $P$ of $G$ containing $\Phi(G)H$. Then it also contains $N_{G}(H)$ by Corollary \ref{cor:pgfg} (ii). Moreover, we have $N_{G}(H)\neq P$ since $(G:H)\geq 8$. Now, pick $g\in G\setminus P$, then $H$ and $gHg^{-1}$ are non-normal subgroups of $P$ that are not conjugate to each other. This is a consequence of the inclusion $N_{G}(H)\subsetneq P$ and the inequality $(G:H)\geq 8$. Moreover, they are contained in $\cH_{P}^{\red}$. In particular, we have $\#\cH_{P}^{\red}\geq 2$. If $\#\cH_{P}^{\srd}\geq 3$, then the $P$-lattice $J_{P/\cH_{P}^{\srd}}$ is not quasi-invertible by the induction hypothesis. In the following, suppose $\#\cH_{P}^{\srd}=2$, which implies $\cH_{P}^{\srd}=\{H,gHg^{-1}\}$ for some $g\in G\setminus P$. Moreover, the equality $N^{G}(H)=N^{P}(H)\cap N^{P}(gHg^{-1})$ implies that 
\begin{itemize}
\item[(a)] $N^{P}(H)=N^{G}(H)$; or 
\item[(b)] $N^{P}(H)\neq N^{G}(H)$ and $N^{P}(H)\neq N^{P}(gHg^{-1})$.  
\end{itemize}
In both cases, the inequality $(G:N^{P}(\cH_{P}^{\srd}))\geq 4\mu(\cH_{P}^{\srd})$ follows. Now, the $P$-lattice $J_{P/\cH_{P}^{\srd}}$ is not quasi-invertible by the induction hypothesis and Lemma \ref{lem:qphc}. Consequently, the $G$-lattice $J_{G/\cH}$ is not quasi-invertible by Proposition \ref{prop:rtmc}. 

\textbf{Case 4.~$\mu(\cH)=M(\cH)$ and $(H:N^{G}(H))=(N_{G}(H):H)=2$ for any $H\in \cH$. }

Since we assume $\mu(\cH)<\#G/2$, we have $N^{G}(H)\neq \{1\}$ for all $H\in \cH$. Hence, Lemma \ref{lem:mxgt} yields that there is a maximal subgroup $P$ of $G$ satisfying $\#\cH_{P}^{\srd}\geq 3$. Then the $P$-lattice $J_{G/\cH_{P}^{\srd}}$ is not quasi-invertible, which is a consequence of the induction hypothesis and Lemma \ref{lem:qphc}. Now, Proposition \ref{prop:rtmc} gives the desired assertion. 
\end{proof}

\begin{proof}[Proof of Theorem \ref{mth2}]
Put $G:=\Gal(L/k)$, and
\begin{equation*}
\cH:=\{\Gal(L/K_{1}),\ldots,\Gal(L/K_{r})\}. 
\end{equation*}
Then, we have $N^{G}(\cH)=\{1\}$. Moreover, Proposition \ref{prop:coch} gives an isomorphism $X^{*}(T_{\bK/k})\cong J_{G/\cH}$. Hence, Proposition \ref{prop:rtiv} implies that $T_{\bK/k}$ is stably (resp.~retract) rational over $k$ if and only if $J_{G/\cH}$ is a quasi-permutation (resp.~quasi-invertible) $G$-lattice. Moreover, by Proposition \ref{prop:rdsr}, the $G$-lattice $[J_{G/\cH}]$ is quasi-permutation (resp.~quasi-invertible) if and only if $[J_{G/\cH^{\srd}}]$ is so. On the other hand, we have the following:
\begin{itemize}
\item (iii-1) holds if and only if $\cH=\{\{1\}\}$ and $G$ is cyclic; and
\item (iii-2) holds if and only if $G\cong D_{2^{\nu}}$ with $\nu \in \Znn$ and $\cH^{\srd}=\{\langle \sigma_{2^{\nu}}^{2m}\tau_{2^{\nu}}\rangle,\langle \sigma_{2^{\nu}}^{2m'+1}\tau_{2^{\nu}}\rangle\}$ for some $m,m'\in \Z$. 
\end{itemize}
Consequently, the assertion follows from Theorem \ref{thm:tnqi}. 
\end{proof}

\section{Proof of Theorem \ref{mth3}}\label{sect:nilp}

\begin{dfn}
Let $G$ be a finite group, and consider multisets $\cH$ and $\cH'$ of subgroups in $G$. We write $\cH'<\cH$ if
\begin{itemize}
\item $\cH'$ is a reduced set; and
\item there is an injection $\Xi \colon \cH' \hookrightarrow \cH^{\set}$ such that $\Xi(H')\subset H'$ for every $H'\in \cH'$. 
\end{itemize}
\end{dfn}

We can confirm the following without difficulty. 

\begin{lem}\label{lem:msin}
Let $G$ be a finite group, and $\cH$ a multiset of its subgroups. 
\begin{enumerate}
    \item We have $\cH^{\red}<\cH$. 
    \item Let $\cH'$ and $\cH''$ be reduced sets of subgroups in $G$ such that $\cH''<\cH'<\cH$. Then, we have $\cH''<\cH$. 
\end{enumerate}
\end{lem}

\begin{comment}
\begin{lem}\label{lem:psdp}
Let $G=G_{p}\times G'$, where $G_{p}$ is a $p$-group, and $G'$ is a finite group of order coprime to $p$. Consider a multiset $\cH$ of subgroups of $G$. 
\begin{enumerate}
\item There is a strongly reduced set $\cH_{0}$ of $G$ such that
\begin{itemize}
\item[(1)] $(\cH_{0})_{G_{p}}^{\set}=\cH_{G_{p}}^{\srd}$; and
\item[(2)] $[J_{G/\cH}]=[J_{G/\cH_{0}}]$. 
\end{itemize}
\item If $\cH_{G_{p}}^{\srd}=(\cH_{G_{p}}^{\srd})^{\nor}$, then the set $\cH_{0}$ as in \emph{(i)} satisfies $\cH_{0}<\cH$. 
\end{enumerate}
\end{lem}
\end{comment}

\begin{lem}\label{lem:psdp}
Let $G=G_{p}\times G'$, where $G_{p}$ is a $p$-group, and $G'$ is a finite group of order coprime to $p$. Consider a multiset $\cH$ of subgroups of $G$. Then, there is a reduced set $\cH_{0}$ of $G$ such that
\begin{enumerate}
\item $\cH_{0}<\cH$; 
\item $(\cH_{0})_{G_{p}}^{\set}=\cH_{G_{p}}^{\red}$; and
\item $[J_{G/\cH_{0}}]=[J_{G/\cH}]$. 
\end{enumerate}
\end{lem}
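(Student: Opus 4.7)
The plan is to modify $\cH$ via the similarity-preserving operations of Section \ref{sect:mult}, so that the $G_p$-part of the resulting multiset coincides with $\cH_{G_p}^{\srd}$.

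First, by Proposition \ref{prop:rdsr} we can replace $\cH$ with $\cH^{\srd}$ and assume $\cH$ is strongly reduced. Since $G = G_p \times G'$ with $\gcd(\#G_p, \#G') = 1$, every subgroup $H$ of $G$ is itself nilpotent and decomposes uniquely as $H = H_p \times H'$ with $H_p := H \cap G_p$ its Sylow $p$-subgroup and $H' := H \cap G'$ its prime-to-$p$ complement. Via Mackey's decomposition (Proposition \ref{prop:rtmc}), together with the fact that $G'$ centralizes $G_p$, one verifies $\cH_{G_p}^{\set} = \{H_p : H \in \cH^{\set}\}$, with multiplicity of $H_p$ given by $\sum m_{\cH}(H) \cdot (G':H')$. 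We set $A := \cH_{G_p}^{\srd}$, a chosen maximal strongly reduced subset.

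The next step is to handle each $H = H_p \times H' \in \cH$ with $H_p \notin A$. For such $H$, pick $K \in A$ and $g \in G_p$ with $H_p \subsetneq g K g^{-1}$; by Proposition \ref{prop:conj} applied in the $G_p$-coordinate, we may assume $H_p \subsetneq K$. Then $M_H := K \times H' \supsetneq H$ with $(M_H : H) = (K : H_p)$, a positive power of $p$. The plan is to adjoin $M_H$ to the multiset with weight $(K:H_p)$ in the sense of Definition \ref{dfn:gnmn}, and invoke Proposition \ref{prop:rdtr} with $H_0 = M_H$ and $H_0' = H$ to confirm that this enlargement preserves $[J_{G/\cH}]$. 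The redundant element $H$ should then be cleared using Lemma \ref{lem:rded} in combination with Corollary \ref{cor:rdmn}. Iterating over all offending $H$, and strongly reducing the outcome via Proposition \ref{prop:rdsr} once more, will yield $\cH_0$ with $(\cH_0)_{G_p}^{\set} = A$.

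The hard part will be the removal step: after adjoining $M_H$ with weight $(K:H_p)$, a direct application of Lemma \ref{lem:rded} cannot delete $H$, since the weight $1$ on $H$ is not divisible by $(K:H_p) > 1$. The resolution should exploit the coprimality of $(K:H_p)$ (a $p$-power) with $\#G'$: by adjoining further auxiliary subgroups of the form $K \times H''$ for carefully chosen $H'' \subsetneq H'$, with weights guided by B\'ezout identities in the prime-to-$p$ direction, one should be able to arrange the hypotheses of Corollary \ref{cor:rdmn} so that every subgroup with $p$-part outside $A$ lies in $\cH^{\set} \setminus \cH_{\varphi}[1]$ and is thereby eliminated. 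Executing this combinatorial juggle precisely, while simultaneously keeping the multiset strongly reduced and confirming that the final $(\cH_0)_{G_p}^{\set}$ equals $A$, is where the bulk of the technical work will lie.
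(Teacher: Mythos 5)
Your setup and first moves match the paper's proof: the enlarged subgroup $M_H = K\times H'$ is exactly the paper's $H_0N_1$ (with $H_0=H$ and $K=N_1:=G_p\cap H_1$ for some $H_1\in\cH$), and Proposition~\ref{prop:rdtr} applied with $H\subset M_H$ does let you adjoin $M_H$ at weight $(M_H{:}H)=(K{:}H_p)$ without changing the similarity class. You have also correctly identified the obstruction: Lemma~\ref{lem:rded} then needs the weight on $M_H$ to divide the weight $1$ on $H$, and a nontrivial $p$-power does not.

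The flagged ``combinatorial juggle,'' however, is a genuine gap, and the tool you reach for will not close it. Corollary~\ref{cor:rdmn} discards the subgroups with $d_\varphi>1$; but your offending $H$ lies in the original multiset with weight $1$, so $d_\varphi(H)=1$ forever, and $H$ can never land in $\cH^{\set}\setminus\cH_\varphi[1]$ no matter which auxiliary subgroups you adjoin. The actual mechanism that deletes $H$ is strong reduction (Proposition~\ref{prop:rdsr}): once $M_H$ has effective weight $1$, the strict inclusion $H\subsetneq M_H$ removes $H$ from $\widetilde\cH^{\srd}$. To get $M_H$ to effective weight $1$, the paper adjoins $M_H=H_0N_1$ \emph{twice}, with weights $(H_0N_1{:}H_0)$ (a $p$-power, valid by Proposition~\ref{prop:rdtr} via $H_0\subset H_0N_1$) and $(H_0N_1{:}H_1\cap H_0N_1)$ (prime to $p$, valid by Proposition~\ref{prop:rdtr} via $H_1\cap H_0N_1\subset H_0N_1$, where $H_1\cap H_0N_1$ is itself adjoined at weight $1$, harmlessly, because $H_1\cap H_0N_1\subset H_1\in\cH$ and Lemma~\ref{lem:rded} applies). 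Their gcd is $1$, so $d_\varphi\equiv 1$ on $\widetilde\cH^{\set}$, Corollary~\ref{cor:rdrd} gives $[J_{G/\widetilde\cH}^{(\varphi)}]=[J_{G/\widetilde\cH^{\set}}]$, and passing to $\widetilde\cH^{\srd}$ drops both $H_0$ and $H_1\cap H_0N_1$; iterating over all offending $H_0$ finishes. The auxiliary $K\times H''$ you gesture at must therefore be $H_1\cap H_0N_1=N_1\times\bigl((H_1)'\cap(H_0)'\bigr)$, sourced from an existing $H_1\in\cH$: a free choice of $H''\subsetneq H'$ gives no way to justify the adjunction via Proposition~\ref{prop:rdtr} or Lemma~\ref{lem:rded}.
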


\begin{proof}
(i): By Corollary \ref{cor:rdrd} (ii) and Lemma \ref{lem:msin}, we may assume that $\cH$ is reduced. %Moreover, using Proposition \ref{prop:conj}, $\cH_{G_{p}}^{\red}=\cH_{G_{p}}^{\srd}$ may also be assumed. 
Note that all elements of $\cH_{G_{p}}$ are of the form $G_{p}\cap H$ for some $H\in \cH$. Now, pick $H_{0}\in \cH$ such that $G_{p}\cap H_{0}\notin \cH_{G_{p}}^{\red}$. Take $H_{1}\in \cH$ so that $G_{p}\cap H_{0}\subset N_{1}:=G_{p}\cap H_{1}\in \cH_{G_{p}}^{\red}$. Then we have $H_{0}N_{1}\notin \cH$ since $\cH$ is strongly reduced. Moreover, $H_{1}\cap H_{0}N_{1}$ is strictly contained in $H_{0}N_{1}$, which follows from $H_{0} \not\subset H_{1}$. In particular, it is not contained in $\cH\cup \{H_{0}N_{1}\}$. Now we consider the multiset $\widetilde{\cH}$ of subgroup of $G$ which satisfies 
\begin{itemize}
\item $\widetilde{\cH}^{\set}=\cH \cup \{H_{0}N_{1},H_{1}\cap H_{0}N_{1}\}$; 
\item $m_{\widetilde{\cH}}(H_{0}N_{1})=2$; and
\item $m_{\widetilde{\cH}}(H_{0})=1$ for each $H_{0}\in \widetilde{\cH}^{\set}\setminus \{H_{0}N_{1}\}$. 
\end{itemize}
Moreover, we write for $\varphi$ the weight function on $\widetilde{\cH}$ defined as follows: 
\begin{equation*}
\varphi(H)=
\begin{cases}
((H_{0}N_{1}:H_{0}),(H_{0}N_{1}:H_{1}\cap H_{0}N_{1}))&\text{if $H=H_{0}N_{1}$ and $\#H>\#(H_{1}\cap HN_{1})$;}\\
((H_{0}N_{1}:H_{1}\cap H_{0}N_{1}),(H_{0}N_{1}:H_{0}))&\text{if $H=H_{0}N_{1}$ and $\#H_{0}\leq \#(H_{1}\cap H_{0}N_{1})$; }\\
1&\text{otherwise. }
\end{cases}
\end{equation*}
Then Proposition \ref{prop:rdsr} gives an isomorphism of $G$-lattices
\begin{equation*}
J_{G/\widetilde{\cH}} \cong 
\begin{cases}
J_{G/\cH}\oplus \Z[G/H_{0}N_{1}]&\text{if }H_{1}\subset H_{0}N_{1};\\
J_{G/\cH}\oplus \Z[G/H_{0}N_{1}]^{\oplus 2}\oplus \Z[G/(H_{1}\cap H_{0}N_{1})]&\text{if }H_{1}\not\subset HN_{1}. 
\end{cases}
\end{equation*}
On the other hand, since $(H_{0}N_{1}:H_{0})$ is a power of $p$ and $(H_{0}N_{1}:H_{1}\cap H_{0}N_{1})$ is coprime to $p$, we obtain that $d_{\varphi}$ is the constant weight function on $\widetilde{\cH}^{\set}$ that takes value $1$. Therefore, if we set $\cH^{\dagger}:=\widetilde{\cH}^{\red}$, then Lemma \ref{lem:rded} and Corollary \ref{cor:rdrd} (ii) follow that there is an equality
\begin{equation*}
[J_{G/\widetilde{\cH}}^{(\varphi)}]=[J_{G/\widetilde{\cH}^{\red}}]. 
\end{equation*}
In summary, one has
\begin{equation*}
[J_{G/\cH}]=[J_{G/\cH^{\dagger}}]. 
\end{equation*}
Note that the inclusions $H_{0} \subset H_{0}N_{1}$ and $H_{1}\cap H_{0}N_{1}\subset H_{0}N_{1}$ imply that $H_{0}$ and $H_{1}\cap H_{0}N_{1}$ are not contained in ${\cH}^{\dagger}$. In particular, we have $\widetilde{\cH}^{\red}\setminus \{H_{0}N_{1}\}\subset \cH$. In addition, the assumption that $\cH$ is reduced implies that $H_{0}\cap G'$ is contained in $\cH_{G'}^{\red}$. Hence, $H_{0}N_{1}$ is contained in ${\cH}^{\dagger}$. Now, set
\begin{equation*}
\Xi \colon \widetilde{\cH}^{\red}\rightarrow \cH;\,H\mapsto 
\begin{cases}
    H_{0}&\text{if }H=H_{0}N_{1}; \\
    H&\text{otherwise. }
\end{cases}
\end{equation*}
Then, it is injective and $\Xi(H)\subset H$ for any $H\in \widetilde{\cH}^{\red}$. Therefore, we obtain $\widetilde{\cH}^{\red}<\cH$. On the other hand, we have $({\cH}^{\dagger})_{G_{1}}^{\red}=\cH_{G_{1}}^{\red}$ and
\begin{equation*}
\#\{H\in \cH^{\dagger} \mid G_{p}\cap H\notin \cH_{G_{p}}^{\red}\}<
\#\{H\in \cH \mid G_{p}\cap H\notin \cH_{G_{p}}^{\red}\}.
\end{equation*}
Applying the above procedure for all $H_{0}\in \cH$ with $G_{p}\cap H_{0}\notin \cH_{G_{p}}^{\red}$, we obtain a strongly reduced set of subgroups $\cH_{0}$ of $G$ satisfying (i), (ii) and (iii). 
\end{proof}

\begin{thm}\label{thm:mnnp}
Let $G$ be a finite nilpotent group, and $\cH$ a multiset of its subgroups. Then the following are equivalent: 
\begin{enumerate}
\item $J_{G/\cH}$ is a quasi-permutation $G$-lattice; 
\item $J_{G/\cH}$ is a quasi-invertible $G$-lattice; 
\item $[J_{G/\cH}]=[J_{G/\cH'}]$, where $\cH'$ is a reduced set of subgroups of $G$ with $\cH'<\cH$ that satisfies one of the following: 
\begin{itemize}
\item[(iii-$\alpha$)] $\cH'=\{N\}$, $N\triangleleft G$ and $G/N$ is cyclic; or
\item[(iii-$\beta$)] $\cH'=\{H,H'\}$, $G/N^{G}(\cH')\cong C_{m}\times D_{2^{\nu}}$ for some $m\in \Z \setminus 2\Z$ and ${\nu}\in \Zpn$, $H/N^{G}(\cH')\cong \langle (1,\tau_{2^{\nu}})\rangle$, and $H'/N^{G}(\cH')\cong \langle (1,\sigma_{2^{\nu}}\tau_{2^{\nu}})\rangle$. 
\end{itemize}
\end{enumerate}
\end{thm}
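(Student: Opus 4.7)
The plan is to prove the chain (iii) $\Rightarrow$ (i) $\Rightarrow$ (ii) $\Rightarrow$ (iii). For (iii) $\Rightarrow$ (i), case (a) reduces via Corollary \ref{cor:trfx} with the normal subgroup $N$ to the quasi-permutation property of $J_{(G/N)/\{1\}}$ for the cyclic group $G/N$, which is the classical Endo--Miyata theorem (\cite[Theorem 1.5]{Endo1975}). Case (b) reduces via Corollary \ref{cor:trfx} with $N^{G}(\cH')$ and possibly Proposition \ref{prop:conj} to bring the prescribed subgroups into the standard form of Theorem \ref{thm:nlqp}, applied to $G_{m,\nu}=C_{m}\times D_{2^{\nu}}$ with its two distinguished subgroups. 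The implication (i) $\Rightarrow$ (ii) is immediate from the definitions.

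The main work lies in (ii) $\Rightarrow$ (iii). Since $G$ is finite nilpotent, it decomposes as $G=G_{2}\times G_{2'}$, where $G_{2}$ is the Sylow $2$-subgroup and $G_{2'}=\prod_{p \text{ odd}} G_{p}$ is the Hall $2'$-subgroup; every subgroup $H\leq G$ inherits the decomposition $H=(H\cap G_{2})\times (H\cap G_{2'})$. By Proposition \ref{prop:rdsr} we may assume $\cH$ is strongly reduced. For each prime $p$ dividing $|G|$, Propositions \ref{prop:rtmc} and \ref{prop:rtrd} combined with Corollary \ref{cor:rdrd} imply that $J_{G_{p}/\cH_{G_{p}}^{\red}}$ is quasi-invertible as a $G_{p}$-lattice. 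For odd $p$, Theorem \ref{thm:ndqp} forces $\#\cH_{G_{p}}^{\red}=1$ and $G_{p}/N^{G_{p}}(\cH_{G_{p}}^{\red})$ cyclic. For $p=2$, Theorem \ref{thm:tnqi} yields either a singleton with cyclic quotient (case A) or a two-element strongly reduced set with dihedral quotient (case B). We then apply Lemma \ref{lem:psdp} iteratively, once for each prime dividing $|G|$, to produce a strongly reduced $\widetilde{\cH}$ with $[J_{G/\widetilde{\cH}}]=[J_{G/\cH}]$ in $\sS(G)$ and such that $\widetilde{\cH}_{G_{p}}^{\set}$ realises the identified local structure at every prime. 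Piecing together the per-Sylow information via the product decomposition then yields case (a) of (iii) when $p=2$ is in case A, and case (b) when $p=2$ is in case B; in the latter situation the odd Sylow subgroups contribute precisely the cyclic factor $C_{m}$ with $m$ odd in $C_{m}\times D_{2^{\nu}}$.

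The principal obstacle is controlling the iteration of Lemma \ref{lem:psdp} across different primes. Each application at $p$ replaces a member $H_{0}$ of $\cH$ by $H_{0}N_{1}$ (with adjusted multiplicities), where $N_{1}\leq G_{p}$, altering only the $p$-part of the Sylow decomposition and leaving each $q$-part intact for $q\neq p$; the auxiliary members $H_{1}\cap H_{0}N_{1}$ introduced in the proof of that lemma have $q$-parts dominated by existing members of $\cH_{G_{q}}$, and therefore contribute no new maximal elements at other primes. This compatibility is what makes successive applications at different primes commute, allowing the simultaneous realisation of all the local Sylow structures inside one strongly reduced multiset. That is precisely what translates the per-Sylow classifications of Theorems \ref{thm:ndqp} and \ref{thm:tnqi} into the global conclusions (a) and (b) of (iii).
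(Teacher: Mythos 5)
Your proposal is correct and follows essentially the same route as the paper: reduce to a strongly reduced $\cH$, restrict to each Sylow subgroup and invoke Theorems \ref{thm:ndqp} and \ref{thm:tnqi}, then use Lemma \ref{lem:psdp} to normalize the Sylow components inside a single strongly reduced set, with Theorem \ref{thm:nlqp} (and the cyclic case of Endo--Miyata) providing the converse. Your explicit discussion of why the iteration of Lemma \ref{lem:psdp} is stable across primes (since the replacement $H_{0}\mapsto H_{0}N_{1}$ with $N_{1}\le G_{p}$ leaves all $q$-Sylow components unchanged for $q\neq p$) is a useful elaboration of a point the paper treats briefly.
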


\begin{proof}
(i) $\Rightarrow$ (ii): This is clear.

(ii) $\Rightarrow$ (iii): Since $G$ is nilpotent, we have $G=\prod_{p\mid \#G}G_{p}$, where $G_{p}$ is a $p$-Sylow subgroup of $G$. Hence, applying Lemma \ref{lem:psdp} for all prime divisors of $\#G$, we obtain a reduced set $\cH''$ of subgroups in $G$ that satisfies
\begin{itemize}
\item $\cH''<\cH$;
\item $(\cH''_{G_{p}})^{\set}=\cH_{G_{p}}^{\red}$ for all $p$; and
\item $[J_{G/\cH''}]=[J_{G/\cH}]$. 
\end{itemize}
On the other hand, since $J_{G/\cH}$ is quasi-invertible as a $G_{p}$-lattice, Theorem \ref{thm:ndqp} implies that $(\cH''_{G_{p}})^{\set}$ consists of a single normal subgroup $N_{p}$ of $G_{p}$ with $G_{p}/N_{p}$ cyclic. Since $\prod_{2<p\mid \#G}N_{p}$ is normal in $G$, we may assume that $G_{p}$ is cyclic and $N_{p}=\{1\}$ for any odd prime divisor $p$ of $\#G$. In particular, we have
\begin{equation*}
    G\cong C_{m}\times G_{2}
\end{equation*}
for some odd positive integer. Moreover, $\#H$ is a power of $2$ for any $H\in \cH''$. Now, set
\begin{equation*}
    \cH':=(\cH'')^{\srd}. 
\end{equation*}
Then $(\cH'_{G_{2}})^{\set}$ is also strongly reduced.  

\textbf{Case 1.~$\#\cH'=1$. }

In this case, Theorem \ref{thm:tnqi} implies that $(\cH'_{G_{2}})^{\set}$ satisfies (iii-I), that is, $(\cH'_{G_{2}})^{\set}$ consists of a normal subgroup $N$ of $G_{2}$ with $G_{2}/N$ cyclic. Moreover, we have $N\triangleleft G$ and $G/N$ is cyclic. Therefore, $\cH'$ satisfies (iii-$\alpha$). 

\textbf{Case 2.~$\#\cH' \geq 2$. }

In this case, Theorem \ref{thm:tnqi} asserts that (iii-I\hspace{-1pt}I) is valid for $(\cH'_{G_{2}})^{\set}$, that is, 
\begin{itemize}
\item $G_{2}/N^{G_{2}}(\cH) \cong D_{2^{\nu}}$ for some $\nu \in \Zpn$; 
\item $\cH=\{H,H'\}$; 
\item $H/N^{G}(\cH)\cong \langle \sigma_{2^{\nu}}^{2m}\tau_{2^{\nu}} \rangle$ and $H'/{N^{G}(\cH)}\cong \langle \sigma_{2^{\nu}}^{2m'+1}\tau_{2^{\nu}} \rangle$ for some $m,m'\in \Z$. 
\end{itemize}
Since $N^{G_{2}}(\cH')$ is normal in $G$, we have $N^{G}(\cH)=N^{G_{2}}(\cH)$ and $G/N^{G}(\cH)\cong C_{m}\times D_{2^{\nu}}$. Consequently, $\cH'$ satisfies (iii-$\beta$). 

(iii) $\Rightarrow$ (i): This is a consequence of Theorem \ref{thm:nlqp}. 
\end{proof}

Now, we begin with the proof of Theorem \ref{mth3} using Theorem \ref{thm:mnnp}. Let $k$ be a field, and $\bK=\prod_{i=1}^{r}K_{i}$ a finite {\'e}tale algebra over $k$. A \emph{subalgebra} of $\bK$ refers to a finite product $\prod_{j=1}^{s}K'_{i_{j}}$, where $1\leq i_{1}<\cdots<i_{s}\leq r$ are integers and $K'_{i_{j}}$ is an intermediate field of $K_{i_{j}}/k$ for each $j\in \{1,\ldots,s\}$. 

\begin{proof}[Proof of Theorem \ref{mth3}]
It suffices to prove (ii) $\Rightarrow$ (iii) $\Rightarrow$ (i). Put $G:=\Gal(L/k)$ and 
\begin{equation*}
    \cH:=\{\Gal(L/K_{i})<G\mid i\in \{1,\ldots,r\}\}
\end{equation*}

(ii) $\Rightarrow$ (iii): Combining the assumption with Proposition \ref{prop:coch}, we obtain that the $G$-lattice $J_{G/\cH}$ is quasi-invertible. Hence, there exists a reduced set of subgroups in $G$ with $\cH'<\cH$ and $[J_{G/\cH'}]=[J_{G/\cH}]$ that satisfies (iii-$\alpha$) or (iii-$\beta$) in Theorem \ref{thm:mnnp}. If (iii-$\alpha$) holds, denote by $\bK'=K'$ the intermediate field of $\widetilde{K}/k$ corresponding to $N$. Then, $K'$ is a subalgebra of $\bK$ satisfying (iii-$\alpha$) in Theorem \ref{mth3}. On the other hand, if (iii-$\beta$) is true, write $\cH=\{H,H'\}$, and denote by $K'_{1}$ and $K'_{2}$ the intermediate fields of $L/k$ corresponding to $K'_{1}$ and $K'_{2}$, respectively. Let $\bK':=K'_{1}\times K'_{2}$, then it is a subalgebra of $\bK$ satisfying (iii-b) in Theorem \ref{mth3}.

(iii) $\Rightarrow$ (i): We may assume $\bK=\bK'$. If $\bK$ satisfies (iii-a), then (iii-$\alpha$) in Theorem \ref{thm:mnnp} is valid for $\cH$. Hence, $J_{G/\cH}$ is quasi-permutation by Proposition \ref{prop:emqp}. On the other hand, if $\bK$ satisfies (iii-b), then (iii-$\beta$) in Theorem \ref{thm:mnnp} holds for $\cH$. Consequently, Theorem \ref{thm:dnqp} implies that $J_{G/\cH}$ is quasi-permutation. Therefore, the assertion follows from Proposition \ref{prop:coch}. 
\end{proof}

\section{Proof of Theorem \ref{mth5}}\label{sect:pfmn}

In this section, we assume that $k$ is a global field. For a finite {\'e}tale algebra $\bK$ over $k$, let
\begin{equation*}
    \Sha(\bK/k):=(\N_{\bK/k}(\A_{\bK}^{\times})\cap k^{\times})/\N_{\bK/k}(\bK^{\times}). 
\end{equation*}
Here, $\A_{\bK}^{\times}$ is the product of the id{\`e}le groups of all the factors of $\bK$. We say that the \emph{multinorm principle holds for $\bK/k$} if
\begin{equation*}
    \Sha(\bK/k)=1. 
\end{equation*}

One can rephrase $\Sha(\bK/k)$ by means of multinorm one tori. For an algebraic torus $T$ over $k$, the Tate--Shafarevich group of $T$ is defined as follows: 
\begin{equation*}
    \Sha^{1}(k,T):= \Ker \left(H^1(k,T)\xrightarrow{(\Res_{k_{v}/k})_{v}} \prod_{v} H^1(k_v,T)\right). 
\end{equation*}
Here, $v$ runs through all places of $k$, and $k_{v}$ denotes the completion of $k$ at $v$ (see also \cite[Section 6.3]{PR94}). 

\begin{lem}[{\cite[p.~7, (2.5)]{Liang2024}}]\label{lem:gnon}
Let $k$ be a global field, and $\bK$ a finite {\'e}tale algebra over $k$. Then, there is an isomorphism
\begin{equation*}
    \Sha(\bK/k)\cong \Sha^{1}(k,T_{\bK/k}). 
\end{equation*}
\end{lem}

Note that Lemma \ref{lem:gnon} is a generalization of Ono's theorem, which is given in \cite[p.~70]{Ono1963}. 

\begin{prop}[{\cite[p.~1213, Theorem 5]{Voskresenskii1969}}]\label{prop:vskr}
Let $k$ be a global field, and $T$ an algebraic torus over $k$. Then, there is an exact sequence
\begin{equation*}
0\rightarrow A_{k}(T) \rightarrow H^1(k,\Pic(\overline{X}))^{\vee}\rightarrow \Sha^{1}(k,T)\rightarrow 0. 
\end{equation*}
Here, $\Pic(\overline{X})^{\vee}$ and $A_{k}(T)$ are defined as follows: 
\begin{itemize}
    \item $\overline{X}:=X\otimes_{k}k^{\sep}$, where $X$ is a smooth compactification of $T$ over $k$; 
    \item $\Pic(\overline{X})^{\vee}$ is the Pontryagin dual of $\Pic(\overline{X})$; 
    \item $A_{k}(T):=(\prod_{v}T(k_v))/\overline{T(k)}$ is the defect of the weak approximation of $T$.
\end{itemize}
\end{prop}

\begin{proof}[Proof of Theorem \ref{mth5}]
By Theorem \ref{mth4}, $T_{\bK/k}$ is stably rational over $k$. In particular, if we set $G:=\Gal(K_{1}K_{2}/k)$, then Proposition \ref{prop:flpc} implies that the $G$-lattice $X^{*}(T_{\bK/k})$ is  quasi-permutation. Hence, we have $H^{1}(k,\Pic(\overline{X}))=0$ by Corollary \ref{cor:crvs}. Combining this result with Proposition \ref{prop:vskr}, we obtain $\Sha^{1}(k,T_{\bK/k})=0$. Now, the assertion follows from Lemma \ref{lem:gnon}. 
\end{proof}

\end{document}